\def\E{\ifmmode{\mathbb E}\else{$\mathbb E$}\fi} 
\def\N{\ifmmode{\mathbb N}\else{$\mathbb N$}\fi} 
\def\R{\ifmmode{\mathbb R}\else{$\mathbb R$}\fi} 
\def\Q{\ifmmode{\mathbb Q}\else{$\mathbb Q$}\fi} 
\def\C{\ifmmode{\mathbb C}\else{$\mathbb C$}\fi} 
\def\H{\ifmmode{\mathbb H}\else{$\mathbb H$}\fi} 
\def\Z{\ifmmode{\mathbb Z}\else{$\mathbb Z$}\fi} 
\def\P{\ifmmode{\mathbb P}\else{$\mathbb P$}\fi} 
\def\T{\ifmmode{\mathbb T}\else{$\mathbb T$}\fi} 
\def\SS{\ifmmode{\mathbb S}\else{$\mathbb S$}\fi} 
\def\DD{\ifmmode{\mathbb D}\else{$\mathbb D$}\fi} 
\renewcommand{\d}{\delta}
\newcommand{\del}{\partial}
\newcommand{\Hom}{{\operatorname{Hom}}}
\newcommand{\ben}{\begin{enumerate}}
\newcommand{\een}{\end{enumerate}}
\newcommand{\be}{\begin{equation}}
\newcommand{\ee}{\end{equation}}
\newcommand{\bea}{\begin{eqnarray}}
\newcommand{\eea}{\end{eqnarray}}
\newcommand{\beastar}{\begin{eqnarray*}}
\newcommand{\eeastar}{\end{eqnarray*}}
\newcommand{\bc}{\begin{center}}
\newcommand{\ec}{\end{center}}
\theoremstyle{theorem}
\newtheorem{thm}{Theorem}[section]
\newtheorem{cor}[thm]{Corollary}
\newtheorem{lem}[thm]{Lemma}
\newtheorem{prop}[thm]{Proposition}
\theoremstyle{definition}
\newtheorem{defn}[thm]{Definition}
\newtheorem{rem}[thm]{Remark}
\newtheorem{ques}[thm]{Question}
\newtheorem{hypo}[thm]{Hypothesis}
\newtheorem{cond}[thm]{Condition}
\newtheorem{choice}[thm]{Choice}
\newtheorem{notation}[thm]{\rm\bfseries{Notation}}
\newtheorem*{thm*}{Theorem}
\numberwithin{equation}{section}
\def\R{{\mathbb R}}
\def\E{{\mathbb E}}
\def\Z{{\mathbb Z}}
\def\C{{\mathbb C}}
\def\R{{\mathbb R}}
\def\P{{\mathbb P}}
\def\N{{\mathbb N}}
\def\11{{\mathbb I}}
\def\delbar{{\overline \partial}}
\def\d_Hudtau{{\frac{\del w}{\del \tau}}}
\def\d_Hudt{{\frac{\del w}{\del t}}}
\def\C{\mathbb{C}}
\def\Z{\mathbb{Z}}
\def\T{\mathbb{T}}
\def\Q{\mathbb{Q}}
\def\E{\ifmmode{\mathbb E}\else{$\mathbb E$}\fi} 
\def\N{\ifmmode{\mathbb N}\else{$\mathbb N$}\fi} 
\def\R{\ifmmode{\mathbb R}\else{$\mathbb R$}\fi} 
\def\Q{\ifmmode{\mathbb Q}\else{$\mathbb Q$}\fi} 
\def\C{\ifmmode{\mathbb C}\else{$\mathbb C$}\fi} 
\def\H{\ifmmode{\mathbb H}\else{$\mathbb H$}\fi} 
\def\Z{\ifmmode{\mathbb Z}\else{$\mathbb Z$}\fi} 
\def\P{\ifmmode{\mathbb P}\else{$\mathbb P$}\fi} 
\def\SS{\ifmmode{\mathbb S}\else{$\mathbb S$}\fi} 
\def\DD{\ifmmode{\mathbb D}\else{$\mathbb D$}\fi} 
\def\R{{\mathbb R}}
\def\E{{\mathbb E}}
\def\Z{{\mathbb Z}}
\def\C{{\mathbb C}}
\def\R{{\mathbb R}}
\def\N{{\mathbb N}}
\def\delbar{{\overline \partial}}
\def\d{\delta}
\def\m{\mu}
\def\p{\psi}
\def\CA{{\mathcal A}}
\def\CB{{\mathcal B}}
\def\CF{{\mathcal F}}
\def\CH{{\mathcal H}}
\def\CJ{{\mathcal J}}
\def\CL{{\mathcal L}}
\def\CP{{\mathcal P}}
\def\CP{{\mathcal P}}
\def\darr#1{\raise1.5ex\hbox{$\leftrightarrow$}
\mkern-16.5mu #1}
\def\roughly#1{\raise.3ex\hbox{$#1$\kern-.75em
\lower1ex\hbox{$\sim$}}}
\def\opname#1{\mathop{\kern0pt{\rm #1}}\nolimits}
\def\Dev{\operatorname{Dev}}
\def\Image{\operatorname{Image}}
\def\Int{\operatorname{Int}}
\def\ric{\operatorname{Ric}}
\begin{document}
\quad \vskip1.375truein

\title[Perturbed Legendrian contact instantons]{Geometric analysis of
perturbed contact instantons with Legendrian boundary conditions}

\author{Yong-Geun Oh}
\address{Center for Geometry and Physics, Institute for Basic Science (IBS),
77 Cheongam-ro, Nam-gu, Pohang-si, Gyeongsangbuk-do, Korea 790-784
\& POSTECH, Gyeongsangbuk-do, Korea}
\email{yongoh1@postech.ac.kr}

\begin{abstract} In the present article, we provide analytic foundation of the
following nonlinear elliptic system, called the \emph{Hamiltonian-perturbed contact
instanton equation}, 
$$
(du - X_H \otimes \gamma)^{\pi(0,1)} = 0, \quad
 d(e^{g_{H, u}}u^*(\lambda + H \otimes \gamma)\circ j) = 0
$$
associated to a contact triad $(M,\lambda,J)$ and
contact Hamiltonian $H$ and its boundary value problem under the Legendrian boundary condition. 
(1) We identify the correct choice of the action functional
for perturbed contact Hamiltonian trajectories which provides a gradient structure
for the system and derive its first variation formula.
(2) We identify the correct choice of the energy for the bubbling analysis for the finite energy solutions 
for the equation.
(3) We  develop elliptic regularity theory for the solution, called \emph{perturbed contact
instantons}: We first establish a global $W^{2,2}$ bound by the Hamiltonian calculus
and the harmonic theory of the vector-valued one form  $d_Hu : = du - X_H(u)\otimes \gamma$
and its relevant Weitzenb\"ock formulae utilizing the contact triad connection
of the contact triad $(M,\lambda, J)$. Then we establish $C^{k,\alpha}$-estimates
by an alternating boot-strap argument between the $\pi$-component of $d_Hu$ and the
Reeb-component of $d_Hu$. Along the way, we also establish the boundary regularity theorem of
$W^{1,4}$-weak solutions of perturbed contact
instanton equation under the weak Legendrian boundary condition.
(4) Based on this regularity theory, we prove an
asymptotic $C^\infty$ convergence result at a puncture under the hypothesis of finite energy.
 \end{abstract}

\keywords{perturbed contact instantons,
perturbed contact action functional,
contact triad connection,
a priori $W^{2,2}$ and $C^{k,\alpha}$ estimates,
Weitzenb\"ock formula, asymptotic subsequence convergence}

\thanks{This work is supported by IBS project \#IBS-R003-D1.}

\subjclass[2010]{Primary 53D42; Secondary 35B45}

\date{}

\maketitle

\tableofcontents

\section{Introduction}

Let $(M, \xi)$ be a contact manifold of dimension $2n+1$ with $\xi$ its contact distribution.
Assuming $\xi$ is cooriented, let $\lambda$ be a contact form that is compatible with the coorientation of $\xi$.
Denote by $R_\lambda$ its Reeb vector field.
All contact forms compatible with the coorientation are of the form $f \lambda$ with $f\in C^\infty(M, \R^+)$.

The following system of PDEs, which we call the \emph{contact instanton} equation,
 \be\label{eq:contacton}
\delbar_J^\pi u=0, \quad d(u^*\lambda\circ j)=0
\ee
was introduced by Hofer \cite{hofer:gafa}.  Abbas-Cieliebak-Hofer
\cite{abbas-cieliebak-hofer} and by Abbas \cite{abbas} respectively utilized the equation in
their attempts to attack Weinstein's conjecture in three dimensions. They
lifted the equation to a perturbed pseudoholomorphic curve equation on
the symplectization in their applications.

In \cite{oh-wang:CR-map1, oh-wang:CR-map2}, Wang and the present author systematically
studied the system \eqref{eq:contacton} as it is on the given contact manifold, and established
its regularity theory employing well-known strategy of
utilizing Bochner-Weitzenb\"ock type formulae in the harmonic theory. They also establish  an asymptotic
convergence result on punctured Riemann surfaces in the closed string theory context
on a contact triad $(M, \lambda, J)$ whose domain is a punctured Riemann surface $(\dot\Sigma, j)$.
In a recent work \cite{oh:contacton-Legendrian-bdy}, the present author introduced a relevant
boundary value problem for the equation \eqref{eq:contacton} with Legendrian boundaries on
boundary punctured Riemann surfaces in the open string context. In particular, we show that the
boundary value problem is (nonlinear) elliptic by establishing the boundary counterparts of
the local coercive a priori estimates derived in \cite{oh-wang:CR-map1}. Moreover in the strip-like
coordinates around each puncture of the Riemann surface, it is proved that any finite $\pi$-energy
solution with $C^1$-bound has Reeb chords as its asymptotic limits at the punctures. Most importantly,
the Legendrian \emph{barrier} eliminates the phenomenon of the `appearance of spiraling instantons around the Reeb core'
and set the stage for a compactification and a Fredholm theory of the moduli space of
finite energy contact instantons constructed in \cite{oh-wang:CR-map1,oh:contacton}. Based on these, we
expected that such a compactified moduli space can be applied to the problem in contact topology.
Indeed in \cite{oh:entanglement1}, we have employed this analytical machinery
to prove a conjecture of Sandon \cite{sandon-translated} and Shelukhin \cite{shelukhin:contactomorphism} on an Arnold
conjecture-type question for the existence of translated points of contactomorphisms on
compact contact manifolds. In another front,  we also introduced the perturbed
contact  instanton equation deformed
by contact vector fields \cite{oh:contacton-Legendrian-bdy}.

In the present paper, we continue to take the spirit of similar harmonic theory
and establish all the counterparts of those
established in \cite{oh-wang:CR-map1,oh:contacton-Legendrian-bdy} for the Hamiltonian-perturbed
contact instanton equations using the covariant tensor calculus.
This time the task requires a correct choice of the density function and some ingenuity in
carrying out tensor calculations.
Furthermore the relevant tensor calculations and estimates become much harder than
the unperturbed case of \cite{oh-wang:CR-map1,oh:contacton-Legendrian-bdy}
due to the presence of the perturbation term of contact Hamiltonian vector field.
We would like to compare the present study of perturbed
contact instanton equation relative to the unperturbed ones in \cite{oh:contacton},
\cite{oh:contacton-Legendrian-bdy} in contact geometry,
 with Floer's breaking of conformal symmetry
of Gromov's pseudoholomorphic curves by Hamiltonian vector fields in
symplectic geometry.  In this regard, we anticipate that
the Hamiltonian perturbed contact instantons will play similar role in the study of
contact topology and contact Hamiltonian dynamics as Floer's Hamiltonian Floer homology did to
symplectic topology and Hamiltonian dynamics in symplectic geometry.
(See \cite{oh:entanglement1,oh:entanglement2} and \cite{oh-yso:1jet} for such applications.)
The geometry and analysis developed in the present paper, combined with those already given in \cite{oh:entanglement1,oh:contacton-gluing},
will be important in this regard.

\subsection{Hamiltonian perturbed contact instanton equation}

Fix a domain $\dot\Sigma$ which is a boundary-punctured surface as described above and
a  one-form $\gamma$ on $\dot \Sigma$ in general. (See Condition \ref{cond:gamma} for
the condition that we require $\gamma$ to satisfy.)
For a given  domain-dependent Hamiltonian $H: \dot \Sigma \times M \to \R$ with $H = H(z,y)$,
we consider the map
$$
u \mapsto du - X_H \otimes \gamma = d_H u
$$
as a section of the (infinite dimensional) vector bundle
$$
\CH \to \CF
$$
where $\CF$ is the set of smooth maps $u: \cdot \Sigma \to M$ and the fiber of the bundle at
$u$ is given by the set of $u^*TM$-valued one-form on $\dot \Sigma$
$$
\CH_u = \Omega^1(u^*TM) = \{\eta : \dot \Sigma \to u^*TM \mid \pi \circ \xi = id \}.
$$
We decompose
\be\label{eq:dHu-decompose}
d_H u = d_H^\pi u + \lambda(d_Hw)\, R_\lambda
\ee
where we write
\be\label{eq:lambdadHw}
\lambda(d_H u) = u^*\lambda + H \, \gamma.
\ee
More explicitly, we have
$$
du - X_H \otimes \gamma  =  \Pi(du - X_H \otimes \gamma) + \lambda(du - X_H \otimes \gamma))\, R_\lambda.
$$
For the convenience of exposition and for the simplicity of notations,
we  develop some notations here by systematically introducing the following notations.

\begin{notation}\label{nota:dHpiu-lambdaH}
Let $\gamma$ be a one-form on $\dot \Sigma$, and consider the
$u^*TM$-valued one-form given by
$$
du - X_H \otimes \gamma.
$$
By slight abuse of notations, we use the following notations:
\begin{enumerate}
\item
We denote
\be\label{eq:dHpiu}
d_H^\pi u = \Pi(du - X_H \otimes \gamma) =:(du - X_H \otimes \gamma)^\pi.
\ee
\item We also write
$\lambda_H: = \lambda + H\, \gamma$ and
\be\label{eq:u*lambdaH}
u^*\lambda_H: = u^*(\lambda + H\, \gamma) = u^*\lambda + u^* H\, \gamma.
\ee
\end{enumerate}
\end{notation}

We then decompose $d_H^\pi u: T\dot \Sigma \to \xi$ into its complex linear and
anti-complex linear parts
$$
d_H^\pi u = \del_H^\pi u + \delbar_H^\pi u
$$
where we have
\beastar
\del_H^\pi u & : = & (du - X_H \otimes \gamma)^{\pi(1,0)} =  \frac{d_H^\pi u- J \circ d_H^\pi u \circ j}{2}\\
\delbar_H^\pi u & : = &  (du - X_H \otimes \gamma)^{\pi(0,1)} = \frac{d_H^\pi u+ J \circ d_H^\pi u \circ j}{2}.
\eeastar

Next, for each given coorientation preserving contact diffeomorphism $\psi$ of $(M,\xi)$, we have
$\psi^*\lambda = e^g \lambda$  for some function $g$ which we denote by $g = g_\psi$ and call the \emph{conformal
exponent} of $\psi$.
We will need to consider a contact isotopy $\psi_H =\{\psi_H^t\}$ generated by a contact Hamiltonian
$H = H(t,x)$.

We will also use the following notations consistently throughout the present paper.

\begin{notation}
Let a time-dependent Hamiltonian $H = H(t,x)$ be given.
\begin{enumerate}
\item We denote by $\psi_H^t$ the flow of the contact Hamiltonian vector field $X_H$, and
denote by $\phi_H^t$ the isotopy
\be\label{eq:phiHt}
\phi_H^t: = \psi_H^t (\psi_H^1)^{-1}
\ee
for the simplicity of notation.
\item Let $u = u(\tau,t)$ be a function $u: \R \times [0,1] \to M$ be given. Then we define the function
$g_{H,u}$ on $\R \times [0,1]$ by
\be\label{eq:gHu}
g_{H,u}(\tau,t) :=  g_{(\phi_H^t)^{-1}}(t,u(\tau,t))= - g_{\phi_H^t}\left((\phi_H^t)^{-1}(u(\tau,t))\right).
\ee
\item By a slight abuse of notation, we write
$$
u^*(e^{g_{(\phi_H^t)^{-1}}}\lambda): = e^{g_{H,u}} u^*\lambda, \quad u^*(e^{g_{(\phi_H^t)^{-1}}}\lambda_H)
: = e^{g_{H,u}}(u^*\lambda + u^*H\, dt).
$$
\end{enumerate}
\end{notation}

Now we are ready to write down the \emph{perturbed contact instanton equation}, which is
\be\label{eq:perturbed-contacton-bdy-intro}
\begin{cases}
(du - X_H \otimes dt)^{\pi(0,1)} = 0, \\
 d(e^{g_{H, u}}u^*(\lambda + H \, dt)\circ j) = 0
 \end{cases}
\ee
when the domain surface  $\dot \Sigma$ is $ \R \times [0,1]$
together with the boundary condition
\be\label{eq:Legendrian-bdy-condition}
u(\tau,0) \in R_0, \quad u(\tau,1) \in R_1.
\ee
More generally when we are given a Legendrian tuple $(R_1,\cdots, R_k)$ is given, we consider
the equation of the type
\be\label{eq:contacton-Legendrian-bdy}
\begin{cases}
(du - X_H \otimes \gamma)^{\pi(0,1)} = 0, \quad
 d(e^{g_{H, u}}u^*(\lambda + H \, \gamma)\circ j) = 0, \\
u(\overline{z_iz_{i+1}}) \subset R_i, \quad i = 1,\cdots, k
\end{cases}
\ee
on a general punctured Riemann surfaces $\dot \Sigma$ with $k$ boundary
punctures equipped with a one-form $\gamma$.

We mention that when $H = 0$, the equation is reduced to the contact instanton equation
\eqref{eq:contacton} the analysis of which has been
studied in \cite{oh-wang:connection,oh-wang:CR-map1,oh:contacton,oh:contacton-Legendrian-bdy}.

\subsection{Perturbed action functional and its first variation}

Next we introduce the crucial notions of perturbed action integrals of a path
and the energy relevant to the global study of perturbed contact instantons
\eqref{eq:contacton-Legendrian-bdy} introduced in \cite[Introduction]{oh:contacton-Legendrian-bdy}.
It is not apparent at all to make the correct choice of the functional that provides
the equation \eqref{eq:perturbed-contacton-bdy-intro} with the gradient structure.
Our choice of the current definition partially arises as a hindsight of our earlier
work \cite{oh:entanglement1} on the proof of Sandon-Shelukhin's conjecture.

We start with the definition of perturbed action integrals associated to
contact Hamiltonian $H = H(t,x)$. We recall the standard contact action functional
$$
\CA(\gamma) = \int \gamma^*\lambda
$$
associated to the contact form $\lambda$ (associated to $H = 0$) in contact geometry.

\begin{defn}[Perturbed action functional] Let $H = H(t,x)$ be a contact Hamiltonian and recall
$\phi_H^t = \psi_H^t (\psi_H^1)^{-1}$.
We define a functional $\CA_H: \CL(R_0,R_1)  \to \R$ by
\be\label{eq:action}
\CA_H(\gamma) := \int_\gamma e^{g_{(\phi_H^t)^{-1}}(\gamma(t))} \gamma^*(\lambda + H\, dt)
 \left(=  \int_\gamma e^{g_{(\phi_H^t)^{-1}}(\gamma(t))} \gamma^*\lambda_H \right)
\ee
for any smooth path $\gamma:[0,1] \to M$. When $H = 0$, we write $\CA_0 = \CA$.
\end{defn}
 Then we have the following first variation formula.

\begin{prop}[Proposition \ref{prop:1st-variation}]\label{prop:1st-variation-intro}
For any vector field $\eta$ along $\gamma$, we have
\bea\label{eq:1st-variation}
\delta \CA_H(\gamma)(\eta) & = & \int_0^1 e^{g_{(\phi_H^t)^{-1}}(\gamma(t))}
\left(d\lambda(\dot \gamma - X_H(t, \gamma(t)), \eta) \right)\, dt \nonumber\\
 & {}& + \lambda( \eta(1))
-  e^{-g_{\psi_H^1}((\psi_H^1)^{-1}(\gamma(0))} \lambda(\eta(0))).
\eea
\end{prop}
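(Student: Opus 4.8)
\smallskip
\noindent\emph{Proof plan.}
The plan is to differentiate $\CA_H$ along a variation directly from \eqref{eq:action}, after repackaging the integrand as the pullback of a single one-form. Set $\widehat\gamma(t)=(t,\gamma(t))$ and let $\beta:=e^{g_{(\phi_H^t)^{-1}}}(\lambda+H\,dt)$ be the one-form on $[0,1]\times M$ in which $\lambda$ and $H$ are pulled back from $M$, $dt$ is the coordinate one-form on $[0,1]$, and $e^{g_{(\phi_H^t)^{-1}}}$ denotes the function $(t,y)\mapsto e^{g_{(\phi_H^t)^{-1}}(y)}$, so that $\CA_H(\gamma)=\int_0^1\widehat\gamma^{\,*}\beta$. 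For a one-parameter family $\{\gamma_s\}$ with $\gamma_0=\gamma$ and $\partial_s|_{s=0}\gamma_s=\eta$, put $\Gamma(t,s)=(t,\gamma_s(t))$; reading off the $dt\wedge ds$-component of the tautology $d(\Gamma^{*}\beta)=\Gamma^{*}(d\beta)$ and integrating over $t\in[0,1]$ produces, in one stroke, the boundary terms and the integration by parts:
\[
\delta\CA_H(\gamma)(\eta)=\Bigl[\,e^{g_{(\phi_H^t)^{-1}}}\lambda(\eta)\,\Bigr]_{t=0}^{t=1}-\int_0^1 d\beta\bigl(\partial_t+\dot\gamma,\ \eta\bigr)\,dt,
\]
where $\partial_t+\dot\gamma$ and $\eta$ are the $\Gamma$-images of $\partial_t,\partial_s$ at $s=0$.

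Next I would compute the interior term by expanding $d\beta$: splitting the exterior derivative on $[0,1]\times M$ into its $M$-part and its $dt\wedge\partial_t$-part, the density contributes terms carrying $\partial_t g_{(\phi_H^t)^{-1}}$ and $dg_{(\phi_H^t)^{-1}}$, and $H$ contributes $dH$-terms. These are reassembled using two structural inputs: \textbf{(i)} the contact-Hamiltonian calculus on $(M,\lambda)$, namely $\lambda(X_H)=-H$ (consistently with \eqref{eq:lambdadHw}) and $\mathcal L_{X_H}\lambda=-(R_\lambda\cdot H)\lambda$, equivalently $\iota_{X_H}d\lambda=dH-(R_\lambda\cdot H)\lambda$; and \textbf{(ii)} the first-order evolution equation for the density, obtained by differentiating $\bigl((\phi_H^t)^{-1}\bigr)^{*}\lambda=e^{g_{(\phi_H^t)^{-1}}}\lambda$ in $t$ and using $\partial_t\phi_H^t=X_H\circ\phi_H^t$, of the shape $\partial_t g_{(\phi_H^t)^{-1}}+dg_{(\phi_H^t)^{-1}}(X_H)=R_\lambda\cdot H$. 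Substituting (ii) to remove $\partial_t g_{(\phi_H^t)^{-1}}$ and then (i) to absorb the $R_\lambda\cdot H$- and $dH$-contributions, the terms not of the form $e^{g_{(\phi_H^t)^{-1}}}\,d\lambda(\dot\gamma-X_H,\eta)$ cancel, yielding the interior integrand of \eqref{eq:1st-variation}. For the boundary weights, at $t=1$ one has $\phi_H^1=\psi_H^1(\psi_H^1)^{-1}=\mathrm{id}$, so $g_{(\phi_H^1)^{-1}}\equiv0$ and the contribution is $\lambda(\eta(1))$; at $t=0$, $\phi_H^0=(\psi_H^1)^{-1}$, and the composition identity $g_{\psi^{-1}}=-g_\psi\circ\psi^{-1}$ puts the contribution into the form appearing in \eqref{eq:1st-variation}.

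I expect the interior cancellation to be the real obstacle: written out, $d\beta(\partial_t+\dot\gamma,\eta)$ carries, besides $e^{g_{(\phi_H^t)^{-1}}}d\lambda(\dot\gamma,\eta)$, a $t$-derivative of the density, two terms of the type $dg_{(\phi_H^t)^{-1}}(\cdot)\,\lambda(\cdot)$, and the terms coming from $d\bigl(e^{g_{(\phi_H^t)^{-1}}}H\bigr)$; there is no reason visible in advance why these should collapse, and in fact making them collapse is exactly what pins down the choice of the density in \eqref{eq:action} (a choice traced back to \cite{oh:entanglement1}). The computation also demands disciplined bookkeeping of the orientation, wedge, and $\iota_{X_H}d\lambda$ sign conventions. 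A lighter, essentially equivalent variant avoids writing $d\beta$ out: work with the time-dependent one-form $\mu_t:=e^{g_{(\phi_H^t)^{-1}}}\lambda=\bigl((\phi_H^t)^{-1}\bigr)^{*}\lambda$ on $M$; from $(\phi_H^t)^{*}\mu_t=\lambda$ one reads $\partial_t\mu_t=-\mathcal L_{X_H}\mu_t$, and since $\iota_{X_H}\mu_t=e^{g_{(\phi_H^t)^{-1}}}\lambda(X_H)=-e^{g_{(\phi_H^t)^{-1}}}H$, Cartan's identity rewrites $\partial_t\mu_t=d\bigl(e^{g_{(\phi_H^t)^{-1}}}H\bigr)-\iota_{X_H}d\mu_t$; in the first variation of $\CA_H(\gamma)=\int_0^1\bigl(\mu_t(\dot\gamma)+e^{g_{(\phi_H^t)^{-1}}}H\bigr)\,dt$ the variation of the $e^{g_{(\phi_H^t)^{-1}}}H$-term cancels the $d\bigl(e^{g_{(\phi_H^t)^{-1}}}H\bigr)$-contribution coming from $\partial_t\mu_t$, leaving $\bigl[\mu_t(\eta)\bigr]_{t=0}^{t=1}-\int_0^1 d\mu_t(\dot\gamma-X_H,\eta)\,dt$; expanding $d\mu_t$ and using the evolution (ii), together with the endpoint evaluation above, then recovers \eqref{eq:1st-variation}.
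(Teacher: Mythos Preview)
Your proposal is correct, but the paper's own argument is considerably shorter because it routes through the gauge transformation rather than differentiating the weighted integrand directly. Specifically, the paper first invokes Lemma~\ref{lem:action-identity}, namely $\CA_H(\gamma)=\CA(\overline\gamma)$ with $\overline\gamma(t)=(\phi_H^t)^{-1}(\gamma(t))$, and then computes the first variation of the \emph{unperturbed} functional $\CA(\overline\gamma)=\int\overline\gamma^*\lambda$ by Cartan's formula: $\frac{d}{ds}\big|_{s=0}\int\overline\gamma_s^*\lambda=\int_{\overline\gamma}\overline\eta\rfloor d\lambda+[\lambda(\overline\eta)]_0^1$. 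Since $\overline\eta=d(\phi_H^t)^{-1}\eta$ and $\dot{\overline\gamma}=d(\phi_H^t)^{-1}(\dot\gamma-X_H)$, pulling back through the contactomorphism and reading off the conformal factor gives \eqref{eq:1st-variation} directly, with the endpoint weights falling out from $\phi_H^1=\mathrm{id}$ and $\phi_H^0=(\psi_H^1)^{-1}$ together with $g_{\psi^{-1}}=-g_\psi\circ\psi^{-1}$.

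By contrast, your main approach keeps everything on the $\gamma$ side and must therefore manufacture by hand the cancellations that the gauge transformation produces automatically: the evolution identity (ii) for $g_{(\phi_H^t)^{-1}}$ and the contact-Hamiltonian relations (i) are precisely the infinitesimal shadows of the single geometric fact $((\phi_H^t)^{-1})^*\lambda=e^{g_{(\phi_H^t)^{-1}}}\lambda$ that the paper uses once. Your ``lighter variant'' with $\mu_t=((\phi_H^t)^{-1})^*\lambda$ is in fact the paper's argument viewed from the Eulerian rather than Lagrangian side: instead of moving the curve to $\overline\gamma$ and keeping $\lambda$ fixed, you keep $\gamma$ fixed and move the form to $\mu_t$; the identity $\partial_t\mu_t=-\CL_{X_H}\mu_t$ is the differentiated version of $(\phi_H^t)^*\mu_t=\lambda$. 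What the paper's route buys is that no bookkeeping of $\partial_t g$, $dg$, $dH$ terms is needed --- they never appear --- while your direct route has the advantage of being self-contained and not relying on Lemma~\ref{lem:action-identity} as a separate input.
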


When we are given a pair $(R_0,R_1)$ of Legendrian submanifolds,
we can consider the path space
$$
\CL(R_0,R_1) = \CL(M;R_0,R_1): = \{ \gamma:[0,1] \to M
\mid \gamma(0) \in R_0, \, \gamma(1) \in R_1\}
$$
and the restriction of $\CA$ thereto.
An immediate corollary of this proposition is that the critical point equation of the action functional
\emph{under the Legendrian boundary condition} is precisely
$$
(\dot \gamma(t) - X_H(t,\gamma(t)))^\pi = 0,
$$
i.e., $\dot \gamma(t) - X_H(t,\gamma(t)) = a(t) R_\lambda(\gamma(t))$
for some function $a = a(t)$. (See Proposition \ref{prop:lifting}
for some relevant discussion on the function $a$.)

It turns out that the correct definition of the
$\pi$-energy is the following. We will give the correct definition of \emph{horizontal energy}
(or the $\lambda$-energy) elsewhere. (See \cite{oh:contacton} for the closed string case
and \cite{oh:entanglement1} for the open string case.)

\begin{defn} Let $\dot\Sigma$ be a boundary-punctured Riemann surface of genus zero with punctures
equipped with a K\"ahler metric $h$ with \emph{strip-like ends} outside a compact subset
$K \subset \dot\Sigma$. Let $u: \dot \Sigma \to M$ be any smooth map with Legendrian boundary
condition. We define the total $\pi$-harmonic energy $E^\pi(u)$
by
\be\label{eq:energy-intro}
E_H^\pi(u) =  \frac{1}{2} \int_{\dot \Sigma} e^{g_{H,u}}  |d^\pi u - X_H^\pi \otimes \gamma|_J^2
\ee
where the norm is taken in terms of the given metric $h$ on $\dot \Sigma$
and the triad metric associated to the contact triad $(M,\lambda,J)$.
\end{defn}
With this definition, we show the following identity
\be\label{eq:energy-action}
E_{H,J} ^\pi(u) = \CA_H(u(+\infty) - \CA_H(u(-\infty))
\ee
in Theorem \ref{thm:energy-action} for any finite energy perturbed contact instanton $u$.
Here $u$ satisfies
a suitable asymptotic conditions on the strip-like region of $\dot \Sigma$ spelled out
as follows: the associated energy density function thereon
is nothing but the $\pi$-energy $E^\pi(\overline u)$ for the map
\be\label{eq:ubar-intro}
\overline u(\tau,t) = (\psi_H^t(\psi_H^1)^{-1}))^{-1}(u(\tau,t)).
\ee
(See \cite[Section 6]{oh:entanglement1} for the explanation on the perspective of such a choice.)
\begin{rem}
The transformation \eqref{eq:ubar-intro} is associated to the boundary condition
$$
\overline u(\tau,0) \in \psi(R_0), \quad \overline u(\tau,1) \in R_1
$$
when $u$ satisfies $u(\tau,0) \in R_0$, $u(\tau,1) \in R_1$.
Depending on the circumstances, one may also use the transformation
$$
\widetilde u(\tau,t): = \psi_H^t(u(\tau,t))
$$
which satisfies
$
\widetilde u(\tau,0) \in R_0, \, \widetilde u(\tau,1) \in \psi_H^1(R_1).
$
Under this gauge transformation, we should replace the definition of the function $g_{H,u}$
by the simpler one
$$
g'_{H,u}(t,x) = g_{\psi_{H^t}}(x).
$$
To be consistent with the exposition of \cite{oh:entanglement1}, we do not use this one but
the above \eqref{eq:ubar-intro} in the present paper.
(See  \cite{oh:cag} for the explanation on the general perspective on these transformations
in the symplectic geometry.)
\end{rem}

The upshot of the definition \eqref{eq:energy-intro} of the $\pi$-energy is the presence of the
conformal factor $e^{g_{H,u}}$. This is not totally surprising in that
the conformal exponent is known to play an important role
in the study of contact Hamiltonian dynamics because general contactomorphism
 does not preserve the contact form $\lambda$, unless it
is a \emph{strict contactomorphism}. (See \cite{mueller-spaeth-I}, \cite{usher:conformal-factor} for such
a study.)

\subsection{Harmonic theory of perturbed contact instantons}

Our main strategy of proving a priori estimates is to use the harmonic theory
 based on the strategy of utilizing the Weitzenb\"ock formula. Such strategy
is well established prevalently in the geometric analysis. To maximize the advantage of
using the tensor calculus in the study of contact instantons, we will always use the
\emph{contact triad connection} that Wang and the present author had introduced in \cite{oh-wang:connection}
associated to each contact triad $(M,\lambda,J)$. We also freely use the covariant tensor
calculus of vector-valued differential forms which is the standard practice in geometric
analysis but not so for the expected readers of the present article.
 For readers' convenience, we summarize the defining
properties of the connection in Appendix \ref{sec:connection}, and the covariant calculus of
vector-valued forms in Appendix \ref{sec:vectorvalued-forms} respectively.

\begin{defn}[Contact Hermitian connection \cite{oh-wang:connection}]
Let $\nabla$ be the contact triad connection on $TM$. We denote by $\nabla^\pi$ the associated Hermitian
connection on the Hermitian vector bundle $(\xi, d\lambda|_\xi,J)$ with $\xi \subset TM$, which is defined by
$$
\nabla^\pi_X : = \Pi \nabla_X|_{\xi}: TM \otimes \xi \to \xi,
$$
 i.e.,
$$
\nabla_X^\pi Y = \Pi(\nabla_X Y)
$$
for any pair of $(X,Y) \in \Gamma(TM) \times \Gamma(\xi)$.
\end{defn}

The following is the perturbed analog to the contact Cauchy-Riemann map introduced in \cite[Definition 1.1]{oh-wang:CR-map1}.

\begin{defn}[Perturbed contact Cauchy-Riemann map] We call a map $u: \dot \Sigma \to M$
a \emph{$H$-perturbed contact Cauchy-Riemann map}, abbreviated as \emph{$H$-perturbed CR map}, if
$$
du -X_H(u)\otimes \gamma \in \Hom(T\dot \Sigma, u^*TM)
$$
is a $(j,J)$-complex linear map, i.e., if $u$ satisfies
$$
(du-X_H(u)\otimes \gamma)^{\pi (0,1)}=0.
$$
\end{defn}

We define the \emph{off-shell}
perturbed energy $e_H(u)$ and the $\pi$-energy density functions $e_H^\pi(u)$ 
of any smooth function by
\beastar
e_H(u) &: = & |d_H u|^2 = |du - X_H \otimes \gamma|^2\\
e_H^\pi(u)& : = & |d_H^\pi u|^2 = |(du - X_H \otimes \gamma)^\pi|^2.
\eeastar
By definition, we have the identity $e_H(u) = e_H^\pi(u) + |u^*\lambda|^2$.
The following a priori \emph{on-shell} identity for the perturbed  $\pi$-harmonic energy density
function is the basis for our a priori estimates. This is perturbed counterpart of
\cite[Theorem 1.2]{oh-wang:CR-map1}.

\begin{thm}[Fundamental equation; Theorem \ref{thm:fundamental}]\label{thm:fundamental-intro}
Suppose that $\gamma$ is a one-form on $\dot \Sigma$ and let $u:(\dot \Sigma, j) \to (M,J)$
be a $H$-perturbed CR map. Then we have
\bea\label{eq:dd_Hu2}
d^{\nabla^\pi}(d_H^\pi u) & = & u^*\lambda\wedge(\frac{1}{2}(\CL_{R_\lambda} J)J d_H^\pi u) -
2 T^\pi(X_H^\pi(u), \gamma \wedge d_H^\pi u) \nonumber \\
&{}& +  u^*\lambda\wedge(\frac{1}{2}(\CL_{R_\lambda} J)JX_H^\pi(u)\, \gamma)
- d^{\nabla^\pi} (X_H^\pi \, \gamma).
\eea
\end{thm}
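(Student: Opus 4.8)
The plan is to compute $d^{\nabla^\pi}(d_H^\pi u)$ directly by expressing $d_H^\pi u$ in terms of $d_H u = du - X_H(u)\otimes\gamma$ and the splitting \eqref{eq:dHu-decompose}, and then applying the exterior covariant derivative with respect to the contact Hermitian connection $\nabla^\pi$. First I would write $d_H^\pi u = \Pi(d_H u) = d_H u - \lambda(d_H u)\,R_\lambda$, so that $d^{\nabla^\pi}(d_H^\pi u) = \Pi\bigl(d^\nabla(d_H u)\bigr) - d^{\nabla^\pi}\bigl(\lambda(d_H u)\,R_\lambda\bigr)$, using that $\nabla^\pi = \Pi\circ\nabla|_\xi$ on $\xi$-valued forms. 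The first term splits further as $\Pi\bigl(d^\nabla(du)\bigr) - \Pi\bigl(d^\nabla(X_H(u)\otimes\gamma)\bigr)$. For the piece $d^\nabla(du)$ one uses the standard identity $d^\nabla(du) = -u^*T^\nabla$ (the pullback of the torsion of the contact triad connection, since $\nabla$ has no symmetric part contributing to $d^\nabla du$ beyond torsion), together with the defining torsion properties of the contact triad connection recalled in Appendix \ref{sec:connection}; in particular the torsion is controlled by $d\lambda$, $R_\lambda$, and the tensor $\CL_{R_\lambda}J$. For the piece $d^\nabla(X_H(u)\otimes\gamma) = (\nabla(X_H\circ u))\wedge\gamma + X_H(u)\otimes d\gamma$, the $d\gamma$ contribution should either be absorbed into the notation or handled by Condition \ref{cond:gamma} on $\gamma$; the remaining $\Pi$-projection of $\nabla(X_H\circ u)\wedge\gamma$ is what produces the term $-d^{\nabla^\pi}(X_H^\pi\,\gamma)$ together with a Reeb-direction correction.

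The key bookkeeping step is to track how the Reeb component $\lambda(d_H u)\,R_\lambda$ interacts with everything. Differentiating $\lambda(d_Hu)\,R_\lambda$ and projecting, the term $\Pi\bigl((d\,\lambda(d_Hu))\otimes R_\lambda\bigr)$ vanishes because $R_\lambda$ is killed by $\Pi$, while $\Pi\bigl(\lambda(d_Hu)\wedge\nabla R_\lambda\bigr)$ must be evaluated: here one invokes the contact triad connection property that $\nabla R_\lambda$ is expressible via $\frac12(\CL_{R_\lambda}J)J$ on $\xi$ (this is one of the characterizing identities of the triad connection). Combining this with the torsion computation, the cross terms reorganize into exactly the four terms on the right-hand side of \eqref{eq:dd_Hu2}: the $u^*\lambda\wedge(\tfrac12(\CL_{R_\lambda}J)J\,d_H^\pi u)$ term comes from $\nabla R_\lambda$ paired against $d_H^\pi u$; the $u^*\lambda\wedge(\tfrac12(\CL_{R_\lambda}J)JX_H^\pi(u)\,\gamma)$ term comes from the same mechanism applied to the $X_H$-perturbation carried along $\gamma$; the torsion term $-2T^\pi(X_H^\pi(u),\gamma\wedge d_H^\pi u)$ is the genuinely new perturbed contribution, arising because $d_Hu$ is not closed in the way $du$ is — its "failure" involves $\nabla X_H$ against $\gamma$ fed into the torsion; and $-d^{\nabla^\pi}(X_H^\pi\gamma)$ is the leftover exact-type piece. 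Throughout, the hypothesis that $u$ is an $H$-perturbed CR map, i.e. $(d_Hu)^{\pi(0,1)}=0$, is used to replace $d_H^\pi u$ by its $(1,0)$-part where needed and to eliminate the $J$-antilinear contributions that would otherwise appear.

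The main obstacle I expect is the precise tensorial manipulation of the torsion term: unlike the unperturbed case of \cite[Theorem 1.2]{oh-wang:CR-map1} where $d^\nabla(du)$ only sees the torsion evaluated on $du\wedge du$ (which has a clean antisymmetric structure), here one has mixed expressions $T(X_H(u), d_Hu)$ in which $X_H(u)$ need not lie in $\xi$ — so one must carefully decompose $X_H = X_H^\pi + \lambda(X_H)R_\lambda$ and use the specific vanishing properties of the contact triad connection's torsion on the Reeb direction to see that only $T^\pi(X_H^\pi(u),\cdot)$ survives after the $\Pi$-projection, with the factor of $2$ coming from the $(1,0)$-$(0,1)$ symmetrization combined with the CR equation. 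A secondary subtlety is ensuring all $d\gamma$ terms are consistently suppressed or cancel, which should follow from Condition \ref{cond:gamma}; and one must verify that $\CL_{R_\lambda}J$ anticommutes with $J$ so that $(\CL_{R_\lambda}J)J$ maps $\xi$ to $\xi$ and the right-hand side is genuinely $\xi$-valued, matching the left-hand side.
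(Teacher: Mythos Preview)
Your overall strategy of computing $d^{\nabla^\pi}(d_H^\pi u)$ from the splitting $d_H^\pi u = \Pi(d_H u)$ and the torsion identity $d^\nabla(du) = u^*T$ is workable, but it amounts to rederiving Lemma~\ref{lem:fundamental} (i.e., \cite[Lemma~4.1]{oh-wang:connection}) from scratch. The paper instead cites that lemma to obtain
\[
d^{\nabla^\pi}(d_H^\pi u) = T^\pi(\Pi du, \Pi du) + u^*\lambda \wedge \tfrac12(\CL_{R_\lambda}J)J\,d^\pi u - d^{\nabla^\pi}(X_H^\pi\otimes\gamma)
\]
directly (Corollary~\ref{cor:FE-autono}), and then focuses on reorganizing the torsion term. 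Your route is longer but not wrong in principle.

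There is, however, a genuine gap in your account of where the term $-2T^\pi(X_H^\pi(u),\gamma\wedge d_H^\pi u)$ comes from. You attribute it to ``$d_Hu$ not being closed in the way $du$ is'' and to ``$\nabla X_H$ against $\gamma$ fed into the torsion,'' but that is not the mechanism. In your own decomposition the torsion only enters through $\Pi(d^\nabla(du))$, which produces $T^\pi(d^\pi u, d^\pi u)$; the piece $d^\nabla(X_H\otimes\gamma) = (\nabla X_H)\wedge\gamma + X_H\,d\gamma$ contains no torsion at all and contributes only to $-d^{\nabla^\pi}(X_H^\pi\,\gamma)$. The actual step, which the paper carries out explicitly, is this: the CR hypothesis makes $d_H^\pi u$ of type $(1,0)$, and Axiom~(5) of the triad connection ($T^\pi(JY,Y)=0$) forces $T^\pi$ to vanish on any pair of $(1,0)$ sections, so $T^\pi(d_H^\pi u, d_H^\pi u) = 0$. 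Expanding $d^\pi u = d_H^\pi u + X_H^\pi\otimes\gamma$ bilinearly in $T^\pi(d^\pi u, d^\pi u)$ then leaves only the two cross terms, which combine (using antisymmetry and $\gamma\wedge\gamma = 0$) to $2T^\pi(X_H^\pi,\gamma\wedge d_H^\pi u)$. The factor of $2$ is from the two cross terms in this expansion, not from any $(1,0)$--$(0,1)$ symmetrization.

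Two smaller points. First, your worry about $d\gamma$ is unnecessary: the identity \eqref{eq:dd_Hu2} holds for arbitrary $\gamma$, with the $d\gamma$ contribution absorbed into $d^{\nabla^\pi}(X_H^\pi\,\gamma)$; Condition~\ref{cond:gamma} is not invoked here. Second, your concern about decomposing $X_H = X_H^\pi + \lambda(X_H)R_\lambda$ inside the torsion is moot once you work with $T^\pi(\Pi du,\Pi du)$, since $\Pi du = d^\pi u$ is already $\xi$-valued and $X_H^\pi$ appears naturally from $d^\pi u = d_H^\pi u + X_H^\pi\otimes\gamma$.
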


The upshot of  this identity is that the derivative of the one-from  $d_H^\pi u$ is expressed
in terms of the linear expression thereof with some inhomogeneous terms so that the
equation provides the stage of boot-strap arguments for the regularity study.

Using the definition \eqref{eq:dHpiu}, we  have the decomposition
$$
d_H u = d_H^\pi u + u^*\lambda_H \otimes R_\lambda
$$
and  hence the decomposition of the full covariant differential
$$
\nabla (d_H u) = \nabla (d_H^\pi u) + \nabla (u^*\lambda_H) \, R_\lambda
+u^*\lambda_H \, \nabla R_\lambda.
$$
We will later show the equality $\nabla (d_H^\pi u) = \nabla^\pi (d_H^\pi u)$ which
reduces  the task of estimating the full covariant derivative $\nabla (d_Hu)$ to a simpler
problem of estimating the $\xi$-component $\nabla^\pi (d_H^\pi u)$ and the Reeb
component $\nabla(u^*\lambda_H)$ separately.
(See  the proof of Proposition \ref{prop:nabladHu}.)
A priori, the task of estimating the norm square $|\nabla(d_Hu)|^2$
of the full derivative would be much more challenging because  the two  components have
rather different behavior owing to the fact that they satisfy  different-type of  equations as
presented in \eqref{eq:contacton-Legendrian-bdy}.

Utilizing  the general Weitzenb\"ock formula (e.g.,
in \cite[Appendix C]{freed-uhlenbeck}, \cite[Appendix A]{oh-wang:CR-map1}), we derive the following
fundamental formula for the covariant harmonic Laplacian $\Delta e_H^\pi(u)$: This is the perturbed counterpart of
the formula \cite[Equation (4.11)]{oh-wang:CR-map1}. In the present paper, we
will freely use standard convention for the covariant calculus of vector-bundle valued
forms presented such as in \cite[Section 5 of Chapter 5]{wells-book} or in
\cite[Appendix B]{oh-wang:CR-map1}.

\begin{thm}[Laplacian of the energy density]\label{thm:e-pi-weitzenbeck}
Let $u$ be any $H$-perturbed contact CR map.
Then we have
\bea\label{eq:e-pi-weitzenbeck}
-\frac{1}{2}\Delta e_H^\pi(u)&=&|\nabla^\pi (\del_H^\pi u)|^2+K|\del_H^\pi u|^2+\langle \ric^{\nabla^\pi} (\del_H^\pi u), \del_H^\pi u\rangle\nonumber\\
&{}&- \langle \delta^{\nabla^{\pi}}[(u^*\lambda \wedge (\CL_{X_\lambda}J)J \del_H^\pi u],
\del_H^\pi u\rangle \nonumber\\
&{}& + \langle \delta^{\nabla^\pi}[ u^*\lambda\wedge((\CL_{R_\lambda} J)JX_H^\pi(u)],
\del_H^\pi u\rangle \nonumber\\
&{}&- 4 \langle \delta^{\nabla^\pi} [T^\pi(X_H^\pi(u), \gamma \wedge d_H^\pi u)], d_H^\pi u\rangle
\nonumber\\
&{}& - 2 \langle d^{\nabla^\pi}X_H^\pi (u)\wedge \gamma, \del_H^\pi u \rangle
- 2 \langle X_H^\pi(u) \wedge d\gamma, \del_H^\pi u\rangle.
\eea
\end{thm}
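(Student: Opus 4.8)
\emph{Proof proposal.} The plan is to follow the Bochner--Weitzenb\"ock scheme of \cite[Section~4]{oh-wang:CR-map1}, carrying along the extra inhomogeneous terms produced by $X_H$ and by $d\gamma$, and using the Fundamental Equation \eqref{eq:dd_Hu2} in place of \cite[Theorem~1.2]{oh-wang:CR-map1}.

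Since $u$ is an $H$-perturbed CR map we have $\delbar_H^\pi u = 0$, hence $d_H^\pi u = \del_H^\pi u$ and $e_H^\pi(u) = |\del_H^\pi u|^2$. I view $\alpha := \del_H^\pi u = d_H^\pi u$ as a $u^*\xi$-valued one-form on $\dot\Sigma$ and start from the pointwise Bochner identity for the Hermitian connection $\nabla^\pi$,
\[
-\frac{1}{2}\Delta|\alpha|^2 = |\nabla^\pi\alpha|^2 - \langle \nabla^{\pi*}\nabla^\pi\alpha,\alpha\rangle .
\]
Then I invoke the general Weitzenb\"ock formula for bundle-valued one-forms on the Riemann surface $(\dot\Sigma,h)$ (e.g. \cite[Appendix~C]{freed-uhlenbeck}, \cite[Appendix~A]{oh-wang:CR-map1}) to trade $\nabla^{\pi*}\nabla^\pi$ for the Hodge-type Laplacian $\Delta^{\nabla^\pi} = d^{\nabla^\pi}\delta^{\nabla^\pi} + \delta^{\nabla^\pi}d^{\nabla^\pi}$ up to curvature:
\[
\langle \nabla^{\pi*}\nabla^\pi\alpha,\alpha\rangle = \langle \Delta^{\nabla^\pi}\alpha,\alpha\rangle - K|\alpha|^2 - \langle \ric^{\nabla^\pi}\alpha,\alpha\rangle ,
\]
with $K$ the Gauss curvature of $h$ and $\ric^{\nabla^\pi}$ the curvature operator of $(\xi,d\lambda|_\xi,J)$. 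This accounts for the first line of \eqref{eq:e-pi-weitzenbeck} and reduces everything to computing $-\langle \Delta^{\nabla^\pi}\alpha,\alpha\rangle$.

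The heart of the matter is to express $\langle \Delta^{\nabla^\pi}\alpha,\alpha\rangle$ through $d^{\nabla^\pi}\alpha = d^{\nabla^\pi}(d_H^\pi u)$ alone, so that \eqref{eq:dd_Hu2} can be substituted. For $\delta^{\nabla^\pi}d^{\nabla^\pi}\alpha$ this is immediate: apply $\delta^{\nabla^\pi}$ to the right-hand side of \eqref{eq:dd_Hu2}, using the Leibniz rule $d^{\nabla^\pi}(X_H^\pi(u)\otimes\gamma) = d^{\nabla^\pi}(X_H^\pi(u))\wedge\gamma + X_H^\pi(u)\, d\gamma$ on the last term. For $d^{\nabla^\pi}\delta^{\nabla^\pi}\alpha$ I use the defining property $\delbar_H^\pi u = 0$ again: it forces $\alpha$ to be of pure type with respect to $(j,J)$, concretely $*\alpha = J\alpha$ (in the orientation convention of \cite{oh-wang:CR-map1}); since the contact triad connection induces the Hermitian connection, i.e. $\nabla^\pi J = 0$ and $\nabla^\pi(d\lambda|_\xi) = 0$, this yields $\delta^{\nabla^\pi}\alpha = -J*\big(d^{\nabla^\pi}\alpha\big)$ and hence $d^{\nabla^\pi}\delta^{\nabla^\pi}\alpha = -J\,\nabla^\pi\!\big(*d^{\nabla^\pi}\alpha\big)$, again governed by \eqref{eq:dd_Hu2}. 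In particular no use is made of the second equation of the perturbed contact instanton system, which is why the statement only requires $u$ to be an $H$-perturbed CR map. Substituting \eqref{eq:dd_Hu2} into both pieces, expanding the wedge products by the Leibniz rules for $\delta^{\nabla^\pi}$ and $d^{\nabla^\pi}$, and using the contact-triad-connection identities of Appendix~\ref{sec:connection} (the torsion $T$, the derivatives $\nabla R_\lambda$, and the Reeb-direction term $\CL_{R_\lambda}J$) together with the vector-valued-form calculus of \cite[Appendix~B]{oh-wang:CR-map1}, \cite{wells-book}, one collects precisely the remaining terms of \eqref{eq:e-pi-weitzenbeck}: the $u^*\lambda\wedge(\CL_{R_\lambda}J)J(\cdot)$ terms (with $X_\lambda = R_\lambda$ the Reeb field), the torsion term $T^\pi(X_H^\pi(u),\gamma\wedge d_H^\pi u)$, and the two terms carrying $d^{\nabla^\pi}X_H^\pi(u)\wedge\gamma$ and $X_H^\pi(u)\wedge d\gamma$; the part of the computation that involves no $X_H$ can be taken verbatim from \cite[Section~4]{oh-wang:CR-map1}.

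The main obstacle is the bundle-valued-form and bidegree bookkeeping in this last step. Relative to \cite{oh-wang:CR-map1} there are three genuinely new features that make the calculation harder: the torsion coupling $T^\pi(X_H^\pi(u),\gamma\wedge d_H^\pi u)$, whose codifferential must be re-expressed through the explicit torsion of the contact triad connection; the inhomogeneous term $d^{\nabla^\pi}(X_H^\pi(u)\otimes\gamma)$, which after the Leibniz split produces the $d\gamma$-term with no analogue in the unperturbed theory; and the fact that lines (2)--(5) of \eqref{eq:e-pi-weitzenbeck} receive contributions \emph{both} from $\delta^{\nabla^\pi}d^{\nabla^\pi}\alpha$ and from $d^{\nabla^\pi}\delta^{\nabla^\pi}\alpha$, which have to be seen to assemble with exactly the stated coefficients. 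I expect this sign-and-factor chase to be the only delicate point; the structural input --- Bochner, the surface Weitzenb\"ock formula, the Fundamental Equation, and the pure-type identity $\delta^{\nabla^\pi}\alpha = -J*d^{\nabla^\pi}\alpha$ --- is essentially forced.
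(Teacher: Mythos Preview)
Your overall plan is correct and very close to the paper's. You correctly identify the Bochner--Weitzenb\"ock identity as the starting point, and you correctly recognize that the whole problem reduces to computing $\langle \Delta^{\nabla^\pi}\alpha,\alpha\rangle$ with $\alpha=\del_H^\pi u$, using only the pure-type property of $\alpha$ and the Fundamental Equation \eqref{eq:dd_Hu2}. Where you diverge from the paper is in how you handle the $d^{\nabla^\pi}\delta^{\nabla^\pi}$ half of the Hodge Laplacian.

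You propose to use the $(1,0)$-type identity $\delta^{\nabla^\pi}\alpha=\pm J*(d^{\nabla^\pi}\alpha)$ to express $d^{\nabla^\pi}\delta^{\nabla^\pi}\alpha$ explicitly in terms of $d^{\nabla^\pi}\alpha$, substitute \eqref{eq:dd_Hu2} into \emph{both} halves, and then combine; you correctly flag the resulting sign-and-factor chase as the delicate step. The paper short-circuits this entirely. It proves the abstract identity (Lemma \ref{lem:2delta})
\[
\langle d^{\nabla^\pi}\delta^{\nabla^\pi}\alpha,\alpha\rangle
=\langle \delta^{\nabla^\pi}d^{\nabla^\pi}\alpha,\alpha\rangle
\qquad\text{for any }(1,0)\text{-form }\alpha,
\]
by exactly the same $*\alpha=-\alpha\circ j$, $\nabla^\pi J=0$ manipulations you invoke, but carried out at the level of the inner product rather than the operator. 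This immediately gives
$\langle \Delta^{\nabla^\pi}\alpha,\alpha\rangle=2\langle \delta^{\nabla^\pi}d^{\nabla^\pi}\alpha,\alpha\rangle$,
so one only has to apply $\delta^{\nabla^\pi}$ to \eqref{eq:dd_Hu2} once and double. The coefficients $-2$ and $-4$ in \eqref{eq:e-pi-weitzenbeck} are then transparent: they are $2\times(-1)$ and $2\times(-2)$ coming from the corresponding terms of \eqref{eq:dd_Hu2}. Your route reaches the same destination, but the paper's lemma removes precisely the bookkeeping you identify as the main obstacle.
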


\subsection{$W^{2,2}$-estimates and higher $C^{k,\alpha}$-regularity estimates}

By applying tensorial calculations and utilizing the Weitzenb\"ock formula in a crucial way, we
derive the following crucial differential inequality.

\begin{thm}[Fundamental differential inequality]\label{thm:FDI-intro}
Let $u$ be any $H$-perturbed contact CR map satisfying Legendrian boundary
condition. Then
\be
|\nabla (d_Hu)|^2 \leq |\nabla^\pi (d_H^\pi u)|^2 + |\nabla (u^*\lambda_H)|^2 + \|\nabla R_\lambda\|_{C^0} |u^*\lambda||du|
\label{eq:|nabladu|}
\ee
and
\bea
\frac18 |\nabla (d_H u)|^2
&\leq & -\frac{1}{2}\Delta e_H(u) + C_1'e_H(u)^2 +  C_2' e_H(u) \nonumber \\
& {} &  \quad + C_3' |du|^2 + C_4' |du|^4. \label{eq:higher-derivative}
\eea
\end{thm}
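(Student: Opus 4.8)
The plan is to prove the two inequalities of Theorem \ref{thm:FDI-intro} in order, the first being essentially algebraic and the second following from the Weitzenb\"ock identity of Theorem \ref{thm:e-pi-weitzenbeck} together with a systematic pointwise control of the inhomogeneous terms. For \eqref{eq:|nabladu|}, I would start from the decomposition $d_H u = d_H^\pi u + u^*\lambda_H \otimes R_\lambda$ and take the full covariant differential, obtaining $\nabla(d_H u) = \nabla(d_H^\pi u) + \nabla(u^*\lambda_H)\, R_\lambda + u^*\lambda_H\, \nabla R_\lambda$. The key input here is the identity $\nabla(d_H^\pi u) = \nabla^\pi(d_H^\pi u)$ (promised in the discussion preceding Proposition \ref{prop:nabladHu}), which lets me treat the first term as a genuine $\xi$-valued object, and the fact that the contact triad connection makes $R_\lambda$ parallel in appropriate directions so that $\nabla R_\lambda$ only sees the $\xi$-directions; squaring and using orthogonality of the $\xi$- and Reeb-components under the triad metric then collapses the cross terms and leaves exactly the claimed bound, with the error term $\|\nabla R_\lambda\|_{C^0}\,|u^*\lambda|\,|du|$ absorbing the single genuine cross contribution (note $|u^*\lambda_H|$ is controlled by $|u^*\lambda|$ plus Hamiltonian terms, but on-shell the Reeb component of $d_Hu$ is $u^*\lambda_H$, so one bookkeeps carefully that $|u^*\lambda|\le |du| + \|X_H\|_{C^0}|\gamma|$, etc.).

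For \eqref{eq:higher-derivative}, the strategy is to combine \eqref{eq:|nabladu|} with the Weitzenb\"ock formula \eqref{eq:e-pi-weitzenbeck}. First I rewrite $-\tfrac12\Delta e_H^\pi(u)$ using \eqref{eq:e-pi-weitzenbeck}: the leading good term $|\nabla^\pi(\del_H^\pi u)|^2$ is what we want to keep on the left after moving the Laplacian to the right, while all the remaining terms — the curvature term $K|\del_H^\pi u|^2$, the Ricci term, the four $\delta^{\nabla^\pi}$-divergence terms, and the two $d\gamma$/$d^{\nabla^\pi}X_H^\pi$ terms — must be estimated pointwise. Here one must be careful: the $\delta^{\nabla^\pi}$ terms involve \emph{another} derivative of $d_H^\pi u$, so they cannot simply be dominated by $e_H(u)^2$; the standard device is to expand the divergence by Leibniz, isolate the piece containing $\nabla^\pi(d_H^\pi u)$, and then use Cauchy-Schwarz with a small weight $\e$ so that $\e|\nabla^\pi(d_H^\pi u)|^2$ is reabsorbed into the good term (that is where the explicit constant $\tfrac18$ comes from — the factor $\tfrac12$ from $|\nabla d_H u|^2 \le 2|\nabla^\pi(d_H^\pi u)|^2 + \dots$ combined with the $\e$-absorption of several terms). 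The remaining lower-order pieces of each divergence term, being algebraic in $d_H^\pi u$, $X_H^\pi(u)$, $\gamma$ and the torsion/curvature tensors of the fixed triad connection, are bounded by $C(e_H(u)^2 + e_H(u) + |du|^2 + |du|^4)$ once one notes that $|X_H^\pi(u)\otimes\gamma|$ is bounded by a constant depending on $H$ and $\gamma$ on the relevant compact region, and that the transition between $|d_H u|$ and $|du|$ costs only such constants. The $e_H(u)^2$ and $|du|^4$ terms arise precisely from the torsion term $T^\pi(X_H^\pi(u),\gamma\wedge d_H^\pi u)$ and the quadratic-in-$d_H^\pi u$ structure of the first Weitzenb\"ock line, and the linear and $|du|^2$ terms from the $X_H$-inhomogeneities.

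The main obstacle I anticipate is the careful treatment of the four divergence terms $\delta^{\nabla^\pi}[\cdots]$: each of these, when expanded, produces exactly one term carrying a derivative of $d_H^\pi u$ that competes with the good term $|\nabla^\pi(\del_H^\pi u)|^2$, and one has to track the numerical weights so that the sum of all the $\e$-contributions stays strictly below the coefficient of the good term, leaving a definite positive fraction — here $\tfrac18$ — on the left. A secondary nuisance is matching the $\pi$-quantities ($\del_H^\pi u$, $d_H^\pi u$) appearing in the Weitzenb\"ock formula with the full quantity $d_H u$ in the target inequality, which requires the pointwise comparisons $|\del_H^\pi u|\le |d_H^\pi u|\le |d_H u|$ (valid because on-shell $\delbar_H^\pi u = 0$, so $d_H^\pi u = \del_H^\pi u$, and the Reeb component is orthogonal) together with $|d_H u| \le |du| + C$. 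Once these comparisons and the $\e$-absorption bookkeeping are set up, the remaining estimates are routine applications of Cauchy-Schwarz and the $C^0$-bounds on the fixed geometric data $(M,\lambda,J,H,\gamma)$ over the compact part of $\dot\Sigma$.
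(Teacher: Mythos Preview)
Your approach to \eqref{eq:|nabladu|} is correct and matches the paper's: it follows directly from Proposition \ref{prop:nabladHu} and the orthogonality of the $\xi$- and Reeb-components.

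For \eqref{eq:higher-derivative}, however, there is a genuine gap. You propose to combine \eqref{eq:|nabladu|} with the $\pi$-Weitzenb\"ock formula \eqref{eq:e-pi-weitzenbeck} alone, but this cannot close. The first inequality \eqref{eq:|nabladu|} tells you that to control $|\nabla(d_H u)|^2$ you must control \emph{both} $|\nabla^\pi(d_H^\pi u)|^2$ and $|\nabla(u^*\lambda_H)|^2$. The $\pi$-Weitzenb\"ock formula \eqref{eq:e-pi-weitzenbeck} produces only the first of these as its good term; it says nothing about $|\nabla(u^*\lambda_H)|^2$. Moreover the target inequality has $-\tfrac12\Delta e_H(u)$ on the right, not $-\tfrac12\Delta e_H^\pi(u)$, and since $e_H = e_H^\pi + |u^*\lambda_H|^2$ the difference is precisely $-\tfrac12\Delta|u^*\lambda_H|^2$, which you never address.

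What the paper actually does is run a \emph{second} Weitzenb\"ock argument for the Reeb component: it computes $\Delta(u^*\lambda_H)$ explicitly (Proposition \ref{prop:Deltau*lambdaH}), and this is where the second defining equation $d(e^{g_{H,u}}(u^*\lambda_H\circ j))=0$ of the perturbed contact instanton enters --- it is needed to express $\delta(u^*\lambda_H)$ in manageable form. The resulting identity \eqref{eq:e-lambda-weitzenbeck} yields $|\nabla(u^*\lambda_H)|^2$ as a good term in $-\tfrac12\Delta|u^*\lambda_H|^2$ (Corollary \ref{cor:e-lambda-weitezenbeck}). Only after summing the two Weitzenb\"ock inequalities do you have both good terms on the left, and then your $\e$-absorption bookkeeping (which is otherwise correctly described) can be carried out as in Proposition \ref{prop:second:derivative}. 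Note in particular that the hypothesis ``$H$-perturbed contact CR map'' in the statement is not quite enough for the second inequality as written; the proof in the body uses the full contact instanton equation.
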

With these differential inequalities in our disposal, we can establish the following
a priori local $W^{2,2}$-estimates in the same fashion as the one
given in \cite[Appendix C]{oh-wang:CR-map1} but with much more complex tensorial calculations.

\begin{thm}[Theorem \ref{thm:coercive-L2}]\label{thm:coercive-L2-intro}
For any pair of domains $D_1$ and $D_2$ in $\dot\Sigma$ such that $\overline{D_1}\subset D_2$,
$$
\|\nabla(d_H u)\|^2_{L^2(D_1)}  \leq
 C_1 \|d_H u\|_{L^4(D_2)}^4 + C_2 \|d_H u\|_{L^2(D_2)}^2 + C_3 \|du\|_{L^4(D_2)}^4 +
 C_4 \|du\|_{L^2(D_2)}^2 + 2 \CB
$$
with the boundary contribution
$$
\CB =
\int_{\del D_2} ( C_5 |du|^3 |d_Hu| + C_6 |du|^2|d_Hu|  + C_7 |d_H u| |du|)
$$
for any perturbed contact instanton $u$,
where $C_i = C_i(D_1, D_2)$ are some constants which depend on $D_1$, $D_2$ and
 $(M, \lambda, J)$ and $H$, but are independent of $u$.
\end{thm}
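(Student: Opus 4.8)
The plan is to integrate the fundamental differential inequality \eqref{eq:higher-derivative} against the square of a cutoff function and to dispose of the Laplacian term by Green's identity on $(\dot\Sigma,h)$. Fix $D_1$ with $\overline{D_1}\subset D_2$ and pick $\chi\in C^\infty(\dot\Sigma)$ with $0\le\chi\le1$, $\chi\equiv1$ on $D_1$, $\chi$ vanishing near $\partial D_2\setminus\partial\dot\Sigma$, and moreover arranged so that in a collar of $\partial\dot\Sigma$ it depends only on the boundary-tangential coordinate, so that $\partial_\nu\chi\equiv0$ along $\partial\dot\Sigma$ ($\nu$ the outward unit conormal). Since a perturbed contact instanton is in particular an $H$-perturbed CR map satisfying the Legendrian boundary condition, Theorem~\ref{thm:FDI-intro} applies; multiplying \eqref{eq:higher-derivative} by $\chi^2$ and integrating over $D_2$ gives
\[
\tfrac18\int_{D_2}\chi^2\,|\nabla(d_Hu)|^2\ \le\ -\tfrac12\int_{D_2}\chi^2\,\Delta e_H(u)\ +\ \int_{D_2}\chi^2\bigl(C_1'\,e_H(u)^2+C_2'\,e_H(u)+C_3'\,|du|^2+C_4'\,|du|^4\bigr).
\]

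The zeroth-order terms are immediate: with $e_H(u)=|d_Hu|^2$ and $0\le\chi\le1$ one gets $\int_{D_2}\chi^2 e_H(u)^2\le\|d_Hu\|_{L^4(D_2)}^4$, $\int_{D_2}\chi^2 e_H(u)\le\|d_Hu\|_{L^2(D_2)}^2$, $\int_{D_2}\chi^2|du|^4\le\|du\|_{L^4(D_2)}^4$ and $\int_{D_2}\chi^2|du|^2\le\|du\|_{L^2(D_2)}^2$, producing the $C_1,C_3,C_4$ contributions and part of $C_2$. For the Laplacian term I apply Green's second identity on $(\dot\Sigma,h)$:
\[
-\tfrac12\int_{D_2}\chi^2\,\Delta e_H(u)=-\tfrac12\int_{D_2}e_H(u)\,\Delta(\chi^2)\ -\ \tfrac12\int_{\partial D_2}\bigl(\chi^2\,\partial_\nu e_H(u)-e_H(u)\,\partial_\nu(\chi^2)\bigr).
\]
On $\partial D_2$ the part interior to $\dot\Sigma$ contributes nothing because $\chi$ vanishes there, and on $\partial D_2\cap\partial\dot\Sigma$ the term $e_H(u)\,\partial_\nu(\chi^2)$ drops since $\partial_\nu\chi\equiv0$; the interior term is bounded by $\|\Delta(\chi^2)\|_{C^0}\|d_Hu\|_{L^2(D_2)}^2$, giving the remainder of $C_2$. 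This leaves only the boundary integral $-\tfrac12\int_{\partial D_2\cap\partial\dot\Sigma}\chi^2\,\partial_\nu e_H(u)$ to be estimated.

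Estimating this boundary integral is the heart of the matter and where I expect the main difficulty to lie. A priori $\partial_\nu e_H(u)=2\langle\nabla_\nu(d_Hu),d_Hu\rangle$ contains the normal derivative of $d_Hu$, hence genuine second derivatives of $u$, and so cannot be controlled by first-order data without further input. The point is that along $\partial\dot\Sigma$ the Legendrian boundary condition $u(\overline{z_iz_{i+1}})\subset R_i$ — which forces $u^*\lambda_H$ to annihilate $T\partial\dot\Sigma$ and $d_H^\pi u$ to land in $TR_i$ — together with the two equations of \eqref{eq:contacton-Legendrian-bdy} (the complex linearity of $d_H^\pi u$ and the harmonicity of $e^{g_{H,u}}u^*\lambda_H\circ j$) lets one solve for $\nabla_\nu(d_Hu)$ on the boundary: differentiating the boundary identities tangentially and substituting the equations rewrites $\partial_\nu e_H(u)$ as an expression algebraic in $du$ and $d_Hu$, of degree at most four, with coefficients built from the second fundamental forms of the $R_i$, the torsion $T^\pi$ and the term involving $\CL_{R_\lambda}J$ of the contact triad connection, the conformal exponent $g_{H,u}$, and $X_H$ together with its first derivatives. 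Carrying out this (lengthy) computation yields the pointwise bound
\[
|\partial_\nu e_H(u)|\ \le\ C\bigl(|du|^3|d_Hu|+|du|^2|d_Hu|+|du|\,|d_Hu|\bigr)\qquad\text{on }\partial\dot\Sigma,
\]
whence $-\tfrac12\int_{\partial D_2\cap\partial\dot\Sigma}\chi^2\,\partial_\nu e_H(u)\le\CB$. This is the perturbed, conformally twisted boundary analogue of the doubling computation of \cite{oh-wang:CR-map1} and of the boundary estimate of \cite{oh:contacton-Legendrian-bdy}; the perturbation term $X_H\otimes\gamma$ and the factor $e^{g_{H,u}}$ are what produce the $|du|$-dependent corrections and make the bookkeeping markedly heavier than in the unperturbed case.

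Finally, combining the three estimates, using $\chi\equiv1$ on $D_1$ so that $\int_{D_1}|\nabla(d_Hu)|^2\le\int_{D_2}\chi^2|\nabla(d_Hu)|^2$, and absorbing $\|\Delta(\chi^2)\|_{C^0}$ together with the $C^0$-bounds over the image of $u$ of $T^\pi$, $\ric^{\nabla^\pi}$, the Gauss curvature $K$ of $(\dot\Sigma,h)$, $\nabla R_\lambda$, the second fundamental forms of the $R_i$, $X_H$ and its derivatives, and $g_{H,u}$, into new constants $C_i=C_i(D_1,D_2)$, one obtains the asserted inequality. (For the $W^{1,4}$-weak solutions of item~(3) one first runs the argument for smooth $u$ and passes to the limit via the boundary regularity theorem; for a genuine perturbed contact instanton $u$ is smooth and Green's identity is legitimate as used.)
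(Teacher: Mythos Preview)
Your proposal is correct and follows essentially the same strategy as the paper: multiply the fundamental differential inequality \eqref{eq:higher-derivative} by a squared cutoff, integrate, and reduce everything to a pointwise estimate of $\partial_\nu e_H(u)$ along $\partial\dot\Sigma$ via the Legendrian boundary condition and the properties of the contact triad connection. Your sketch of the boundary computation (second fundamental forms of the $R_i$, torsion $T^\pi$, $\CL_{R_\lambda}J$, derivatives of $X_H$) matches the paper's Lemma~\ref{lem:boundary-integral} and its proof in Subsection~\ref{subsec:boundary-integral}.

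The one technical difference worth noting: you handle $\int\chi^2\Delta e_H$ by Green's \emph{second} identity, so the interior remainder is $\int e_H\,\Delta(\chi^2)$, bounded directly by $\|\Delta(\chi^2)\|_{C^0}\|d_Hu\|_{L^2}^2$. The paper integrates by parts only once, obtaining the cross term $\int 2\chi\,d\chi\wedge(*de_H)$ (Lemma~\ref{lem:intD-chidchi}), which it controls by Young's inequality $ab\le\tfrac{1}{\epsilon}a^2+\epsilon b^2$ and then \emph{absorbs} the resulting $\tfrac{1}{\epsilon}\int\chi^2|\nabla(d_Hu)|^2$ into the left-hand side by choosing $\epsilon$ large. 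Your route is slightly cleaner in that it avoids the absorption step entirely; the paper's route is the more traditional Bochner-technique bookkeeping. Both arrive at exactly the same boundary integral $\int_{\partial D}\chi^2\,\partial_\nu e_H$, and from there the arguments coincide. (Minor remark: the paper's boundary computation does not actually invoke the closedness of $e^{g_{H,u}}u^*\lambda_H\circ j$ or the factor $g_{H,u}$ --- only the Legendrian condition, $\gamma|_{\partial\dot\Sigma}=0$, and the triad-connection identities are used --- so your list of ingredients is a slight overcount, but harmless.)
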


Then we proceed with the higher regularity estimates. For this one, we follow the scheme
used in \cite{oh:contacton-Legendrian-bdy} of directly proving the following Schauder-type
estimates.

\begin{thm}\label{thm:local-regularity-intro}
Let $u$ be a perturbed contact instanton satisfying
and \eqref{eq:contacton-Legendrian-bdy}.
Then for any pair of domains $D_1 \subset D_2 \subset \dot \Sigma$ such that $\overline{D_1}\subset D_2$, we have
$$
\|d_H u\|_{C^{k,\alpha}(D_1)} \leq C \|d_H u\|_{W^{1,2}(D_2)}.
$$
for some constant $C$ depending on $J$, $\lambda$ and $D_1, \, D_2$
 and $(M, \lambda, J)$ and $H$, but are independent of $u$, but independent of $u$.
\end{thm}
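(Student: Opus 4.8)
The plan is to run an alternating elliptic bootstrap between the two summands of the decomposition \eqref{eq:dHu-decompose}, $d_H u = d_H^\pi u + u^*\lambda_H\, R_\lambda(u)$: for the $\xi$-valued part $d_H^\pi u$ I would use the first equation of \eqref{eq:contacton-Legendrian-bdy} together with the fundamental equation of Theorem \ref{thm:fundamental-intro}, and for the Reeb part $u^*\lambda_H$ the second equation of \eqref{eq:contacton-Legendrian-bdy}. First I would promote the hypothesis $d_H u\in W^{1,2}(D_2)$ to H\"older regularity: since $\dim\dot\Sigma=2$ we have $W^{1,2}\hookrightarrow L^p$ for all $p<\infty$, so $du=d_H u+X_H(u)\otimes\gamma\in L^p$ and $u\in W^{1,p}\subset C^{0,\alpha}$ on a slightly smaller domain. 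By the first equation of \eqref{eq:contacton-Legendrian-bdy}, $d_H^\pi u=\del_H^\pi u$ is a $\xi$-valued $(1,0)$-form, so $d^{\nabla^\pi}(d_H^\pi u)=\delbar^{\nabla^\pi}(d_H^\pi u)$ and Theorem \ref{thm:fundamental-intro} becomes $\delbar^{\nabla^\pi}(d_H^\pi u)=R_\pi$, where $R_\pi$ is the right-hand side of \eqref{eq:dd_Hu2}; every term of $R_\pi$ is a ($u$-dependent, hence $C^{0,\alpha}$) coefficient multiplied into one of $d_H^\pi u$, $X_H^\pi(u)$, $\nabla X_H^\pi(u)\cdot du$ or $d\gamma$, possibly wedged with $u^*\lambda$ or $\gamma$, and since $|u^*\lambda|\le|du|$ this gives $R_\pi\in L^p$ for all $p<\infty$. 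Likewise, setting $\beta:=e^{g_{H,u}}u^*\lambda_H$, the second equation of \eqref{eq:contacton-Legendrian-bdy} says $\delta\beta=0$, while $d\beta=e^{g_{H,u}}\bigl(dg_{H,u}\wedge u^*\lambda_H+u^*(d\lambda)+d(u^*H)\wedge\gamma+u^*H\,d\gamma\bigr)$; here $u^*(d\lambda)$ is quadratic in the $\xi$-part $d_H^\pi u+X_H^\pi(u)\otimes\gamma$ of $du$ because $R_\lambda\in\ker d\lambda$, and $dg_{H,u},\ d(u^*H)$ are bounded multiples of $du$, so $d\beta\in L^p$ for all $p$.

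With this in hand, Calder\'on--Zygmund estimates for $\delbar^{\nabla^\pi}$ on $\xi$-valued $(1,0)$-forms and for the elliptic first-order system $(d,\delta)$ on $1$-forms, both with $L^p$ inhomogeneity and $L^p$ lower-order coefficients, give $d_H^\pi u,\ \beta\in W^{1,p}_{\mathrm{loc}}$ for every $p<\infty$, hence $u^*\lambda_H=e^{-g_{H,u}}\beta\in W^{1,p}_{\mathrm{loc}}$, hence $d_H u\in W^{1,p}_{\mathrm{loc}}\subset C^{0,\alpha}$ and $u\in C^{1,\alpha}$. The alternating Schauder bootstrap then closes by induction on $k$: if $d_H u\in C^{k,\alpha}$ (so $u\in C^{k+1,\alpha}$), the coefficients in $R_\pi$ and in $d\beta$, being assembled from $u,\ d_H u,\ \gamma,\ H$ and their derivatives up to order $k$, lie in $C^{k,\alpha}$, so $R_\pi,\ d\beta\in C^{k,\alpha}$, and interior as well as boundary Schauder estimates for $\delbar^{\nabla^\pi}$ and for $(d,\delta)$ upgrade $d_H^\pi u$ and $\beta$, hence $d_H u$, hence $u$, by one order of differentiability. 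Chaining these estimates over a finite nested family $D_1\subset\cdots\subset D_2$, and absorbing the (at most quadratic) nonlinear terms exactly as in \cite[Appendix C]{oh-wang:CR-map1} and in \cite{oh:contacton-Legendrian-bdy}, yields the asserted bound with $C$ depending only on $k,\alpha$, the domains, $(M,\lambda,J)$, $\gamma$ and $H$. The interior part here is a perturbation of the argument of \cite{oh-wang:CR-map1}: the extra terms coming from $X_H$ and from $g_{H,u}$ are all of order $\le1$ in $u$ and at most linear in $d_H u$, apart from the harmless quadratic $u^*(d\lambda)$, and are absorbed without difficulty.

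The hard part will be the boundary regularity near $\partial\dot\Sigma$. One must first verify that the Legendrian condition $u(\overline{z_iz_{i+1}})\subset R_i$ from \eqref{eq:contacton-Legendrian-bdy}, after the gauge change $\widetilde u=\psi_H^t(u)$ (which sends the boundary into honest Legendrians, replaces $g_{H,u}$ by the simpler $g'_{H,u}(t,x)=g_{\psi_H^t}(x)$, and thereby makes $d_H^\pi\widetilde u$ take values in a totally real subbundle of $\xi$ along $\partial\dot\Sigma$), gives an elliptic boundary condition of totally real type for the $\delbar^{\nabla^\pi}$-problem, together with the complementary mixed condition for the Reeb component forced by $u^*\lambda|_{T\partial\dot\Sigma}=0$. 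One then needs boundary Calder\'on--Zygmund and Schauder estimates for these two boundary value problems — obtained by reflection once the Legendrians are straightened, or directly — carried out so that the perturbation terms $X_H^\pi(u)\otimes\gamma$, $\nabla X_H^\pi(u)\cdot du$ and $dg_{H,u}$, which need not vanish on the boundary and which couple the $\pi$- and Reeb components there, are controlled within the alternation rather than breaking it. This is exactly where the contact Hamiltonian genuinely complicates the unperturbed argument of \cite{oh:contacton-Legendrian-bdy}, and it also subsumes the boundary regularity of $W^{1,4}$-weak solutions under the weak Legendrian boundary condition, which is what one uses to enter the scheme in the first place.
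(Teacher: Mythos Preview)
Your overall strategy---an alternating elliptic bootstrap between the $\xi$-part $d_H^\pi u$ (via the fundamental equation) and the Reeb part (via the second instanton equation)---is exactly the paper's, and your covariant packaging (the $\delbar^{\nabla^\pi}$-equation for $d_H^\pi u$ and the Hodge system $\delta\beta=0$, $d\beta=\cdots$ for $\beta=e^{g_{H,u}}u^*\lambda_H$) is a legitimate variant of the paper's explicit isothermal-coordinate reduction. The paper instead sets $\zeta=d_H^\pi u(\partial_x)$ and the complex scalar $\alpha=\lambda_H(\partial_y u)+i\,\lambda_H(\partial_x u)$, and writes the system as $\nabla_x^\pi\zeta+J\nabla_y^\pi\zeta+B(u^*\lambda,\zeta)=-*JP(u)$ coupled with $\delbar\alpha=\tfrac12|\zeta|^2+G(du,H)$ (Proposition~\ref{prop:FE-in-isothermal-intro}); this is the same content in coordinates, tailored for direct application of Riemann--Hilbert and Schauder estimates. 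Either route closes the induction.

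Where you go astray is the boundary paragraph. You propose a gauge change $\widetilde u=\psi_H^t(u)$ to obtain a totally real boundary condition for $d_H^\pi\widetilde u$, but this is both unavailable and unnecessary. Unavailable, because $\psi_H^t$ requires a $t$-coordinate and only makes sense on the strip-like ends, not on a general boundary semi-disc. Unnecessary, because the totally real condition already holds \emph{without} any gauge: Condition~\ref{cond:gamma} imposes $\gamma|_{\partial\dot\Sigma}=0$, so along $\partial\dot\Sigma$ the tangential component satisfies $d_H^\pi u(\partial_x)=d^\pi u(\partial_x)-X_H^\pi(u)\,\gamma(\partial_x)=d^\pi u(\partial_x)\in TR_i$, since $R_i$ is Legendrian. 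Similarly $\lambda_H(\partial_x u)=\lambda(\partial_x u)+H\gamma(\partial_x)=0$, so $\alpha|_{\partial}\in\R$ (equivalently, your $\beta$ has vanishing tangential component). The paper runs the boundary bootstrap directly with these conditions; the perturbation terms $X_H^\pi(u)\otimes\gamma$ and $H\gamma$ cause no extra boundary coupling precisely because $\gamma$ vanishes there. So the ``hard part'' you flag is in fact absorbed by the standing hypothesis on $\gamma$, and no separate boundary mechanism is needed beyond the standard Riemann--Hilbert estimates for the two decoupled boundary value problems $(\zeta\in TR_i,\ \alpha\in\R)$.
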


The proof of this higher regularity estimates will be given by an alternating bootstrap argument
similarly as in \cite{oh:contacton-Legendrian-bdy}, which is based on the following structure of the above fundamental
equation in \emph{isothermal coordinates}. (See also \cite[Subsection 11.5]{oh-wang:CR-map2}.)

\begin{prop}[Fundamental equation in isothermal coordinates]\label{prop:FE-in-isothermal-intro}
Let $u$ be a solution to \eqref{eq:contacton-Legendrian-bdy}.
We consider $\zeta = d_H^\pi u(\del_x)$ and a complex valued function
$$
\alpha(x,y) = \lambda_H\left(\frac{\del u}{\del y}\right)
+ \sqrt{-1}\left(\lambda_H\left(\frac{\del u}{\del x}\right)\right)
$$
in an isothermal coordinate $(x,y)$ of $(\dot \Sigma,j)$. Then  the pair $(\zeta,\alpha)$ satisfies
\be\label{eq:main-eq-isothermal}
\begin{cases}
\nabla_x^\pi \zeta + J \nabla_y^\pi \zeta + B(u^*\lambda, \zeta) = -* JP(u) \\
\zeta(z) \in TR_i \quad \text{for } \, z \in \del D_2
\end{cases}
\ee
where $B(u^*\lambda, \zeta)$, $P(u)$ are defined in \eqref{eq:P(u)},  and
\be\label{eq:equation-for-alpha}
\begin{cases}
\delbar \alpha
=\frac12 |\zeta|^2 + G(du,H)\\
\alpha(z) \in \R \quad \text{for } \, z \in \del D_2
\end{cases}
\ee
where
$$
G(du,H) = (u^*(R_\lambda[H] \wedge \gamma + u^*H d\gamma)(\del_x,\del_y).
$$
Furthermore we have $JP(u), \, G(du,H) \in W^{1,2}(\dot \Sigma)$, when $u \in W^{2,2}(\dot \Sigma)$.
\end{prop}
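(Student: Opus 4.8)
The plan is to derive the two equations of \eqref{eq:main-eq-isothermal} and \eqref{eq:equation-for-alpha} by unwinding the two components of the perturbed contact instanton equation \eqref{eq:contacton-Legendrian-bdy} in an isothermal coordinate $(x,y)$, i.e.\ one in which $j\del_x = \del_y$ and the metric is conformal to the Euclidean one. I would first treat the $\pi$-component. Writing $d_H^\pi u = \del_H^\pi u + \delbar_H^\pi u$ and using that $\delbar_H^\pi u = 0$ on shell, the condition that $d_H^\pi u$ is $(j,J)$-complex linear translates, with $\zeta := d_H^\pi u(\del_x)$, into $d_H^\pi u(\del_y) = J\zeta$. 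Applying the contact Hermitian connection $\nabla^\pi$ and the fundamental equation of Theorem \ref{thm:fundamental-intro}: the left side $d^{\nabla^\pi}(d_H^\pi u)$ evaluated on $(\del_x,\del_y)$ produces $\nabla_x^\pi(J\zeta) - \nabla_y^\pi\zeta = J(\nabla_x^\pi\zeta + J\nabla_y^\pi\zeta)$ after using that $\nabla^\pi$ is complex linear (a defining property of the contact triad connection, Appendix \ref{sec:connection}); the right side of \eqref{eq:dd_Hu2}, evaluated on $(\del_x,\del_y)$, assembles into the zeroth-order term $B(u^*\lambda,\zeta)$ coming from the $u^*\lambda\wedge(\CL_{R_\lambda}J)J(\cdot)$ pieces together with the inhomogeneous terms $-*JP(u)$ collecting the torsion term $T^\pi(X_H^\pi,\gamma\wedge d_H^\pi u)$, the term $u^*\lambda\wedge(\CL_{R_\lambda}J)JX_H^\pi\gamma$, and $d^{\nabla^\pi}(X_H^\pi\gamma)$. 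Here $P(u)$ is exactly the bundle-valued $2$-form gathered into \eqref{eq:P(u)}, and the Hodge star accounts for evaluation against the area form $dx\wedge dy$. The boundary condition $\zeta(z)\in TR_i$ is immediate: on $\del D_2$ the tangent $\del_x$ is tangent to the boundary arc $\overline{z_iz_{i+1}}$, and $u$ maps that arc into $R_i$, so $du(\del_x)\in TR_i$; since $X_H$ is a contact vector field tangent to $R_i$ (or by the way $H$ is arranged to be compatible with the Legendrian tuple) and $R_i$ is Legendrian so $TR_i\subset\xi$, we get $d_H^\pi u(\del_x) = d_Hu(\del_x)\in TR_i$.

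Next I would handle the Reeb component. Set $\alpha(x,y) = \lambda_H(\del u/\del y) + \sqrt{-1}\,\lambda_H(\del u/\del x)$, i.e.\ $\alpha$ encodes the $1$-form $*(u^*\lambda_H)$ — more precisely $\alpha\,\overline{\del z}$-type combination built from $u^*\lambda_H = u^*\lambda + u^*H\,\gamma$. The second equation in \eqref{eq:contacton-Legendrian-bdy}, $d(e^{g_{H,u}}u^*\lambda_H\circ j) = 0$, says $e^{g_{H,u}}u^*\lambda_H\circ j$ is closed, hence (away from punctures, locally) the $1$-form relevant to $\alpha$ is related to a function; computing $\delbar\alpha = \tfrac12(\del_x + \sqrt{-1}\del_y)\alpha$ and expanding, the leading term is $d(u^*\lambda_H)(\del_x,\del_y)$ up to the conformal factor, and $d(u^*\lambda_H) = u^*d\lambda + d(u^*H\,\gamma)$. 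On the $\pi$-part, $u^*d\lambda(\del_x,\del_y) = d\lambda(d_H^\pi u(\del_x), d_H^\pi u(\del_y)) + (\text{terms with } X_H)$; using $d_H^\pi u(\del_y) = J d_H^\pi u(\del_x) = J\zeta$ and $d\lambda(\zeta, J\zeta) = |\zeta|_J^2$ gives the principal term $\tfrac12|\zeta|^2$ (the $\tfrac12$ from the isothermal normalization of the area form versus the metric). The remaining terms — those proportional to $X_H$ in $d\lambda$, and $d(u^*H\,\gamma) = u^*(dH)\wedge\gamma + u^*H\,d\gamma$, the $dH$ part splitting via $dH = R_\lambda[H]\lambda + d^\pi H$ — collect precisely into $G(du,H) = u^*(R_\lambda[H]\,\lambda\wedge\gamma + u^*H\,d\gamma)(\del_x,\del_y)$ once one checks the $d^\pi H$ contribution cancels against the $\pi$-part of the $X_H$ terms (this is where the specific form of $X_H$ — its $\pi$-component being the $d\lambda$-dual of $-d^\pi H$ — enters). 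The boundary condition $\alpha(z)\in\R$ follows because on $\del D_2$ the arc is mapped into $R_i\subset\xi$ and $du(\del_x)$ is tangent to $R_i$, so $\lambda(du(\del_x)) = 0$; likewise $H\gamma(\del_x)$ — with $\gamma = dt$ or more generally $\gamma|_{\del\Sigma}$ arranged to vanish in the $\del_x$ direction along the boundary (Condition \ref{cond:gamma}) — so $\lambda_H(du(\del_x)) = 0$, whence the imaginary part of $\alpha$ vanishes there. Finally, the regularity claim $JP(u), G(du,H)\in W^{1,2}$ when $u\in W^{2,2}$ is a Sobolev-multiplication bookkeeping: $P(u)$ and $G(du,H)$ are pointwise algebraic expressions, smooth in $u$, that are at worst quadratic in $du$ (e.g.\ $T^\pi(X_H^\pi(u),\gamma\wedge d_H^\pi u)$ is linear in $du$ with coefficients smooth in $u$), so their first derivatives involve $\nabla(du)\in L^2$ times $C^0$ terms plus $du\cdot du\in L^2$ in dimension $2$ by Sobolev embedding $W^{2,2}\hookrightarrow W^{1,p}$ for all $p<\infty$ and $W^{1,2}\hookrightarrow L^q$ for all $q<\infty$; thus the products land in $L^2$ and the expressions in $W^{1,2}$.

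The bulk of the work — and the main obstacle — is the bookkeeping in the second step: verifying that \emph{all} the perturbation terms reorganize cleanly into the single inhomogeneous functions $B$, $P(u)$, $G(du,H)$ with no leftover first-order-in-$\zeta$ terms that would spoil the ellipticity structure. Concretely, one must check that (i) the $d^{\nabla^\pi}(X_H^\pi\gamma)$ term and the torsion term produce only terms that are bounded by $|du|$ (not $|\nabla du|$), so that they are genuinely lower order and can be absorbed into $P(u)$; and (ii) in the $\alpha$-equation, the cross terms between $du$ and $X_H$ inside $u^*d\lambda$ conspire with $u^*(dH)\wedge\gamma$ so that the $\pi$-Hamiltonian directions cancel and only the Reeb-derivative $R_\lambda[H]$ and the $d\gamma$ term survive in $G$. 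This is exactly the ``ingenuity in carrying out tensor calculations'' the introduction flags; the computation is routine in principle but the sign conventions (orientation of $dx\wedge dy$, the factor $\tfrac12$ from isothermal coordinates, the convention for $*$ on $1$-forms on a surface) must be tracked carefully. I would organize it by first recording, as a lemma, the pointwise expansions of $u^*d\lambda(\del_x,\del_y)$ and $d(u^*H\gamma)(\del_x,\del_y)$ in terms of $\zeta$, $X_H$, and derivatives of $H$, then simply reading off the two equations; and I would defer the explicit formula \eqref{eq:P(u)} for $P(u)$ and the definition of $B$ to the point in the proof where the terms naturally group, rather than guessing them in advance.
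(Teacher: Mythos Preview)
Your approach is essentially the paper's: evaluate the fundamental equation (Theorem~\ref{thm:fundamental-intro}) on $(\del_x,\del_y)$ for the $\zeta$-equation, and use the identity $d(u^*\lambda_H) = \tfrac12|d_H^\pi u|^2\,dA + u^*(R_\lambda[H]\,\lambda)\wedge\gamma + u^*H\,d\gamma$ (proved exactly via the cancellation you describe, Lemma~\ref{lem:du*lambda}) for the $\alpha$-equation.

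Two small corrections. First, in the paper's grouping the torsion term $T^\pi(X_H^\pi,\gamma\wedge d_H^\pi u)$ is \emph{linear} in $\zeta$ and therefore belongs to $B(u^*\lambda,\zeta)$, not to $P(u)$; only the genuinely inhomogeneous piece $u^*\lambda_H\wedge\tfrac12(\CL_{R_\lambda}J)JX_H^\pi\otimes\gamma$ sits in $P$. This does not affect the analysis, but your attribution is reversed. Second, your justification of the boundary condition $\zeta\in TR_i$ via ``$X_H$ tangent to $R_i$'' is not how the paper (or the setup) works: there is no such tangency hypothesis. The correct reason is the one you invoke later for $\alpha$, namely Condition~\ref{cond:gamma}: $\gamma|_{\del\dot\Sigma}=0$, so on the boundary $d_Hu(\del_x)=du(\del_x)\in TR_i\subset\xi$, whence $\zeta=d_H^\pi u(\del_x)=du(\del_x)\in TR_i$.
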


The above higher regularity estimate reduces the convergence study of perturbed contact instantons
 to the study of $C^1$-estimates which is based on the bubbling analysis
developed in \cite{oh:contacton}.

\subsection{Asymptotic subsequence convergence}

We put the following hypotheses in our asymptotic study of the finite
energy contact instanton maps $u$ similarly as in \cite{oh-wang:CR-map1},
\cite{oh:contacton-Legendrian-bdy}.

\begin{hypo}\label{hypo:basic}
Let $h$ be the metric on $\dot \Sigma$ given above.
Assume $u:\dot\Sigma\to M$ satisfies the contact instanton equation \eqref{eq:contacton-Legendrian-bdy}
and
\begin{enumerate}
\item $E_H^\pi(u)<\infty$ (finite $\pi$-energy);
\item $\|d u\|_{C^0(\dot\Sigma)}, \quad \|d_H\|_{C^0(\dot \Sigma)} <\infty$.
\item $\Image u \subset K \subset M$ for some compact set $K$.
\end{enumerate}
\end{hypo}

Let $u$ satisfy Hypothesis \ref{hypo:basic}. We associate two natural asymptotic invariants
to each given boundary puncture  of $\dot \Sigma$.

\begin{defn}\label{eq:action-charge}
 Consider the family of paths $u_\tau$ defined by
$$
u_\tau(t) := u(\tau,t)
$$
for $\tau \in [0,\infty)$ in the given strip-like coordinates $(\tau,t)$ at the
positive puncture of our concern. We define
\bea
T_H & := & \frac{1}{2}\int_{[0,\infty) \times [0,1]} e^{g_{H,u}} |d_H ^\pi u|^2
+ \int_{[0,1]} e^{g_{H,u}} (u^*\lambda + H\, dt)|_{\{\tau = 0\}} \label{eq:TQ-T}\\
Q_H & : = & \int_{[0,1]} e^{g_{H,u}}  u^*(\lambda + H\, dt)\circ j)|_{\{\tau =0\}}
\label{eq:TQ-Q}
\eea
We call $T_H$ and $Q_H$ the \emph{asymptotic (perturbed) action} and
\emph{asymptotic (perturbed) charge} of $u$ respectively.
(The case of negative punctures is similar and omitted.)
\end{defn}

With this definition, we prove the following asymptotic convergence result
which generalizes the ones from \cite{oh-wang:CR-map1} and \cite{oh:contacton-Legendrian-bdy}
to the case of perturbed contact instantons.

\begin{thm}[Subsequence Convergence, Theorem \ref{thm:subsequence}]
\label{thm:subsequence-intro}
Let $u:[0, \infty)\times [0,1]\to M$ satisfy the
perturbed contact instanton equations \eqref{eq:contacton-Legendrian-bdy}
and Hypothesis \ref{hypo:basic}.
Then for any sequence $s_k\to \infty$, there exists a subsequence, still denoted by $s_k$, and a
massless instanton $u_\infty(\tau,t)$ (i.e., $E^\pi(u_\infty) = 0$)
on the cylinder $\R \times [0,1]$  that satisfies the following:
\begin{enumerate}
\item $\delbar_H^\pi u_\infty = 0$ and
$$
\lim_{k\to \infty}u(s_k + \tau, t) = u_\infty(\tau,t)
$$
in the $C^l(K \times [0,1], M)$ sense for any $l$, where $K\subset [0,\infty)$ is an arbitrary compact set.
\item The limit $u_\infty$ has vanishing asymptotic charge $Q_H = 0$.
\item At each puncture, there  exists a Reeb chord $\gamma$
 joining $R_0$ and $R_1$ with its action $T_H \neq 0$ such that satisfies
\be\label{eq:Reeb-translated-chord}
\overline u_\infty(\tau,t)= \gamma(T_H\, t).
\ee
\end{enumerate}
\end{thm}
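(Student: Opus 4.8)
The strategy is the standard one for asymptotic analysis of finite-energy solutions of an elliptic system over a half-cylinder, adapted to the perturbed contact instanton setting by first removing the Hamiltonian term via the gauge transformation \eqref{eq:ubar-intro}. First I would set $\overline u(\tau,t) = (\psi_H^t(\psi_H^1)^{-1})^{-1}(u(\tau,t))$ and record that, by the discussion surrounding \eqref{eq:energy-intro} and \eqref{eq:ubar-intro}, the density $e^{g_{H,u}}|d_H^\pi u - X_H^\pi\otimes\gamma|_J^2$ is the ordinary $\pi$-energy density $e^\pi(\overline u)$ of the \emph{unperturbed} map $\overline u$ with respect to the moving boundary condition $\overline u(\tau,0)\in\psi_H^1(R_0)$, $\overline u(\tau,1)\in R_1$. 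Hypothesis \ref{hypo:basic} then transfers to $\overline u$: finite $\pi$-energy, uniform $C^0$-bounds on $d\overline u$, and image in a compact set. The first step is the \emph{energy decay} estimate: finiteness of $E_H^\pi(u)=\frac12\int e^\pi(\overline u)$ forces $\int_{[k,k+1]\times[0,1]}e^\pi(\overline u)\to 0$, and combining this with the $C^{k,\alpha}$-estimate of Theorem \ref{thm:local-regularity-intro} (equivalently, the local regularity for $\overline u$) and the $W^{2,2}$-bound of Theorem \ref{thm:coercive-L2-intro} gives $\|d^\pi\overline u\|_{C^0([k,\infty)\times[0,1])}\to 0$ as $k\to\infty$. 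For the Reeb component, the second equation of \eqref{eq:contacton-Legendrian-bdy}, rewritten for $\overline u$, says $\overline u^*\lambda\circ j$ is closed; together with the charge-action analysis (Definition \ref{eq:action-charge}, the monotonicity of the truncated action $T_H$ via \eqref{eq:energy-action}) this pins down the asymptotic behavior of the one-form $\overline u^*\lambda$.

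\textbf{The compactness argument.}
Given $s_k\to\infty$, set $\overline u_k(\tau,t):=\overline u(s_k+\tau,t)$ on $[-s_k,\infty)\times[0,1]$, which exhausts $\R\times[0,1]$. By Hypothesis \ref{hypo:basic}(2)--(3) the maps $\overline u_k$ are uniformly $C^1$-bounded with image in a fixed compact set, and by the interior and boundary Schauder estimates (Theorem \ref{thm:local-regularity-intro}, applied on nested domains) they are uniformly $C^{l}$-bounded on every compact subset for every $l$; note the moving boundary conditions $\psi_H^1(R_0)$ and $R_1$ are genuinely fixed Legendrians, so the boundary regularity applies without change. Arzel\`a--Ascoli plus a diagonal argument extracts a subsequence converging in $C^l_{loc}$ to a limit $\overline u_\infty:\R\times[0,1]\to M$ solving $\delbar^\pi\overline u_\infty=0$, $d(\overline u_\infty^*\lambda\circ j)=0$, with the same Legendrian boundary conditions. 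The energy-decay step gives $E^\pi(\overline u_\infty)=\lim_k\frac12\int_{K\times[0,1]}e^\pi(\overline u_k)=0$ for every compact $K$, so $\overline u_\infty$ is massless; translating back through $\psi_H^t(\psi_H^1)^{-1}$ recovers the statement $\delbar_H^\pi u_\infty=0$ with $E^\pi(u_\infty)=0$ and the $C^l_{loc}$ convergence of $u(s_k+\tau,t)$ claimed in (1).

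\textbf{Identifying the limit.}
With $d^\pi\overline u_\infty\equiv 0$, the map $\overline u_\infty$ factors through the Reeb direction: $\del_\tau\overline u_\infty$ and $\del_t\overline u_\infty$ are pointwise multiples of $R_\lambda$. Writing $\overline u_\infty^*\lambda = a\,d\tau + b\,dt$, the equation $\delbar^\pi\overline u_\infty=0$ together with $d(\overline u_\infty^*\lambda\circ j)=0$ forces $a,b$ to be harmonic, hence (being bounded, by Hypothesis \ref{hypo:basic}(2)) constant; the Legendrian boundary conditions at $t=0,1$ kill the $d\tau$-component, giving $a\equiv 0$ and $b\equiv T_H$ a constant, so $\overline u_\infty(\tau,t)$ is independent of $\tau$ and equals a Reeb orbit $t\mapsto\gamma(T_H t)$ with $\gamma(0)\in\psi_H^1(R_0)$, $\gamma(1)\in R_1$ — equivalently, after undoing the gauge, a Reeb chord joining $R_0$ and $R_1$. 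That $Q_H=0$ for the limit follows because $Q_H$ is the integral of $\overline u_\infty^*\lambda\circ j$ over $\{\tau=0\}$, whose integrand is $-b\,d\tau$-type and vanishes on the limit; alternatively, $Q_H$ is the obstruction to exactness and the energy identity \eqref{eq:energy-action} forces it to zero in the limit. Finally $T_H\neq 0$: if $T_H=0$ then $\overline u_\infty$ is constant and, by the removal-of-singularity/threshold argument of \cite{oh:contacton} together with the finite-energy hypothesis, the original puncture would be removable, contradicting that it is a genuine puncture — this is where one invokes the bubbling analysis of \cite{oh:contacton}.

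\textbf{Main obstacle.}
The genuinely delicate point is the \emph{uniform} $C^l$-bound on the translates $\overline u_k$ near the boundary: the Schauder estimate of Theorem \ref{thm:local-regularity-intro} has the form $\|d_Hu\|_{C^{k,\alpha}(D_1)}\le C\|d_Hu\|_{W^{1,2}(D_2)}$ with $C$ independent of $u$ but the right-hand side must be controlled uniformly in $k$, which requires combining Hypothesis \ref{hypo:basic}(1)--(2) with the $W^{2,2}$-estimate of Theorem \ref{thm:coercive-L2-intro} \emph{including its boundary term} $\CB$, and checking that the boundary contribution does not blow up under translation — this is exactly where the Legendrian barrier and the uniform $C^0$-bound on $du$ are essential. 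The second subtle point is the step from $C^0$-smallness of $d^\pi\overline u$ on far-out strips to the constancy of the Reeb one-form in the limit; one must be careful that the closed one-form $\overline u_\infty^*\lambda\circ j$ is not merely closed but, thanks to $Q_H=0$, genuinely determines a single constant $T_H$, so that no nontrivial monodromy survives. Both points are handled by carefully tracking the gauge transformation and reusing, for $\overline u$, the unperturbed arguments of \cite{oh-wang:CR-map1} and \cite{oh:contacton-Legendrian-bdy}.
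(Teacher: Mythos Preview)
Your proposal is correct and follows essentially the same approach as the paper: reduce to the unperturbed case via the gauge transformation $\overline u(\tau,t) = (\psi_H^t(\psi_H^1)^{-1})^{-1}(u(\tau,t))$, then invoke the subsequence-convergence result for unperturbed contact instantons with Legendrian boundary $(\psi_H^1(R_0),R_1)$. The paper's proof is terser---it simply cites \cite[Theorem 6.5]{oh:contacton-Legendrian-bdy} after the gauge change---whereas you unpack the compactness and limit-identification steps explicitly; one minor caution is that your justification of $T_H\neq 0$ via ``removable puncture'' is not quite an argument (the domain $[0,\infty)\times[0,1]$ carries no genuineness hypothesis on the puncture), and indeed the paper's own Section~\ref{sec:subsequence-convergence} statement drops this claim.
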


Motivated by this theorem and also by the notion of \emph{translated points} introduced by
Sandon \cite{sandon-translated}, we introduce the following intersection analog thereto.

\begin{defn}[Reeb translated Hamiltonian chords] Let $H = H(t,x)$ be a contact Hamiltonian,
and $(R_0,R_1)$ be a pair of Legendrian submanifolds.
We call the path $\widetilde \gamma$ of the type
$$
\widetilde \gamma(t) = \psi_H^t(\psi_H^1)^{-1}(\gamma(Tt)), \quad \widetilde \gamma(0) \in R_0, \, \widetilde \gamma(1) \in R_1
$$
a \emph{Reeb-translated Hamiltonian chord} from $R_0$ to $R_1$.
\end{defn}
These Reeb translated Hamiltonian chords will be the generators of a Floer-type
complex associated to the perturbed contact instantons.
(See \cite{oh:entanglement1}.)

\subsection{Comments, convention and notations}

We suspect that the style of the present paper, which is full of tensor
calculations, is not familiar to most of anticipated  readers thereof. Because of this,
we would like to make some tips  and warnings in reading the present paper.

First of all, we presume readers are already familiar with the defining 
properties of contact triad connection.
Many boot-strap arguments establishing the coercive regularity estimates 
in the present paper extensively relies on precise Bochner-Weitzenbeck type identities and more
whose derivations crucially rely on our systematic usage of \emph{contact triad
connection} on $M$ and its associated \emph{contact Hermitian connection} 
introduced by Wang and the present author in \cite{oh-wang:connection} and utilized
by them in \cite{oh-wang:CR-map1,oh-wang:CR-map2} and by the present author
in \cite{oh:contacton-Legendrian-bdy}.  In the general harmonic theory e.g., in the 
transcedental method of studying geometry of holomorphic vector bundles in
complex geometry,  usage of Chern connection \cite{chern-connection} has been useful to 
optimize relevant tensor calculations. Likewise the way how Wang and the present
author had come up with the \emph{contact triad connection} in \cite{oh-wang:connection}
was through their effort in \cite{oh-wang:CR-map1,oh-wang:CR-map2} 
to simplify tensor calculations in their study of harmonic
theory of contact instantons associated to the given contact triad $(M,\lambda, J)$. 
It has also been proven
by the tensor calculations performed in \cite{oh:contacton-Legendrian-bdy} and in 
the present paper that the triad connection is also well-adapted to the calculations
involving the Legendrian boundary condition: It turns out to be very useful in organizing
various terms into something that carries natural geometric meaning (See the proofs of 
\cite[Theorem 4.5]{oh:contacton-Legendrian-bdy} and of Subsections 
\ref{subsec:W22-estimate} and \ref{subsec:boundary-integral}.)

Therefore it will be important for readers to
digest the defining properties of the contact triad connection  and the way how they are exercised in this 
calculation in \cite{oh-wang:CR-map1}. For readers' convenience, we
summarize basic properties of the contact triad connection in Appendix \ref{sec:connection},
but we strongly suggest readers to have these two papers in handy and consult them 
whenever they feel necessary in following tensor calculations.

We will also use the following sign convention and notations consistently throughout
the paper.

\begin{itemize}
\item We denote by $\nabla$ both the triad connection on $M$ and its pull-back 
connection $u^*\nabla$ on $\dot \Sigma$
for notational simplicity, whose usage should be clear from the context.
\item {(Contact Hamiltonian)} We define the contact Hamiltonian of a contact vector field $X$ to be
$$
- \lambda(X).
$$
\item
For given time-dependent function $H = H(t,x)$, we denote by $X_H$ the associated contact Hamiltonian vector field
whose associated Hamiltonian $- \lambda(X_t)$ is given by $H = H(t,x)$, and its flow by $\psi_H^t$.
\item When $\psi = \psi_H^1$, we say $H$ generates $\psi$ and write $H \mapsto \psi$.
\item {(Developing map)} $\Dev(t \mapsto \psi_t)$: denotes the time-dependent contact Hamiltonian generating
the contact Hamiltonian path $t \mapsto \psi_t$.
\item For given one-form $\gamma$ on $\dot \Sigma$, we write $d_Hu: = du - X_H \otimes \gamma$,
and $\lambda_H = \lambda + H\, \gamma$.
\item We write $\phi_H^t: = \psi_H^t \circ (\psi_H^1)^{-1}$.
\item $g_{\psi}$: Conformal exponent defined by the equation $\psi^*\lambda = e^{g_\psi}\lambda$
for a contactomorphism $\psi$ of contact manifold $(M,\xi)$.
\item $g_{H,u}: = g_{(\phi_H^t)^{-1}}\circ u$ for given Hamiltonian $H$ and a map $u:\dot \Sigma \to M$.
\item The constants $C, \, C'$ vary place by place which are independent of the function $u$
depending only on the triad $(M,\lambda, J)$ and the domain metric $h$ of $\dot \Sigma$.
\end{itemize}

\section{Perturbed action functional and the first variation}
\label{sec:perturbed-action}

Our earlier study of perturbed contact instantons and its application to Sandon-Shelukhin's conjecture
in \cite{oh:contacton-Legendrian-bdy}, \cite{oh:entanglement1} in hindsight lead us to the following
off-shell definition of perturbed action functionals for the contact Hamilton's equation
$\dot x = X_H(t,x)$.

\begin{defn}[Perturbed action] Let $H = H(t,x)$ be a contact Hamiltonian
and a pair $(R_0,R_1)$ of Legendrian submanifolds. We consider the path space
$$
\CL(R_0,R_1) = \CL(M;R_0,R_1): = \{ \gamma:[0,1] \to M \mid \gamma(0) \in R_0, \, \gamma(1) \in R_1\}.
$$
We define a functional $\CA_H: \CL(R_0,R_1)  \to \R$ given by
$$
\CA_H(\gamma) :=  \int_\gamma e^{g_{(\phi_H^t)^{-1}}(\gamma(t))} \gamma^*\lambda_H
= \int_\gamma e^{g_{(\phi_H^t)^{-1}}(\gamma(t))} \gamma^*(\lambda + H\, dt).
$$
When $H = 0$, we write $\CA_0 = \CA$.
\end{defn}

The following lemma connects this definition of perturbed action functional with the unperturbed one.

\begin{lem}\label{lem:action-identity} For given path $\gamma \in \CL(R_0,R_1)$, consider the path
$\overline \gamma \in \CL(\psi_H^1(R_0), R_1)$ defined by
$$
\overline \gamma(t): = (\psi_H^t(\psi_H^1)^{-1})^{-1}(\gamma(t)).
$$
Then we have
$$
\CA_H(\gamma) = \CA(\overline \gamma).
$$
\end{lem}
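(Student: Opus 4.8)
The plan is to unwind both sides as ordinary integrals over $[0,1]$ and match the integrands pointwise using the cocycle (chain-rule) property of the conformal exponent together with the change of variables $\overline\gamma(t) = (\phi_H^t)^{-1}(\gamma(t))$, where $\phi_H^t = \psi_H^t(\psi_H^1)^{-1}$. First I would write
\[
\CA(\overline\gamma) = \int_0^1 \lambda_{\overline\gamma(t)}\!\left(\dot{\overline\gamma}(t)\right)\,dt,
\qquad
\CA_H(\gamma) = \int_0^1 e^{g_{(\phi_H^t)^{-1}}(\gamma(t))}\,\lambda_H\!\left(\dot\gamma(t)\right)\,dt,
\]
recalling $\lambda_H = \lambda + H\,dt$, i.e. $\lambda_H(\dot\gamma(t)) = \lambda(\dot\gamma(t)) + H(t,\gamma(t))$. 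The key point is to differentiate $\overline\gamma(t) = (\phi_H^t)^{-1}(\gamma(t))$ in $t$, which produces two terms: the differential of $(\phi_H^t)^{-1}$ applied to $\dot\gamma(t)$, and the ``moving frame'' term coming from the $t$-dependence of the family $(\phi_H^t)^{-1}$, which is $-\,T(\phi_H^t)^{-1}\big(X_{\Dev(\phi_H^\bullet)}(t,\gamma(t))\big)$ evaluated appropriately. Since $\phi_H^t = \psi_H^t(\psi_H^1)^{-1}$ and $t\mapsto\psi_H^t$ is generated by $H$, the developing map of $t\mapsto\phi_H^t$ is again $H$ (the right translation by $(\psi_H^1)^{-1}$ does not change the generating Hamiltonian), so this second term is exactly the pull-back of $-X_H$.

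Next I would apply $\lambda$ at $\overline\gamma(t)$ to this expression for $\dot{\overline\gamma}(t)$ and use that $(\phi_H^t)^{-1}$ is a contactomorphism with $\big((\phi_H^t)^{-1}\big)^*\lambda = e^{\,g_{(\phi_H^t)^{-1}}}\lambda$ by definition of the conformal exponent. This converts $\lambda_{\overline\gamma(t)}\!\big(T(\phi_H^t)^{-1}(v)\big)$ into $e^{\,g_{(\phi_H^t)^{-1}}(\gamma(t))}\,\lambda_{\gamma(t)}(v)$ for any $v\in T_{\gamma(t)}M$. Taking $v=\dot\gamma(t)$ produces the term $e^{\,g_{(\phi_H^t)^{-1}}(\gamma(t))}\,\lambda(\dot\gamma(t))$, while the moving-frame term produces $-\,e^{\,g_{(\phi_H^t)^{-1}}(\gamma(t))}\,\lambda\big(X_H(t,\gamma(t))\big) = e^{\,g_{(\phi_H^t)^{-1}}(\gamma(t))}\,H(t,\gamma(t))$, using the sign convention that the contact Hamiltonian of $X_H$ is $-\lambda(X_H)$. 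Adding these gives precisely $e^{\,g_{(\phi_H^t)^{-1}}(\gamma(t))}\,\lambda_H(\dot\gamma(t))$, so the integrands agree and the lemma follows. I should also note that $\overline\gamma$ indeed lies in $\CL(\psi_H^1(R_0),R_1)$: at $t=0$, $(\phi_H^0)^{-1} = \psi_H^1$ so $\overline\gamma(0) = \psi_H^1(\gamma(0)) \in \psi_H^1(R_0)$, and at $t=1$, $(\phi_H^1)^{-1} = \mathrm{id}$ so $\overline\gamma(1) = \gamma(1) \in R_1$.

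The main obstacle is bookkeeping rather than conceptual: one must be careful that the ``moving-frame'' contribution to $\dot{\overline\gamma}$ is generated by $H$ and not by some conjugated Hamiltonian, and that the conformal factor is evaluated at the right point (at $\gamma(t)$, consistent with the definition $g_{H,u} = g_{(\phi_H^t)^{-1}}\circ u$ in Notation). Concretely, writing $\phi^t := \phi_H^t$ and $Z^t := \big(\frac{d}{dt}\phi^t\big)\circ(\phi^t)^{-1}$ for the generating vector field, one has $Z^t = X_H(t,\cdot)$ because right multiplication by the fixed element $(\psi_H^1)^{-1}$ leaves the generator unchanged; then $\frac{d}{dt}\big[(\phi^t)^{-1}(p)\big] = -\,T(\phi^t)^{-1}\big(Z^t(p)\big)$ for fixed $p$, and the chain rule with $p = \gamma(t)$ combines the two contributions as above. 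Once this derivative identity is in hand, the rest is a one-line application of $\big((\phi^t)^{-1}\big)^*\lambda = e^{g_{(\phi^t)^{-1}}}\lambda$.
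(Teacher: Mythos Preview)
Your proposal is correct and follows essentially the same approach as the paper: compute $\dot{\overline\gamma}$ via the chain rule to obtain $\dot{\overline\gamma} = d(\phi_H^t)^{-1}\big(\dot\gamma - X_H(\gamma(t))\big)$, then apply $\lambda$ and use $\big((\phi_H^t)^{-1}\big)^*\lambda = e^{g_{(\phi_H^t)^{-1}}}\lambda$ together with $-\lambda(X_H)=H$ to match the integrands pointwise. The only cosmetic difference is that the paper differentiates the forward relation $\gamma(t) = \phi_H^t(\overline\gamma(t))$ and then inverts, whereas you differentiate $\overline\gamma(t) = (\phi_H^t)^{-1}(\gamma(t))$ directly; both yield the same formula for $\dot{\overline\gamma}$.
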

\begin{proof} We compute
$$
(\overline \gamma)^*\lambda(\del_t) = \lambda\left(\frac{\del \overline \gamma}{\del t} \right).
$$
By definition, we have
$$
\gamma(t) = \psi_H^t(\psi_H^1)^{-1}(\overline \gamma(t)).
$$
Then
$$
\frac{d \gamma}{d t}(t)
= d\left(\psi_H^t(\psi_H^1)^{-1}\right)\left(\frac{d \overline \gamma}{d t}\right) + X_H(\gamma(t))
$$
and hence we have
$$
\frac{d \overline \gamma}{dt} = d(\psi_H^t(\psi_H^1)^{-1}))^{-1}
\left(\frac{d\gamma }{d t} - X_H(\gamma(t))\right).
$$
Now we evaluate
\beastar
\lambda\left(\frac{d \overline \gamma}{dt}\right) & = &
\lambda\left(d(\psi_H^t(\psi_H^1)^{-1})\right)^{-1}
\left(\frac{d\gamma }{d t} - X_H(\gamma(t)\right)\\
& = & e^{g_{(\phi_H^t)^{-1}}(\gamma(t))}
\lambda\left(\frac{d\gamma }{d t} - X_H(\gamma(t))\right)\\
& = & e^{g_{(\phi_H^t)^{-1}}(\gamma(t))}
(\gamma^*\lambda +  H\, dt)(\del_t) =
e^{g_{(\phi_H^t)^{-1}}(\gamma(t))}  \gamma^*\lambda_H(\del_t).
\eeastar
By taking the integral over $[0,1]$, we have finished the proof.
\end{proof}

Consider the free path space
$$
\CP: = C^\infty([0,1],M) = \{\gamma: [0,1] \to M \}
$$
and consider the action functional
\be\label{eq:AAH}\CA_H(\gamma)
= \int_0^1 e^{g_{(\phi_H^t)^{-1}}(\gamma(t))} \left(\lambda\left(\dot \gamma(t)\right)+ H\right)\, dt.
\ee
We now derive the following  first variation formula
of the action functional \eqref{eq:AAH} on a \emph{free} path space $\CP$.

\begin{prop}\label{prop:1st-variation} For any vector field $\eta$ along $\gamma$, we have
\bea\label{eq:1st-variation}
\delta \CA_H(\gamma)(\eta) & = & \int_0^1 e^{g_{(\phi_H^t)^{-1}}(\gamma(t))}
\left(d\lambda(\dot \gamma - X_H(t, \gamma(t)), \eta) \right)\, dt \nonumber\\
 & {}& +  \lambda( \eta(1))
-  e^{g_{(\psi_H^1)^{-1}}(\gamma(0))} \lambda(\eta(0)).
\eea
\end{prop}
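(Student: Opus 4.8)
The plan is to compute $\delta\CA_H(\gamma)(\eta)$ directly from the integral formula \eqref{eq:AAH} by differentiating under the integral sign along a variation $s\mapsto\gamma_s$ with $\gamma_0=\gamma$ and $\frac{\partial}{\partial s}\big|_{s=0}\gamma_s=\eta$, and then to rewrite the resulting expression using Cartan's formula plus an integration by parts, exactly as in the classical derivation of the first variation of the contact action. The integrand of $\CA_H$ has two $s$-dependent pieces: the conformal factor $e^{g_{(\phi_H^t)^{-1}}(\gamma_s(t))}$ and the one-form term $\gamma_s^*\lambda_H(\partial_t)=\lambda(\dot\gamma_s)+H(t,\gamma_s)$. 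Differentiating the product gives
\[
\delta\CA_H(\gamma)(\eta)=\int_0^1\Bigl[\,dg_{(\phi_H^t)^{-1}}(\eta)\,e^{g_{(\phi_H^t)^{-1}}(\gamma)}\bigl(\lambda(\dot\gamma)+H\bigr)
+e^{g_{(\phi_H^t)^{-1}}(\gamma)}\,\delta\bigl(\lambda(\dot\gamma)+H\bigr)(\eta)\,\Bigr]\,dt .
\]
For the second bracket I would use the standard identity $\delta(\lambda(\dot\gamma))(\eta)=d\lambda(\eta,\dot\gamma)+\frac{d}{dt}(\lambda(\eta))$ together with $\delta H(\eta)=dH(\eta)$.

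Next I would invoke the defining relation between $dH$, $dg$, and $d\lambda$ for contact Hamiltonian vector fields. Recall (from the paper's sign convention $H=-\lambda(X_H)$ and the contact condition) that $X_H$ satisfies $\iota_{X_H}d\lambda = dH - (R_\lambda H)\lambda$ together with $\lambda(X_H)=-H$, and that the conformal exponent of the flow is governed by $\mathcal L_{X_H}\lambda = (R_\lambda H)\,\lambda$, which after the substitution $\phi_H^t=\psi_H^t(\psi_H^1)^{-1}$ produces precisely the pointwise identity relating $dg_{(\phi_H^t)^{-1}}$ along $\gamma$ to $dH$ and $\iota_{X_H}d\lambda$. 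Feeding this in, the terms involving $H$ and $dH$ combine with the $dg$-term so that the $\mathcal L_{R_\lambda}$-type contributions cancel and one is left with
\[
\delta\CA_H(\gamma)(\eta)=\int_0^1 e^{g_{(\phi_H^t)^{-1}}(\gamma)}\Bigl(d\lambda(\dot\gamma-X_H(t,\gamma),\eta)\Bigr)\,dt
+\int_0^1 e^{g_{(\phi_H^t)^{-1}}(\gamma)}\frac{d}{dt}\bigl(\lambda(\eta)\bigr)\,dt+(\text{remaining }g'\text{-terms}).
\]
The cleanest route to organize this — and the one I would actually follow — is to reduce to the unperturbed case via Lemma~\ref{lem:action-identity}: writing $\overline\gamma(t)=(\psi_H^t(\psi_H^1)^{-1})^{-1}(\gamma(t))$ we have $\CA_H(\gamma)=\CA(\overline\gamma)=\int_0^1\lambda(\dot{\overline\gamma})\,dt$, whose first variation is the elementary $\int_0^1 d\lambda(\overline\eta,\dot{\overline\gamma})\,dt+[\lambda(\overline\eta)]_0^1$ with $\overline\eta$ the pushforward of $\eta$ under $d(\psi_H^t(\psi_H^1)^{-1})^{-1}$. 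Then I would transport this back: by the computation already performed in the proof of Lemma~\ref{lem:action-identity}, $\lambda(\dot{\overline\gamma})=e^{g_{(\phi_H^t)^{-1}}(\gamma)}\lambda(\dot\gamma-X_H)$, and an analogous (bilinear) computation gives $d\lambda(\overline\eta,\dot{\overline\gamma})=e^{g_{(\phi_H^t)^{-1}}(\gamma)}\,d\lambda(\eta,\dot\gamma-X_H)$ because $\psi_H^t(\psi_H^1)^{-1}$ pulls $\lambda$ back to $e^{g}\lambda$ and hence pulls $d\lambda$ back to $e^{g}d\lambda+dg\wedge e^{g}\lambda$, the extra $dg\wedge\lambda$ term dropping out after the obligatory cancellation using that $\dot{\overline\gamma}$ has been straightened. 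For the boundary term, at $t=1$ we have $\phi_H^1=\psi_H^1(\psi_H^1)^{-1}=\mathrm{id}$, so $g_{(\phi_H^1)^{-1}}=0$ and $\overline\eta(1)=\eta(1)$, giving $\lambda(\eta(1))$; at $t=0$ we have $\phi_H^0=(\psi_H^1)^{-1}$, so $d(\psi_H^0(\psi_H^1)^{-1})^{-1}=\psi_H^1$ pulls $\lambda$ to $e^{g_{\psi_H^1}}\lambda$, and unwinding the identification $\overline\eta(0)=d(\psi_H^1)^{-1}(\eta(0))$ yields the term $-\,e^{g_{(\psi_H^1)^{-1}}(\gamma(0))}\lambda(\eta(0))$ (equivalently $-e^{-g_{\psi_H^1}((\psi_H^1)^{-1}\gamma(0))}\lambda(\eta(0))$ as written in Proposition~\ref{prop:1st-variation-intro}), since $g_{(\psi_H^1)^{-1}}=-g_{\psi_H^1}\circ(\psi_H^1)^{-1}$.

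The main obstacle I anticipate is purely bookkeeping: keeping the two a priori different conformal-factor conventions consistent — the factor $e^{g_{(\phi_H^t)^{-1}}(\gamma(t))}$ appearing in the integrand versus the factor $e^{-g_{\psi_H^1}((\psi_H^1)^{-1}\gamma(0))}$ appearing at the endpoint $t=0$ — and verifying that the chain-rule term $dg(\eta)$ generated by differentiating $e^{g_{(\phi_H^t)^{-1}}}$ in $s$ is exactly absorbed by the $d\lambda$-pullback correction so that no stray $\mathcal L_{R_\lambda}H$ term survives. I would double-check this cancellation by computing both sides at a single puncture in the model case $\gamma=\overline\gamma$ (i.e.\ $\psi_H^t(\psi_H^1)^{-1}=\mathrm{id}$, $H=0$), where the formula must collapse to the known unperturbed first variation $\delta\CA(\gamma)(\eta)=\int_0^1 d\lambda(\dot\gamma,\eta)\,dt+\lambda(\eta(1))-\lambda(\eta(0))$, and then reinstating the flow conjugation. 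Everything else — the Cartan-formula manipulation and the integration by parts moving $\frac{d}{dt}$ off $\lambda(\eta)$ — is routine and I would not spell it out in detail.
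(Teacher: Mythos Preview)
Your proposal is correct and, in its ``cleanest route'' form, follows exactly the paper's argument: reduce to the unperturbed functional via Lemma~\ref{lem:action-identity} by setting $\overline\gamma(t)=(\phi_H^t)^{-1}(\gamma(t))$, compute $\delta\CA(\overline\gamma)$ by Cartan's formula as $\int_0^1 d\lambda(\overline\eta,\dot{\overline\gamma})\,dt+[\lambda(\overline\eta)]_0^1$, and then push everything back through $d(\phi_H^t)^{-1}$ using the conformal-exponent identities (including $g_{\psi^{-1}}=-g_\psi\circ\psi^{-1}$) to recover the stated formula with the correct boundary terms at $t=0,1$. Your preliminary ``direct differentiation'' sketch is an alternative route the paper does not take, but since you explicitly abandon it in favor of the gauge-transformation argument, there is nothing substantive to compare.
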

\begin{proof} If we consider the path $\overline \gamma$
defined by $\overline \gamma(t) = (\phi_H^t)^{-1}(\gamma(t))$, we have
$$
\overline \gamma^*\lambda(\del_t) = e^{g_{\phi_H^t(\gamma(t)}} \gamma^*\lambda_H.
$$
Let $\gamma_s$ with $ -\varepsilon < s < \varepsilon$ such that
$$
\gamma_0 = \gamma, \quad \frac{\del \gamma_s}{\del s}\Big|_{s = 0} = \eta.
$$
We compute
$$
\frac{d}{ds}\Big|_{s=0} \CA(\overline \gamma_s) = \frac{d}{ds}\Big|_{s=0}\int \overline
\gamma_s^*\lambda.
$$
We compute
\be\label{eq:dds-gamma*lambda}
\frac{\del}{\del s}\Big|_{s=0} \overline \gamma_s^*\lambda
= d(\overline \eta \rfloor \lambda) + \overline \eta \rfloor d\lambda
\ee
where we put
$$
\overline \eta: = \frac{\del \overline \gamma_s}{\del s}\Big|_{s = 0} .
$$
We compute
$$
\overline \eta = \frac{\del \overline \gamma_s}{\del s}\Big|_{s = 0} (t)
= d(\phi_H^t)^{-1}(\eta(t))
$$
Substituting this into \eqref{eq:dds-gamma*lambda}, we derive
\bea\label{eq:dds-int}
\frac{d}{ds}\Big|_{s=0} \int \overline \gamma_s^*\lambda
& = & \int_{\overline \gamma} d(\overline \eta \rfloor \lambda) + \overline \eta \rfloor d\lambda \nonumber \\
& = & \int_{\overline \gamma} \overline \eta \rfloor d\lambda
+ \lambda(\overline \eta(1)) -  \lambda(\overline \eta(0))
\nonumber \\
& = & \int_{\overline \gamma}  (d(\phi_H^t)^{-1}(\gamma(t))(\eta(t))  \rfloor d\lambda
 + \lambda(\overline \eta(1)) -  \lambda(\overline \eta(0)) \nonumber \\
& = & \int_0^1 d\lambda\left((d(\phi_H^t)^{-1}(\eta(t)), \dot{\overline \gamma}(t)\right)
\, dt + \lambda(\overline \eta(1)) -  \lambda(\overline \eta(0)).
\eea
Finally we note
$$
\dot{\overline \gamma} = d(\phi_H^t)^{-1}(\dot \gamma - X_{H_t}(\gamma(t)).
$$
By substituting this into \eqref{eq:dds-int}, we have derived
\beastar
\frac{d}{ds}\Big|_{s=0} \int \overline \gamma_s^*\lambda &= &
\int_0^1 e^{g_{(\phi_H^t)^{-1}}(\gamma(t))}
d\lambda \left(\eta(t), \dot \gamma(t) - X_{H_t}(\gamma(t)\right)\, dt
\nonumber \\
& {}& + \lambda( \eta(1))
-  e^{g_{(\psi_H^1)^{-1}}(\gamma(0))} \lambda(\eta(0))\\
& = & \int_0^1 e^{-g_{\phi_H^t}((\phi_H^t)^{-1}\gamma(t))}
d\lambda \left(\eta(t), \dot \gamma(t) - X_{H_t}(\gamma(t)\right)\, dt
\nonumber \\
& {}& + \lambda( \eta(1))
-  e^{-g_{\psi_H^1}((\psi_H^1)^{-1}(\gamma(0)))} \lambda(\eta(0))\\
\eeastar
where for the second equality we used the identity
$$
g_{\psi^{-1}} = - g_\psi \circ \psi^{-1}
$$
for the conformal exponent of general contactomorphism $\psi$.
(See \cite[Lemma 2.4 (2)]{oh:contacton-Legendrian-bdy}.)
Combining the above, we have finished the proof.
\end{proof}

An immediate corollary of this first variation formula shows that
the Legendrian boundary condition is a natural boundary condition for the
action functional $\CA_H$ in that it kills the boundary contribution in
the first variation.

\begin{cor} Let $(R_0,R_1)$ be a Legendrian pair. Then we have
\be\label{eq:1st-variation-R0R1}
\delta \CA_H(\gamma)(\eta) = \int_0^1 e^{g_{(\phi_H^t)^{-1}}(\gamma(t))}
\left(d\lambda(\dot \gamma - X_H(t, \gamma(t)), \eta) \right)\, dt
\ee
on $\CL(R_0,R_1)$.
In particular, $\gamma$ is a critical point of $\CA_H|_{\CL(R_0,R_1)}$ if and only if
$$
(\dot \gamma - X_H(t, \gamma(t)))^\pi = 0.
$$
\end{cor}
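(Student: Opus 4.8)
\emph{Proof proposal.} The plan is to read off both assertions directly from the first variation formula \eqref{eq:1st-variation} of Proposition~\ref{prop:1st-variation}, with essentially no further computation. First I would record that the Legendrian hypothesis is exactly what annihilates the two boundary terms. An admissible variation of a path $\gamma \in \CL(R_0,R_1)$ is a vector field $\eta$ along $\gamma$ with $\eta(0) \in T_{\gamma(0)}R_0$ and $\eta(1) \in T_{\gamma(1)}R_1$; since $R_0$ and $R_1$ are Legendrian, $TR_0, TR_1 \subset \xi = \ker\lambda$, so $\lambda(\eta(0)) = \lambda(\eta(1)) = 0$. Hence the last two terms on the right-hand side of \eqref{eq:1st-variation} drop out, which is precisely \eqref{eq:1st-variation-R0R1}.

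Next I would establish the characterization of critical points. One direction is immediate: if $(\dot\gamma - X_H(t,\gamma(t)))^\pi = 0$, then $\dot\gamma(t) - X_H(t,\gamma(t))$ is pointwise a multiple of $R_\lambda$, and since $\iota_{R_\lambda} d\lambda = 0$ the integrand of \eqref{eq:1st-variation-R0R1} vanishes identically, so $\gamma$ is a critical point of $\CA_H|_{\CL(R_0,R_1)}$. Conversely, suppose $\delta\CA_H(\gamma)(\eta) = 0$ for every admissible $\eta$. Variations $\eta$ compactly supported in the open interval $(0,1)$ are automatically admissible, and the conformal factor $e^{g_{(\phi_H^t)^{-1}}(\gamma(t))}$ is everywhere positive; so the fundamental lemma of the calculus of variations, applied to \eqref{eq:1st-variation-R0R1}, forces
\[
d\lambda\bigl(\dot\gamma(t) - X_H(t,\gamma(t)),\, w\bigr) = 0
\quad\text{for all } t \in (0,1) \text{ and all } w \in T_{\gamma(t)}M,
\]
and hence, by continuity, for all $t \in [0,1]$. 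Now $d\lambda$, viewed as an alternating $2$-form on $TM$, has kernel exactly the Reeb line $\R R_\lambda$ (it is nondegenerate on $\xi$ and annihilates $R_\lambda$), so the displayed identity says $\dot\gamma(t) - X_H(t,\gamma(t)) \in \R R_\lambda$, i.e.\ $\Pi(\dot\gamma - X_H(t,\gamma(t))) = (\dot\gamma - X_H(t,\gamma(t)))^\pi = 0$.

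There is no serious obstacle in this argument; the only subtlety worth flagging is the degeneracy of $d\lambda$. Because its kernel is the Reeb direction, the Euler--Lagrange equation of $\CA_H$ can only detect the $\xi$-component of $\dot\gamma - X_H$, which is why the critical point condition is $(\dot\gamma - X_H(t,\gamma(t)))^\pi = 0$ rather than the stronger $\dot\gamma = X_H$; the Reeb component $a(t)$ of $\dot\gamma - X_H$ remains genuinely unconstrained by $\CA_H$ alone, as anticipated in the discussion of the function $a$ surrounding Proposition~\ref{prop:lifting}.
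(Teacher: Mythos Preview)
Your proof is correct and follows essentially the same approach as the paper: the Legendrian boundary condition kills the two boundary terms in \eqref{eq:1st-variation}, and the critical point characterization comes down to the fact that $\ker d\lambda = \R R_\lambda$. The only cosmetic difference is that the paper phrases the converse by passing through the gauge-transformed expression $\bigl(d(\phi_H^t)^{-1}(\dot\gamma - X_H)\bigr)^\pi = 0$ and then invoking that contactomorphisms preserve $\xi$, whereas you argue directly from \eqref{eq:1st-variation-R0R1} via the kernel of $d\lambda$; your route is slightly more direct but the content is the same.
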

\begin{proof} The first statement follows immediately by the vanishing of $\lambda$
on the Legendrian submanifold. Therefore we have derived
$\delta \CA_H(\gamma)(\eta) = 0$ for all $\eta \in T_{\gamma}\CL(R_0,R_1)$ if
and only if
$$
\left(d(\phi_H^t)^{-1}(\dot \gamma - X_{H_t}(\gamma(t))\right)^\pi = 0.
$$
On the other hand, the differential $d(\phi_H^t)^{-1}$ of the contactomorphism
$(\phi_H^t)^{-1}$ preserves the contact distribution $\xi$, which shows that
the above vanishing is equivalent to
$$
(\dot \gamma - X_H(t,\gamma(t))^\pi = 0.
$$
This finishes the proof.
\end{proof}

We now examine the relationship between the critical points of
the aforementioned constrained action functional and the contact Hamiltonian trajectories.

\begin{prop}\label{prop:lifting} Suppose a path $\gamma:[0,1] \to M$ satisfies
\be\label{eq:Crit-AAH}
(\dot \gamma - X_H(t, \gamma(t)))^\pi = 0.
\ee
Define the function $\rho: [0,1] \to \R$ by $\rho(t): = \CA_H(\gamma|_{[0,t]})$ and
consider the Reeb-translated Hamiltonian
$$
\widetilde H(t,x): = H(t, \phi_{R_\lambda}^{\rho(t)}(x)).
$$
Then the Reeb-translated curve
$$
\widetilde \gamma(t) = \phi_{R_\lambda}^{-\rho(t)}(\gamma(t))
$$
satisfies the Hamilton's equation $\dot x = X_{\widetilde H}(t,x)$ for $\widetilde H$.
\end{prop}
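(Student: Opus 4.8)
The plan is to establish the equality $\dot{\widetilde\gamma}(t)=X_{\widetilde H}(t,\widetilde\gamma(t))$ by a direct differentiation of $\widetilde\gamma(t)=\phi_{R_\lambda}^{-\rho(t)}(\gamma(t))$ in $t$. Two elementary facts about the Reeb flow $\phi_{R_\lambda}^s$ drive the computation, both consequences of its being a \emph{strict} contactomorphism (so $(\phi_{R_\lambda}^s)^*\lambda=\lambda$): (i) $\phi_{R_\lambda}^s$ commutes with its own generator, whence $d\phi_{R_\lambda}^s(R_\lambda)=R_\lambda\circ\phi_{R_\lambda}^s$; and (ii) conjugation by the strict contactomorphism $\phi_{R_\lambda}^{\rho}$ intertwines contact Hamiltonian vector fields, sending $X_{H(t,\cdot)}$ to $X_{H(t,\cdot)\circ\phi_{R_\lambda}^{\rho}}=X_{\widetilde H(t,\cdot)}$; evaluated along our curve, and using $\phi_{R_\lambda}^{\rho(t)}(\widetilde\gamma(t))=\gamma(t)$, this reads $X_{\widetilde H}(t,\widetilde\gamma(t))=d\phi_{R_\lambda}^{-\rho(t)}\bigl(X_H(t,\gamma(t))\bigr)$. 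Both are of the same nature as the conformal-exponent manipulations already carried out in Section~\ref{sec:perturbed-action}, so I would dispatch them first.

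Granting (i) and (ii), the chain rule gives
\[
\dot{\widetilde\gamma}(t)= -\dot\rho(t)\,R_\lambda(\widetilde\gamma(t)) + d\phi_{R_\lambda}^{-\rho(t)}\bigl(\dot\gamma(t)\bigr).
\]
I would then substitute the hypothesis \eqref{eq:Crit-AAH}, i.e. $\dot\gamma(t)=X_H(t,\gamma(t))+a(t)R_\lambda(\gamma(t))$ with $a(t)=\lambda(\dot\gamma(t)-X_H(t,\gamma(t)))$, split $d\phi_{R_\lambda}^{-\rho(t)}$ over the two summands, and apply (i) to the Reeb summand and (ii) to the $X_H$ summand. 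This collects the right-hand side into
\[
\dot{\widetilde\gamma}(t)= X_{\widetilde H}(t,\widetilde\gamma(t)) + \bigl(a(t)-\dot\rho(t)\bigr)R_\lambda(\widetilde\gamma(t)),
\]
so the whole statement reduces to the scalar identity $\dot\rho(t)=a(t)$. This is exactly the announced ``relevant discussion on the function $a$'': the Reeb-translation amount $\rho$ is the partial perturbed action, and its derivative is the Reeb coefficient $a$ of the on-shell equation.

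For this last identity I would differentiate $\rho(t)=\CA_H(\gamma|_{[0,t]})$ by the fundamental theorem of calculus from the integral formula \eqref{eq:AAH}, using Lemma~\ref{lem:action-identity} --- which rewrites $\CA_H(\gamma|_{[0,t]})$ as $\CA(\overline\gamma|_{[0,t]})$ for $\overline\gamma(t)=(\phi_H^t)^{-1}(\gamma(t))$, a curve whose velocity is a multiple of $R_\lambda$ precisely because $\gamma$ is on shell --- together with the conformal-exponent identity $g_{\psi^{-1}}=-g_\psi\circ\psi^{-1}$ used in Proposition~\ref{prop:1st-variation}. I expect this step to be the main obstacle: the integrand of $\CA_H$ carries the factor $e^{g_{(\phi_H^t)^{-1}}(\gamma(t))}$, and the care lies in tracking the exact points at which the various conformal exponents are evaluated --- and the base-point in the conjugation formula (ii) --- so that differentiating $\rho$ returns precisely $a(t)$ and the spurious $R_\lambda$-component cancels. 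Once that is in hand, the $\xi$-component already matches by (ii), and the proof is complete.
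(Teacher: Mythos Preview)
Your proposal is correct and follows essentially the same direct-differentiation route as the paper: chain rule on $\widetilde\gamma$, then the two strict-contactomorphism facts (i) and (ii) about the Reeb flow, reducing everything to the scalar identity $\dot\rho=a$. The only difference is in that last step: the paper simply writes $b(t):=\lambda(\dot\gamma(t))+H(t,\gamma(t))$ (which is your $a(t)$, since $\lambda(X_H)=-H$), takes $\rho(t)=\int_0^t b(u)\,du$ as the working definition, and asserts this coincides with $\CA_H(\gamma|_{[0,t]})$ --- it does not engage with the weight $e^{g_{(\phi_H^t)^{-1}}}$ that you flag as the main obstacle, nor does it invoke Lemma~\ref{lem:action-identity}. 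So your instinct about where the subtlety lies is sound, but the paper's own proof simply sidesteps it.
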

\begin{proof} Since $\gamma$ satisfies \eqref{eq:Crit-AAH}, we can write
$\dot \gamma(t) - X_H(t,\gamma(t)) = b(t) R_\lambda(\gamma(t))$, i.e.,
$$
\dot \gamma(t) = b(t) R_\lambda(\gamma(t)) + X_H(t,\gamma(t))
$$
for some function $b = b(t)$. In fact, we have
$$
b(t) = \lambda(\dot \gamma(t) - X_H(t,\gamma(t))) = \lambda(\dot \gamma(t)) + H(t,\gamma(t)).
$$

Now consider the flow of the time-dependent
vector field $b(t) R_\lambda$ which is just a reparameterization of the Reeb flow
$$
t \mapsto \phi_{R_\lambda}^{\rho(t)}, \quad \rho(t): = \int_0^t b(u)\, du = \CA_H(\gamma|_{[0,t]}).
$$
By definition, we have $b(t) = \rho'(t)$. We define
$$
\widetilde \gamma(t): = \phi_{R_\lambda}^{-\rho(t)}(\gamma(t))
$$
and compute its derivative
\beastar
\frac{d}{dt} \widetilde \gamma(t) & = & - \rho'(t) R_\lambda(\widetilde \gamma(t))
+ d\psi_{R_{\lambda}}^{-\rho(t)} \dot \gamma(t)\\
& = & - \rho'(t) R_\lambda(\widetilde \gamma(t))
+ d\psi_{R_{\lambda}}^{\rho(t)}(\rho'(t)R_\lambda(\gamma(t)) + X_H(t,\gamma(t)))\\
& = & d\psi_{R_{\lambda}}^{-\rho(t)}(X_H(t,\gamma(t)))
= (\psi_{R_{\lambda}}^{\rho(t)})^*X_H(t,\widetilde \gamma(t))) \\
& = & X_{H_\circ \phi_{R_\lambda}^{\rho(t)}}(t,\widetilde \gamma(t))).
\eeastar
where we use the identity
$$
R_\lambda(\widetilde \gamma(t)) = d\psi_{R_{\lambda}}^{\rho(t)} R_\lambda(\gamma(t))
$$
for the penultimate equality, and the pull-back formula for the contact Hamiltonian
$$
\Dev_\lambda(\psi^*X) = e^{g_\psi} \Dev_\lambda(X)
$$
 for the last equality, respectively.
This finishes the proof.
\end{proof}

\begin{rem} In particular, the asymptotic limit $\gamma$ of any finite $\pi$-energy
solution of \eqref{eq:perturbed-contacton-bdy} satisfies
$$
(\dot \gamma - X_H(t,\gamma(t)))^{\pi} = 0.
$$
It is shown in \cite{oh:entanglement1} that the asymptotic limit indeed of the form
$$
t \mapsto \psi_H^t \circ (\psi_H^1)^{-1}\circ \psi_{R_\lambda}^t(\psi_H^1(p))
$$
for some point $p \in R_0$.
\end{rem}

\section{Perturbed contact instantons, $\pi$-energy and gauge transformation}
\label{sec:conversion}

Let $(M,\lambda,J)$ be a contact triad and
let $R_0, \, R_1$ be a pair of Legendrian submanifolds in $(M,\xi)$.
Let  a contact Hamiltonian $H= H(t,x)$ be given.

By the similar spirit considered in \cite[Section 2.5 \& 2.6]{abouzaid-seidel}, we make the following
choice of one-form $\gamma$ on $\dot \Sigma$. In the present paper, we do not need to be
as specific as therein at least for the main purpose of the present paper.

\begin{cond}[One-form $\gamma$]\label{cond:gamma}
Let $\epsilon^j: \pm [0,\infty, \infty) \to \dot \Sigma$ be
the map defining the given strip-like coordinate $(\tau,t)$  at each puncture $z_j \in \Sigma$ of $\dot \Sigma$.
We require $\gamma$ to satisfy the following requirement:
\begin{enumerate}
\item $\gamma|_{\del \dot \Sigma} = 0$,
\item  $(\epsilon^j)^*\gamma = dt$ for all $j$.
\end{enumerate}
\end{cond}

The following is the definitions of the main objects of the study of the present paper.

\begin{defn}
\begin{enumerate}
\item A \emph{Hamiltonian perturbed contact Cauchy--Riemann map}
from $\dot\Sigma$ to a contact triad $(M, \lambda, J)$ is a smooth map $u: \dot\Sigma\to M$ so that
$$
\delbar_H^\pi u: = (du - X_H(t,u))^{\pi(0,1)} =0.
$$
\item A \emph{Hamiltonian perturbed contact instanton}
is a $H$-perturbed contact CR map $u: \dot\Sigma\to M$ satisfying
the equation $ d(e^{g_{H,u}}(u^*\lambda_H \circ j)) = 0$ in addition, i.e., a map
satisfying the following system of equations
$$
\delbar_H^\pi u = 0, \quad d(e^{g_{H,u}}(u^*\lambda_H \circ j)) = 0.
$$
\end{enumerate}
\end{defn}

The choice of correct definition of the $\pi$-energy for the equations \eqref{eq:perturbed-contacton-bdy}
is rather subtle.  To motivate our definition, we consider the contact counterparts
for the relevant materials from the Lagrangian Floer theory in symplectic geometry.

For a given contact Hamiltonian $H \mapsto \psi$, i.e., $\psi = \psi_H^1$,
we denote by  $\Omega(\psi(R_0),R_1 )$ the set of smooth path
space between the pair $(\psi(R_0), R_1)$.

We recall that the assignment
$$
\ell(t) \mapsto(\psi_H^t(\psi_H^1)^{-1})^{-1}(\ell'(t))
$$
defines a bijective map
\be\label{eq:PhiH}
\Phi_H:\CL(\psi_H^1(R_0) ,R_1) \to \CL(R_0,R_1).
\ee
In particular the translated intersection point of the contact instanton complex for the pair
$(R_0, R_1)$ is generated by the set of Reeb chords
$$
{\mathfrak R}eeb(\psi_H^1(R_0) ,R_1)
$$
between the pair $((\psi_H^1(R_0) ,R_1)$. These curves defines the Floer-type complex
whose boundary map is constructed by the moduli space of unperturbed contact instanton
equation \eqref{eq:perturbed-contacton-bdy}.

On the other hand, the dynamical version of the complex is generated by
some set of solutions of Hamilton's equation $\dot x = X_H(t,x)$
and its boundary map is constructed by the moduli space of \eqref{eq:perturbed-contacton-bdy}.
(See Proposition \ref{prop:Ham=Reeb}  below for the
details of this correspondence.)
These two frameworks are related by the bijective map $\Phi_H$
via the correspondence
\be\label{eq:gauge1}
\ell(t) = \psi_H^t ((\psi_H^1)^{-1}(\ell'(t))), \quad
u(s,t)=\psi_H^t((\psi_H^1)^{-1}(\overline u(s,t))).
\ee
\begin{rem}
Our motivation for this choice also comes from the well-established coordinate change
between the \emph{dynamical version} and the \emph{intersection theoretic
version} of the Floer homology in symplectic geometry.
This has been systematically utilized by the present author
in the symplectic Floer theory. (See \cite{oh:jdg,oh:cag,oh:dmj}
and the book \cite[Section 12.7]{oh:book2}.)
\end{rem}

To make the aforementioned correspondence precise in the current contact case,
we need to take the CR almost complex structure into consideration
arising from a contact triad $(M,\lambda,J)$ as in \cite{oh-wang:connection}.

\begin{choice}\label{choice:LJt}
For an given contact Hamiltonian $H = H(t,x)$ and a one-parameter family
$J = \{J_t\}$ of CR-almost complex structures adapted to $\lambda$, we
consider another family $J_t'$ defined by the relation
\be\label{eq:Jt}
J = \{J_t\}_{0 \leq t \leq 1}, \quad J_t:= ((\psi_H^t(\psi_H^1)^{-1})^{-1})^*J_t'
\ee
of $\lambda$-admissible almost complex structures.
\end{choice}

Now we recall the aforementioned gauge transformation $\Phi_H^{-1}$ to $u$ and define $\overline u : = \Phi_H^{-1}(u)$
which has the expression
\be\label{eq:ubar}
\overline u(\tau,t) = (\psi_H^t(\psi_H^1)^{-1})^{-1}(u(\tau,t))
= \psi_H^1(\psi_H^t)^{-1}(u(\tau,t)).
\ee
Then the following equivalence of the two equations,
which is the reason why we adopt the equation \ref{eq:perturbed-contacton-bdy} as
the correct form of \emph{Hamiltonian perturbed contact instanton equation}.

\begin{prop}\label{prop:Ham=Reeb} Let $J_0 \in \CJ(\lambda)$ and $J_t$ defined as
in \eqref{eq:Jt}. Let $\dot \Sigma \cong \R \times [0,1]$ and let $g_{H,u}$ be the
conformal exponent function defined as above.
Then $u$ satisfies
\be\label{eq:perturbed-contacton-bdy}
\begin{cases}
(du - X_H \otimes dt)^{\pi(0,1)} = 0, \quad d((e^{g_{H, u}}(u^*\lambda + H\, dt)\circ j) = 0\\
u(\tau,0) \in R_0, \quad u(\tau,1) \in R_1
\end{cases}
\ee
with respect to $J_t$ if and only if $\overline u$ satisfies
\be\label{eq:perturbed-intersection}
\begin{cases}
\delbar^\pi \overline u = 0, \quad d(\overline u^*\lambda \circ j) = 0 \\
\overline u(\tau,0) \in  \psi_H^1(R_0), \, \overline u(\tau,1) \in R_1
\end{cases}
\ee
where $\delbar^\pi = \delbar_{J'}^\pi$ associated to $J' = \{J_t'\}$ defined in Choice \ref{choice:LJt}.
\end{prop}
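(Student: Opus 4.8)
The plan is to compute directly how the gauge transformation $u \mapsto \overline u = \Phi_H^{-1}(u)$ acts on each of the two components of the perturbed equation, and check that it intertwines them with the unperturbed contact instanton equation for $\overline u$ with respect to $J' = \{J'_t\}$. First I would differentiate the defining relation $u(\tau,t) = \psi_H^t(\psi_H^1)^{-1}(\overline u(\tau,t))$ in the domain variables. In the $\tau$-direction this gives $\partial_\tau u = d(\psi_H^t(\psi_H^1)^{-1})(\partial_\tau \overline u)$, while in the $t$-direction there is an extra term from the $t$-dependence of the family $\psi_H^t(\psi_H^1)^{-1}$, which by the very definition of the contact Hamiltonian vector field contributes exactly $X_H(t,u(\tau,t))$. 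Hence
$$
du - X_H \otimes dt = d\bigl(\psi_H^t(\psi_H^1)^{-1}\bigr)\circ d\overline u
$$
as $u^*TM$-valued one-forms, which is the key algebraic identity: the perturbation term is precisely what is needed to absorb the non-autonomy of the gauge. This is the same bookkeeping already carried out in the proof of Lemma \ref{lem:action-identity}, so I would invoke that computation.

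Next I would push this through the $\pi$-projection and the almost complex structures. Since $\phi_H^t = \psi_H^t(\psi_H^1)^{-1}$ is a coorientation-preserving contact diffeomorphism, $d\phi_H^t$ preserves $\xi$ and commutes with the projection $\Pi$ in the appropriate sense; together with the relation $J_t = ((\phi_H^t)^{-1})^* J'_t$ from Choice \ref{choice:LJt}, this shows that $d\phi_H^t$ carries the $(j,J'_t)$-antilinear part of $d\overline u$ restricted to $\xi$ to the $(j,J_t)$-antilinear part of $d_H u$ restricted to $\xi$. Therefore $\delbar^\pi_H u = d\phi_H^t(\delbar^\pi_{J'}\overline u)$, and since $d\phi_H^t$ is an isomorphism on $\xi$, the first equation $\delbar^\pi_H u = 0$ is equivalent to $\delbar^\pi_{J'}\overline u = 0$. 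For the boundary conditions this is immediate: $u(\tau,0)\in R_0$ iff $\overline u(\tau,0) = \phi_H^0(R_0)^{\text{-preimage}}$... more precisely, since $(\phi_H^0)^{-1} = \psi_H^1$ and $(\phi_H^1)^{-1} = \mathrm{id}$, one gets $\overline u(\tau,0)\in\psi_H^1(R_0)$ and $\overline u(\tau,1)\in R_1$, exactly as stated.

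The remaining and somewhat more delicate point is the second, "Reeb-component" equation. Here I would use the decomposition \eqref{eq:dHu-decompose}--\eqref{eq:lambdadHw}: applying $\lambda$ to $du - X_H\otimes dt = d\phi_H^t\circ d\overline u$ and using $\lambda\circ d\phi_H^t = e^{g_{\phi_H^t}}\lambda$ on all of $TM$ (valid because $\phi_H^t$ is a contactomorphism, not merely on $\xi$), together with the identity $g_{\psi^{-1}} = -g_\psi\circ\psi^{-1}$ already cited from \cite{oh:contacton-Legendrian-bdy}, I get
$$
e^{g_{H,u}}\,u^*(\lambda + H\,dt) = \overline u^*\lambda,
$$
where $g_{H,u} = g_{(\phi_H^t)^{-1}}\circ u$ is exactly the conformal factor introduced precisely so that this cancellation occurs. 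Composing with $j$ and applying $d$ then shows $d(e^{g_{H,u}}u^*\lambda_H\circ j) = 0$ is equivalent to $d(\overline u^*\lambda\circ j) = 0$. Putting the three parts together gives the claimed equivalence.

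I expect the main obstacle to be purely notational rather than conceptual: keeping careful track of the time-dependence of the family $\phi_H^t$ and of the base point at which each conformal exponent $g_{\phi_H^t}$, $g_{(\phi_H^t)^{-1}}$ is evaluated, so that the factors $e^{g_{H,u}}$ and $e^{g_{\phi_H^t}\circ(\phi_H^t)^{-1}\circ u}$ are correctly identified and cancel. The one genuinely substantive input — that the perturbation term $X_H\otimes dt$ is exactly the derivative of the gauge in the $t$-direction — is a formal consequence of the definition of $X_H$ as the generator of $\psi_H^t$, and was essentially already established in Lemma \ref{lem:action-identity}.
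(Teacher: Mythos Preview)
Your proposal is correct and follows essentially the same route as the paper's proof in Appendix~\ref{sec:conversion}: both differentiate the gauge relation $u = \phi_H^t(\overline u)$ to obtain $du - X_H\otimes dt = d\phi_H^t \circ d\overline u$, then use that $d\phi_H^t$ preserves $\xi$ and intertwines $J'_t$ with $J_t$ for the first equation, and use the conformal-exponent identity for the second. Your handling of the Reeb-component equation is in fact slightly cleaner than the paper's: you package the computation as the one-form identity $e^{g_{H,u}}\,u^*\lambda_H = \overline u^*\lambda$ (the same identity already underlying Lemma~\ref{lem:action-identity}), after which the equivalence $d(e^{g_{H,u}}u^*\lambda_H\circ j)=0 \Leftrightarrow d(\overline u^*\lambda\circ j)=0$ is immediate, whereas the paper carries out the equivalent coordinate computation of $d(\overline u^*\lambda\circ j)(\partial_\tau,\partial_t)$ by hand.
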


Postponing its full proof  till Appendix \ref{sec:conversion}, we just mention the following:
Consider the transformation from $u$ to $\overline u$ which is given by
\be\label{eq:u-to-w}
\overline u(\tau,t) = (\psi_H^t(\psi_H^1)^{-1})^{-1}(u(\tau,t)).
\ee
Then if $u$ is satisfies \eqref{eq:perturbed-contacton-bdy}
with respect to $J_t$, then $\overline u$ is a contact instanton with respect to $J_0$ with the boundary condition
$$
\overline u(\tau,0) \in \psi_H^1(R_0), \, \overline u(\tau,1) \in R_1.
$$

\begin{ques}
What would be the correct \emph{off-shell} choice of energy for perturbed contact instantons
equation \eqref{eq:contacton-Legendrian-bdy} for a \emph{general pair} $(H,J)$
with $J = \{J_t\}$ an arbitrary family of $\lambda$-adapted CR-almost complex structures?
\end{ques}

It turns out that the answer is  the following.

\begin{defn}[The $\pi$-energy of perturbed contact instanton]\label{defn:energy-dynamical}
Let $u: \R \times [0,1] \to M$ be any smooth map. We define
$$
E^\pi_{J,H}(u): = \frac12 \int e^{g_{H, u}}|(d^\pi u - X_H^\pi (u) \otimes \gamma)^\pi|^2_J.
$$
\end{defn}

Then we have the following energy identity between the maps satisfying
\eqref{eq:perturbed-contacton-bdy} and those satisfying \eqref{eq:perturbed-intersection},
when $J=\{J_t\}$ is the one given by \eqref{eq:Jt}.

\begin{prop}[Proposition 7.2 \cite{oh:entanglement1}]
\label{prop:EpiJH=Epi} Let $J' = \{J_t'\}$ be as in \eqref{eq:Jt}.
For any smooth map $u: \R \times [0,1] \to M$. let $\overline u$ be as above. Then
\be\label{eq:EpiJH=Epi}
E^\pi_{J,H}(u) = E^\pi_{J'}( \overline u).
\ee
\end{prop}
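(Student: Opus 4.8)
The plan is to verify the identity \eqref{eq:EpiJH=Epi} by a direct pointwise computation of the integrand, transporting the $\pi$-component of $d_Hu$ under the gauge transformation $\Phi_H^{-1}$ and tracking how the conformal factor $e^{g_{H,u}}$ transforms. The key geometric input is the relation $\gamma(t) = \psi_H^t(\psi_H^1)^{-1}(\overline u(\tau,t))$ applied fiberwise, exactly as in the proof of Lemma \ref{lem:action-identity}: differentiating in $\tau$ and $t$ produces
$$
du = d\bigl(\psi_H^t(\psi_H^1)^{-1}\bigr)\bigl(d\overline u\bigr) + X_H(t,u)\otimes dt,
$$
so that $d_Hu = du - X_H\otimes dt = d\bigl(\psi_H^t(\psi_H^1)^{-1}\bigr)\bigl(d\overline u\bigr)$. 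Since $\phi_H^t := \psi_H^t(\psi_H^1)^{-1}$ is a contactomorphism, its differential preserves the contact distribution $\xi$ and intertwines the projections $\Pi$, hence $d_H^\pi u = d\phi_H^t\bigl(d^\pi\overline u\bigr)$ (and likewise for the Reeb component), where I should be a little careful that the $t$-dependence of $\phi_H^t$ contributes only in the $dt$-direction, which is absorbed into the $X_H\otimes dt$ correction already.

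Next I would compute the norm. By Choice \ref{choice:LJt} we have $J_t = (\phi_H^t)^{-1*}J_t' = (d\phi_H^t)^{-1} J_t' \, d\phi_H^t$ on $\xi$, so $d\phi_H^t$ is $(J_t,J_t')$-complex linear on $\xi$; and the triad metrics compare via the conformal factor: for $v\in\xi$, $|d\phi_H^t(v)|^2_{J_t'}$ at $u$ equals $e^{g_{\phi_H^t}(\overline u)}|v|^2_{J_t}$ at $\overline u$, using $(\phi_H^t)^*(d\lambda|_\xi) = e^{g_{\phi_H^t}}d\lambda|_\xi$ together with the compatibility of $J_t, J_t'$ with their respective forms. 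Now by the definition \eqref{eq:gHu} of $g_{H,u}$,
$$
g_{H,u}(\tau,t) = g_{(\phi_H^t)^{-1}}(u(\tau,t)) = -g_{\phi_H^t}\bigl((\phi_H^t)^{-1}(u(\tau,t))\bigr) = -g_{\phi_H^t}(\overline u(\tau,t)),
$$
so the conformal factor $e^{g_{\phi_H^t}(\overline u)}$ arising in the norm comparison is exactly $e^{-g_{H,u}}$. Therefore
$$
e^{g_{H,u}}\,\bigl|(d^\pi u - X_H^\pi(u)\otimes dt)\bigr|^2_{J_t} \;=\; e^{g_{H,u}}\,\bigl|d\phi_H^t(d^\pi\overline u)\bigr|^2_{J_t'} \;=\; e^{g_{H,u}}\, e^{-g_{H,u}}\,\bigl|d^\pi\overline u\bigr|^2_{J_t'} \;=\; \bigl|d^\pi\overline u\bigr|^2_{J_t'}.
$$
Finally, since the gauge transformation acts fiberwise (for each fixed $(\tau,t)$) and does not alter the domain $\R\times[0,1]$ or its metric $h$, integrating this pointwise identity over $\R\times[0,1]$ and dividing by $2$ gives $E^\pi_{J,H}(u) = E^\pi_{J'}(\overline u)$.

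I expect the main technical care — rather than a deep obstacle — to be bookkeeping the $t$-dependence of the family $\phi_H^t$: one must confirm that the extra $\partial_t\phi_H^t$ term is precisely the $X_H\otimes dt$ correction and does not contaminate the $\pi$-component comparison, and that the norm comparison uses $J_t$ and $J_t'$ at the matching parameter $t$. The rest is a clean three-line substitution once Lemma \ref{lem:action-identity}'s computation is in hand, and indeed the statement is quoted as Proposition 7.2 of \cite{oh:entanglement1}, so the argument should reduce to recording that same computation in the present notation. For completeness I would note that the identity holds off-shell, i.e.\ for arbitrary smooth $u$, since nothing above used the equation \eqref{eq:perturbed-contacton-bdy}.
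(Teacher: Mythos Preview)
The paper does not give its own proof of this proposition; it is quoted verbatim from \cite[Proposition 7.2]{oh:entanglement1}, and the only related computation reproduced in the present paper is the proof of Proposition \ref{prop:Ham=Reeb} in Appendix \ref{sec:conversion}. Your overall strategy---transport $d_Hu$ under the gauge transformation, use that $d\phi_H^t$ preserves $\xi$ and intertwines the two families of CR-structures, then match the conformal factor $e^{g_{H,u}}$ against the one produced by $(\phi_H^t)^*d\lambda|_\xi = e^{g_{\phi_H^t}}d\lambda|_\xi$---is exactly the natural one and is surely what the cited reference does.

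That said, your bookkeeping of which $J$ lives where is inverted, and the displayed chain of equalities is only correct because two such swaps cancel. From $J_t = ((\phi_H^t)^{-1})^*J_t'$ one gets, at the point $u=\phi_H^t(\overline u)$,
\[
(J_t)_u \;=\; d\phi_H^t\big|_{\overline u}\,(J_t')_{\overline u}\,(d\phi_H^t\big|_{\overline u})^{-1},
\]
so $d\phi_H^t$ is $(J_t',J_t)$-linear (i.e.\ $J_t\circ d\phi_H^t = d\phi_H^t\circ J_t'$), not the other way around. The correct norm comparison is then, for $v\in\xi_{\overline u}$,
\[
\bigl|d\phi_H^t(v)\bigr|^2_{J_t}\ \text{at}\ u \;=\; e^{g_{\phi_H^t}(\overline u)}\,\bigl|v\bigr|^2_{J_t'}\ \text{at}\ \overline u,
\]
with $J_t$ on the left (where the integrand of $E^\pi_{J,H}(u)$ lives) and $J_t'$ on the right (where the integrand of $E^\pi_{J'}(\overline u)$ lives). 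With this correction the pointwise identity
\[
e^{g_{H,u}}\,\bigl|d_H^\pi u\bigr|^2_{J_t} \;=\; e^{-g_{\phi_H^t}(\overline u)}\cdot e^{g_{\phi_H^t}(\overline u)}\,\bigl|d^\pi\overline u\bigr|^2_{J_t'} \;=\; \bigl|d^\pi\overline u\bigr|^2_{J_t'}
\]
is transparent, and integration gives the claim. Your remark that the argument is off-shell is correct and worth keeping.
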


The following proposition is one of the key
energy estimates for the solutions $u$ of \eqref{eq:perturbed-contacton-bdy}
in terms of the geometry of Legendrian boundary conditions and its asymptotic chords.
This is an immediate generalization of \cite[Proposition 7.2]{oh:entanglement1} which considers only the case
with the case of constant family $J_t' \equiv J_0$.
For readers' convenience, we provide its proof in Appendix \ref{sec:conversion}.

\begin{prop}[Proposition 7.4 \cite{oh:entanglement1}]\label{prop:pienergy-bound}
Let $\phi_H^t $ be as in \eqref{eq:phiHt}.
Let $u$ be any finite energy solution of \eqref{eq:perturbed-contacton-bdy-intro}
associated to the pair $(H,J)$ as in \eqref{eq:Jt} with the asymptotic limits
$$
\gamma_\pm(t) := \lim_{\tau \to \pm \infty} u(\tau,t).
$$
Let  $\overline u$ be the map defined  as above and consider the paths given by
$$
\overline \gamma_\pm(t) = (\phi_H^t)^{-1}(\gamma_\pm(t)).
$$
Then $\overline \gamma_\pm$ are Reeb chords from $\psi_H^1(R_0)$ to $R_1$ and satisfy
\be\label{eq:pi-energy}
E^\pi_{J,H}(u) = \int_0^1 (\overline \gamma_+)^*\lambda - \int_0^1 (\overline \gamma_-)^*\lambda.
\ee
\end{prop}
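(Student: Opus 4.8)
The plan is to reduce everything to the unperturbed case via the gauge transformation and Lemma~\ref{lem:action-identity}/Proposition~\ref{prop:EpiJH=Epi}, and then invoke the corresponding statement for contact instantons already proved in \cite{oh-wang:CR-map1,oh:contacton-Legendrian-bdy}. First I would set $\overline u(\tau,t) = (\psi_H^t(\psi_H^1)^{-1})^{-1}(u(\tau,t))$ and apply Proposition~\ref{prop:Ham=Reeb}: since $u$ solves \eqref{eq:perturbed-contacton-bdy-intro} with respect to the family $J = \{J_t\}$ defined in \eqref{eq:Jt}, the transformed map $\overline u$ solves the \emph{unperturbed} contact instanton equation \eqref{eq:perturbed-intersection} with respect to $J' = \{J_t'\}$ and with the shifted Legendrian boundary condition $\overline u(\tau,0) \in \psi_H^1(R_0)$, $\overline u(\tau,1) \in R_1$. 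This turns the finite-energy perturbed problem into a finite-energy unperturbed problem on the strip, at the cost of deforming the left Legendrian by $\psi_H^1$.

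Next I would handle the asymptotics. By Proposition~\ref{prop:EpiJH=Epi} we have $E^\pi_{J,H}(u) = E^\pi_{J'}(\overline u) < \infty$, so $\overline u$ is a finite $\pi$-energy unperturbed contact instanton on $\R\times[0,1]$ with Legendrian boundary; the asymptotic convergence theory of \cite{oh-wang:CR-map1,oh:contacton-Legendrian-bdy} (under the standing $C^1$ and image-compactness hypotheses inherited from those on $u$) then guarantees that $\overline u(\tau,\cdot) \to \overline\gamma_\pm$ as $\tau\to\pm\infty$ and that each $\overline\gamma_\pm$ is a Reeb chord from $\psi_H^1(R_0)$ to $R_1$. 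Pulling back through the gauge transformation, $\overline\gamma_\pm(t) = (\phi_H^t)^{-1}(\gamma_\pm(t))$ where $\gamma_\pm(t) = \lim_{\tau\to\pm\infty} u(\tau,t)$, which is exactly the relation claimed in the statement; I should note here that the asymptotic limits $\gamma_\pm$ of $u$ satisfy $(\dot\gamma_\pm - X_H(t,\gamma_\pm))^\pi = 0$, consistent with the Remark following Proposition~\ref{prop:lifting}.

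For the energy identity itself, I would apply the unperturbed energy--action identity (the analog of \eqref{eq:energy-action}, proved in \cite{oh:contacton-Legendrian-bdy} for the Legendrian strip): for any finite-energy contact instanton $\overline u$ on $\R\times[0,1]$ with Reeb-chord asymptotics one has
\be\label{eq:plan-unperturbed}
E^\pi_{J'}(\overline u) = \CA(\overline\gamma_+) - \CA(\overline\gamma_-) = \int_0^1 (\overline\gamma_+)^*\lambda - \int_0^1 (\overline\gamma_-)^*\lambda,
\ee
where the last equality uses $\CA(\overline\gamma) = \int \overline\gamma^*\lambda$ by definition. (Strictly, the cleanest route is a direct computation: $e^{g_{H,u}}|d_H^\pi u - X_H^\pi\otimes\gamma|_J^2\, dA$ is by Proposition~\ref{prop:EpiJH=Epi} pointwise equal to $|d^\pi\overline u|^2_{J'}\, dA$, and on shell the latter equals $d(\overline u^*\lambda\circ j)$-exact-plus-$\overline u^*d\lambda$-type terms whose integral over the strip telescopes to the boundary chord integrals; this is precisely the calculation of \cite[Prop.~7.4]{oh:entanglement1} carried out there for constant $J_t'$, and the non-constant family does not affect it because the identity is purely metric/symplectic once the equation is written in the $\overline u$ picture.) Combining \eqref{eq:plan-unperturbed} with $E^\pi_{J,H}(u) = E^\pi_{J'}(\overline u)$ yields \eqref{eq:pi-energy}.

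The main obstacle I anticipate is not any single estimate but checking that the asymptotic-convergence machinery of \cite{oh:contacton-Legendrian-bdy} applies verbatim to $\overline u$: that paper's hypotheses are phrased for a fixed $J$, whereas here $J' = \{J_t'\}$ is genuinely $t$-dependent, so one must confirm that its coercive estimates and the exponential-decay/subsequence-convergence argument go through for a smooth compact family of $\lambda$-adapted CR almost complex structures (it does, since all the estimates are uniform over compact families, but this should be stated). A secondary technical point is the boundary term: one must verify that with $\gamma|_{\del\dot\Sigma} = 0$ (Condition~\ref{cond:gamma}) the perturbation term $X_H\otimes\gamma$ and the conformal factor contribute nothing on $\del(\R\times[0,1])$ beyond the Legendrian vanishing of $\lambda$, so that the only surviving boundary contributions in the telescoping are the chord integrals $\int_0^1(\overline\gamma_\pm)^*\lambda$ at $\tau = \pm\infty$; this is where the specific normalization $(\epsilon^j)^*\gamma = dt$ is used to identify those limits correctly.
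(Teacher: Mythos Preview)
Your proposal is correct and follows essentially the same route as the paper: gauge-transform via Proposition~\ref{prop:Ham=Reeb} to the unperturbed map $\overline u$, invoke $E^\pi_{J,H}(u)=E^\pi_{J'}(\overline u)$ from Proposition~\ref{prop:EpiJH=Epi}, and then compute $E^\pi_{J'}(\overline u)=\int(\overline u)^*d\lambda$ by Stokes with the Legendrian boundary killing the side terms (exactly the computation the paper carries out in the proof of Theorem~\ref{thm:energy-action}). Your remarks on the $t$-dependent $J'$ and the boundary normalization are apt, and the paper handles the first of these in the same way you suggest, by noting that the proof of \cite[Proposition~7.2]{oh:entanglement1} goes through unchanged for any pair $(J,J')$ related by \eqref{eq:Jt-J't}.
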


Now we give the crucial action identity for the energy which provides the gradient structure of
the perturbed contact instanton equation.

\begin{thm}\label{thm:energy-action}
Let $\phi_H^t $ be as in \eqref{eq:phiHt}. Consider a general pair $(H,J)$.
Let $u$ be any finite energy solution of \eqref{eq:perturbed-contacton-bdy-intro}
associated to the $(H,J)$  with the asymptotic limits
$$
\gamma_\pm(t) := \lim_{\tau \to \pm \infty} u(\tau,t).
$$
Let  $\overline u$ be the map defined  as above and consider the paths given by
$$
\overline \gamma_\pm(t) = (\phi_H^t)^{-1}(\gamma_\pm(t)).
$$
Then $\overline \gamma_\pm$ are Reeb chords from $\psi_H^1(R_0)$ to $R_1$ and satisfy
\be\label{eq:energy-action}
E^\pi_{J,H}(u) = \CA_H(\gamma_+) - \CA_H(\gamma_-).
\ee
\end{thm}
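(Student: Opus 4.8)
The plan is to reduce Theorem \ref{thm:energy-action} to the unperturbed energy--action identity for contact instantons by systematically exploiting the gauge transformation $\overline u = \Phi_H^{-1}(u)$ and the two bridge results already in hand: Proposition \ref{prop:EpiJH=Epi} (which converts the perturbed $\pi$-energy into the unperturbed $\pi$-energy of $\overline u$) and Lemma \ref{lem:action-identity} (which converts $\CA_H(\gamma)$ into $\CA(\overline\gamma)$). First I would recall from Proposition \ref{prop:Ham=Reeb} that $\overline u$ solves the \emph{unperturbed} contact instanton equation \eqref{eq:perturbed-intersection} with respect to $J' = \{J_t'\}$ and with boundary on $(\psi_H^1(R_0), R_1)$, and that it has finite $\pi$-energy by Proposition \ref{prop:EpiJH=Epi}. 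Hence $\overline u$ is a finite-$\pi$-energy contact instanton with Legendrian boundary in the sense of \cite{oh:contacton-Legendrian-bdy}, and its asymptotic limits at $\pm\infty$ are exactly the Reeb chords $\overline\gamma_\pm(t) = (\phi_H^t)^{-1}(\gamma_\pm(t))$ from $\psi_H^1(R_0)$ to $R_1$.

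The core step is then to invoke (or re-derive, if the cited statement is only for the closed string or constant-$J$ case) the unperturbed energy--action identity
\be
E^\pi_{J'}(\overline u) = \CA(\overline\gamma_+) - \CA(\overline\gamma_-) = \int_0^1 (\overline\gamma_+)^*\lambda - \int_0^1 (\overline\gamma_-)^*\lambda ,
\ee
which is precisely the content of Proposition \ref{prop:pienergy-bound} once one observes that for a Reeb chord $\overline\gamma$ the classical action $\CA(\overline\gamma) = \int \overline\gamma^*\lambda$. Concretely I would argue this by Stokes: on the infinite strip one integrates $d(e^{g}\, \overline u^*\lambda\circ j)$-type identities, using the second equation in \eqref{eq:perturbed-intersection} to kill the interior term $d(\overline u^*\lambda\circ j) = 0$, using $\gamma|_{\del\dot\Sigma} = 0$ together with the Legendrian boundary condition (so $\lambda$ vanishes on the boundary arcs and $\overline u^*\lambda\circ j$ restricts trivially there), and reading off the two ends as $\int_0^1 \overline\gamma_\pm^*\lambda$. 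The standard pointwise identity $\frac12 e^{g}|d^\pi\overline u|^2 = e^{g}\,\overline u^*\lambda \wedge \text{(something)} + \cdots$ from the unperturbed theory then converts the $\pi$-energy integral into the boundary/asymptotic terms; this is exactly the computation already carried out in \cite{oh:entanglement1} and sketched in Proposition \ref{prop:pienergy-bound}, so I would simply cite it rather than reproduce it.

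The final step is purely bookkeeping: chain together
\be
E^\pi_{J,H}(u) \overset{\text{Prop.\,\ref{prop:EpiJH=Epi}}}{=} E^\pi_{J'}(\overline u) \overset{\text{Prop.\,\ref{prop:pienergy-bound}}}{=} \int_0^1 (\overline\gamma_+)^*\lambda - \int_0^1 (\overline\gamma_-)^*\lambda \overset{\text{Lem.\,\ref{lem:action-identity}}}{=} \CA_H(\gamma_+) - \CA_H(\gamma_-),
\ee
where in the last equality I apply Lemma \ref{lem:action-identity} to each asymptotic path $\gamma_\pm$ separately (noting $\overline\gamma_\pm = \Phi_H(\gamma_\pm)$ in the notation there, and that $\CA(\overline\gamma_\pm) = \int_0^1 \overline\gamma_\pm^*\lambda$ since these are genuine paths, not just critical ones).

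\textbf{Main obstacle.} The delicate point is not the algebra of chaining the three identities but ensuring the asymptotic analysis is legitimate: one must know a priori that $u$ (equivalently $\overline u$) genuinely \emph{converges} at the punctures to the chords $\gamma_\pm$ (resp.\ $\overline\gamma_\pm$), with enough decay that the Stokes/limit argument producing $\int_0^1\overline\gamma_\pm^*\lambda$ is valid and that no energy escapes to the ends. Under Hypothesis \ref{hypo:basic} this convergence is supplied by the subsequence-convergence machinery (Theorem \ref{thm:subsequence-intro}) and the exponential decay estimates of \cite{oh:contacton-Legendrian-bdy, oh:entanglement1}; but one should be careful that the hypotheses of Theorem \ref{thm:energy-action} as stated (``finite energy solution'') are strong enough to guarantee these, or else the statement implicitly presupposes Hypothesis \ref{hypo:basic}. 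A secondary, more technical nuisance is checking that the conformal factor $e^{g_{H,u}}$ transforms correctly under the gauge change — i.e.\ that $g_{H,u}(\tau,t)$ is exactly the pullback by $\overline u$ of the data making $\overline u^*\lambda$ appear with weight $1$ — but this is precisely the identity $\lambda(d\overline\gamma/dt) = e^{g_{(\phi_H^t)^{-1}}(\gamma(t))}\gamma^*\lambda_H(\del_t)$ verified in the proof of Lemma \ref{lem:action-identity}, extended pointwise in $\tau$, so it requires no new idea.
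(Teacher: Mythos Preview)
Your proposal is correct and follows essentially the same route as the paper: the paper also chains Proposition~\ref{prop:EpiJH=Epi} (after remarking that its proof goes through for any pair $J, J'$ related by \eqref{eq:Jt-J't}), the Stokes computation of Proposition~\ref{prop:pienergy-bound} (which it writes out explicitly rather than just citing), and Lemma~\ref{lem:action-identity}. Your identification of the asymptotic convergence as the only genuine analytic input is also accurate; the paper tacitly assumes this via the standing hypotheses.
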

\begin{proof} We first generalize the identity \eqref{eq:EpiJH=Epi} to any general pair
$J= \{J_t\}$ and $J' = \{J'_t\}$ satisfying
\be\label{eq:Jt-J't}
J = \{J_t\}_{0 \leq t \leq 1}, \quad J_t:= ((\psi_H^t(\psi_H^1)^{-1})^{-1})^*J_t'.
\ee
In fact an examination of the proof of \cite[Proposition 7.2]{oh:entanglement1} actually shows
the same inequality for any pair $J, \m J'$ that satisfy this relation.

Therefore, for the study of energy identity \eqref{eq:energy-action}, it is enough to compute $E^\pi_J(\overline u)$.
By the same proof of \cite[Proposition 7.4]{oh:entanglement1}, we compute, using the equation
 $Jd\overline u(\del_\tau) = d\overline u(\del t)$,
\beastar
&{}&
\int_{-\infty}^\infty \int_0^1 |d^\pi \overline u(\del_\tau)|^2\, dt\, d\tau \\
&=& \int_{-\infty}^\infty \int_0^1 d\lambda(d\overline u(\del_\tau), J d\overline u(\del_\tau))\, dt\, d\tau
= \int_{-\infty}^\infty \int_0^1 d\lambda(d\overline u(\del_\tau), d\overline u(\del_t))\, dt\, d\tau\\
& = & \int (\overline u)^*d\lambda \\
& = & \left(\int_0^1 ({\overline \gamma_+})^*\lambda - \int_0^1 ({\overline \gamma_-})^*\lambda \right)
+ \int_{-\infty}^\infty \lambda\left(\frac{\del \overline u}{\del \tau}(\tau,0) \right)\, d\tau -
\int_{-\infty}^\infty \lambda\left(\frac{\del \overline u}{\del \tau}(\tau,1) \right)\, d\tau\\
& = & \int_0^1 ({\overline \gamma_+})^*\lambda - \int_0^1 ({\overline \gamma_-})^*\lambda.
\eeastar
Here the last equality follows from the Legendrian boundary condition
$$
\overline u(\tau,0) \in \psi_H^1(R), \quad \overline u(\tau,1) \in R
$$
for $i = 0, \, 1$. On the other hand, Lemma \ref{lem:action-identity} shows that the last line is the same as
$$
\CA_H(\gamma_+) - \CA_H(\gamma_-).
$$
This finishes the proof.
\end{proof}

\section{Weitzenb\"ock formula for the $\pi$-energy density function}

In this section, we start with the study of moduli spaces of perturbed contact instantons
by establishing local regularity theory thereof. For this purpose, we utilize the
Weitzenb\"ock formula by extending the calculations of the second covariant differential and
the Laplacian of $\pi$-harmonic energy
carried out in \cite{oh-wang:connection} to the  Hamiltonian perturbed contact Cauchy--Riemann maps.
This formula will be a fundamental ingredient for the regularity estimate
and other analytic study of perturbed contact instantons.

Let $\lambda$ be a contact form associated to the contact manifold $(M,\xi)$.
We decompose the total (perturbed) energy density function similarly as in
\cite[p.661]{oh-wang:CR-map1}. For this purpose, we consider a general
$u^*TM$-valued one-form $\eta$ on $\dot \Sigma$. Using the decomposition
$$
Y = Y^\pi + \lambda(Y) R_\lambda
$$
we can decompose $\eta = \eta \circ \pi + \langle \eta, R_\lambda\rangle \, \lambda$
which is an orthogonal direct sum. Therefore we have
$$
|\eta|^2 = |\eta^\pi|^2 + |\langle \eta,R_\lambda \rangle|^2.
$$
We apply this to $\eta = d_Hu$. This leads us to the following definition which is a direct generalization of
\cite[p.661]{oh-wang:CR-map1}.

\begin{defn}[Total energy density function]
We define
$$
e_H(u) = e_H^\pi(u) + |u^*\lambda_H|^2 = |(du - X_H \otimes \gamma)^\pi|^2 + |u^* \lambda
+ H\, \gamma|^2.
$$
\end{defn}

Next It is crucial to compute the covariant derivative $\nabla(d_Hu)$ and the
Hodge-Laplacian $\Delta d_Hu$ to apply celebrated Bochner-Weitzenb\"ock formula.
Before actually carrying out this computation, we make some preliminary simplification of
the upcoming rather complex tensorial computations, utilizing the defining properties of
contact triad connection which is summarized in Appendix \ref{sec:connection}.

\subsection{Preliminary simplifications for tensorial calculation}

We start with the following, which crucially relies on the property of
contact triad connection introduced in \cite{oh-wang:connection}.
(See Appendix \ref{sec:connection} for a summary of its defining properties.)

\begin{lem}\label{lem:nablaRlambda}
Let $u:\dot \Sigma \to M$ be any $H$-perturbed CR map and
$\alpha \in \Omega^{\pi(0,1)}(u^*TM)$. Then
\bea
\langle d_Hu, R_\lambda \rangle & = & u^*(\lambda + H \gamma) \label{eq:dHuRlambda}\\
\langle \alpha, \nabla R_\lambda \rangle & = & 0 \label{eq:alpha,nablaRlambda}
\eea
\end{lem}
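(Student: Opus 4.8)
The plan is to verify the two displayed identities by direct computation using only the pointwise decomposition $Y = Y^\pi + \lambda(Y)\,R_\lambda$ on $TM$ together with two defining properties of the contact triad connection $\nabla$: first, that $\nabla R_\lambda$ takes values in $\xi$ (equivalently $\lambda(\nabla_X R_\lambda) = 0$ for all $X$), and second, that the anti-complex-linear part of $\nabla R_\lambda|_\xi$ vanishes, i.e.\ the $(0,1)$-component of the tensor $X \mapsto \nabla_X R_\lambda$ (restricted to $\xi$) is zero. Both of these are among the axioms of the contact triad connection recalled in Appendix~\ref{sec:connection}, and they are precisely the features that were engineered in \cite{oh-wang:connection} to make such computations collapse.

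For \eqref{eq:dHuRlambda}, I would simply pair $d_H u = du - X_H(u)\otimes\gamma$ with $R_\lambda$ in the triad metric. Since the triad metric restricted to $\xi$ is $d\lambda(\cdot, J\cdot)$ and $R_\lambda$ is a unit vector orthogonal to $\xi$, the inner product $\langle Z, R_\lambda\rangle$ equals $\lambda(Z)$ for any $Z \in TM$. Applying this with $Z = d_H u(\cdot) = du(\cdot) - X_H(u)\,\gamma(\cdot)$ gives $\langle d_H u, R_\lambda\rangle = \lambda(du) - \lambda(X_H(u))\,\gamma = u^*\lambda + H\,\gamma$, using the sign convention $-\lambda(X_H) = H$ fixed in the conventions list; this is exactly $u^*\lambda_H = u^*(\lambda + H\gamma)$ in the notation of \eqref{eq:u*lambdaH}. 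This step is essentially bookkeeping with the sign convention and should be a single line.

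For \eqref{eq:alpha,nablaRlambda}, the point is that $\alpha \in \Omega^{\pi(0,1)}(u^*TM)$ means $\alpha$ is $\xi$-valued and anti-complex-linear: $J\alpha = \alpha \circ j$ pointwise (with a sign depending on orientation conventions), while $\nabla R_\lambda$ — viewed as the $u^*TM$-valued one-form $X \mapsto \nabla_{du(X)} R_\lambda$ — is also $\xi$-valued by the first property above, so the pairing $\langle \alpha, \nabla R_\lambda\rangle$ already lives in $\xi$. Now I split $\nabla R_\lambda|_\xi$ into its complex-linear and anti-complex-linear parts; the second property of the triad connection kills the anti-complex-linear part, so $\nabla R_\lambda|_\xi$ is complex linear. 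A complex-linear one-form and an anti-complex-linear one-form are pointwise orthogonal with respect to a Hermitian metric (this is the standard fact that the $(1,0)$ and $(0,1)$ pieces are orthogonal), which forces $\langle \alpha, \nabla R_\lambda\rangle = 0$ pointwise, hence as a function on $\dot\Sigma$.

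\textbf{Main obstacle.} There is no serious computational difficulty here; the only thing requiring care is getting the conventions exactly right — the sign in the definition of $X_H$ (the paper uses $H = -\lambda(X_H)$), the precise statement of which component of $\nabla R_\lambda$ the triad-connection axioms annihilate, and the orientation/sign conventions relating $J$, $j$, and the $(0,1)$-projection $\delbar^\pi_H$. I would therefore spend most of the write-up pinning down exactly which clauses of the Appendix~\ref{sec:connection} axioms are being invoked, and checking that the anti-complex-linearity of $\alpha$ together with the complex-linearity of $\nabla R_\lambda|_\xi$ genuinely produces orthogonality in the triad Hermitian metric rather than merely in some auxiliary metric. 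Once those conventions are fixed, both identities are immediate.
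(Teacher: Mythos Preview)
Your treatment of \eqref{eq:dHuRlambda} is correct and identical to the paper's: both simply pair $d_Hu=du-X_H\otimes\gamma$ with $R_\lambda$ and use $\lambda(X_H)=-H$.

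For \eqref{eq:alpha,nablaRlambda} your strategy is the same as the paper's --- invoke Properties (3) and (6) of the triad connection and then appeal to type-orthogonality --- but the execution has a real gap. First, you have axiom (6) backwards: writing $A(Y)=\nabla_Y R_\lambda$, the quantity $\partial^\nabla_Y R_\lambda=\tfrac12\bigl(A(Y)-JA(JY)\bigr)$ is exactly the $J$-\emph{linear} part $A^+$ of $A$ (check: $A^+(JY)=JA^+(Y)$), so axiom (6) forces $A$ to be anti-$J$-linear, not complex linear as you claim. Second, and more seriously, knowing the type of the \emph{endomorphism} $A:\xi\to\xi$ does not by itself determine the $(j,J)$-type of the pulled-back one-form $e\mapsto \nabla_{du(e)}R_\lambda=A\bigl(d^\pi u(e)\bigr)$ on $\dot\Sigma$; that type is inherited through $d^\pi u$. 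For an $H$-perturbed CR map it is $d_H^\pi u=d^\pi u-X_H^\pi\otimes\gamma$ that is of type $(1,0)$, not $d^\pi u$ itself. The extra $X_H^\pi\otimes\gamma$ piece prevents $A\circ d^\pi u$ from being of pure type, so the $(1,0)$-versus-$(0,1)$ orthogonality step does not go through as stated.

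To be fair, the paper's own proof glides over exactly this point (and its hypothesis $\alpha\in\Omega^{\pi(0,1)}$ is at odds with its application to $\alpha=d_H^\pi u$, which is $(1,0)$); your instinct that the ``main obstacle'' here is getting the conventions right was well founded. When $H=0$ the gap closes, since then $d^\pi u$ itself is $(1,0)$ and $A\circ d^\pi u$ is honestly $(0,1)$, orthogonal to any $(1,0)$ form. In the perturbed case you must track the $X_H^\pi\otimes\gamma$ contribution separately rather than absorb it into a pure-type claim.
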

\begin{proof} We compute
\beastar
\langle d_H u, R_\lambda \rangle & = &
\langle du - u^*X_H \otimes \gamma \lambda), R_\lambda \rangle \\
& = & \langle du, R_\lambda \rangle -  \langle X_H,R_\lambda \rangle \gamma =
u^*\lambda + u^*(H) \otimes \gamma.
\eeastar
This finishes the proof of \eqref{eq:dHuRlambda}, which actually holds for any smooth map, not just for
contact Cauchy-Riemann maps.

For the proof of \eqref{eq:alpha,nablaRlambda}, we first note that Property (3)
of contact triad connection in Appendix \ref{sec:connection} implies that the linear map
$$
\nabla R_\lambda: TM \to TM
$$
satisfies $\nabla_{R_\lambda} R_\lambda = 0$ and that it naturally restricts to the linear map $\nabla R_\lambda: \xi \to \xi$.
Then the property \eqref{eq:dnablaYR}  of contact triad connection therein implies that
the linear map
$$
\nabla_{\Pi(\cdot)} R_\lambda: \xi \to TM
$$
is of $(1,0)$-type. On the other hand, since $d_H^\pi u$
is $(1,0)$-type for any $H$-perturbed contact CR map, we conclude
$$
\langle d_H^\pi u, \nabla_{d^\pi u} R_\lambda \rangle = 0
$$
and so is $\langle \alpha, \nabla_{du} R_\lambda \rangle$. This finishes the proof.
\end{proof}

Next we derive the following identity.

\begin{prop}\label{prop:nabladHu} For any perturbed Cauchy-Riemann map, we have
\be\label{eq:nabladHu}
\nabla(d_Hu) = \nabla^\pi (d^\pi_H u) + \nabla(u^*\lambda_H) R_\lambda + u^*\lambda_H \nabla R_\lambda.
\ee
\end{prop}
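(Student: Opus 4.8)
The plan is to start from the pointwise decomposition $d_Hu = d_H^\pi u + u^*\lambda_H\otimes R_\lambda$ established via Notation \ref{nota:dHpiu-lambdaH} together with \eqref{eq:dHuRlambda}, and simply apply the pull-back triad connection $\nabla$ to both sides. By the Leibniz rule for covariant differentiation of a vector-valued one-form written as a product of the scalar one-form $u^*\lambda_H$ and the section $R_\lambda$ (here $R_\lambda$ is read as $R_\lambda\circ u$, a section of $u^*TM$), one gets
\[
\nabla(d_Hu) = \nabla(d_H^\pi u) + \nabla(u^*\lambda_H)\, R_\lambda + u^*\lambda_H\,\nabla R_\lambda,
\]
where $\nabla R_\lambda$ means $u^*\nabla$ applied to the pulled-back section, i.e. the one-form $X\mapsto \nabla_{du(X)}R_\lambda$. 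So the only substantive point is the replacement of $\nabla(d_H^\pi u)$ by $\nabla^\pi(d_H^\pi u)$, i.e. the claim that the full covariant derivative of the $\xi$-valued form $d_H^\pi u$ is already $\xi$-valued, with no Reeb component, and that its $\xi$-part is computed by the contact Hermitian connection $\nabla^\pi = \Pi\circ\nabla|_\xi$.

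The heart of the argument is therefore: for $\alpha := d_H^\pi u \in \Omega^1(u^*\xi)$ (which is moreover of type $(1,0)$ since $u$ is an $H$-perturbed CR map), show $\langle \nabla\alpha, R_\lambda\rangle = 0$, so that $\nabla\alpha = \Pi\nabla\alpha = \nabla^\pi\alpha$. I would do this by the metric compatibility of the triad connection: for any vector fields $X$ on $\dot\Sigma$,
\[
\langle \nabla_X \alpha, R_\lambda\rangle = X\langle \alpha, R_\lambda\rangle - \langle \alpha, \nabla_X R_\lambda\rangle.
\]
The first term vanishes because $\langle\alpha, R_\lambda\rangle = \langle d_H^\pi u, R_\lambda\rangle = 0$ identically (the $\pi$-projection kills the Reeb direction). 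The second term vanishes by Lemma \ref{lem:nablaRlambda}, equation \eqref{eq:alpha,nablaRlambda}: since $\alpha = d_H^\pi u$ is $(1,0)$ and $\nabla_{\Pi(\cdot)}R_\lambda$ is of $(1,0)$-type (a defining property of the contact triad connection, recalled via \eqref{eq:dnablaYR}), the pairing $\langle \alpha, \nabla_{du}R_\lambda\rangle$ is the contraction of a $(1,0)$-form with a $(1,0)$-valued form, hence zero. Here one must be slightly careful that $\nabla_X R_\lambda$ for general $X = d u(\cdot)$ decomposes as $\nabla_{\Pi(du)}R_\lambda + \lambda(du)\nabla_{R_\lambda}R_\lambda$, and the second piece vanishes because $\nabla_{R_\lambda}R_\lambda = 0$ (Property (3) of the triad connection, Appendix \ref{sec:connection}); so $\langle\alpha,\nabla_{du}R_\lambda\rangle = \langle\alpha, \nabla_{\Pi(du)}R_\lambda\rangle = 0$ as in \eqref{eq:alpha,nablaRlambda}.

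Having established $\nabla(d_H^\pi u) = \nabla^\pi(d_H^\pi u)$, I would substitute back into the Leibniz expansion above to obtain exactly \eqref{eq:nabladHu}. I do not expect any genuine obstacle here — this is essentially a bookkeeping computation — but the one place requiring care is the interpretation of the symbols: $\nabla R_\lambda$ on the right-hand side is the $u^*TM$-valued one-form obtained by differentiating the pulled-back Reeb field, and $\nabla(u^*\lambda_H)$ is the ordinary covariant (or just exterior, since it is a scalar one-form and will be used that way) derivative of the scalar one-form $u^*\lambda_H = u^*\lambda + u^*H\,\gamma$; one should note that the Leibniz rule $\nabla(f\cdot s) = (\nabla f)\,s + f\,\nabla s$ for $f$ a scalar one-form and $s$ a section is applied componentwise and produces the two displayed terms with $R_\lambda$ and $\nabla R_\lambda$ respectively. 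A brief remark justifying that the identity uses only the $H$-perturbed CR condition (through the $(1,0)$-type of $d_H^\pi u$ feeding into Lemma \ref{lem:nablaRlambda}) and not the full instanton equation would complete the proof.
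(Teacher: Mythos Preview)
Your proposal is correct and follows essentially the same approach as the paper: start from the decomposition $d_Hu = d_H^\pi u + u^*\lambda_H\otimes R_\lambda$, apply the Leibniz rule, and then reduce $\nabla(d_H^\pi u)$ to $\nabla^\pi(d_H^\pi u)$ by showing the Reeb component vanishes via metric compatibility together with Lemma \ref{lem:nablaRlambda} (using that $d_H^\pi u$ is of type $(1,0)$ and $\nabla_{R_\lambda}R_\lambda = 0$). Your write-up is in fact slightly cleaner in its handling of the decomposition $\nabla_{du}R_\lambda = \nabla_{\Pi(du)}R_\lambda + u^*\lambda\,\nabla_{R_\lambda}R_\lambda$ than the paper's own presentation.
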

\begin{proof} We start from the decomposition
$$
d_Hu = d_H^\pi u + u^*\lambda_H \otimes R_\lambda
$$
and $du = d^\pi u + u^*\lambda R_\lambda$.
Therefore we have
\be\label{eq:nabladHu-temp}
\nabla(d_H u) = \nabla(d_H^\pi u) + \nabla (u^*\lambda_H R_\lambda) = \nabla (d_H^\pi u) + \nabla (u^*\lambda_H) R_\lambda
+ u^*\lambda_H \nabla R_\lambda
\ee
Recalling $\nabla$ in $\nabla(d_H u)$ is the
pull-back connection of the contact triad connection $\nabla$ on $(M,\lambda, J)$,
we actually have
$$
\nabla_e (d_H^\pi u) = \nabla_{du(e)}(d_H^\pi  u)
$$
for any $e \in T\dot \Sigma$. We have
$$
\nabla_{du(e)}(d_H^\pi u) = \nabla_{d^\pi u(e)} (d_H^\pi u )
+ \nabla_{u^*\lambda(e) R_\lambda} (d_H^\pi  u).
$$
We will now prove
\be\label{eq:nabladHu,Rlambda=0}
\langle \nabla_{u^*\lambda(e) R_\lambda} (d_H^\pi  u), R_\lambda \rangle = 0.
\ee
Since $\langle d_H^\pi u, R_\lambda \rangle = 0$ and $\nabla$ preserves the triad metric, we have
$$
\langle \nabla_{d^\pi u(e)} (d_H^\pi u ), R_\lambda \rangle
= - \langle d_H^\pi u, \nabla_{d^\pi u(e)} R_\lambda \rangle.
$$
On the other hand,  $d_H^\pi u$ is of type $(1,0)$ since $u$ is assumed to be a $H$-perturbed CR map.
Therefore we derive
$$
\langle d_H^\pi u,\nabla_{du} R_\lambda \rangle = 0
$$
by \eqref{eq:alpha,nablaRlambda}. This proves \eqref{eq:nabladHu,Rlambda=0} which in turn implies
$$
\nabla_{du} (d_H^\pi u) = \nabla_{d^\pi u} (d_H^\pi u) = \nabla^\pi_{du} (d_H^\pi u).
$$
Substituting this into \eqref{eq:nabladHu-temp}, we have finished the proof.
\end{proof}

Taking the norm square of \eqref{eq:nabladHu}, we have obtained the following upper bound for
the $|\nabla (d_H u)|^2$.

\begin{cor} Let $u$ be any $H$-perturbed CR map. Then we have
\be\label{eq:|nabladHu|2}
|\nabla (d_H u)|^2 \leq |\nabla^\pi (d^\pi_H u)|^2 + |\nabla(u^*\lambda_H)|^2 + \|\nabla R_\lambda\|^2_{C^0}|u^*\lambda||du|
\ee
where $\nabla R_\lambda$ is regarded as a linear map $\nabla R_\lambda : TM \otimes \xi \to \xi$.
\end{cor}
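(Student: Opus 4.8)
The plan is to obtain \eqref{eq:|nabladHu|2} by taking the pointwise norm square of the identity \eqref{eq:nabladHu} established in Proposition \ref{prop:nabladHu} and then estimating the resulting cross terms. Recall that the right-hand side of \eqref{eq:nabladHu} is the sum of three $u^*TM$-valued one-forms: the $\xi$-valued piece $\nabla^\pi(d_H^\pi u)$, the Reeb-directed piece $\nabla(u^*\lambda_H)\,R_\lambda$, and the piece $u^*\lambda_H\,\nabla R_\lambda$. So $|\nabla(d_Hu)|^2$ is the sum of the three squared norms plus twice the three pairwise inner products. The strategy is to show that the first cross term vanishes, that the second cross term vanishes, and that only the third cross term survives, contributing the $\|\nabla R_\lambda\|_{C^0}$ term after a Cauchy--Schwarz-type bound.

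First I would observe that the term $\nabla(u^*\lambda_H)\,R_\lambda$ is pointwise parallel to $R_\lambda$, while $\nabla^\pi(d_H^\pi u)$ takes values in $\xi$, which is $\langle\cdot,\cdot\rangle$-orthogonal to $R_\lambda$ under the triad metric; hence their inner product vanishes identically. Next, for the pair consisting of $\nabla(u^*\lambda_H)\,R_\lambda$ and $u^*\lambda_H\,\nabla R_\lambda$, I would use Property (3) of the contact triad connection recalled in Appendix \ref{sec:connection}, namely that $\nabla R_\lambda$ restricts to a map $\xi\to\xi$ (equivalently $\langle \nabla_X R_\lambda, R_\lambda\rangle = 0$ for all $X$, which follows by differentiating $|R_\lambda|^2 \equiv 1$ and using $\nabla_{R_\lambda}R_\lambda = 0$); this forces $\langle R_\lambda, \nabla R_\lambda\rangle = 0$, so this cross term vanishes too. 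Finally, the cross term between $\nabla^\pi(d_H^\pi u)$ and $u^*\lambda_H\,\nabla R_\lambda$ need not vanish, but $\nabla R_\lambda$ viewed as the bundle map $TM\otimes\xi\to\xi$ (as noted in the statement) is bounded, so this term is controlled by $\|\nabla R_\lambda\|_{C^0}\,|u^*\lambda_H|\,|\nabla^\pi(d_H^\pi u)|$; combining with the $|u^*\lambda_H\,\nabla R_\lambda|^2$ term and a Young-type inequality, and finally absorbing everything into the stated form $\|\nabla R_\lambda\|^2_{C^0}|u^*\lambda||du|$, gives the claimed bound. (Here one also uses that $|u^*\lambda_H| = |\langle d_Hu, R_\lambda\rangle| \le |d_Hu|$ and, on shell, relates $|d_Hu|$ and $|du|$ up to controlled terms via $d_Hu = du - X_H\otimes\gamma$.)

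The only mildly delicate point — and the main obstacle, if there is one — is bookkeeping the norms so that the final right-hand side appears exactly as $|\nabla^\pi(d_H^\pi u)|^2 + |\nabla(u^*\lambda_H)|^2 + \|\nabla R_\lambda\|^2_{C^0}|u^*\lambda||du|$ rather than with extra $|d_Hu|$-type factors; this is handled by allowing the implicit constants hidden in $\|\nabla R_\lambda\|_{C^0}$ and by using $|u^*\lambda_H|\le|d_Hu|$ together with the elementary inequality $ab\le \tfrac12(a^2+b^2)$ to redistribute. Everything else is a direct consequence of Proposition \ref{prop:nabladHu} and the orthogonality properties of $R_\lambda$ and $\nabla R_\lambda$ with respect to the triad metric, so no new geometric input is required beyond what has already been set up.
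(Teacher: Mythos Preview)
Your approach is exactly the paper's: the paper's entire proof is the single sentence ``Taking the norm square of \eqref{eq:nabladHu}, we have obtained the following upper bound,'' and you have correctly supplied the details it omits. In particular, your identification of the two vanishing cross terms is right: $\nabla^\pi(d_H^\pi u)$ is $\xi$-valued while $\nabla(u^*\lambda_H)\,R_\lambda$ is Reeb-directed, and $\nabla R_\lambda$ is $\xi$-valued (by Property~(3) in Appendix~\ref{sec:connection}, or equivalently by differentiating $|R_\lambda|^2\equiv 1$), so $\langle R_\lambda,\nabla R_\lambda\rangle=0$.

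One small comment on the bookkeeping you flag at the end: the stated inequality in the Corollary has $|u^*\lambda||du|$ rather than $|u^*\lambda_H||du|$ in the last term (and the introduction's version \eqref{eq:|nabladu|} drops the square on $\|\nabla R_\lambda\|_{C^0}$), so the precise constants and factors in that lower-order term are not being tracked carefully in the paper either. What matters for the downstream $W^{2,2}$ estimate in Section~\ref{subsec:W22-estimate} is only that this remainder is controlled by products of $|du|$, $|u^*\lambda|$ (or $|u^*\lambda_H|$, or $|d_Hu|$) with a universal constant depending on $\|\nabla R_\lambda\|_{C^0}$; your Cauchy--Schwarz bound on the surviving cross term $\langle \nabla^\pi(d_H^\pi u),\, u^*\lambda_H\,\nabla R_\lambda\rangle$ together with the obvious bound on $|u^*\lambda_H\,\nabla R_\lambda|^2$ delivers exactly that. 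So you need not worry about matching the displayed form literally.
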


\begin{rem} This corollary reduces the problem of estimating the full derivative $\nabla (d_Hu)$
to a much simpler problem of estimating the $\xi$-component $\nabla^\pi (d_H^\pi u)$ and
the Reeb component $\nabla (u^*\lambda_H)$ separately, and the rather trivial problem
of estimating the  term $\|\nabla R_\lambda\|_{C^0}^2 |u^*\lambda||du|$.
\end{rem}

Keeping this proposition and the remark above in our mind,
we apply the standard Weitzenb\"ock formula to the horizontal component
$$
d_H^\pi u: = (du - X_H \otimes \gamma)^\pi = \Pi(du - X_H \otimes \gamma)
$$
of the vector bundle $u^*\xi \to \dot \Sigma$, and get the following basic
Bochner-Weitzenb\"ock identity as the first step similarly as in \cite[Equation (4.1)]{oh-wang:connection}

We  start with looking at the (Hodge) Laplacian
of the $\pi$-harmonic energy density $e_H^\pi (u)$ of an arbitrary smooth map
$u: \dot \Sigma \to M$ in the \emph{off-shell level}.

\begin{lem} Let $u$ be any smooth map. Then
\bea\label{eq:bochner-weitzenbeck-e}
-\frac{1}{2}\Delta e_H^\pi(u)&=&|\nabla^\pi(d_H^\pi u) |^2-\langle \Delta^{\nabla^\pi} d_H^\pi u, d_H^\pi u\rangle
\nonumber \\
&{}&  +K\cdot |d_H^\pi u|^2+\langle \ric^{\nabla^\pi}(d_H^\pi u), d_H^\pi u\rangle.
\eea
\end{lem}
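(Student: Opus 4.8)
This is the standard Bochner–Weitzenböck identity applied to the vector-bundle-valued one-form $d_H^\pi u \in \Omega^1(u^*\xi)$, so the proof is a routine but careful bookkeeping exercise. The plan is to invoke the general Weitzenböck formula for the Hodge Laplacian acting on sections of a bundle of the form $T^*\dot\Sigma \otimes E$, where $E = u^*\xi$ carries the pulled-back contact Hermitian connection $\nabla^\pi$ and $\dot\Sigma$ carries the Kähler (hence, locally, flat up to curvature $K$) metric $h$. For a general $E$-valued one-form $\psi$ on a surface, the Weitzenböck formula reads
\be\label{eq:weitz-general}
\Delta^{\nabla^\pi}\psi = (\nabla^\pi)^*\nabla^\pi \psi + K\cdot \psi + \ric^{\nabla^\pi}(\psi),
\ee
where $\Delta^{\nabla^\pi} = d^{\nabla^\pi}\delta^{\nabla^\pi} + \delta^{\nabla^\pi}d^{\nabla^\pi}$ is the Hodge Laplacian, $(\nabla^\pi)^*\nabla^\pi$ is the connection (rough) Laplacian, $K$ is the Gauss curvature of $(\dot\Sigma, h)$ acting as scalar, and $\ric^{\nabla^\pi}$ is the curvature term of $\nabla^\pi$ contracted against the one-form (this is the term denoted $\ric^{\nabla^\pi}$ in \cite[Appendix A]{oh-wang:CR-map1}). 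Everything here is purely the off-shell identity — no use of the CR equation yet — which is why this lemma is stated as the ``off-shell level'' version.

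\textbf{Key steps.} First I would set $\psi := d_H^\pi u$ and write out the pointwise identity $-\frac12\Delta|\psi|^2 = |\nabla^\pi\psi|^2 - \langle (\nabla^\pi)^*\nabla^\pi\psi, \psi\rangle$, which is the elementary Bochner identity valid for any section of any Riemannian vector bundle and follows from expanding $\Delta|\psi|^2 = 2\langle (\nabla^\pi)^*\nabla^\pi\psi,\psi\rangle - 2|\nabla^\pi\psi|^2$ (with the analyst's sign convention $\Delta = -\,\mathrm{tr}\,\nabla^2$ built into $(\nabla^\pi)^*\nabla^\pi$; one must fix the sign convention and carry it consistently, matching the one used in \cite{oh-wang:CR-map1}). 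By definition $e_H^\pi(u) = |d_H^\pi u|^2 = |\psi|^2$, so the left side is exactly $-\frac12\Delta e_H^\pi(u)$. Next I would substitute the Weitzenböck identity \eqref{eq:weitz-general} for $(\nabla^\pi)^*\nabla^\pi\psi$ in terms of $\Delta^{\nabla^\pi}\psi$, $K\psi$, and $\ric^{\nabla^\pi}(\psi)$, and pair with $\psi$. This yields
\be\label{eq:assembled}
-\tfrac12\Delta e_H^\pi(u) = |\nabla^\pi\psi|^2 - \langle \Delta^{\nabla^\pi}\psi, \psi\rangle + K|\psi|^2 + \langle \ric^{\nabla^\pi}(\psi), \psi\rangle,
\ee
which is precisely \eqref{eq:bochner-weitzenbeck-e} after rewriting $\psi = d_H^\pi u$. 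The only place care is needed is the precise form of the curvature term: on a Riemann surface the ``Weitzenböck curvature'' splits cleanly into the base contribution $K$ (scalar on one-forms) and the bundle contribution $\ric^{\nabla^\pi}$, and one should cite \cite[Appendix C]{freed-uhlenbeck} or \cite[Appendix A]{oh-wang:CR-map1} for the exact conventions so that the signs match the later on-shell refinement in Theorem \ref{thm:e-pi-weitzenbeck}.

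\textbf{Expected main obstacle.} Honestly there is no real obstacle — this lemma is the soft, purely formal input, and all the genuine work (using $\delbar_H^\pi u = 0$ to convert $\Delta^{\nabla^\pi}d_H^\pi u$ into the explicit inhomogeneous terms via the fundamental equation \eqref{eq:dd_Hu2}) is deferred to Theorem \ref{thm:e-pi-weitzenbeck}. The one subtlety worth flagging is sign/normalization consistency: the term $\langle \ric^{\nabla^\pi}(d_H^\pi u), d_H^\pi u\rangle$ must be defined so that, when combined with $-\langle\Delta^{\nabla^\pi}d_H^\pi u, d_H^\pi u\rangle$ and the on-shell expression for $\Delta^{\nabla^\pi}d_H^\pi u$, one recovers the $\ric^{\nabla^\pi}$ term appearing in \eqref{eq:e-pi-weitzenbeck}; for this I would adopt verbatim the notation and conventions of \cite[Appendix A, B]{oh-wang:CR-map1}, so that the present lemma is literally the perturbed analogue of the off-shell step preceding \cite[Equation (4.1)]{oh-wang:connection} with $du^\pi$ replaced by $d_H^\pi u$ and $\nabla^\pi$ the same contact Hermitian connection. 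The proof then reduces to two displayed lines plus a citation.
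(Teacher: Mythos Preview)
Your proposal is correct and matches the paper's approach exactly: the paper does not give an independent proof of this lemma but simply cites \cite[Appendix A]{oh-wang:CR-map1}, noting that the identity is the standard Bochner--Weitzenb\"ock formula for vector-bundle-valued one-forms applied to $d_H^\pi u \in \Omega^1(u^*\xi)$. Your two-step outline (elementary Bochner identity for $|\psi|^2$ plus the Weitzenb\"ock relation between Hodge and rough Laplacians on a surface) is precisely what that cited appendix contains, and your observation that this is the purely off-shell step---with all the real work deferred to the on-shell computation of $\Delta^{\nabla^\pi} d_H^\pi u$---is exactly how the paper structures the argument.
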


Here $e_H^\pi:=e_H^\pi(u)$, $K$ is the Gaussian curvature of $(\dot\Sigma,h)$,
and $\ric^{\nabla^\pi}$ is the Ricci tensor of the connection $\nabla^\pi$
on the vector bundle $u^*\xi$ and
$$
\Delta^{\nabla^\pi} = \delta^{\nabla^\pi} d^{\nabla^\pi} + d^{\nabla^\pi} \delta^{\nabla^\pi}
$$
is the covariant Hodge Laplacian.
(See the proof of \cite[Appendix A]{oh-wang:CR-map1} for the proof.
For the basic differential notations, such as $d^{\nabla^\pi}$, $\delta^{\nabla^\pi}$ etc.,
we also refer readers to  \cite[Appendix B]{oh-wang:CR-map1}. For readers' convenience,
we collect basic lemmata on the operators in Appendix \ref{sec:vectorvalued-forms}.)

\subsection{Calculations for the Hodge Laplacian of $d_H^\pi u$}

According to the Bochner-Weitzenb\"ock identity \eqref{eq:bochner-weitzenbeck-e}, it is crucial to
derive an explicit formula for the term
$$
\langle \Delta^{\nabla^\pi} d_H^\pi u, d_H^\pi u \rangle
$$
which involves the third order derivatives, and  is of the same order as that of
the left hand side $\Delta e^\pi_H$. For the simplification of notations, we may
sometimes drop the superindex $\nabla^\pi$ from $\Delta^{\nabla^\pi}$ which should not
confuse readers at all.

For this purpose, we first
transfer the following important formula for $d^{\nabla^\pi}(d^\pi u)$ in the off-shell level
from \cite{oh-wang:connection}.

\begin{lem}[Lemma 4.1 \cite{oh-wang:connection}]\label{lem:fundamental}
Let $w: \dot\Sigma \to M$ be \emph{any smooth} map. Denote by $T^\pi$ the torsion tensor of $\nabla^\pi$.
Then as a two form with values in $u^*\xi$,
$d^{\nabla^\pi} (d^\pi w)$ has the expression
\be\label{eq:dnabladpiw}
d^{\nabla^\pi} (d^\pi w)= T^\pi(\Pi d_Hu, \Pi d_Hw )
+ w^*\lambda \wedge \left(\frac{1}{2} (\CL_{X_\lambda}J)\, Jd^\pi w\right).
\ee
\end{lem}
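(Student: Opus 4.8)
\textbf{Proof proposal for Lemma \ref{lem:fundamental}.}

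The plan is to reduce this to the off-shell formula for $d^{\nabla^\pi}(d^\pi w)$ already established in \cite[Lemma 4.1]{oh-wang:connection}, and to verify that the claimed statement is nothing but a repackaging of that formula in the notation of the present paper. First I would recall that for an arbitrary smooth map $w:\dot\Sigma\to M$, the quantity $d^{\nabla^\pi}(d^\pi w)$ is a two-form with values in $w^*\xi$, and that its computation in \cite{oh-wang:connection} proceeds by writing $d^{\nabla^\pi}(d^\pi w)$ in a local frame, expanding using the defining properties of the contact triad connection, and isolating the torsion contribution. Since $\nabla^\pi$ is the contact Hermitian connection associated to the contact triad connection $\nabla$ on $(M,\lambda,J)$, its torsion $T^\pi$ enters precisely in the term $T^\pi(\Pi(\cdot),\Pi(\cdot))$ applied to the two ``legs'' of the differential $dw$, i.e. to $\Pi dw$ in both slots.

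The one genuine point to address is the appearance of $d_Hu$ inside the torsion term in the statement \eqref{eq:dnabladpiw}, as opposed to simply $d^\pi w$ (or $\Pi dw$). I would argue that this is a typographical artifact of the transfer from \cite{oh-wang:connection}: the correct general identity, valid for any smooth map $w$, reads
\be
d^{\nabla^\pi}(d^\pi w) = T^\pi(\Pi dw,\Pi dw) + w^*\lambda\wedge\Bigl(\tfrac12(\CL_{R_\lambda}J)\,Jd^\pi w\Bigr),
\ee
which is exactly \cite[Lemma 4.1]{oh-wang:connection}, and the lemma as stated here is this identity with $w$ playing the role of the unperturbed argument. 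Here $\CL_{X_\lambda}J = \CL_{R_\lambda}J$ since $X_\lambda = R_\lambda$ denotes the Reeb field. The proof of the underlying identity in \cite{oh-wang:connection} rests on: (i) the fact that the contact triad connection restricted to $\xi$ agrees with $\nabla^\pi$ and is $J$-linear on $\xi$; (ii) the identity $d^{\nabla}(dw) = T(dw,dw)$ for the torsion of any connection composed with a map, projected to $\xi$; and (iii) the specific formula for the $\xi$-component of $\nabla_{R_\lambda}$ from property \eqref{eq:dnablaYR}, which produces the $\CL_{R_\lambda}J$ term whenever the $R_\lambda$-component $w^*\lambda$ of $dw$ is paired against the $\xi$-component $d^\pi w$.

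So the key steps in order are: (1) recall the statement and proof of \cite[Lemma 4.1]{oh-wang:connection} verbatim; (2) match the torsion term, noting that only the $\Pi$-projected parts of $dw$ contribute to $T^\pi$ since $T^\pi$ is a tensor on $\xi$; (3) match the second term, using $X_\lambda = R_\lambda$ and the fact that pairing the Reeb-component of $dw$ against $d^\pi w$ via the connection produces exactly $w^*\lambda\wedge\tfrac12(\CL_{R_\lambda}J)Jd^\pi w$. I expect the main (and only real) obstacle to be notational bookkeeping — confirming that the ``$d_Hu$'' appearing in \eqref{eq:dnabladpiw} should be read as $\Pi dw$ for the generic smooth map $w$, so that the lemma is genuinely an off-shell statement and the perturbation term $X_H\otimes\gamma$ plays no role here; the analytic content is entirely borrowed from \cite{oh-wang:connection}.
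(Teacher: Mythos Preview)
Your reading is correct: the paper does not supply a proof of this lemma but simply transfers it from \cite[Lemma 4.1]{oh-wang:connection}, and you have correctly identified that the occurrence of $d_Hu$ in the torsion term is a typographical slip (it should be $\Pi dw$, as confirmed by the immediately following Corollary \ref{cor:FE-autono}, which applies the lemma with $w=u$ and obtains $T^\pi(\Pi du,\Pi du)$). Your proposal matches the paper's treatment exactly.
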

We warn that readers should not get confused with the wedge product we have used here, which is
the wedge product for forms in the usual sense, i.e., $(\alpha_1\otimes\zeta)\wedge\alpha_2=(\alpha_1\wedge\alpha_2)\otimes\zeta$
for $\alpha_1, \alpha_2\in \Omega^*(P)$ and $\zeta$ a section of $E$.
(See \cite[Appendix 2]{oh-wang:connection}, \cite{wells-book}, e.g.,
for some systematic discussion on this  convention.)

We first derive the following generalization of \eqref{eq:dnabladpiw} which
is an immediate corollary of the above lemma by definition of $d_H^\pi u
= d^\pi u - X_H^\pi \otimes \gamma$.

\begin{cor}\label{cor:FE-autono}
Let $u: \dot\Sigma \to M$ be any smooth map. Denote by $T^\pi$ the torsion tensor of $\nabla^\pi$.
Then as a two form with values in $w^*\xi$,
$d^{\nabla^\pi} (d_H^\pi w)$ has the expression
\be\label{eq:dnabladpiw}
d^{\nabla^\pi} (d_H^\pi u)= T^\pi(\Pi du, \Pi du)+ u^*\lambda \wedge \left(\frac{1}{2} (\CL_{X_\lambda}J)\, Jd^\pi u \right) - d^{\nabla^\pi}(X_H^\pi(u) \otimes \gamma).
\ee
\end{cor}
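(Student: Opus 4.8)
The plan is to deduce the formula from Lemma \ref{lem:fundamental} by a two-line manipulation exploiting the $\R$-linearity of the covariant exterior derivative. First I would unwind the definition \eqref{eq:dHpiu}: since the fiberwise projection $\Pi\colon TM\to\xi$ is linear,
\[
d_H^\pi u \;=\; \Pi\bigl(du - X_H(u)\otimes\gamma\bigr)\;=\;\Pi\,du \;-\; \Pi X_H(u)\otimes\gamma \;=\; d^\pi u \;-\; X_H^\pi(u)\otimes\gamma .
\]
Applying $d^{\nabla^\pi}$, which is $\R$-linear on $\Omega^\bullet(u^*\xi)$, then yields
\[
d^{\nabla^\pi}\bigl(d_H^\pi u\bigr)\;=\;d^{\nabla^\pi}\bigl(d^\pi u\bigr)\;-\;d^{\nabla^\pi}\bigl(X_H^\pi(u)\otimes\gamma\bigr).
\]

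Next I would invoke Lemma \ref{lem:fundamental} (Lemma 4.1 of \cite{oh-wang:connection}) with $w=u$. Because that lemma holds for an \emph{arbitrary} smooth map, no on-shell hypothesis on $u$ is needed, and it gives
\[
d^{\nabla^\pi}\bigl(d^\pi u\bigr)\;=\;T^\pi(\Pi\,du,\Pi\,du)\;+\;u^*\lambda\wedge\Bigl(\tfrac12(\CL_{X_\lambda}J)\,Jd^\pi u\Bigr).
\]
Substituting this into the previous display produces exactly \eqref{eq:dnabladpiw}, completing the proof.

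Since the argument is purely formal — linearity of $d^{\nabla^\pi}$ plus a direct citation — there is essentially no obstacle here; this corollary is a notational convenience that repackages the unperturbed Weitzenb\"ock machinery so that the entire effect of the Hamiltonian perturbation is isolated in the single term $-\,d^{\nabla^\pi}(X_H^\pi(u)\otimes\gamma)$. The only step needing a moment of care is the bookkeeping that the $\CL_{X_\lambda}J$-term is formed from $d^\pi u$ (not from $d_H^\pi u$) and the torsion term from $\Pi\,du$; the isolated perturbation term will then be expanded downstream, via the Leibniz rule $d^{\nabla^\pi}(\sigma\otimes\gamma)=\nabla^\pi\sigma\wedge\gamma+\sigma\otimes d\gamma$ for the contact Hermitian connection, into a $\nabla^\pi X_H^\pi$-contribution and an $X_H^\pi\otimes d\gamma$-contribution, as it appears in Theorem \ref{thm:e-pi-weitzenbeck}.
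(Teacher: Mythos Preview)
Your proof is correct and follows exactly the approach the paper intends: the paper does not even write out a proof, merely stating that the corollary ``is an immediate corollary of the above lemma by definition of $d_H^\pi u = d^\pi u - X_H^\pi \otimes \gamma$,'' which is precisely the linearity-plus-citation argument you spell out.
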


Using Corollary \ref{cor:FE-autono}, we derive the following fundamental equation
for the perturbed Cauchy-Riemann maps which is the perturbed analog to
\cite[Theorem 4.2]{oh-wang:CR-map1}.

For the simplicity of notation, we often write $X_H \, \gamma$ in place of $X_H \otimes \gamma$
in the calculations we do in the rest of the paper,
as long as there is no danger of confusion.

\begin{thm}[Fundamental equation]\label{thm:fundamental}
 Suppose that $\gamma$ is a one-form on $\dot \Sigma$ and let $u$ be a perturbed contact
 Cauchy-Riemann map. Then we have
\bea\label{eq:dd_Hu2}
d^{\nabla^\pi}(d_H^\pi u) & = & u^*\lambda\wedge(\frac{1}{2}\left(\CL_{R_\lambda} J)J d_H^\pi u\right) -
2 T^\pi(X_H^\pi(u), \gamma \wedge d_H^\pi u) \nonumber \\
&{}& +  u^*\lambda\wedge(\frac{1}{2}\left(\CL_{R_\lambda} J)JX_H^\pi(u)\otimes \gamma\right)
- d^{\nabla^\pi} (X_H^\pi(u) \otimes \gamma).
\eea
\end{thm}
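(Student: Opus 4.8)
\textbf{Proof plan for the Fundamental Equation (Theorem \ref{thm:fundamental}).}
The plan is to start from Corollary \ref{cor:FE-autono}, which already expresses $d^{\nabla^\pi}(d_H^\pi u)$ in terms of the ``off-shell'' data, and then use the $H$-perturbed Cauchy--Riemann equation $\delbar_H^\pi u = 0$ to rewrite the torsion term and the $(1,0)$-component terms into the form displayed in \eqref{eq:dd_Hu2}. The three ingredients on the right-hand side of Corollary \ref{cor:FE-autono} are: the torsion term $T^\pi(\Pi du, \Pi du)$, the curvature-of-the-Reeb-flow term $u^*\lambda \wedge \tfrac12 (\CL_{R_\lambda}J) J d^\pi u$, and the inhomogeneous term $-d^{\nabla^\pi}(X_H^\pi(u)\otimes\gamma)$. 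The first would be handled by expanding $\Pi du = d^\pi u = d_H^\pi u + X_H^\pi(u)\otimes\gamma$ inside the bilinear torsion tensor; the second by rewriting $d^\pi u = d_H^\pi u + X_H^\pi(u)\otimes\gamma$ and distributing over the wedge; the third stays as is.

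The key step is the torsion computation. Writing $d^\pi u = d_H^\pi u + X_H^\pi(u)\otimes\gamma$ and using bilinearity plus antisymmetry of the two-form-valued bracket $T^\pi(\,\cdot\,,\,\cdot\,)$, we get
\begin{align*}
T^\pi(\Pi du,\Pi du) &= T^\pi(d_H^\pi u, d_H^\pi u) + 2\, T^\pi(X_H^\pi(u)\otimes\gamma, d_H^\pi u) + T^\pi(X_H^\pi(u)\otimes\gamma, X_H^\pi(u)\otimes\gamma).
\end{align*}
The last term vanishes because $\gamma\wedge\gamma = 0$. The first term $T^\pi(d_H^\pi u, d_H^\pi u)$ should vanish by the same mechanism as in the unperturbed case \cite[Theorem 4.2]{oh-wang:CR-map1}: since $u$ is an $H$-perturbed CR map, $d_H^\pi u$ is of type $(1,0)$, and the defining property of the contact triad connection forces $T^\pi$ to vanish on pairs of $(1,0)$-vectors in $\xi$ (this is exactly where the triad connection earns its keep). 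That leaves the cross term $2\,T^\pi(X_H^\pi(u)\otimes\gamma, d_H^\pi u)$, which, with the sign convention for wedging vector-valued one-forms, should come out as $-2\,T^\pi(X_H^\pi(u),\gamma\wedge d_H^\pi u)$, matching the second term of \eqref{eq:dd_Hu2}.

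For the curvature term, substituting $d^\pi u = d_H^\pi u + X_H^\pi(u)\otimes\gamma$ into $u^*\lambda\wedge\tfrac12(\CL_{R_\lambda}J)Jd^\pi u$ splits it into $u^*\lambda\wedge\tfrac12(\CL_{R_\lambda}J)J d_H^\pi u$ (the first term of \eqref{eq:dd_Hu2}) plus $u^*\lambda\wedge\tfrac12(\CL_{R_\lambda}J)J X_H^\pi(u)\otimes\gamma$ (the third term). Collecting everything, together with the untouched $-d^{\nabla^\pi}(X_H^\pi(u)\otimes\gamma)$, yields precisely \eqref{eq:dd_Hu2}. The main obstacle I anticipate is bookkeeping the sign and ordering conventions when a scalar-valued one-form ($\gamma$) is wedged against a $u^*\xi$-valued one-form inside the torsion bilinear form — in particular getting the factor of $2$ and the sign right, and verifying carefully that $T^\pi(d_H^\pi u, d_H^\pi u) = 0$ uses only the $(1,0)$-type of $d_H^\pi u$ and the $J$-linearity property of $\nabla^\pi$ (Property of the contact triad connection in Appendix \ref{sec:connection}), rather than anything special to the unperturbed equation. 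Everything else is routine substitution.
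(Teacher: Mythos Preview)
Your proposal is correct and follows essentially the same approach as the paper's proof. The only cosmetic difference is that the paper starts from the vanishing $0 = T^\pi(d_H^\pi u, d_H^\pi u)$ (using that $d_H^\pi u$ is of type $(1,0)$ and $T^\pi$ is of type $(0,2)$) and expands $d_H^\pi u = d^\pi u - X_H^\pi(u)\,\gamma$ to solve for $T^\pi(d^\pi u, d^\pi u)$, whereas you expand $T^\pi(d^\pi u, d^\pi u)$ directly and then invoke $T^\pi(d_H^\pi u, d_H^\pi u)=0$; these are logically identical. The paper also carries out the cross-term computation explicitly in isothermal coordinates to pin down the factor of $2$ and the sign, exactly the bookkeeping issue you flagged as the main obstacle.
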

\begin{proof}
Since  $d_H^\pi u = d^\pi u - X_H^\pi  \otimes \gamma$
is assumed to be of $(1,0)$-type by hypothesis and $T^\pi$ is of $(0,2)$-type, we obtain
\beastar
0 & = & T^\pi(d^\pi u - X_H^\pi(u)\, \gamma, d^\pi u- X_H^\pi(u)\, \gamma) \\
& = & T^\pi(d^\pi u, d^\pi u) - T^\pi(X_H^\pi(u)\, \gamma, d^\pi u)- T^\pi(d^\pi u, X_H^\pi(u)\, \gamma)\\
&{}& \quad  +  T^\pi(X_H^\pi(u)\, \gamma, X_H^\pi(u)\, \gamma).
\eeastar
in \eqref{eq:dd_Hu2}.
Obviously, we have $ T^\pi(X_H^\pi(u)\, \gamma, X_H^\pi(u)\, \gamma) = 0$ by the skew-symmetry of $T^\pi$ applied to the $u^*\xi$-valued one-form proportional to $\gamma$.

Next we evaluate
$$
\left(T^\pi(X_H^\pi(u)\, \gamma, d^\pi u) + T^\pi(d^\pi u, X_H^\pi(u)\, \gamma)\right)(\del_x,\del_y)
$$
for an isothermal coordinates $(x,y)$. This becomes
\beastar
&{}& 2 T^\pi\left(\Pi \gamma_x X_H, \Pi\frac{\del u}{\del y}\right)
- 2 T^\pi\left(\Pi \gamma_y X_H, \Pi \frac{\del u}{\del x}\right)\\
& = & 2 T^\p\left(\Pi X_H, \gamma_x \Pi \frac{\del u}{\del y}\right) + 2 T^\pi \left(\Pi X_H,\gamma_y
 \Pi \frac{\del u}{\del x}\right)\\
& = & 2\left(T^\pi(\Pi X_H, \gamma_x \Pi \frac{\del u}{\del y}- \gamma_y  \Pi \frac{\del u}{\del x}\right)\\
& = & 2T^\pi(\Pi X_H, \gamma \wedge d^\pi u)(\del_x,\del_y).
\eeastar
Therefore we have obtained
\beastar
T^\pi(d^\pi u,d^\pi  u) & = & T^\pi(X_H^\pi(u)\, \gamma, d^\pi u) + T^\pi(d^\pi u, X_H^\pi(u)\, \gamma)\\
& = & 2T^\pi(\Pi X_H, \gamma \wedge d^\pi u) \\
& = & 2T^\pi(\Pi X_H, \gamma \wedge (d_H^\pi u + X_H \otimes \gamma)) \\
& = & 2T^\pi(\Pi X_H, \gamma \wedge (d_H^\pi u)
\eeastar
where the last equality holds by the vanishing
 $T^\pi(\Pi X_H, \gamma \wedge X_H \otimes \gamma) = 0$.

Finally using $du = d_H u + X_H(u) \otimes \gamma$, we rewrite
$$
u^*\lambda \wedge \left(\frac{1}{2} (\CL_{X_\lambda}J)\, Jd^\pi u \right)
=  u^*\lambda\wedge(\frac{1}{2}\left(\CL_{R_\lambda} J)J d_H^\pi u\right)  +
u^*\lambda\wedge(\frac{1}{2}\left(\CL_{R_\lambda} J)JX_H^\pi(u)\otimes \gamma\right).
$$
Substituting the last two into \eqref{eq:dnabladpiw}, we have finished the proof.
\end{proof}

For the study of \eqref{eq:bochner-weitzenbeck-e}, it is fundamental to estimate the inner product
$$
\langle \Delta^{\nabla^\pi} d_H^\pi u, d_H^\pi u\rangle
$$
for perturbed contact Cauchy--Riemann maps $u$ for which $d_Hu$ is of $(1,0)$-type, i.e.,
$$
d_H^\pi u = \del_H^\pi u.
$$
For this purpose, we need the following crucial lemma.

\begin{lem} \label{lem:2delta}
Let $\alpha  \in\Omega^{(1,0)}(u^*\xi)$ be any $u^*\xi$-valued $(1,0)$-form.
Then we have
\be\label{ddelta=deltad}
\langle d^{\nabla^\pi}\delta^{\nabla^\pi} \alpha,\alpha \rangle =
\langle \delta^{\nabla^\pi}d^{\nabla^\pi}\alpha,\alpha \rangle.
\ee
\end{lem}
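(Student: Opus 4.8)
The plan is to verify \eqref{ddelta=deltad} as a pointwise identity by a direct computation in a well-chosen isothermal coordinate. Fix $p\in\dot\Sigma$ and pick an isothermal coordinate $z=x+\sqrt{-1}\,y$ centered at $p$ in which the conformal factor of $h$ equals $1$ at $p$ and has vanishing differential there; this normalization is available since two isothermal coordinates differ by a holomorphic substitution, and rescaling together with matching one derivative of the conformal factor is enough. At $p$ the metric-dependent first-order corrections in the codifferential drop out, so $\delta^{\nabla^\pi}$ acts there by $\beta\mapsto -(\nabla^\pi_x\beta(\del_x)+\nabla^\pi_y\beta(\del_y))$ on $u^*\xi$-valued $1$-forms and by $f\,dx\wedge dy\mapsto \nabla^\pi_y f\,dx-\nabla^\pi_x f\,dy$ on $u^*\xi$-valued $2$-forms, with the sign conventions of Appendix \ref{sec:vectorvalued-forms}, while $d^{\nabla^\pi}$ is metric-free.

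Since $\alpha\in\Omega^{(1,0)}(u^*\xi)$ we have $\alpha(\del_y)=J\,\alpha(\del_x)$; set $\zeta:=\alpha(\del_x)$ and $\mu:=\nabla^\pi_x\zeta+J\nabla^\pi_y\zeta$. Using $\nabla^\pi J=0$ one gets, at $p$, $\delta^{\nabla^\pi}\alpha=-\mu$ and $d^{\nabla^\pi}\alpha=(J\nabla^\pi_x\zeta-\nabla^\pi_y\zeta)\,dx\wedge dy=(J\mu)\,dx\wedge dy$. Hence $d^{\nabla^\pi}\delta^{\nabla^\pi}\alpha=-\nabla^\pi_x\mu\,dx-\nabla^\pi_y\mu\,dy$, and pairing against $\alpha=\zeta\,dx+J\zeta\,dy$ in the fibrewise metric gives $\langle d^{\nabla^\pi}\delta^{\nabla^\pi}\alpha,\alpha\rangle=-\langle\nabla^\pi_x\mu,\zeta\rangle-\langle\nabla^\pi_y\mu,J\zeta\rangle$. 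On the other hand $\delta^{\nabla^\pi}d^{\nabla^\pi}\alpha=\delta^{\nabla^\pi}\big((J\mu)\,dx\wedge dy\big)=(J\nabla^\pi_y\mu)\,dx-(J\nabla^\pi_x\mu)\,dy$, again by $\nabla^\pi J=0$; pairing with $\alpha$ and using the Hermitian relations $\langle Ja,b\rangle=-\langle a,Jb\rangle$ and $\langle Ja,Jb\rangle=\langle a,b\rangle$ for the metric of $(\xi,d\lambda|_\xi,J)$ yields $\langle\delta^{\nabla^\pi}d^{\nabla^\pi}\alpha,\alpha\rangle=-\langle\nabla^\pi_y\mu,J\zeta\rangle-\langle\nabla^\pi_x\mu,\zeta\rangle$. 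The two expressions coincide, and since $p$ is arbitrary this proves \eqref{ddelta=deltad}.

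The step that needs the most care is the bookkeeping of signs in the codifferential on $1$- and $2$-forms and in the Hodge star of a surface: these must be exactly the ones compatible with $\delta^{\nabla^\pi}=(d^{\nabla^\pi})^{*}$, and one should check that no undifferentiated curvature term appears. It does not: on a Riemann surface both $d^{\nabla^\pi}\delta^{\nabla^\pi}\alpha$ and $\delta^{\nabla^\pi}d^{\nabla^\pi}\alpha$ are genuinely second order in $\zeta$ through $\mu$, with no zeroth-order piece, so no covariant derivatives ever get commuted. Conceptually this is the vanishing, in complex dimension one, of the Bochner--Kodaira--Nakano curvature correction $[\,\sqrt{-1}\,\Theta(u^*\xi),\Lambda\,]$ on $(1,0)$-forms --- equivalently, the identity reduces to $\langle\delta^{\nabla^\pi}d^{\nabla^\pi}\alpha,\alpha\rangle=\langle\bar\partial^{\nabla^\pi *}\bar\partial^{\nabla^\pi}\alpha,\alpha\rangle=\langle\partial^{\nabla^\pi}\partial^{\nabla^\pi *}\alpha,\alpha\rangle=\langle d^{\nabla^\pi}\delta^{\nabla^\pi}\alpha,\alpha\rangle$ --- but the coordinate computation above makes this transparent without invoking the $\Lambda$-calculus, and one should also confirm the elementary normalization of the isothermal coordinate at $p$ used at the outset.
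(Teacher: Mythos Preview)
Your proof is correct but takes a genuinely different route from the paper. The paper gives a coordinate-free argument: starting from $\langle\delta^{\nabla^\pi} d^{\nabla^\pi}\alpha,\alpha\rangle = -\langle *d^{\nabla^\pi}*d^{\nabla^\pi}\alpha,\alpha\rangle$, it repeatedly exploits that $*$ is an isometry, the identity $*\alpha = -\alpha\circ j$ on $1$-forms, the $(1,0)$-condition $\alpha\circ j = J\alpha$, and $\nabla^\pi J = 0$ (so that $J$ commutes with $d^{\nabla^\pi}$) to shuffle Hodge stars and $J$'s until it lands on $\langle d^{\nabla^\pi}\delta^{\nabla^\pi}\alpha,\alpha\rangle$. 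You instead verify the identity pointwise via an explicit isothermal-coordinate computation, normalizing the conformal factor and its differential at the base point so that all metric correction terms drop out, and reducing the claim to direct arithmetic with $\zeta = \alpha(\del_x)$ and $\mu = \nabla^\pi_x\zeta + J\nabla^\pi_y\zeta$.

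Both arguments rest on exactly the same two structural ingredients---the $(1,0)$-hypothesis on $\alpha$ and the $J$-linearity of $\nabla^\pi$---and neither commutes covariant derivatives or invokes curvature, which is the essential point. The paper's version is slicker and intrinsic; yours is more explicit and makes transparent that no Weitzenb\"ock-type zeroth-order term appears. Your normalization of the isothermal coordinate (matching value and first derivative of the log of the conformal factor via a quadratic holomorphic change of chart) is valid and worth keeping, since without $d\phi(p)=0$ the expressions for $d^{\nabla^\pi}\delta^{\nabla^\pi}\alpha$ and $\delta^{\nabla^\pi}d^{\nabla^\pi}\alpha$ at $p$ would each acquire an extra term. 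The closing Bochner--Kodaira--Nakano remark is a nice conceptual gloss but not needed for the proof.
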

\begin{proof}
  Using the formula $\delta^{\nabla^\pi} = -* d^{\nabla^\pi}*$ when acting on 2-forms and the fact
that $*$ is an isometry,  we have
$$
\langle \delta^{\nabla^\pi}d^{\nabla^\pi}  \alpha, \alpha\rangle= -\langle *d^{\nabla^\pi}  * d^{\nabla^\pi}  \alpha, \alpha\rangle
$$
Then we use $*\alpha=-\alpha\circ j$ for any $1$-form $\alpha$ to rewrite the last term into
$$
-\langle d^{\nabla^\pi} * d^{\nabla^\pi}  \alpha, *\alpha\rangle.
$$
Then since $\alpha$ is complex-linear and the connection $\nabla^\pi$ is $J$-linear, this becomes
\bea
-\langle d^{\nabla^\pi} * d^{\nabla^\pi}  \alpha, -\alpha\circ j\rangle
&=&\langle d^{\nabla^\pi} * d^{\nabla^\pi}  \alpha, J \alpha\rangle\nonumber\\
&=&-\langle J d^{\nabla^\pi} * d^{\nabla^\pi}  \alpha,  \alpha\rangle\nonumber\\
&=&-\langle d^{\nabla^\pi} * d^{\nabla^\pi}  J\alpha,  \alpha\rangle\nonumber\\
&=&-\langle d^{\nabla^\pi}  * d^{\nabla^\pi}  \alpha\circ j,  \alpha\rangle\nonumber\\
&=&\langle d^{\nabla^\pi} * d^{\nabla^\pi} * \alpha,  \alpha\rangle\label{eq:dstard5}\\
&=&\langle d^{\nabla^\pi}  \delta^{\nabla^\pi}\alpha, \alpha\rangle.\nonumber
\eea
Here for \eqref{eq:dstard5}, we again use $* \alpha = -\alpha \circ j$ for any one-form $\alpha$.
This finishes the proof.
\end{proof}

Applying this lemma to $\alpha = \del_H^\pi u$, we immediately obtain
the following which is the analog \cite[Lemma 4.5]{oh-wang:connection} with $\del_H^\pi u$
replaced by $\del^\pi u$.

\begin{cor}[Compare with Lemma 4.5 \cite{oh-wang:connection}]
\label{cor:2delta}
For any smooth map $u$, we have
\beastar
\langle d^{\nabla^\pi}  \delta^{\nabla^\pi}\del_H^\pi u, \del_H^\pi u\rangle
=\langle \delta^{\nabla^\pi}d^{\nabla^\pi} \del_H^\pi u, \del_H^\pi u\rangle.
\eeastar
As a consequence,
\bea
\langle \Delta^{\nabla^\pi} \del_H^\pi u, \del_H^\pi u\rangle
=2\langle \delta^{\nabla^\pi}d^{\nabla^\pi}  \del_H^\pi u, \del_H^\pi u\rangle.\label{eq:2Delta}
\eea
\end{cor}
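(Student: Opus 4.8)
The plan is to deduce both assertions directly from Lemma~\ref{lem:2delta}, the only point requiring comment being that $\del_H^\pi u$ does fall within the hypotheses of that lemma. First I would observe that, by Notation~\ref{nota:dHpiu-lambdaH}, $d_H^\pi u = \Pi(du - X_H\otimes\gamma)$ takes values in the pulled-back contact distribution $u^*\xi$, and that $\del_H^\pi u$ is by definition precisely the complex-linear part
$$
\del_H^\pi u = \frac{d_H^\pi u - J\circ d_H^\pi u\circ j}{2}.
$$
Hence $\del_H^\pi u \in \Omega^{(1,0)}(u^*\xi)$, \emph{regardless} of whether $u$ is a perturbed contact Cauchy--Riemann map. (If $u$ happens to be such a map one moreover has $\del_H^\pi u = d_H^\pi u$, but this is not needed here.)

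With this observation, applying Lemma~\ref{lem:2delta} with $\alpha = \del_H^\pi u$ immediately yields
$$
\langle d^{\nabla^\pi}\delta^{\nabla^\pi}\del_H^\pi u, \del_H^\pi u\rangle
= \langle \delta^{\nabla^\pi}d^{\nabla^\pi}\del_H^\pi u, \del_H^\pi u\rangle,
$$
which is the first statement. For the consequence I would expand the covariant Hodge Laplacian $\Delta^{\nabla^\pi} = \delta^{\nabla^\pi}d^{\nabla^\pi} + d^{\nabla^\pi}\delta^{\nabla^\pi}$, pair it with $\del_H^\pi u$, and then use the identity just obtained to merge the two summands, giving
$$
\langle \Delta^{\nabla^\pi}\del_H^\pi u, \del_H^\pi u\rangle = 2\langle \delta^{\nabla^\pi}d^{\nabla^\pi}\del_H^\pi u, \del_H^\pi u\rangle.
$$

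There is essentially no obstacle here: all the real work sits in Lemma~\ref{lem:2delta}, whose proof uses only that $\alpha$ is a complex-linear one-form, that $\nabla^\pi$ is $J$-linear, and the elementary facts $*\alpha = -\alpha\circ j$ and $\delta^{\nabla^\pi} = -*d^{\nabla^\pi}*$ on two-forms. The one subtlety worth flagging explicitly in the write-up is that the corollary holds for an \emph{arbitrary} smooth map $u$, not merely for perturbed contact Cauchy--Riemann maps, because the type hypothesis is imposed on $\del_H^\pi u$ --- the $(1,0)$-projection --- rather than on $d_H^\pi u$ itself.
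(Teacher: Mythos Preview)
Your proposal is correct and matches the paper's approach exactly: the paper simply states that the corollary follows by applying Lemma~\ref{lem:2delta} to $\alpha = \del_H^\pi u$. Your explicit remark that $\del_H^\pi u$ is $(1,0)$ by construction for \emph{any} smooth $u$ (not just perturbed CR maps) is a helpful clarification that the paper leaves implicit.
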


The following lemmata express $\langle\Delta^{\nabla^\pi}d_H^\pi u, d_H^\pi u\rangle$,
which involves the third derivative of $u$,
in terms of the expressions involving derivatives of order at most two
for any $H$-perturbed contact CR map $u$.
This is a generalization of
\cite[Lemma 4.6]{oh-wang:connection}.

\begin{lem}[Compare with Lemma 4.6 \cite{oh-wang:connection}] \label{lem:laplaceproduct}
Let $u$ be a $H$-perturbed contact CR map. Then
we have
\bea\label{eq:Laplaciandpiw}
-\langle \Delta^{\nabla^\pi}d_H^\pi u, d_H^\pi u\rangle
& = & - \delta^{\nabla^\pi}[(u^*\lambda \wedge (\CL_{X_\lambda}J)J \del_H^\pi u], d_H^\pi u\rangle\nonumber\\
&{}& + \langle \delta^{\nabla^\pi}[ u^*\lambda\wedge (\CL_{R_\lambda} J)JX_H^\pi(u) \otimes \gamma],
d_H^\pi u \rangle \nonumber \\
&{}& - 4 \langle \delta^{\nabla^\pi} [T^\pi(X_H^\pi(u), \gamma \wedge d_H^\pi u)], d_H^\pi u\rangle \nonumber\\
&{}& - 2 \langle d^{\nabla^\pi}X_H^\pi \wedge \gamma, \del_H^\pi u\rangle -
2 \langle X_H^\pi \wedge d\gamma, \del_H^\pi u\rangle.
\eea
\end{lem}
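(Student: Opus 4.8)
The plan is to obtain \eqref{eq:Laplaciandpiw} by substituting the Fundamental equation (Theorem \ref{thm:fundamental}) into the reduction supplied by Corollary \ref{cor:2delta}. Since $u$ is an $H$-perturbed contact CR map, $\delbar_H^\pi u = 0$, so $d_H^\pi u = \del_H^\pi u$ is a $u^*\xi$-valued $(1,0)$-form; hence Corollary \ref{cor:2delta} yields
$$
-\langle \Delta^{\nabla^\pi} d_H^\pi u, d_H^\pi u\rangle = -2\,\langle \delta^{\nabla^\pi} d^{\nabla^\pi}(d_H^\pi u), d_H^\pi u\rangle .
$$
The task is thereby reduced to applying $\delta^{\nabla^\pi}$ to the right-hand side of \eqref{eq:dd_Hu2} and pairing the outcome with $d_H^\pi u$.

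First I would split $d^{\nabla^\pi}(d_H^\pi u)$ according to the four terms in \eqref{eq:dd_Hu2}: the $u^*\lambda$-wedge term built from $(\CL_{R_\lambda}J)J d_H^\pi u$, the torsion term $-2T^\pi(X_H^\pi(u),\gamma\wedge d_H^\pi u)$, the $u^*\lambda$-wedge term built from $(\CL_{R_\lambda}J)JX_H^\pi(u)\otimes\gamma$, and the exact term $-d^{\nabla^\pi}(X_H^\pi(u)\otimes\gamma)$. Applying $\delta^{\nabla^\pi}$ to the first three and pairing with $d_H^\pi u$ reproduces the first three lines of \eqref{eq:Laplaciandpiw} directly; the bookkeeping here amounts to matching the factors $\tfrac{1}{2}$ and $2$ internal to \eqref{eq:dd_Hu2} against the overall $-2$ above, and tracking the signs produced by $\delta^{\nabla^\pi}=-*d^{\nabla^\pi}*$ on $u^*\xi$-valued two-forms.

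The last term $-d^{\nabla^\pi}(X_H^\pi(u)\otimes\gamma)$ carries the genuinely new perturbation contributions and is where I expect the real work. After $\delta^{\nabla^\pi}$ and the factor $-2$ it contributes $2\,\langle \delta^{\nabla^\pi}d^{\nabla^\pi}(X_H^\pi(u)\otimes\gamma), d_H^\pi u\rangle$, which I would handle by the Leibniz rule
$$
d^{\nabla^\pi}(X_H^\pi(u)\otimes\gamma) = d^{\nabla^\pi}X_H^\pi(u)\wedge\gamma + X_H^\pi(u)\otimes d\gamma ,
$$
followed by a $*$-manipulation analogous to the proof of Lemma \ref{lem:2delta} (using the $(1,0)$-type of $d_H^\pi u$ and the $J$-linearity of $\nabla^\pi$) that rewrites the resulting expression and collapses it onto $d_H^\pi u$, so as to produce exactly the two remaining terms $-2\langle d^{\nabla^\pi}X_H^\pi(u)\wedge\gamma,\del_H^\pi u\rangle$ and $-2\langle X_H^\pi(u)\wedge d\gamma,\del_H^\pi u\rangle$ of \eqref{eq:Laplaciandpiw}. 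This is the perturbed analogue of a step that is vacuous in \cite[Lemma 4.6]{oh-wang:connection}.

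The principal obstacle is precisely this sign-and-coefficient bookkeeping through the two applications of $\delta^{\nabla^\pi}$, compounded by the fact that $X_H$ is not tangent to $\xi$ in general: one must carry $X_H^\pi(u)=\Pi X_H(u)$ and $\lambda(X_H)=-H$ through every term and repeatedly invoke the defining properties of the contact triad connection — its $J$-linearity on $\xi$, the $(0,2)$-type of $T^\pi$, and the structure of $\nabla R_\lambda$ recorded in Lemma \ref{lem:nablaRlambda} — to certify that nothing beyond the listed terms survives.
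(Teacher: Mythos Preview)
Your proposal is correct and follows essentially the same approach as the paper: the paper's proof simply says to apply $\delta^{\nabla^\pi}$ to the fundamental equation \eqref{eq:dd_Hu2}, combine with \eqref{eq:2Delta} from Corollary \ref{cor:2delta}, and use the Leibniz formula $d^{\nabla^\pi}(X_H^\pi(u)\otimes\gamma)=d^{\nabla^\pi}X_H^\pi(u)\wedge\gamma+X_H^\pi(u)\otimes d\gamma$. You anticipate more work in the final step (a $*$-manipulation in the spirit of Lemma \ref{lem:2delta}) than the paper's terse proof spells out, but the overall strategy and the ingredients you invoke are exactly those the paper uses.
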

\begin{proof} Since $(du - X_H \otimes \gamma)^{\pi(0,1)} = 0$,
we have $d_H^\pi u = \del_H^\pi u$. Then
by applying $\delta^{\nabla^\pi}$ to \eqref{eq:dd_Hu2} and then combining
\eqref{eq:2Delta} and the formula
$$
d^{\nabla^\pi}(X_H^\pi(u) \otimes \gamma) = d^{\nabla^\pi}(X_H(u)) \otimes \gamma + X_H^\pi(u) \otimes
d\gamma,
$$
we finish the proof of the lemma.
\end{proof}

Here in the above lemmata, $\langle\cdot, \cdot\rangle$ denotes the inner product induced from $h$, i.e.,
$\langle\alpha_1\otimes\zeta, \alpha_2\rangle:=h(\alpha_1, \alpha_2)\zeta$,
for any $\alpha_1, \alpha_2\in \Omega^k(P)$ and $\zeta$ a section of $E$.
(See \cite[Appendix]{oh-wang:connection}, \cite{wells-book} for some detailed discussion on such convention.)

\begin{thm}[Laplacian of the $\pi$-energy density]\label{thm:e-pi-weitzenbeck}
Let $u$ be any $H$-perturbed contact CR map.
Then we have
\bea\label{eq:e-pi-weitzenbeck}
-\frac{1}{2}\Delta e_H^\pi(u)&=&|\nabla^\pi (\del_H^\pi u)|^2+K|\del_H^\pi u|^2+\langle \ric^{\nabla^\pi} (\del_H^\pi u), \del_H^\pi u\rangle\nonumber\\
&{}&- \underbrace{\langle \delta^{\nabla^{\pi}}[u^*\lambda \wedge (\CL_{X_\lambda}J)J \del_H^\pi u],
\del_H^\pi u\rangle}_{(A)} \nonumber\\
&{}& + \underbrace{\langle \delta^{\nabla^\pi}[ u^*\lambda\wedge(\CL_{R_\lambda} J)JX_H^\pi(u)
\otimes \gamma], \del_H^\pi u\rangle}_{(B)} \nonumber\\
&{}&- 4 \underbrace{\langle \delta^{\nabla^\pi} [T^\pi(X_H^\pi(u), \gamma \wedge d_H^\pi u)],
d_H^\pi u\rangle}_{(C)}
\nonumber\\
&{}& - 2 \langle d^{\nabla^\pi}X_H^\pi (u)\wedge \gamma, \del_H^\pi u \rangle
- 2 \langle X_H^\pi(u) \wedge d\gamma, \del_H^\pi u\rangle.
\eea
\end{thm}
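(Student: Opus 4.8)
The plan is to derive \eqref{eq:e-pi-weitzenbeck} by assembling the two ingredients already in place: the off-shell Bochner-Weitzenb\"ock identity \eqref{eq:bochner-weitzenbeck-e} and the on-shell expansion of the third-order term $\langle \Delta^{\nabla^\pi} d_H^\pi u, d_H^\pi u\rangle$ furnished by Lemma \ref{lem:laplaceproduct}. In this sense the theorem is a direct corollary of those two results; no further geometric input is required. First I would invoke the hypothesis that $u$ is an $H$-perturbed contact CR map, i.e. $(du - X_H\otimes\gamma)^{\pi(0,1)}=0$, so that $d_H^\pi u = \del_H^\pi u$ holds as an honest equality of $u^*\xi$-valued one-forms, not merely after discarding the $(0,1)$-part. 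Hence every occurrence of $d_H^\pi u$ on the right-hand side of \eqref{eq:bochner-weitzenbeck-e} may be rewritten as $\del_H^\pi u$, which turns the first three terms into $|\nabla^\pi(\del_H^\pi u)|^2 + K|\del_H^\pi u|^2 + \langle \ric^{\nabla^\pi}(\del_H^\pi u),\del_H^\pi u\rangle$, exactly the leading three terms of the claimed formula.

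Next I would substitute the expansion \eqref{eq:Laplaciandpiw} of Lemma \ref{lem:laplaceproduct} for $-\langle \Delta^{\nabla^\pi} d_H^\pi u, d_H^\pi u\rangle$ into \eqref{eq:bochner-weitzenbeck-e}. This replaces the single third-order term by the five lower-order terms of that lemma, and these match the statement term by term: the $\delta^{\nabla^\pi}$ of $u^*\lambda\wedge(\CL_{X_\lambda}J)J\del_H^\pi u$ paired with $\del_H^\pi u$ is the term $-(A)$; the $\delta^{\nabla^\pi}$ of $u^*\lambda\wedge(\CL_{R_\lambda}J)JX_H^\pi(u)\otimes\gamma$ paired with $\del_H^\pi u$ is $+(B)$; the $\delta^{\nabla^\pi}$ of $T^\pi(X_H^\pi(u),\gamma\wedge d_H^\pi u)$ paired with $d_H^\pi u$ is $-4(C)$; and the remaining two algebraic contributions are $-2\langle d^{\nabla^\pi}X_H^\pi(u)\wedge\gamma,\del_H^\pi u\rangle$ and $-2\langle X_H^\pi(u)\wedge d\gamma,\del_H^\pi u\rangle$. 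Collecting all of this reproduces \eqref{eq:e-pi-weitzenbeck} verbatim.

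Because both inputs are already established, the only genuine work here is bookkeeping: one must be sure the sign conventions for $\delta^{\nabla^\pi}$, for the wedge product of vector-bundle valued forms, and for the pointwise pairing $\langle\cdot,\cdot\rangle$ are consistently those fixed in Appendix \ref{sec:vectorvalued-forms} (and matching \cite{oh-wang:connection,oh-wang:CR-map1}), so that the signs $-(A)$, $+(B)$, $-4(C)$ come out correctly and no inhomogeneous term is dropped or counted twice. The one place where a little care is actually needed sits one step upstream, inside Lemma \ref{lem:laplaceproduct}: there one applies $\delta^{\nabla^\pi}$ to the fundamental equation \eqref{eq:dd_Hu2} of Theorem \ref{thm:fundamental} and uses the identity $\langle\Delta^{\nabla^\pi}\del_H^\pi u,\del_H^\pi u\rangle = 2\langle\delta^{\nabla^\pi}d^{\nabla^\pi}\del_H^\pi u,\del_H^\pi u\rangle$ from Corollary \ref{cor:2delta} to trade the full covariant Laplacian for $2\,\delta^{\nabla^\pi}d^{\nabla^\pi}$; once that reduction is granted, the present theorem follows immediately by the substitution just described.
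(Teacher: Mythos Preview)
Your proposal is correct and follows exactly the same approach as the paper: the paper's proof is a single sentence stating that the result follows immediately by substituting \eqref{eq:Laplaciandpiw} (Lemma \ref{lem:laplaceproduct}) into the Bochner--Weitzenb\"ock identity \eqref{eq:bochner-weitzenbeck-e}. Your write-up simply spells out this substitution and the accompanying bookkeeping in more detail.
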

\begin{proof}  This immediately follows by substituting \eqref{eq:Laplaciandpiw} into
\eqref{eq:bochner-weitzenbeck-e}.
\end{proof}

\subsection{Calculations for the Reeb-component $u^*\lambda_H$}

Recall the notations
$$
\lambda_H = \lambda + H \, \gamma, \quad d_H^\pi u = d^\pi u - X_H \otimes \gamma
$$
by definition.
Again using the Bochner--Weitzenb\"ock formula (applied to differential forms on a Riemann surface),
we get the following identity
\bea\label{eq:e-lambda-weitzenbeck}
-\frac{1}{2}\Delta|u^*\lambda_H|^2 & = & |\nabla u^*\lambda_H|^2
+K|u^*\lambda_H|^2 -\langle \Delta u^*\lambda_H,  u^*\lambda_H\rangle
\eea
where
$$
\Delta u^*\lambda_H=d\delta (u^*\lambda_H)+\delta d(u^*\lambda_H).
$$
We now derive
\begin{prop}\label{prop:Deltau*lambdaH}
\bea\label{eq:Delta-u*lambda+Hgamma}
\Delta (u^*\lambda_H) &  = &  d* (d(g_{H, u}) \wedge u^*\lambda_H  \circ j )\nonumber \\
&{}& + \frac12 *d|(du - X_H \otimes \gamma)^\pi|^2 \nonumber \\
&{}&-* d*(u^*(R_\lambda[H]\, \lambda)\wedge \gamma  + u^*H\, d\gamma)
\eea
\end{prop}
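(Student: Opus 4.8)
The plan is to compute $\Delta(u^*\lambda_H)=d\delta(u^*\lambda_H)+\delta d(u^*\lambda_H)$ by handling the two summands separately, using the perturbed contact instanton equations as the main input. For the co-closed part, I would start from the second equation in \eqref{eq:contacton-Legendrian-bdy}, namely $d(e^{g_{H,u}}u^*\lambda_H\circ j)=0$. Since on a Riemann surface $\delta\beta=-*d*\beta$ for a one-form $\beta$ and $*\beta=-\beta\circ j$, we have $\delta(u^*\lambda_H)=*d*(u^*\lambda_H)=-*d(u^*\lambda_H\circ j)$. Writing $u^*\lambda_H\circ j=e^{-g_{H,u}}\bigl(e^{g_{H,u}}u^*\lambda_H\circ j\bigr)$ and applying the Leibniz rule, the closed factor drops out and we get $d(u^*\lambda_H\circ j)=-d(g_{H,u})\wedge\bigl(u^*\lambda_H\circ j\bigr)$, hence $\delta(u^*\lambda_H)=*\bigl(d(g_{H,u})\wedge u^*\lambda_H\circ j\bigr)$. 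Taking $d$ of this yields the first term $d*\bigl(d(g_{H,u})\wedge u^*\lambda_H\circ j\bigr)$ on the right-hand side of \eqref{eq:Delta-u*lambda+Hgamma}, after inserting an extra $*$ (which is an isometry in the relevant degree) to match the stated form.

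For the closed part $\delta d(u^*\lambda_H)$, I would expand $d(u^*\lambda_H)=d(u^*\lambda)+d(u^*H\,\gamma)=u^*(d\lambda)+u^*(dH)\wedge\gamma+u^*H\,d\gamma$. The term $u^*(d\lambda)$ is where the first contact instanton equation enters: since $(du-X_H\otimes\gamma)^{\pi(0,1)}=0$, the $\pi$-part of $d_Hu$ is $(1,0)$-complex-linear, and pulling back $d\lambda=d\lambda|_\xi$ (which vanishes on $R_\lambda$) one can express $u^*(d\lambda)$ in terms of $\tfrac12|(du-X_H\otimes\gamma)^\pi|^2$ times the area form, plus cross terms between the $\pi$-part and the Reeb part; applying $\delta=-*d*$ and using $*(\text{area form})=1$ converts this into $\tfrac12 * d|(du-X_H\otimes\gamma)^\pi|^2$. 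The remaining pieces $u^*(dH)\wedge\gamma+u^*H\,d\gamma$: here I would split $dH$ along $M$ into its $\xi$-component and its $R_\lambda$-component, $dH = d^\pi H + (R_\lambda[H])\lambda$ modulo the identification, so that $u^*(dH)\wedge\gamma = u^*(d^\pi H)\wedge\gamma + u^*\bigl((R_\lambda[H])\lambda\bigr)\wedge\gamma$; after applying $-*d*$ the $\xi$-component contribution should cancel against a matching term produced when one carefully tracks $u^*(d\lambda)$ against $d_Hu=du-X_H\otimes\gamma$ rather than $du$ (this is exactly the algebraic miracle that the ``correct'' conformal factor and the ``correct'' $\lambda_H$ are designed to produce), leaving precisely $-*d*\bigl(u^*((R_\lambda[H])\lambda)\wedge\gamma+u^*H\,d\gamma\bigr)$, the third term in \eqref{eq:Delta-u*lambda+Hgamma}.

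The main obstacle I anticipate is the bookkeeping in the middle step: showing that when one replaces $du$ by $d_Hu=du-X_H\otimes\gamma$ inside $u^*(d\lambda)$, the extra terms involving $X_H$ combine exactly with the $\xi$-part of the $u^*(dH)\wedge\gamma+u^*H\,d\gamma$ contribution so that only the Reeb-direction derivative $R_\lambda[H]$ survives. Concretely one needs the identity $d(H\gamma)=dH\wedge\gamma+H\,d\gamma$ together with $\iota_{X_H}d\lambda = -d^\pi H$ (the defining relation of the contact Hamiltonian vector field, up to sign convention and the Reeb-component normalization $\lambda(X_H)=-H$), and then verify that $u^*(\iota_{X_H}d\lambda)\wedge\gamma$ exactly absorbs $u^*(d^\pi H)\wedge\gamma$. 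I would carry this out in an isothermal coordinate $(x,y)$ evaluating all two-forms on $(\partial_x,\partial_y)$, which turns the wedge-product identities into scalar identities and makes the cancellation transparent; the application of $\delta=-*d*$ is then routine. Once the three pieces are assembled the proposition follows by adding $d\delta(u^*\lambda_H)$ and $\delta d(u^*\lambda_H)$.
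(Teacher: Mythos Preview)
Your proposal is correct and follows essentially the same route as the paper: compute $d\delta(u^*\lambda_H)$ from the closedness equation $d(e^{g_{H,u}}u^*\lambda_H\circ j)=0$ via the Leibniz rule, and compute $\delta d(u^*\lambda_H)$ by expanding $d(u^*\lambda_H)$ using the CR equation together with the identity $dH = X_H\rfloor d\lambda + R_\lambda[H]\,\lambda$, which is exactly the cancellation you anticipate. The paper isolates the second computation as a standalone lemma (evaluating on an orthonormal frame $(e,je)$ rather than isothermal coordinates), and one small wording fix: the ``cross terms'' you mention are not between the $\pi$-part and the Reeb part of $du$ (since $d\lambda$ annihilates $R_\lambda$) but between $d_H^\pi u$ and $X_H^\pi\otimes\gamma$ inside $u^*d\lambda$, and these are precisely what cancel against $u^*(X_H\rfloor d\lambda)\wedge\gamma$.
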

\begin{proof}
By definition, a perturbed contact instanton is a perturbed contact CR map
that also satisfy
\be\label{eq:perturbed-deltaw=0}
d\left(e^{g_{H, u}}(u^*\lambda_H) \circ j\right) = 0
\ee
in addition.
\begin{lem}
\be\label{eq:2du*lambdaH}
 d((u^*\lambda_H) \circ j) = - d(g_{H, u}) \wedge (u^*\lambda_H \circ j).
\ee
\end{lem}
\begin{proof}
By multiplying $e^{-g_{H, u}}$ followed by expanding the left hand side of \eqref{eq:perturbed-deltaw=0},
we derive
\beastar
0 & = &
e^{-g_{H, u}}d\left(e^{g_{H, u}}\left(u^*\lambda_H \circ j\right)\right)  \\
& = &
e^{-g_{H, u}} d(e^{g_{H, u}}) \wedge (u^*\lambda_H \circ j)
+ d(u^*\lambda_H \circ j)
\eeastar
which is equivalent to \eqref{eq:2du*lambdaH}.
\end{proof}

Using the identities $**\alpha = -\alpha$ and
$*\alpha = - \alpha \circ j$ for the one-form $\alpha$, we derive
$$
\delta (u^*\lambda_H) =  - *
d((u^*\lambda_H) \circ j) = *\left(
d(g_{H, u}) \wedge u^*\lambda_H\circ j \right)
$$
from \eqref{eq:2du*lambdaH}. Therefore we obtain
\be\label{eq:ddeltau*lambda}
d \delta (u^*\lambda_H)= d* \left(d(g_{H, u}) \wedge u^*\lambda_H \circ j\right).
\ee

Next we compute $\delta d(u^*\lambda_H)$. For this purpose, we first compute
$d(u^*\lambda_H)$.

\begin{lem}\label{lem:du*lambda}  We have
\be\label{eq:du*lambdaH=}
 d(u^*\lambda_H) = \frac 12 |d_H^\pi u|^2\, dA
 + u^*(R_\lambda[H]\, \lambda)\wedge \gamma  + u^*H\, d\gamma
\ee
\end{lem}
\begin{proof}
Let $(e,je)$ be an orthonormal basis of $T\dot \Sigma$.  We evaluate
\beastar
d(u^*\lambda_H) (e,je) &= & d\lambda(du(e), du(je)) + d(u^*H\, \gamma)(e,je)\\
& = & d\lambda(du(e) - X_H \, \gamma(e),du(je)) - X_H \, \gamma(je))\\
&{}& \quad + d\lambda(du(e), X_H\, \gamma(je)) + d\lambda (X_H\, \gamma(e), du(je)) \\
&{}& \quad + (u^*dH\wedge \gamma)(e,je) + u^*H\, d\gamma(e,je)\\
& = & d\lambda(du(e) - X_H \, \gamma(e),du(je)) - X_H \, \gamma(je))\\
&{}& \quad - d\lambda(du(e), X_H\, \gamma(je)  + d\lambda (X_H, \gamma(e) du(je) ) \\
&{}& \quad + (u^*dH\wedge \gamma)(e,je) + u^*H\, d\gamma(e,je).
\eeastar
Here since $du- X_H \, \gamma$ is of $(1,0)$-type, we obtain
\beastar
&{}&
d\lambda(du(e) - X_H \, \gamma(e),du(je)) - X_H \, \gamma(je))  \\
& = & d\lambda((du - X_H \, \gamma)(e), (du- X_H \, \gamma)(je))\\
& = & d\lambda((du - X_H \, \gamma)(e), J (du- X_H \, \gamma)(e))\\
& = & |du(e) - X_H \, \gamma(e)|^2 = \frac12 |du - X_H \, \gamma|^2.
\eeastar
On the other hand, we can rewrite
\beastar
&{}&
- d\lambda(X_H,\, \gamma(je) du(e)) + d\lambda (X_H, \gamma(e) du(je))\\
& = & - d\lambda(X_H, \, du(e) \gamma(je) -  du(je)  \gamma(e)) \\
& = & - (X_H \rfloor d\lambda)((du \wedge \gamma)(e,je))\\
& = & -u^*(X_H \rfloor  d\lambda) \wedge \gamma(e,je).
\eeastar
Then combining the identity $dH = X_H \rfloor d\lambda + R_\lambda[H]\, \lambda$, we have
\beastar
&{}& - d\lambda(X_H\, \gamma(je) du(e)) + d\lambda (X_H, \gamma(e) du(je)) + (u^*dH\wedge \gamma)(e,je)\\
& = & -\left( u^*(X_H \rfloor d\lambda) \wedge \gamma \right)(e,je) + (u^*dH\wedge \gamma) (e,je)\\
& = & (u^*(R_\lambda[H]\, \lambda) \wedge\gamma) (e,je).
\eeastar
Therefore we derive
\be\label{eq:du*lambdaH}
u^*d\lambda_H = \frac12 |d_H^\pi u|^2\, dA + u^*(R_\lambda[H]\, \lambda)\wedge \gamma  + u^*H\, d\gamma
\ee
which finishes the proof.
\end{proof}

By taking the codifferential $\delta$ to \eqref{eq:du*lambdaH=}, we have derived
\be\label{eq:deltadu*lambda=}
\delta d(u^*\lambda_H) = - \frac 12 * d |d_H^\pi u|^2
 -* d*(u^*(R_\lambda[H]\, \lambda)\wedge \gamma  + u^*H\, d\gamma).
\ee
By adding up \eqref{eq:ddeltau*lambda} and \eqref{eq:deltadu*lambda=}, we have finished the proof of
\eqref{eq:Delta-u*lambda+Hgamma}.
\end{proof}

\section{Fundamental differential inequality for {$\Delta e_H$}}

Our derivation of a priori coercive elliptic estimates relies on Weitzenb\"ock's formula which
provides  a differential inequality for the Laplacian of the perturbed  energy density 
$e_H = e_H^\pi + |u^*\lambda_H|^2$
$$
\Delta e_H(u) = \Delta e_H^\pi(u) + \Delta |u^*\lambda_H|^2.
$$
We will compute  each summand separately below.

\subsection{Differential inequality for {$\Delta e_H^\pi$}}
In this section,
we start from Theorem \ref{thm:e-pi-weitzenbeck} to derive a bound for the $\pi$-energy
$\Delta e_H^\pi = |du - X_H \otimes \gamma|^2$, and prove the following differential inequality.

\begin{prop}\label{prop:Delta-epiu} Let $u$  be a $H$-perturbed contact CR map.
Then for any $c > 0$ we have
\bea
&{}&
-\frac{1}{2}\Delta e_H^\pi(u)\nonumber \\
& \geq & (1 - \frac{5}{2c})|\nabla^\pi (d_H^\pi u)|^2 + (\min K -\|\ric^{\nabla^\pi}\|_{C^0}) | d_H^\pi u|^2 \nonumber\\
&{}& - \frac{1}{c}|\nabla u^*\lambda_H |^2
- \frac1{4c}( \|\nabla H \gamma)\|_{C^0}^2|du|^4 + \|H\|_{C^0}^2 \|\nabla \gamma\|_{C^0}^2|du|^2)  \nonumber \\
&{}& - \frac{c}{2} \|\CL_{X_\lambda}J J\|^2_{C^0(M)} |d_H^\pi u|^4 \nonumber\\
&{}& - \frac{c}{2} \|\CL_{X_\lambda}J J\|_{C^0(M)}^2 \|X_H^\otimes \gamma\|_{C^0}^2
 |d_H^\pi u|^2 \nonumber \\
 &{}& - \frac{c}{4} \|\nabla^\pi(\CL_{X_\lambda}J) J\|_{C^0(M)}(|du|^4 +  |d_H^\pi u|^4) \nonumber\\
&{}&  - \frac12 \left(\|\nabla^\pi(\CL_{X_\lambda}J J)\|_{C^0(M)}\|X_H^\pi\|_{C^0}+ \|\CL_{X_\lambda}J J\|_{C^0(M)}\|\nabla^\pi X_H^\pi \|_{C^0}|\right)\nonumber \\
&{}& \quad \times (|du|^4 + |d_H^\pi u|^2) \nonumber\\
&{}& - 16c (\|T^\pi\|_{C^0}\|X_H^\pi\|_{C^0}\|\gamma \circ j\|_{C^0})^2 \nonumber\\
&{}& - 2 \left(\|\nabla^\pi T^\pi\|_{C^0} \|X_H^\pi \otimes \gamma \|_{C^0}\|\gamma\circ j\|_{C^0}
+\| T^\pi\|_{C^0} \|\nabla^\pi (X_H^\pi \otimes \gamma\|_{C^0}\|\gamma\circ j\|_{C^0}\right)
\nonumber \\
&{}& \quad \times (|du|^2 + |d_H^\pi u|^4)\nonumber\\
&{}& - 4 \|T^\pi\|_{C^0} \|X_H^\pi\|_{C^0} \|\gamma\circ j\|_{C^0}
 |d^{\nabla^\pi}d_H^\pi u| |d_H^\pi u| \nonumber\\
&{}&  - \left(\|\nabla X_H^\pi\|_{C^0} \|\gamma\|_{C^0}
+ \|X_H^\pi\|_{C^0} \|d\gamma\|_{C^0}\right)(|du|^2 + |d_H^\pi u|^4)\nonumber\\
\label{eq:Reeb-contribution}
\eea
\end{prop}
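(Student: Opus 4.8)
The plan is to begin from the exact Weitzenb\"ock identity of Theorem~\ref{thm:e-pi-weitzenbeck} and to bound, from below and term by term, the inhomogeneous pieces $(A)$, $(B)$, $(C)$ together with the two linear tail terms $-2\langle d^{\nabla^\pi}X_H^\pi(u)\wedge\gamma,\del_H^\pi u\rangle$ and $-2\langle X_H^\pi(u)\wedge d\gamma,\del_H^\pi u\rangle$, keeping the good term $|\nabla^\pi(\del_H^\pi u)|^2$ on the right with a coefficient that stays positive once $c$ is chosen large. First I would use that $u$ is a $H$-perturbed CR map, so $d_H^\pi u=\del_H^\pi u$, and record the common structure of the three brackets: each is $\delta^{\nabla^\pi}$ applied to a $u^*\xi$-valued two-form that is the wedge of a \emph{scalar} one-form ($u^*\lambda$ in $(A)$ and $(B)$; $\gamma$ and $d\gamma$ hidden inside $(C)$) with a vector-valued one-form whose vector part is, respectively, $(\CL_{X_\lambda}J)J\del_H^\pi u$, $(\CL_{R_\lambda}J)JX_H^\pi(u)$, and $d_H^\pi u$. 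Then $\langle\ric^{\nabla^\pi}(\del_H^\pi u),\del_H^\pi u\rangle$ and $K|\del_H^\pi u|^2$ are bounded below at once by $(\min K-\|\ric^{\nabla^\pi}\|_{C^0})|d_H^\pi u|^2$.

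The substantive step is to expand each $\delta^{\nabla^\pi}$ of these wedges by the co-Leibniz rule for vector-valued forms (Appendix~\ref{sec:vectorvalued-forms}) and to sort the resulting terms into four classes: (i) those in which the covariant derivative lands on $d_H^\pi u$, which carry a factor $|\nabla^\pi(\del_H^\pi u)|$ paired against a $C^0$-bounded tensor contracted with $d_H^\pi u$; (ii) those in which the derivative lands on the scalar one-form $u^*\lambda$, which by Proposition~\ref{prop:nabladHu} (writing $u^*\lambda$ through $\lambda(d_Hu)=u^*\lambda_H$) together with Lemma~\ref{lem:du*lambda} are re-expressed via $\nabla(u^*\lambda_H)$ and $|d_H^\pi u|^2$; (iii) those in which the derivative lands on the structure tensors $\CL_{X_\lambda}J$, $\CL_{R_\lambda}J$, $X_H^\pi$, $T^\pi$ or on $\gamma$, bounded by the $C^0$-norms of their $\nabla$'s times $|du|$; and (iv) for $(C)$ a residual $\delta^{\nabla^\pi}$-contraction of $\gamma\wedge d_H^\pi u$, which I would process using the fundamental equation of Theorem~\ref{thm:fundamental}, producing the term $-4\|T^\pi\|_{C^0}\|X_H^\pi\|_{C^0}\|\gamma\circ j\|_{C^0}|d^{\nabla^\pi}d_H^\pi u||d_H^\pi u|$ and the pure constant $-16c(\|T^\pi\|_{C^0}\|X_H^\pi\|_{C^0}\|\gamma\circ j\|_{C^0})^2$ in \eqref{eq:Reeb-contribution}. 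After this, the argument is purely algebraic: apply Young's inequality $|\langle X,Y\rangle|\le\frac1{2\e}|X|^2+\frac{\e}{2}|Y|^2$ to each class-(i) cross-term with $\e=c$, absorbing them all into the good term at a total cost of $\frac{5}{2c}$, leaving $(1-\frac{5}{2c})|\nabla^\pi(\del_H^\pi u)|^2$; apply it again to each class-(ii) cross-term, absorbing them into $-\frac1c|\nabla u^*\lambda_H|^2$ at the cost of the $-\frac1{4c}(\|\nabla(H\gamma)\|_{C^0}^2|du|^4+\|H\|_{C^0}^2\|\nabla\gamma\|_{C^0}^2|du|^2)$ terms; and for the remaining terms use $|\langle X,Y\rangle|\le|X||Y|$ and $2ab\le a^2+b^2$, together with $|d_H^\pi u|\le|d^\pi u|+\|X_H^\pi\|_{C^0}|\gamma|$ and the crude majorizations $|du|,|d_H^\pi u|\le 1+|du|^4$ where the stated quartic bounds require it, to reach exactly the quadratic and quartic majorants displayed in \eqref{eq:Reeb-contribution}.

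The main obstacle is step two: carrying out the co-Leibniz expansion of $\delta^{\nabla^\pi}$ on the three vector-valued two-forms and correctly tracking, for every resulting contraction, which $C^0$-weight and which power of $|du|$ versus $|d_H^\pi u|$ it inherits, and in particular recognizing that the ``derivative-of-$u^*\lambda$'' pieces collapse, via Proposition~\ref{prop:nabladHu} and Lemma~\ref{lem:du*lambda}, onto the already-present quantities $\nabla(u^*\lambda_H)$ and $|d_H^\pi u|^2$ rather than onto genuinely new third-order terms. This is precisely the tensor bookkeeping the introduction flags as requiring ``some ingenuity''; once it is in place, the Young's-inequality step with the free parameter $c$ and the curvature estimate are routine.
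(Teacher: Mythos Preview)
Your approach is essentially the paper's: start from the Weitzenb\"ock identity of Theorem~\ref{thm:e-pi-weitzenbeck}, expand each of $(A)$, $(B)$, $(C)$ via the co-Leibniz rule (this is exactly Lemma~\ref{lem:A-B}), then absorb the cross-terms carrying $|\nabla^\pi(\del_H^\pi u)|$ or $|\nabla u^*\lambda|$ by Young's inequality with parameter $c$. Your accounting of the $\frac{5}{2c}$ loss on the good term is correct: $\frac{1}{2c}$ comes from $(A)$ and $\frac{2}{c}$ from the $d^{\nabla^\pi}d_H^\pi u$ piece of $(C)$ after the pointwise bound $|d^{\nabla^\pi}\alpha|\le |\nabla^\pi\alpha|$.

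Two small over-complications in your routing. For class~(ii), neither Proposition~\ref{prop:nabladHu} nor Lemma~\ref{lem:du*lambda} is needed; the paper simply uses the algebraic identity $u^*\lambda = u^*\lambda_H - H(u)\gamma$, so that $|\nabla u^*\lambda|^2$ splits into $|\nabla u^*\lambda_H|^2$ plus terms bounded by $\|\nabla(H\gamma)\|_{C^0}^2|du|^2$-type quantities. For class~(iv), you do not actually need to invoke Theorem~\ref{thm:fundamental}: after the co-Leibniz expansion of $(C)$ (the third line of \eqref{eq:C}), the term containing $d^{\nabla^\pi}d_H^\pi u$ is simply bounded pointwise and left as $|d^{\nabla^\pi}d_H^\pi u||d_H^\pi u|$ in the final inequality---it is not re-expanded via the fundamental equation at this stage.
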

\begin{proof} We need to estimate the terms (A), (B), (C) of \eqref{eq:e-pi-weitzenbeck}
in Theorem \ref{thm:e-pi-weitzenbeck}.

The first line of \eqref{eq:e-pi-weitzenbeck} is bounded below by
$$
|\nabla^\pi(d^\pi_H(u))|^2 + (\min K -\|\ric^{\nabla^\pi}\|_{C^0}) | d_H^\pi u|^2.
$$
We then prove the following two lemmata in  Appendix \ref{sec:A-B}
which rewrite the middle 3 lines of \eqref{eq:e-pi-weitzenbeck} in terms of covariant derivatives.

\begin{lem}\label{lem:A-B}
\bea
(A)& := & \delta^{\nabla^\pi}[(u^*\lambda \wedge (\CL_{X_\lambda}J)J \del_H^\pi u]\nonumber\\
&=&-*\langle (\nabla^\pi(\CL_{X_\lambda}J J))\del_H^\pi u, u^*\lambda\rangle \nonumber\\
&{}&
-*\langle (\CL_{X_\lambda}J J)\nabla^\pi\del_H^\pi u, u^*\lambda\rangle
-*\langle (\CL_{X_\lambda}J J)\del_H^\pi u, \nabla u^*\lambda\rangle.
\label{eq:A} \\
(B) & := & \delta^{\nabla^\pi}[(u^*\lambda \wedge (\CL_{X_\lambda}J)J X_H^\pi(u) \otimes \gamma
]\nonumber\\
&=&-*\langle (\nabla^\pi(\CL_{X_\lambda}J J))J X_H^\pi(u) \otimes \gamma,
u^*\lambda\rangle \nonumber\\
&{}&
-*\langle (\CL_{X_\lambda}J J)\nabla^\pi( J X_H^\pi(u) \otimes \gamma), u^*\lambda\rangle
-*\langle (\CL_{X_\lambda}J J)J X_H^\pi(u) \otimes \gamma, \nabla u^*\lambda\rangle.\nonumber \\
&{}& \label{eq:B}\\
(C)& : =&  \delta^{\nabla^\pi}[T^\pi(X_H^\pi,\gamma \wedge d_H^\pi u)] \nonumber\\
& = &  *(\nabla^\pi T^\pi)(X_H^\pi, \langle d_H^\pi u,\gamma \circ j \rangle)
+ T^\pi(*\nabla^\pi X_H^\pi, \langle d_H^\pi u,\gamma \circ j \rangle) \nonumber\\
&{}& + * T^\pi(X_H^\pi, \langle d^{\nabla^\pi}d_H^\pi u,\gamma \circ j\rangle )
- * T^\pi(X_H^\pi, \langle d_H^\pi u, d(\gamma\circ j) \rangle).
\label{eq:C}
\eea
\end{lem}

\subsubsection{Estimate for (A)}

Using the formula \eqref{eq:A}, we compute
\beastar
\langle (A), \del_H^\pi u \rangle & = & - \langle \delta^{\nabla^\pi}[(u^*\lambda
\wedge (\CL_{X_\lambda}J)J \del_H^\pi u], \del_H^\pi u\rangle \nonumber\\
&=&\langle *\langle (\nabla^\pi(\CL_{X_\lambda}J J))\del_H^\pi u, u^*\lambda\rangle,
\del_H^\pi u\rangle \nonumber\\
& = & + \langle *\langle (\CL_{X_\lambda}J J)\nabla^\pi\del_H^\pi u, u^*\lambda\rangle,
\del_H^\pi u\rangle \nonumber\\
&{}& + \langle *\langle (\CL_{X_\lambda}J J)\del_H^\pi u, \nabla u^*\lambda\rangle,
\del_H^\pi u\rangle.
\eeastar
Hence noting $\nabla^\pi$ above denotes the pull-back connection by the map $u$, we actually mean
$$
(\nabla^\pi(\CL_{X_\lambda}J)) \del_H^\pi u = (\nabla^\pi_{du}(\CL_{X_\lambda}J)) \del_H^\pi u
$$
and so get a bound for the term  $|\langle (A), \del_H^\pi u \rangle|$ by
\bea\label{eq:delta-ident}
&{}&|\langle \delta^{\nabla^\pi}[(u^*\lambda \wedge (\CL_{X_\lambda}J)J \del_H^\pi u], \del_H^\pi u\rangle
\nonumber|\\
&\leq&
\|\nabla^\pi(\CL_{X_\lambda}J J)\|_{C^0(M)}|du||u^*\lambda||d_H^\pi u|^2
+|\langle (\CL_{X_\lambda}JJ )\nabla^\pi(\del_H^\pi u), u^*\lambda\rangle||d_H^\pi u|\nonumber\\
&{}& +|\langle (\CL_{X_\lambda}J J)\del_H^\pi u, \nabla u^*\lambda\rangle||d_H^\pi u| \nonumber\\
&\leq& \|\nabla^\pi(\CL_{X_\lambda}J)\|_{C^0(M)}|du|^2|d_H^\pi u|^2
+ \|\CL_{X_\lambda}J J\|_{C^0} |\nabla^\pi(\del_H^\pi u)|| u^*\lambda| |d_H^\pi u|\nonumber\\
&{}& + \|\CL_{X_\lambda}JJ \|_{C^0} |\del_H^\pi u|  |\nabla u^*\lambda| |d_H^\pi u|
\eea
Using the general inequality
$
ab \leq \frac1{2c} a^2 + \frac{c}{2} b^2,
$
which holds  for any constant $c > 0$, we further bound the last two terms of \eqref{eq:delta-ident} via
\beastar
&{}&
\|\CL_{X_\lambda}J J\|_{C^0(M)}|\nabla^\pi (\del_H^\pi u)||du| |d_H^\pi u|\\
&\leq&\frac{1}{2c}|\nabla^\pi (\del_H^\pi u)|^2+\frac{c}{2}\|\CL_{X_\lambda}J J\|^2_{C^0(M)}
|du|^2|d_H^\pi u|^2
\eeastar
and
\beastar
\|\CL_{X_\lambda}JJ \|_{C^0} |\del_H^\pi u|  |\nabla u^*\lambda| |d_H^\pi u|
&\leq&\frac{1}{2c}|\nabla u^*\lambda|^2+\frac{c}{2}\|\CL_{X_\lambda}J J\|^2_{C^0(M)}|d_H^\pi u|^4.
\eeastar
Here $c$ is any positive constant to be chosen later. Finally, adding them up, we get the upper bound
\bea\label{eq:deltanabla-pi-j}
&{}&|\langle \delta^{\nabla^\pi}[(u^*\lambda \wedge (\CL_{X_\lambda}J)J \del_H^\pi u], \del_H^\pi u\rangle|\nonumber\\
&\leq&
\frac{1}{2c}\left(|\nabla^\pi (\del_H^\pi u)|^2  + |\nabla u^*\lambda|^2\right)\nonumber\\
&{}&
+\frac{c}{2} \|\CL_{X_\lambda}J J\|^2_{C^0(M)} |d_H^\pi u|^4
 + \frac{c}{2} \|\nabla^\pi(\CL_{X_\lambda}J J)\|_{C^0(M)}|du|^2  |d_H^\pi u|^2 \nonumber\\
&\leq &
\frac{1}{2c}\left(|\nabla^\pi (\del_H^\pi u)|^2  + |\nabla u^*\lambda|^2\right)\nonumber\\
&{}&
+\frac{c}{2} \|\CL_{X_\lambda}J J\|^2_{C^0(M)} |d_H^\pi u|^4
 + \frac{c}{4} \|\nabla^\pi(\CL_{X_\lambda}J J)\|_{C^0(M)}(|du|^4 +  |d_H^\pi u|^4).  \nonumber \\
&{}&
\eea
\subsubsection{Estimate for (B)}

Similarly using the fommular for the term (B)
of \eqref{eq:e-pi-weitzenbeck}, we obtain
\bea\label{eq:XHu-pi}
|\langle (B), \del_H^\pi u\rangle |
& = &|\langle \delta^{\nabla^\pi}[(u^*\lambda)\wedge
(\CL_{X_\lambda}J)J X_H^\pi(u)\otimes \gamma], \del_H^\pi u\rangle|\nonumber\\
&\leq&
\|\nabla^\pi(\CL_{X_\lambda}J J)\|_{C^0(M)}|du||u^*\lambda|
|X_H^\pi(u)\otimes \gamma||d_H^\pi u| \nonumber\\
&{}& +|\langle (\CL_{X_\lambda}JJ )\nabla^\pi(X_H(u)\otimes \gamma),
u^*\lambda\rangle||d_H^\pi u|\nonumber\\
&{}& +|\langle (\CL_{X_\lambda}J J)X_H(u)\otimes \gamma,
\nabla u^*\lambda\rangle||d_H^\pi u| \nonumber\\
&\leq& \|\nabla^\pi(\CL_{X_\lambda}J J)\|_{C^0(M)}|du|^2|X_H^\pi(u)\otimes \gamma|
|d_H^\pi u| \nonumber\\
&{}& +|\langle (\CL_{X_\lambda}J J)\nabla^\pi(X_H^\pi(u)\otimes \gamma),
u^*\lambda\rangle||d_H^\pi u|\nonumber\\
&{}& +|\langle (\CL_{X_\lambda}J J)X_H^\pi(u)\otimes \gamma,
\nabla u^*\lambda\rangle||d_H^\pi u|
\eea
We get the bound for the first term
\beastar
&{}&
 \|\nabla^\pi(\CL_{X_\lambda}J J)\|_{C^0(M)}|du|^2|X_H^\pi(u)\otimes \gamma||d_H^\pi u|\\
 &\leq & \frac12  \|\nabla^\pi(\CL_{X_\lambda}J J)\|_{C^0(M)}\|X_H^\pi\otimes \gamma\|_{C^0}
 (|du|^4 + |d_H^\pi u|^2).
\eeastar
We get the bound for the last two terms of \eqref{eq:XHu-pi} via
\beastar
&{}&
|\langle (\CL_{X_\lambda}J J)\nabla^\pi(X_H^\pi(u) \otimes \gamma), u^*\lambda\rangle||d_H^\pi u|\\
&\leq&
\|\CL_{X_\lambda}J J\|_{C^0(M)}\|\nabla^\pi (X_H^\pi \otimes \gamma) \|_{C^0}|du|^2 |d_H^\pi u|\\
&\leq& \frac12 \|\CL_{X_\lambda}J J\|_{C^0(M)}\|\nabla^\pi (X_H^\pi \otimes \gamma) \|_{C^0} \\
&{}& \times |(|du|^4 +  |d_H^\pi u|^2)
\eeastar
and
\beastar
&{}&
|\langle (\CL_{X_\lambda}JJ ) X_H^\pi (u)\otimes \gamma,
 \nabla u^*\lambda\rangle||d_H^\pi u|\\
&\leq&\frac{1}{2c}|\nabla u^*\lambda|^2+\frac{c}{2}\|\CL_{X_\lambda}J J\|^2_{C^0(M)}
\|X_H^\pi\otimes \gamma\|^2_{C^0} |d_H^\pi u|^2.
\eeastar
Finally, adding the three up and inserting the sum into \eqref{eq:XHu-pi}, we get the upper bound
\bea\label{eq:deltaXH(u)-pi-j}
&{}&|\langle \delta^{\nabla^\pi}[(u^*\lambda \wedge (\CL_{X_\lambda}J)J X_H^\pi(u)\otimes \gamma],
\del_H^\pi u\rangle|\nonumber\\
&\leq&
\frac{1}{2c} |\nabla u^*\lambda|^2 +
 \frac{c}{2} \|\CL_{X_\lambda}J J\|_{C^0(M)}^2 \|X_H^\pi\otimes \gamma\|_{C^0}^2
  |d_H^\pi u|^2 \nonumber\\
&{}&  + \left(\frac12 (\|\nabla^\pi(\CL_{X_\lambda}J J)\|_{C^0(M)}\|X_H^\pi\otimes \gamma|_{C^0}
\ + \|\CL_{X_\lambda}J J\|_{C^0(M)}\|\nabla^\pi X_H^\pi \otimes \gamma\|_{C^0}|\right) \nonumber\\
&{}& \quad \times (|du|^4 + |d_H^\pi u|^2)\nonumber\\
 &{}&
\eea

\begin{rem} We remark that the above calculation is a much more
complex variation of the ones presented in \cite[(5.2)--(5.3)]{oh-wang:connection} for the case $H =0$.
\end{rem}

\subsubsection{Estimate for (C)}

Next, using the formula (C), we estimate the term
$$
\langle (C) ,\del_H^\pi u\rangle
= - 4 \langle \delta^{\nabla^\pi} [T^\pi(X_H^\pi(u), \gamma \wedge d_H^\pi u)], d_H^\pi u\rangle
$$
of \eqref{eq:e-pi-weitzenbeck} in a similar way. Utilizing \eqref{eq:C}, we compute
\bea\label{eq:(C)}
&{}&
|\langle (C), \del_H^\pi u\rangle| \nonumber \\
& = &
|4 \langle \delta^{\nabla^\pi} [T^\pi(X_H^\pi(u), \gamma \wedge d_H^\pi u)], d_H^\pi u\rangle|\nonumber\\
& \leq & \frac{2}{c} |d^{\nabla^\pi}d_H^\pi u|^2
+ 16c (\|T^\pi\|_{C^0}\|X_H^\pi\|_{C^0}\|(\gamma \circ j)\|_{C^0})^2 \nonumber\\
&{}& +4 (\|\nabla^\pi T^\pi\|_{C^0} \|X_H^\pi\|_{C^0}\|\gamma\circ j\|_{C^0}
+ \| T^\pi\|_{C^0} \|\nabla^\pi X_H^\pi\|_{C^0}\|\gamma\circ j\|_{C^0})|du| |d_H^\pi u|^2\nonumber\\
& \leq &
\frac{2}{c} |d^{\nabla^\pi}d_H^\pi u|^2
+ 16c (\|T^\pi\|_{C^0}\|X_H^\pi\|_{C^0}\|(\gamma \circ j)\|_{C^0})^2 \nonumber\\
&{}& + 2(\|\nabla^\pi T^\pi\|_{C^0} \|X_H^\pi\|_{C^0}\|\gamma\circ j\|_{C^0}
 + \| T^\pi\|_{C^0} \|\nabla^\pi X_H^\pi\|_{C^0}\|\gamma\circ j\|_{C^0})(|du|^2+ |d_H^\pi u|^4.)\nonumber\\
&{}&+ 4 \|T^\pi\|_{C^0} \|X_H^\pi\|_{C^0} \|\gamma\circ j\|_{C^0}
 |d^{\nabla^\pi}d_H^\pi u| |d_H^\pi u|.
\eea

By substituting \eqref{eq:deltanabla-pi-j}, \eqref{eq:deltaXH(u)-pi-j} and \eqref{eq:(C)}
into \eqref{eq:e-pi-weitzenbeck} and using the identity
$u^*\lambda = u^*\lambda_H - H(u) \gamma$,
we obtain
\beastar
&{}&
-\frac{1}{2}\Delta e_H^\pi(u) \\
& \geq &
|\nabla^\pi (d_H^\pi u)|^2+K|d_H^\pi u|^2+
\langle \ric^{\nabla^\pi} (\del_H^\pi u), \del_H^\pi u \rangle  \\
&{}& - \frac{1}{2c}\left(|\nabla^\pi (d_H^\pi u)|^2+ |\nabla(u^*\lambda)|^2\right)
- \frac{c}{2} \|\CL_{X_\lambda}J J\|^2_{C^0(M)} |d_H^\pi u|^4 \\
&{}& - \frac{c}{4}\|\nabla^\pi(\CL_{X_\lambda}J J)\|_{C^0(M)}(|du|^4 + |d_H^\pi u|^4)\\
&{}& \\
&{}& - \frac{1}{2c} |\nabla u^*\lambda_H |^2 -
 \frac{c}{2} \|\CL_{X_\lambda}J J\|_{C^0(M)}^2 \|X_H^\otimes \gamma\|_{C^0}^2 |d_H^\pi u|^2 \nonumber\\
&{}&  - \frac12 \left(\|\nabla^\pi(\CL_{X_\lambda}J J)\|_{C^0(M)}\|X_H^\pi \otimes \gamma\|_{C^0}
+ \|\CL_{X_\lambda}J J\|_{C^0(M)}\|\nabla^\pi (X_H^\pi\otimes \gamma) \|_{C^0}|\right) \\
&{}& \times  (|du|^4 + |d_H^\pi u|^2)\\
 &{}& \\
&{}& - \frac{2}{c} |\nabla^\pi(d_H^\pi u)|^2 - 16c  (\|T^\pi\|_{C^0}
\|X_H^\pi\|_{C^0}\|\gamma \circ j\|_{C^0})^2 \\
&{}& - 2 \left(\|\nabla^\pi T^\pi\|_{C^0} \|X_H^\pi\|_{C^0}\|\gamma\circ j\|_{C^0}
+\| T^\pi\|_{C^0} \|\nabla^\pi X_H^\pi\|_{C^0}\|\gamma\circ j\|_{C^0}\right)(|du|^2 + |d_H^\pi u|^4)\\
&{}& - 4 \|T^\pi\|_{C^0} \|X_H^\pi\|_{C^0} \|\gamma\circ j\|_{C^0}
 |d^{\nabla^\pi}d_H^\pi u| |d_H^\pi u| \\
&{}&  - (\|\nabla X_H^\pi\|_{C^0} \|\gamma\|_{C^0}
+ \|X_H^\pi\|_{C^0} \|d\gamma\|_{C^0}) |du||\del_H^\pi u|^2.
\eeastar
By rearranging various summands of the above, we derive
\eqref{eq:Reeb-contribution}.
\end{proof}

\subsection{Differential inequality for $\Delta |u^*\lambda_H|^2$}

In this subsection, we examine the contribution by the Reeb component $u^*\lambda$ such as \eqref{eq:Reeb-contribution}.
Here crucially enters the second defining equation of \emph{perturbed contact instantons}
\eqref{eq:perturbed-deltaw=0} below.

Specifically, we derive the following inequality starting from Proposition \ref{prop:Deltau*lambdaH}.

\begin{prop}\label{prop:Delta-u*lambda-Hgamma} We have
\bea\label{eq:laplacian-pi-upper}
\langle \Delta  (u^*\lambda_H),  u^*\lambda_H \rangle
&\leq & \frac{1}{2c} |\nabla^\pi d_H^\pi u|^2  + \frac{1}{c} |\nabla( u^*\lambda_H)|^2
+ \frac{1}{2c} |\nabla d u|^2 \nonumber\\
&{}& + C_1 |du|^2 + C_2 |u^*\lambda_H|^2 + C_3|du| |u^*\lambda_H|^2 \nonumber\\
&{}&  + C_4|du|^2 |u^*\lambda_H|^2 + C_5 |u^*\lambda_H|^4
+ C_6 |du||u^*\lambda_H| \nonumber \\
&{}&
\eea
where the constants $C_i$ with $i=1, \ldots, 6$ depend only on the geometry of $(M,\lambda, J)$,
$(\dot \Sigma, j, h)$, the Hamiltonian $H$ and $c$, but independent of the map $u$.
\end{prop}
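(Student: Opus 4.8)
The plan is to substitute the formula for $\Delta(u^*\lambda_H)$ from Proposition~\ref{prop:Deltau*lambdaH} into the pointwise inner product $\langle\Delta(u^*\lambda_H),u^*\lambda_H\rangle$ and estimate the three resulting summands --- call them $(\mathrm I)$ (the term carrying $d(g_{H,u})$), $(\mathrm{II})$ (the term $\tfrac12 *d|d_H^\pi u|^2$), and $(\mathrm{III})$ (the term built from $u^*(R_\lambda[H]\,\lambda)\wedge\gamma+u^*H\,d\gamma$) --- one at a time, applying Young's inequality $ab\le \frac1{2c}a^2+\frac c2b^2$ repeatedly to trade the three genuinely second-order quantities $|\nabla du|$, $|\nabla u^*\lambda_H|$, $|\nabla^\pi d_H^\pi u|$ for small multiples $\frac1{2c}(\cdot)^2$ of their squares plus terms that are at worst quadratic in first derivatives. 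Throughout I would use the elementary bounds $|d_H^\pi u|\le |d_Hu|=|du-X_H\otimes\gamma|\le |du|+\|X_H\|_{C^0}\|\gamma\|_{C^0}$, the pointwise bound $|u^*\lambda|=|\lambda(du)|\le C|du|$, the conversion $|\nabla u^*\lambda|\le |\nabla u^*\lambda_H|+C(|du|+1)$ (since $u^*\lambda=u^*\lambda_H-u^*H\,\gamma$ and $\nabla(u^*H\,\gamma)$ is first order in $u$), and the $C^0$-bounds on $H,dH,R_\lambda[H],\gamma,d\gamma$, the triad data, and the Gaussian curvature, all absorbed into constants.

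For $(\mathrm I)$ I would first note that $*(d(g_{H,u})\wedge u^*\lambda_H\circ j)$ is a function bounded pointwise by $C|dg_{H,u}|\,|u^*\lambda_H|$, and then differentiate. Because $g_{H,u}(\tau,t)=g_{(\phi_H^t)^{-1}}(t,u(\tau,t))$ is a composition of a smooth $t$-family of functions on $M$ with $u$, the chain rule yields $|dg_{H,u}|\le C(|du|+1)$ and, crucially, $|\nabla(dg_{H,u})|\le C(|\nabla du|+|du|^2+1)$, where the \emph{second} spatial derivatives of $g_{(\phi_H^t)^{-1}}$ contribute the $|du|^2$ term while the first spatial and mixed $t$--$x$ derivatives contribute $|\nabla du|$ linearly and the rest to the constant. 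Since the Riemann-surface complex structure $j$ is parallel for $h$, one also has $|\nabla(u^*\lambda_H\circ j)|=|\nabla u^*\lambda_H|$. Hence $|(\mathrm I)|\le C(|\nabla du|+|du|^2+1)|u^*\lambda_H|^2+C(|du|+1)|\nabla u^*\lambda_H|\,|u^*\lambda_H|$, and Young's inequality on the $|\nabla du|\,|u^*\lambda_H|^2$ and $(|du|+1)|\nabla u^*\lambda_H|\,|u^*\lambda_H|$ pieces extracts $\frac1{2c}|\nabla du|^2+\frac1{2c}|\nabla u^*\lambda_H|^2$ together with contributions to $C_2|u^*\lambda_H|^2$, $C_4|du|^2|u^*\lambda_H|^2$ and $C_5|u^*\lambda_H|^4$.

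Term $(\mathrm{II})$ is the mildest: metric compatibility of $\nabla^\pi$ gives $d|d_H^\pi u|^2=2\langle\nabla^\pi d_H^\pi u,d_H^\pi u\rangle$ as a one-form, so $|(\mathrm{II})|\le |\nabla^\pi d_H^\pi u|\,|d_H^\pi u|\,|u^*\lambda_H|\le C|\nabla^\pi d_H^\pi u|(|du|+1)|u^*\lambda_H|$, and Young yields $\frac1{2c}|\nabla^\pi d_H^\pi u|^2+C_4|du|^2|u^*\lambda_H|^2+C_2|u^*\lambda_H|^2$. For $(\mathrm{III})$, writing $\beta$ for the $2$-form inside, $|*d*\beta|=|d(*\beta)|\le C\,|\nabla\beta|$; differentiating $\beta=(R_\lambda[H]\circ u)\,u^*\lambda\wedge\gamma+(H\circ u)\,d\gamma$ and using $|d(R_\lambda[H]\circ u)|\le C|du|$, $|d(H\circ u)|\le C|du|$, $|u^*\lambda|\le C|du|$ and the conversion above bounds it by $C(|du|^2+|\nabla u^*\lambda_H|+|du|+1)$; pairing with $u^*\lambda_H$ and applying Young to the $|\nabla u^*\lambda_H|\,|u^*\lambda_H|$ piece produces a further $\frac1{2c}|\nabla u^*\lambda_H|^2$ plus the remaining $C_1|du|^2$, $C_3|du|\,|u^*\lambda_H|^2$, $C_6|du|\,|u^*\lambda_H|$ and $C_2|u^*\lambda_H|^2$ contributions (the $|du|^2$ and $|du|\,|u^*\lambda_H|$ terms arising from splittings such as $|du|^2|u^*\lambda_H|\le\frac12|du|^2+\frac12|du|^2|u^*\lambda_H|^2$ and $C(|du|+1)|u^*\lambda_H|\le C|du|\,|u^*\lambda_H|+\frac c2|u^*\lambda_H|^2+\text{const}$). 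Adding the three estimates, the two $\frac1{2c}|\nabla u^*\lambda_H|^2$ contributions combine into $\frac1c|\nabla u^*\lambda_H|^2$ and one arrives at \eqref{eq:laplacian-pi-upper}.

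The main obstacle I anticipate is the bookkeeping in term $(\mathrm I)$: one must verify that the chain rule through the $t$-dependent composition $g_{H,u}=g_{(\phi_H^t)^{-1}}\circ u$ really produces $|\nabla du|$ only \emph{linearly} (so that it is absorbable by Young's inequality with the arbitrarily small coefficient $\frac1{2c}$), with every genuinely quadratic-in-first-derivative piece appearing as $|du|^2|u^*\lambda_H|^2$ or $|u^*\lambda_H|^4$, which are precisely the lower-order terms permitted on the right-hand side; a secondary delicate point is tracking the $\gamma$-dependence, since $\gamma$ is not assumed closed, so the $d\gamma\neq0$ contributions in $(\mathrm{III})$ are exactly what force the presence of the genuinely $|u^*\lambda_H|$-linear terms $C_3,C_6$.
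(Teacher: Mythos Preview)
Your proposal is correct and follows essentially the same approach as the paper: the identical three-term split $(\mathrm I)+(\mathrm{II})+(\mathrm{III})$ coming from Proposition~\ref{prop:Deltau*lambdaH}, the chain-rule bounds on $|dg_{H,u}|$ and $|\nabla(dg_{H,u})|$ (which the paper isolates as a separate lemma, Lemma~\ref{lem:dgHw-nabladgHw}), and repeated application of Young's inequality to extract the three second-order quantities with the small coefficients $\tfrac1{2c},\tfrac1c,\tfrac1{2c}$. Your bookkeeping in $(\mathrm{III})$ is in fact slightly more thorough than the paper's --- you correctly track the $|\nabla u^*\lambda|$ contribution arising from differentiating $u^*\lambda$ inside $\beta$, which the paper's displayed bound for $(\mathrm{III})$ omits --- but this only redistributes where the two copies of $\tfrac1{2c}|\nabla u^*\lambda_H|^2$ originate (the paper draws both from $(\mathrm I)$, you draw one from $(\mathrm I)$ and one from $(\mathrm{III})$) and does not alter the final inequality.
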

\begin{proof}
Recall
$$
g_{H, u}(z) = g_{(\phi_H^t)^{-1}}(u(\tau,t)) = - g_{\phi_H^t}((\phi_H^t)^{-1}((u(\tau,t)))
$$
where
\be\label{eq:phiHt}
\phi_H^t: = \psi_H^t \circ (\psi_H^1)^{-1}.
\ee
Therefore we have
$$
dg_{H,u} = - d\left(g_{\phi_H^t} \circ \phi_H^t\right) \circ du
- R_\lambda[H](t,\phi_H^t(u(\tau,t)))\, dt.
$$
We compute
\beastar
\nabla(dg_{H,u}) & = & \nabla (- dg_{\phi_H^t} \circ \phi_H^t \circ du - R_\lambda[H](t,\phi_H^t(u(\tau,t)))\, dt)\\
& = & \nabla (- dg_{\phi_H^t} \circ \phi_H^t \circ du)
- \nabla \left(R_\lambda[H](t,\phi_H^t(u(\tau,t)))\, dt\right).
\eeastar
Here we have
\beastar
\nabla (- dg_{\phi_H^t} \circ \phi_H^t \circ du)
& = & \nabla (- dg_{\phi_H^t} \circ \phi_H^t) \circ du
- \nabla_{du} \left(g_{\phi_H^t} \circ \phi_H^t\right) \circ \nabla du,
\eeastar
and
\beastar
\nabla \left(R_\lambda[H](t,\phi_H^t(u(\tau,t)))\, dt\right)
=  \nabla_{du} \left(R_\lambda[H](t,\phi_H^t\right)\, dt.
\eeastar

Summarizing the above calculations, we obtain the following inequality.

\begin{lem}\label{lem:dgHw-nabladgHw} We have
$$
|dg_{H,u}|\leq \|dg_{\phi_H^t} \circ \phi_H^t\|_{C^0} \cdot |du|
+ \|R_\lambda[H]\|_{C^0}
$$
and
\beastar
|\nabla dg_{H,u}| & \leq &
\left(\|\nabla (dg_{\phi_H^t} \circ \phi_H^t)\|_{C^0}
 +  \|\nabla \left(R_\lambda[H](t,\phi_H^t)\right)\|_{C^0})\right) \cdot |du| \\
&{}& + \|d(g_{\phi_H^t} \circ \phi_H^t) \|_{C^0} \cdot |\nabla du|.
\eeastar
\end{lem}

We recall \eqref{eq:Delta-u*lambda+Hgamma} here:
\beastar
\Delta (u^*\lambda_H) &  = &  d*( (dg_{H,u}) \wedge u^*\lambda_H \circ j)\nonumber \\
&{}& + \frac12 *d|(du - X_H \otimes \gamma)^\pi|^2 \nonumber \\
&{}&-* d*(u^*(R_\lambda[H]\, \lambda)\wedge \gamma  + u^*H\, d\gamma)
\eeastar
and hence
\beastar
\langle \Delta (u^*\lambda_H), (u^*\lambda_H) \rangle
&  = & \underbrace{\langle  d*((dg_{H,u}) \wedge u^*\lambda_H\circ j)
, (u^*\lambda_H) \rangle}_{(I)} \nonumber \\
&{}& + \underbrace{\frac12 \langle *d|(du - X_H \otimes \gamma)^\pi|^2,
(u^*\lambda_H) \rangle}_{(II)}
 \nonumber \\
&{}&\underbrace{-\langle * d*(u^*(R_\lambda[H]\, \lambda)\wedge \gamma  + u^*H\, d\gamma),
, (u^*\lambda_H) \rangle}_{(III)}
\eeastar

We now compute the three terms separately. We first compute
\beastar
&{}&
d* ((dg_{H,u}) \wedge u^*\lambda_H \circ j)\\
& = & - *d \langle u^*\lambda_H, d(g_{H, u}) \circ j \rangle \\
& = & - *\langle d^\nabla (u^*\lambda_H), dg_{H,u} \circ j \rangle
- *\langle  u^*\lambda_H, d^\nabla( dg_{H,u} \circ j)\rangle.
\eeastar
Therefore
\beastar
|(I)| & = & |\langle   d* (dg_{H,u} \circ j \wedge u^*\lambda_H \circ j),
u^*\lambda_H \rangle|\\
& = & | *\langle d^\nabla (u^*\lambda_H), dg_{H,u} \circ j \rangle
- *\langle  u^*\lambda_H, d^\nabla(dg_{H,u} \circ j) \rangle|\cdot
|u^*\lambda_H|.
\eeastar
Here we have
\beastar
&{}&
| *\langle d^\nabla (u^*\lambda_H), dg_{H,u} \circ j \rangle
- *\langle  u^*\lambda_H, d^\nabla (dg_{H,u} \circ j) \rangle|\\
& \leq & (|\nabla(u^*\lambda_H)|\cdot |dg_{H,u} \circ j |
+| u^*\lambda_H||d^\nabla(dg_{H,u} \circ j)|.
\eeastar
Altogether, we have derived
\beastar
&{}&
|\langle   d* (dg_{H,u}\circ j \wedge u^*\lambda_H, u^*\lambda_H \rangle|\\
& \leq &  (|\nabla(u^*\lambda_H)|\cdot |dg_{H,u}\circ j ||u^*\lambda_H|
 + |\nabla (dg_{H,u} \circ j)| | u^*\lambda_H|^2.
\eeastar

Substituting the inequalities in Lemma \ref{lem:dgHw-nabladgHw} into above, we have derived
\bea\label{eq:d*dghHu}
&{}&
|\langle  * d* (dg_{H,u}) \circ j \wedge u^*\lambda_H, u^*\lambda_H \rangle|\nonumber\\
& \leq & \left( \|dg_{\phi_H^t} \circ \phi_H^t\|_{C^0} \cdot |du \circ j|
+ \|R_\lambda[H]\|_{C^0}\right) |\nabla(u^*\lambda_H)||u^*\lambda_H|\nonumber\\
& + & \left(\|\nabla (dg_{\phi_H^t} \circ \phi_H^t)\|_{C^0}
 +  \|d\left(R_\lambda[H](t,\phi_H^t)\right)\|_{C^0})\right)
 \cdot |du \circ j|  | u^*\lambda_H|^2\nonumber \\
 &{}&  +
\|dg_{\phi_H^t} \circ \phi_H^t) \|_{C^0} \cdot |\nabla du| | u^*\lambda_H|^2.
 \eea
We get the bound
\beastar
&{}&
\|dg_{\phi_H^t} \circ \phi_H^t\|_{C^0} \cdot |du \circ j| |\nabla(u^*\lambda_H)||u^*\lambda_H|\\
& \leq & \frac1{2c} |\nabla(u^*\lambda_H)|^2 + \frac{c}{2}
(\|dg_{\phi_H^t} \circ \phi_H^t\|_{C^0})^2 |du\circ j|^2 |u^*\lambda_H|^2,
\eeastar
and
$$
 \|R_\lambda[H]\|_{C^0} |\nabla(u^*\lambda_H) ||u^*\lambda_H|
\leq \frac1{2c} |\nabla(u^*\lambda_H)|^2
 + \frac{c}{2}\|R_\lambda[H]\|_{C^0}^2 |u^*\lambda_H|^2,
$$
and
$$
\|dg_{\phi_H^t} \circ \phi_H^t \|_{C^0} \cdot |\nabla du| |u^*\lambda_H|^2
\leq  \frac1{2c} |\nabla du|^2 + \frac {c}{2}\|dg_{\phi_H^t} \circ \phi_H^t \|_{C^0}^2
 |u^*\lambda_H|^4.
$$
Substituting these into \eqref{eq:d*dghHu} and rearranging terms, we get
\bea\label{eq:item-1}
&{}&
|\langle  d* (dg_{H,u} \circ j \wedge u^*\lambda_H), u^*\lambda_H \rangle| \nonumber\\
& \leq & \frac1{c} |\nabla(u^*\lambda_H)|^2 + \frac1{2c} |\nabla du|^2\nonumber \\
&{}&  + \frac {c}{2}\|dg_{\phi_H^t} \circ \phi_H^t) \|_{C^0}^2 |u^*\lambda_H|^4 \nonumber\\
&{}& + \frac{c}{2} \|dg_{\phi_H^t} \circ \phi_H^t\|_{C^0}^2 |du\circ j|^2 |u^*\lambda_H|^2
+ \frac{c}{2}\|R_\lambda[H]\|_{C^0}^2 |u^*\lambda_H|^2 \nonumber\\
&{}& + \left(\|\nabla (dg_{\phi_H^t} \circ \phi_H^t)\|_{C^0}
+ \|d\left(R_\lambda[H](t,\phi_H^t\right)\|_{C^0}\right)
\cdot |du\circ j| | u^*\lambda_H|^2.\nonumber\\
{}
\eea

Next we compute
\bea\label{eq:item-2}
|(II)| & = & \frac12 \left| \langle *d|(du - X_H \otimes \gamma)^\pi|^2, u^*\lambda_H \rangle
\right| \nonumber\\
& \leq &  |\nabla^\pi( d_H^\pi u)| |d_H^\pi u||u^*\lambda_H|.
\eea
Finally we have
\beastar
|* d*(u^*(R_\lambda[H]\, \lambda)\wedge \gamma)|
& \leq & 2\|R_\lambda[H]\|_{C^0}|\gamma||du|\\
|*d*u^*H \, d\gamma| & \leq & \|dH\|_{C^0} |d\gamma|_{C^0} |du|
\eeastar
and hence
\be\label{eq:item-3}
|(III)| \leq (2\|R_\lambda[H]\|_{C^0}|\gamma| + \|dH\|_{C^0} |d\gamma|_{C^0})
 |du| |u^*\lambda_H|.
\ee
By summing up \eqref{eq:item-1}, \eqref{eq:item-2} and \eqref{eq:item-3} and
$|du| = |du\circ j|$, we have derived
\beastar
&{}&
\langle \Delta  (u^*\lambda_H),  (u^*\lambda_H)\rangle  \nonumber\\
&\leq & \frac{1}{2c} |\nabla^\pi d_H^\pi u|^2  + \frac{1}{c} |\nabla( u^*\lambda_H)|^2
+ \frac{1}{2c} |\nabla du|^2 \nonumber\\
&{}& + C_1 |du|^2 + C_2 |u^*\lambda_H|^2 + C_3|du| |u^*\lambda_H|^2 \nonumber\\
&{}&  + C_4|du|^2 |u^*\lambda_H|^2 + C_5 |u^*\lambda_H|^4
+ C_6 |du||u^*\lambda_H|.
\eeastar
This finishes the proof of \eqref{eq:laplacian-pi-upper}.
\end{proof}

An immediate corollary of this proposition and \eqref{eq:e-lambda-weitzenbeck} is the following.
\begin{cor} \label{cor:e-lambda-weitezenbeck} We have
\beastar
-\frac12 \Delta |u^*\lambda_H|^2 & \geq & \left(1- \frac1{2c}\right)  |\nabla u^*\lambda_H|^2 + \min K |u^*\lambda_H|^2  \\
&{}& \left(1-  \frac{1}{c} \right) |\nabla( u^*\lambda_H)|^2 - \frac{1}{2c} |\nabla du|^2 \\
&{}& - C_1 |du|^2 - C_2 |u^*\lambda_H|^2 - C_3|du| |u^*\lambda_H|^2 \\
&{}&  - C_4|du|^2 |u^*\lambda_H|^2 - C_5 |u^*\lambda_H|^4
- C_6 |du||u^*\lambda_H|.
\eeastar
\end{cor}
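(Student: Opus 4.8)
The plan is to read the corollary off the Bochner--Weitzenb\"ock identity \eqref{eq:e-lambda-weitzenbeck} for the $u^*TM$-valued one-form $u^*\lambda_H$ on the Riemann surface $(\dot\Sigma,h)$ by inserting the upper bound of Proposition~\ref{prop:Delta-u*lambda-Hgamma}. First I would rewrite \eqref{eq:e-lambda-weitzenbeck} in the isolated form
\[
-\tfrac12\Delta|u^*\lambda_H|^2 \;=\; |\nabla u^*\lambda_H|^2 \;+\; K\,|u^*\lambda_H|^2 \;-\; \langle \Delta(u^*\lambda_H),\, u^*\lambda_H\rangle ,
\]
so that the only term still to be controlled is $-\langle \Delta(u^*\lambda_H), u^*\lambda_H\rangle$.

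Next I would apply \eqref{eq:laplacian-pi-upper}. Since that inequality bounds $\langle \Delta(u^*\lambda_H), u^*\lambda_H\rangle$ from \emph{above}, negating it bounds $-\langle \Delta(u^*\lambda_H), u^*\lambda_H\rangle$ from \emph{below}:
\[
-\langle \Delta(u^*\lambda_H),\, u^*\lambda_H\rangle \;\geq\; -\tfrac1{2c}\,|\nabla^\pi d_H^\pi u|^2 \;-\; \tfrac1c\,|\nabla(u^*\lambda_H)|^2 \;-\; \tfrac1{2c}\,|\nabla du|^2 \;-\; \big(C_1|du|^2 + \cdots + C_6|du|\,|u^*\lambda_H|\big),
\]
the last parenthesis being the six lower-order terms on the right side of \eqref{eq:laplacian-pi-upper}. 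I would then use the pointwise bound $K \geq \min_{\dot\Sigma} K$ to replace $K|u^*\lambda_H|^2$ by $\min K\,|u^*\lambda_H|^2$, and combine gradient terms: the $|\nabla u^*\lambda_H|^2$ produced by the Weitzenb\"ock identity and the $-\tfrac1c|\nabla(u^*\lambda_H)|^2$ term above together give $\big(1-\tfrac1c\big)|\nabla u^*\lambda_H|^2$, while the remaining second-order contributions $-\tfrac1{2c}|\nabla^\pi d_H^\pi u|^2$ and $-\tfrac1{2c}|\nabla du|^2$ and the six lower-order terms are carried unchanged to the right-hand side. Since by Proposition~\ref{prop:Delta-u*lambda-Hgamma} the $C_i$ depend only on the geometry of $(M,\lambda,J)$, $(\dot\Sigma,j,h)$, on $H$ and on $c$ but not on $u$, rearranging the summands produces the asserted inequality; if desired, $|\nabla^\pi d_H^\pi u|^2$ may at this point be re-expressed through Proposition~\ref{prop:nabladHu} and \eqref{eq:|nabladHu|2} in terms of $|\nabla du|^2$, $|\nabla(u^*\lambda_H)|^2$ and lower-order quantities, at the cost of enlarging the $C_i$ and slightly shrinking the coefficient of $|\nabla u^*\lambda_H|^2$ to something of the form $1-O(1/c)$.

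All of the analytic substance lies in Proposition~\ref{prop:Delta-u*lambda-Hgamma}, so there is no genuine obstacle in this corollary: the only points requiring mild care are the direction of the inequality (the upper bound on $\langle\Delta\cdot,\cdot\rangle$ must be negated before it is substituted into the identity for $-\tfrac12\Delta|u^*\lambda_H|^2$) and the bookkeeping that shows the $u$-independence of the constants survives both the rearrangement and the optional absorption of the $|\nabla^\pi d_H^\pi u|^2$ term.
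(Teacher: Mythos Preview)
Your proposal is correct and matches the paper's own approach: the paper states this result as ``an immediate corollary of this proposition and \eqref{eq:e-lambda-weitzenbeck}'' with no further argument, and your unpacking---substitute the negated upper bound \eqref{eq:laplacian-pi-upper} into the Weitzenb\"ock identity \eqref{eq:e-lambda-weitzenbeck}, replace $K$ by $\min K$, and collect terms---is exactly what is meant. Your remark about optionally absorbing the $-\tfrac{1}{2c}|\nabla^\pi d_H^\pi u|^2$ term via \eqref{eq:|nabladHu|2} is also apt, since the stated corollary does not display that term explicitly (the statement as printed appears to contain minor typographical inconsistencies in the gradient coefficients).
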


\section{Local coercive $W^{2,2}$-estimates}

In this section, combining the calculations given in the previous section,
we first calculate the total energy density which is defined as
$$
e_H(u):=|d_H u|^2=e_H^\pi(u)+|u^*\lambda_H|^2.
$$

\subsection{Bounding $\Delta e_H(u)$ from above}

The following differential inequality is the key inequality to establish
whose proof will occupy the entirety of this subsection.

\begin{prop}\label{prop:second:derivative}
Let $u$ be any  perturbed contact instanton. Then there exist constants $C_i'$, $i = 1, \ldots, 4$
independent of $u$' such that
\be\label{eq:second-derivative}
\frac18|\nabla(d_H^\pi u)|^2 + \frac38|\nabla(u^*\lambda_H)|^2 \leq 
-\frac{1}{2}\Delta e_H(u) + C_1'e_H(u)^2 +  C_2' e_H(u) + C_3' |du|^2 + C_4' |du|^4.
\ee
\end{prop}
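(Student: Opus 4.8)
This inequality is purely pointwise, so the Legendrian boundary condition plays no role here; the strategy is to superpose the two Weitzenb\"ock-type lower bounds already established and then absorb the resulting second-order terms. Starting from the orthogonal splitting $e_H(u)=e_H^\pi(u)+|u^*\lambda_H|^2$ we have $-\tfrac12\Delta e_H(u)=-\tfrac12\Delta e_H^\pi(u)-\tfrac12\Delta|u^*\lambda_H|^2$, and we apply Proposition \ref{prop:Delta-epiu} to the first summand and Corollary \ref{cor:e-lambda-weitezenbeck} (together with Proposition \ref{prop:Delta-u*lambda-Hgamma}) to the second, choosing the auxiliary parameter $c$ to be the same in both. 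Adding the two lower bounds yields an inequality of the form
\[
-\tfrac12\Delta e_H(u)\ \ge\ a_c\,|\nabla^\pi(d_H^\pi u)|^2+b_c\,|\nabla(u^*\lambda_H)|^2-\tfrac1{2c}|\nabla du|^2-\kappa\,|d^{\nabla^\pi}d_H^\pi u|\,|d_H^\pi u|-R,
\]
where $a_c,b_c=1-O(1/c)$, $\kappa=4\|T^\pi\|_{C^0}\|X_H^\pi\|_{C^0}\|\gamma\circ j\|_{C^0}$, and $R$ is a finite sum of monomials in $|du|$, $|d_H^\pi u|$, $|u^*\lambda_H|$, $|u^*\lambda|$ of degree at most four whose coefficients depend only on the fixed data $(M,\lambda,J)$, $(\dot\Sigma,j,h)$, $\gamma$, $H$ and $c$.

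The next step is to dispose of the two terms still carrying a first covariant derivative. For $\kappa\,|d^{\nabla^\pi}d_H^\pi u|\,|d_H^\pi u|$ we use the elementary inequality $|d^{\nabla^\pi}\alpha|\le\sqrt2\,|\nabla^\pi\alpha|$, valid for a $u^*\xi$-valued one-form on the Riemann surface $\dot\Sigma$, together with Young's inequality, which bounds this term by $\eta\,|\nabla^\pi(d_H^\pi u)|^2+C_\eta\,|d_H^\pi u|^2$ for any $\eta>0$. For $|\nabla du|^2$ we write $du=d_Hu+X_H(u)\otimes\gamma$, so that $\nabla du=\nabla(d_Hu)+(\nabla X_H)(du)\otimes\gamma+X_H(u)\otimes d\gamma$ and hence $|\nabla du|^2\le 2|\nabla(d_Hu)|^2+C(|du|^2+1)$; inserting the norm bound \eqref{eq:|nabladHu|2}, namely $|\nabla(d_Hu)|^2\le|\nabla^\pi(d_H^\pi u)|^2+|\nabla(u^*\lambda_H)|^2+\|\nabla R_\lambda\|_{C^0}^2|u^*\lambda|\,|du|$, converts $-\tfrac1{2c}|\nabla du|^2$ into $-\tfrac1c\bigl(|\nabla^\pi(d_H^\pi u)|^2+|\nabla(u^*\lambda_H)|^2\bigr)$ up to further terms of the type already collected in $R$.

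After these substitutions the coefficient of $|\nabla^\pi(d_H^\pi u)|^2$ in the resulting lower bound for $-\tfrac12\Delta e_H(u)$ is of the form $1-O(1/c)-O(\eta)$ and that of $|\nabla(u^*\lambda_H)|^2$ is $1-O(1/c)$; choosing $c$ large and then $\eta$ small makes the former at least $\tfrac18$ and the latter at least $\tfrac38$. Discarding the (nonnegative) surplus of these two terms and invoking the identity $\nabla(d_H^\pi u)=\nabla^\pi(d_H^\pi u)$ established in the proof of Proposition \ref{prop:nabladHu}, we obtain $\tfrac18|\nabla(d_H^\pi u)|^2+\tfrac38|\nabla(u^*\lambda_H)|^2\le-\tfrac12\Delta e_H(u)+R'$ with $R'$ again a degree-at-most-four polynomial in $|du|$, $|d_H^\pi u|$, $|u^*\lambda_H|$, $|u^*\lambda|$ with fixed coefficients. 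Finally $R'$ is collapsed into the stated right-hand side by means of the pointwise bounds $|d_H^\pi u|^2\le e_H(u)$, $|u^*\lambda_H|^2\le e_H(u)$, $|u^*\lambda|\le|u^*\lambda_H|+C\le e_H(u)^{1/2}+C$, and Young's inequality on the mixed monomials (for instance $|du|^2|d_H^\pi u|^2\le\tfrac12|du|^4+\tfrac12 e_H(u)^2$ and $|du|\,|d_H^\pi u|^2\le\tfrac12|du|^2+\tfrac12 e_H(u)^2$), which yields $R'\le C_1'e_H(u)^2+C_2'e_H(u)+C_3'|du|^2+C_4'|du|^4$, i.e.\ \eqref{eq:second-derivative}. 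Combined with \eqref{eq:|nabladHu|2} this in particular recovers the cruder inequality \eqref{eq:higher-derivative}.

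The main obstacle is the constant-fixing step: one has to keep track of how the single term $-\tfrac1{2c}|\nabla du|^2$, once split via \eqref{eq:|nabladHu|2}, redistributes weight simultaneously onto $|\nabla^\pi(d_H^\pi u)|^2$ and $|\nabla(u^*\lambda_H)|^2$, and verify that the net coefficients nonetheless remain above $\tfrac18$ and $\tfrac38$ once $c$ is taken large; the rest is the same bookkeeping of Young's inequalities as in the unperturbed argument of \cite[Appendix C]{oh-wang:CR-map1}, only with many more inhomogeneous terms carrying $X_H$, $\nabla X_H$, $\gamma$ and $d\gamma$.
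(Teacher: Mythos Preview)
Your proof is correct and follows essentially the same approach as the paper: add Proposition \ref{prop:Delta-epiu} and Corollary \ref{cor:e-lambda-weitezenbeck}, absorb the $|\nabla du|^2$ term via $du=d_Hu+X_H\otimes\gamma$ and \eqref{eq:|nabladHu|2}, then fix the auxiliary parameter (the paper takes $c=4$ exactly, yielding $1-\tfrac{7}{2c}=\tfrac18$ and $1-\tfrac{5}{2c}=\tfrac38$, rather than your ``$c$ large, $\eta$ small''). Your explicit Young's-inequality treatment of the cross-term $|d^{\nabla^\pi}d_H^\pi u|\,|d_H^\pi u|$ and of the final collapse of $R'$ into the four monomials are in fact more careful than the paper, which simply absorbs these into the unspecified constants $C_i'$ when passing to \eqref{eq:laplace-e-derivative}.
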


Inserting \eqref{eq:laplacian-pi-upper} into \eqref{eq:e-lambda-weitzenbeck} and
summing it up with \eqref{eq:e-pi-weitzenbeck},
we obtain the following inequality for any contact instanton $u$
\bea
&{}& -\frac{1}{2}\Delta e_H(u) = -\frac12 (\Delta e_H^\pi(u) - \frac12 \Delta |u^*\lambda_H|^2
 \nonumber\\
& \geq & (1 - \frac{5}{2c})|\nabla^\pi (d_H^\pi u)|^2
 + \left(1- \frac{3}{2c}\right) |\nabla u^*\lambda_H |^2 - \frac1{2c} |\nabla du|^2 \nonumber \\
&{}& - (\min K -\|\ric^{\nabla^\pi}\|_{C^0}) | d_H^\pi u|^2 \nonumber\\
&{}& - \frac1{4c}( \|\nabla H \gamma)\|_{C^0}^2|du|^4 + \|H\|_{C^0}^2 \|\nabla \gamma\|_{C^0}^2|du|^2)
\nonumber \\
&{}& - \frac{c}{2} \|\CL_{X_\lambda}J J\|^2_{C^0(M)} |d_H^\pi u|^4 \nonumber\\
&{}& \nonumber\\
&{}& - \frac{c}{2} \|\CL_{X_\lambda}J J\|_{C^0(M)}^2 \|X_H^\otimes \gamma\|_{C^0}^2
 |d_H^\pi u|^2 \nonumber \\
 &{}& - \frac{c}{4} \|\nabla^\pi(\CL_{X_\lambda}J) J\|_{C^0(M)}(|du|^4 +  |d_H^\pi u|^4) \nonumber\\
&{}&  - \frac12 \left(\|\nabla^\pi(\CL_{X_\lambda}J J)\|_{C^0(M)}\|X_H^\pi \otimes \gamma \|_{C^0}
+ \|\CL_{X_\lambda}J J\|_{C^0(M)}\|\nabla^\pi(X_H^\pi \otimes \gamma) \|_{C^0}|\right)\nonumber \\
&{}& \quad \times (|du|^4 + |d_H^\pi u|^2) \nonumber\\
&{}& \nonumber \\
&{}& - 16c (\|T^\pi\|_{C^0}\|X_H^\pi\|_{C^0}\|\gamma \circ j\|_{C^0})^2 \nonumber\\
&{}& - 2 \left(\|\nabla^\pi T^\pi\|_{C^0} \|X_H^\pi\|_{C^0}\|\gamma\circ j\|_{C^0}
+\| T^\pi\|_{C^0} \|\nabla^\pi X_H^\pi\|_{C^0}\|\gamma\circ j\|_{C^0}\right) \nonumber \\
&{}& \times (|du|^2 + |d_H^\pi u|^4)\nonumber\\
&{}& - 4 \|T^\pi\|_{C^0} \|X_H^\pi\|_{C^0} \|\gamma\circ j\|_{C^0}
 |d^{\nabla^\pi}d_H^\pi u| |d_H^\pi u| \nonumber\\
&{}&  - \left(\|\nabla X_H^\pi\|_{C^0} \|\gamma\|_{C^0}
+ \|X_H^\pi\|_{C^0} \|d\gamma\|_{C^0}\right)(|du|^2 + |d_H^\pi u|^4)\nonumber\\
&{}& \nonumber \\
&{}& - \frac1{2c}\|\nabla(H\gamma)\|_{C^0} |du|^2  +\min K| u^*\lambda_H|^2  \nonumber\\
&{}& + C_1 |du|^2 - C_2 |u^*\lambda_H|^2 - C_3|du| |u^*\lambda_H|^2 \nonumber\\
&{}&  - C_4|du|^2 |u^*\lambda_H|^2 - C_5 |u^*\lambda_H|^4
- C_6 |du||u^*\lambda_H|
\eea
By the inequality
$$
\frac12 |\nabla du|^2 \leq |\nabla(d_H u)|^2 + |\nabla (X_H \otimes \gamma)|^2)
$$
combined with the inequality from \eqref{eq:|nabladHu|2}
$$
|\nabla (d_H u)|^2 \leq |\nabla^\pi (d^\pi_H u)|^2 + |\nabla(u^*\lambda_H)|^2 + \|\nabla R_\lambda\|^2_{C^0}|u^*\lambda||du|,
$$
 the first line of the above inequality is bounded below by
\beastar
&{}&
+  (1 - \frac{5}{2c})|\nabla^\pi (d_H^\pi u)|^2 + (1 - \frac{3}{2c}) |\nabla u^*\lambda_H|^2\\
&{}& - \frac{1}{c} \left( |\nabla^\pi (d^\pi_H u)|^2 + |\nabla(u^*\lambda_H)|^2 + \|\nabla R_\lambda\|^2_{C^0}|u^*\lambda||du|\right)  - \frac1c |\nabla(X_H \otimes \gamma)|^2 \\
& \geq &  (1 - \frac{7}{2c})|\nabla^\pi (d_H^\pi u)|^2 + (1 - \frac{5}{2c}) |\nabla u^*\lambda_H|^2 \\
&{}& - \frac1{c} \|\nabla R_\lambda\|^2_{C^0}(|u^*\lambda|^2 + |du|^2) 
- \frac1c  |\nabla(X_H \otimes \gamma)|^2.
\eeastar

By rearranging the summands above,  we arrive at the following
\bea
&{}& -\frac{1}{2}\Delta e_H(u) = -\frac12 (\Delta e_H^\pi(u) - \frac12 \Delta |u^*\lambda_H|^2
 \nonumber\\
& \geq &  (1 - \frac{7}{2c})|\nabla^\pi (d_H^\pi u)|^2 + (1 - \frac{5}{2c}) |\nabla u^*\lambda_H|^2
\nonumber\\
&{}& - \frac1{c} \|\nabla R_\lambda\|^2_{C^0}(|u^*\lambda|^2 + |du|^2) 
 - \frac1c|\nabla (X_H \otimes \gamma)|^2 \nonumber \\
 &{}& \nonumber\\
&{}& - (\min K -\|\ric^{\nabla^\pi}\|_{C^0}) | d_H^\pi u|^2 \nonumber\\
&{}& - \frac1{4c}( \|\nabla H \gamma)\|_{C^0}^2|du|^4 + \|H\|_{C^0}^2 \|\nabla \gamma\|_{C^0}^2|du|^2)
\nonumber \\
&{}& - \frac{c}{2} \|\CL_{X_\lambda}J J\|^2_{C^0(M)} |d_H^\pi u|^4 \nonumber\\
&{}& \nonumber\\
&{}& - \frac{c}{2} \|\CL_{X_\lambda}J J\|_{C^0(M)}^2 \|X_H^\otimes \gamma\|_{C^0}^2
 |d_H^\pi u|^2 \nonumber \\
 &{}& - \frac{c}{4} \|\nabla^\pi(\CL_{X_\lambda}J) J\|_{C^0(M)}(|du|^4 +  |d_H^\pi u|^4) \nonumber\\
&{}&  - \frac12 \left(\|\nabla^\pi(\CL_{X_\lambda}J J)\|_{C^0(M)}\|X_H^\pi\|_{C^0}+ \|\CL_{X_\lambda}J J\|_{C^0(M)}\|\nabla^\pi X_H^\pi \|_{C^0}|\right)
\nonumber\\
&{}& \quad \times (|du|^4 + |d_H^\pi u|^2) \nonumber\\
&{}& \nonumber \\
&{}& - 16c (\|T^\pi\|_{C^0}\|X_H^\pi\|_{C^0}\|\gamma \circ j\|_{C^0})^2 \nonumber\\
&{}& - 2 \left(\|\nabla^\pi T^\pi\|_{C^0} \|X_H^\pi \otimes \gamma \|_{C^0}\|\gamma\circ j\|_{C^0}
+\| T^\pi\|_{C^0} \|\nabla^\pi (X_H^\pi \otimes \gamma\|_{C^0}\|\gamma\circ j\|_{C^0}\right) \nonumber\\
&{}& \times (|du|^2 + |d_H^\pi u|^4)\nonumber\\
&{}& - 4 \|T^\pi\|_{C^0} \|X_H^\pi\|_{C^0} \|\gamma\circ j\|_{C^0}
 |d^{\nabla^\pi}d_H^\pi u| |d_H^\pi u| \nonumber\\
&{}&  - \left(\|\nabla X_H^\pi\|_{C^0} \|\gamma\|_{C^0}
+ \|X_H^\pi\|_{C^0} \|d\gamma\|_{C^0}\right)(|du|^2 + |d_H^\pi u|^4)\nonumber\\
&{}& \nonumber \\
&{}& - \frac1{2c}\|\nabla(H\gamma)\|_{C^0} |du|^2  +\min K| u^*\lambda_H|^2 \nonumber\\
&{}& + C_1 |du|^2 - C_2 |u^*\lambda_H|^2 - C_3|du| |u^*\lambda_H|^2 \nonumber\\
&{}&  - C_4|du|^2 |u^*\lambda_H|^2 - C_5 |u^*\lambda_H|^4
- C_6 |du||u^*\lambda_H|
\eea

We fix a sufficiently large constant $c$ so that $c \geq 4 $  and obtain
\bea
\label{eq:laplace-e-derivative}
-\frac{1}{2}\Delta e_H(u)
&\geq & (1  - \frac{7}{2c})|\nabla^\pi (d_H^\pi u)|^2 + (1 - \frac{5}{2c}) |\nabla ( u^*\lambda_H)|^2 \nonumber\\
&{}& - C_1'e_H(u)^2 - C_2' e_H(u) - C_3' |du|^2 - C_4' |du|^4
\eea
where $C_i'$ does not depend on $u$ but depends only of $(M,\lambda,J)$ and $(\dot\Sigma, j,h)$ and
on the choice of $c$ but fixed.

Now we rewrite $\eqref{eq:laplace-e-derivative}$ into
\bea\label{eq:laplace-higherderivative}
&{}&
(1  - \frac{7}{2c})|\nabla^\pi (d_H^\pi u)|^2 + (1 - \frac{5}{2c}) |\nabla ( u^*\lambda_H)|^2  \nonumber\\
&\leq & -\frac{1}{2}\Delta e_H(u) + C_1'e_H(u)^2 + C_2' e_H(u) + C_3' |du|^2 + C_4' |du|^4.
 \eea
We still take $c=4$ and get the following coercive estimate for contact instantons
\beastar
\frac18 |\nabla^\pi (d_H^\pi u)|^2 + \frac38 |\nabla ( u^*\lambda_H)|^2
& \leq & -\frac{1}{2}\Delta e_H(u) + C_1'e_H(u)^2 +  C_2' e_H(u)\nonumber \\
&{}& \quad + C_3' |du|^2 + C_4' |du|^4.
\eeastar
This finishes the proof.

\subsection{Proof of a priori local $W^{2,2}$ estimate}
\label{subsec:W22-estimate}

The proof of the following local inequality
is  similar to that of \cite[Proof of Theorem 4.5]{oh:contacton-Legendrian-bdy}
except $dw$ there replaced by $d_H u$ in the current circumstance.

\begin{thm}\label{thm:coercive-L2}
For any pair of domains $D_1$ and $D_2$ in $\dot\Sigma$ such that $\overline{D_1}\subset D_2$,
\beastar
&{}&
\|\nabla(d_H u)\|^2_{L^2(D_1)}\\
 &\leq&
 C_1 \|d_H u\|_{L^4(D_2)}^4 + C_2 \|d_H u\|_{L^2(D_2)}^2 + C_3 \|du\|_{L^4(D_2)}^4
 + C_4 \|du\|_{L^2(D_2}^2  + 2\CB
\eeastar
with the boundary contribution
$$
\CB = \int_{\del D_2} ( C_7 |du|^3 |d_Hu| + C_8 |du|^2|d_Hu|  + C_9 |d_H u| |du|).
$$
for any perturbed contact instanton $u$,
where $C_i = C_i(D_1, D_2)$ are some constants which depend on $D_1$, $D_2$ and $(M, \lambda, J)$,
but are independent of $u$.
\end{thm}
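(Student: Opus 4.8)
The plan is to imitate the proof of \cite[Theorem 4.5]{oh:contacton-Legendrian-bdy} (itself modeled on \cite[Appendix C]{oh-wang:CR-map1}), now feeding in the fundamental differential inequality of Proposition \ref{prop:second:derivative} in place of its unperturbed counterpart and carrying along the extra inhomogeneous terms in $|du|$. We may assume $u$ is smooth (or at least $W^{3,2}_{\mathrm{loc}}$); the general case follows from the boundary regularity theory of the previous sections together with a routine approximation, or one repeats the argument with difference quotients. First I would fix a cut-off $\chi\in C^\infty(\dot\Sigma)$ with $0\le\chi\le1$, $\chi\equiv1$ on $D_1$, $\operatorname{supp}\chi\subset\overline{D_2}$ and $|\nabla\chi|\le C(D_1,D_2)$, chosen so that $\chi$ vanishes along the interior portion $\partial D_2\setminus\partial\dot\Sigma$ of $\partial D_2$ while being allowed to be non-zero along $\partial D_2\cap\partial\dot\Sigma$. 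Multiplying \eqref{eq:second-derivative} by $\chi^2$ and integrating over $D_2$ gives a lower bound $\tfrac18\int_{D_2}\chi^2|\nabla(d_H^\pi u)|^2+\tfrac38\int_{D_2}\chi^2|\nabla(u^*\lambda_H)|^2$ on the left, while on the right the terms $C_i'$ are controlled at once by $C_1\|d_Hu\|_{L^4(D_2)}^4+C_2\|d_Hu\|_{L^2(D_2)}^2+C_3\|du\|_{L^4(D_2)}^4+C_4\|du\|_{L^2(D_2)}^2$ since $e_H(u)=|d_Hu|^2$.

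The term $-\tfrac12\int_{D_2}\chi^2\,\Delta e_H(u)$ is then integrated by parts. Since $\chi$ vanishes on $\partial D_2\setminus\partial\dot\Sigma$, only a boundary contribution over $\partial D_2\cap\partial\dot\Sigma$ survives, and the bulk term is $\pm\tfrac12\int_{D_2}\nabla(\chi^2)\cdot\nabla e_H(u)$. Using $|\nabla e_H(u)|\le 2|d_H^\pi u|\,|\nabla(d_H^\pi u)|+2|u^*\lambda_H|\,|\nabla(u^*\lambda_H)|$ and Young's inequality, this bulk term is bounded by $\varepsilon\int_{D_2}\chi^2\big(|\nabla(d_H^\pi u)|^2+|\nabla(u^*\lambda_H)|^2\big)+C_\varepsilon\int_{D_2}|\nabla\chi|^2\big(|d_H^\pi u|^2+|u^*\lambda_H|^2\big)$; taking $\varepsilon$ small absorbs the first piece into the left-hand side, and the second piece is $\le C(D_1,D_2)\|d_Hu\|_{L^2(D_2)}^2$. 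To convert what remains on the left into the quantity in the statement, I would invoke \eqref{eq:|nabladHu|2}: pointwise $|\nabla(d_Hu)|^2\le|\nabla^\pi(d_H^\pi u)|^2+|\nabla(u^*\lambda_H)|^2+\|\nabla R_\lambda\|_{C^0}^2\,|u^*\lambda|\,|du|$, and the last summand is $\le C\big(|d_Hu|^2+|du|^2\big)$; since $\chi\equiv1$ on $D_1$ this yields $\|\nabla(d_Hu)\|_{L^2(D_1)}^2\le\int_{D_2}\chi^2\big(|\nabla^\pi(d_H^\pi u)|^2+|\nabla(u^*\lambda_H)|^2\big)+C\big(\|d_Hu\|_{L^2(D_2)}^2+\|du\|_{L^2(D_2)}^2\big)$, so all the interior contributions are now of the advertised form.

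The remaining boundary integral $-\tfrac12\int_{\partial D_2\cap\partial\dot\Sigma}\chi^2\,\partial_n e_H(u)$ is the main obstacle, and it is where the extra work over the unperturbed case concentrates. Writing $e_H(u)=|d_H^\pi u|^2+|u^*\lambda_H|^2$ one has $\partial_n e_H(u)=2\langle\nabla_n(d_H^\pi u),d_H^\pi u\rangle+2\langle\nabla_n(u^*\lambda_H),u^*\lambda_H\rangle$ along $\partial\dot\Sigma$. By Condition \ref{cond:gamma} we have $\gamma|_{\partial\dot\Sigma}=0$, so $d_Hu=du$ and $u^*\lambda_H=u^*\lambda$ on the boundary; together with the Legendrian boundary condition $u(\overline{z_iz_{i+1}})\subset R_i$ — equivalently the conditions $\zeta(z)\in TR_i$ and $\alpha(z)\in\R$ of Proposition \ref{prop:FE-in-isothermal-intro} — I would work in an isothermal half-disk with $\partial\dot\Sigma=\{y=0\}$, use the fundamental equations \eqref{eq:main-eq-isothermal} and \eqref{eq:equation-for-alpha} to trade the normal ($\partial_y$) derivatives for tangential ($\partial_x$) ones, and then integrate by parts once along the $1$-dimensional boundary $\{y=0\}$. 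The derivative falling on $\chi^2$ produces a harmless factor $|\nabla\chi|$, the geometric terms are controlled by the second fundamental forms of $R_0,R_1$ and by the torsion and other tensors of the contact triad connection, and bounding everything by $C^0$-norms leaves exactly a boundary contribution $2\mathcal B$ of the stated shape $\int_{\partial D_2}\big(C_7|du|^3|d_Hu|+C_8|du|^2|d_Hu|+C_9|d_Hu||du|\big)$: the cubic term coming from the quadratic right-hand side of \eqref{eq:main-eq-isothermal}/\eqref{eq:equation-for-alpha} paired with one more factor of $du$, the quadratic term from the second fundamental form contributions, and the linear term from the inhomogeneities $P(u)$ and $G(du,H)$. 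Carrying the additional $X_H$- and $e^{g_{H,u}}$-terms through this boundary reduction — the analogue of the computation in \cite[Theorem 4.5]{oh:contacton-Legendrian-bdy} — is the delicate step; once it is done, collecting all constants $C_i=C_i(D_1,D_2)$ (depending also on $(M,\lambda,J)$ and $H$) completes the proof.
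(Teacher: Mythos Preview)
Your interior argument (cutoff $\chi$, multiply \eqref{eq:second-derivative} by $\chi^2$, integrate, absorb the $\chi\,d\chi$ cross-term by Young, then pass from $|\nabla^\pi(d_H^\pi u)|^2+|\nabla(u^*\lambda_H)|^2$ to $|\nabla(d_Hu)|^2$ via \eqref{eq:|nabladHu|2}) coincides with the paper's.

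The boundary treatment differs. The paper does \emph{not} invoke the isothermal fundamental equations \eqref{eq:main-eq-isothermal}--\eqref{eq:equation-for-alpha} and does \emph{not} integrate by parts along $\partial\dot\Sigma$. Instead (Lemma~\ref{lem:boundary-integral}, proved in Subsection~\ref{subsec:boundary-integral}) it bounds $\langle\nabla_y(d_Hu),d_Hu\rangle$ \emph{pointwise} on the boundary: splitting via \eqref{eq:nabladHu}, the $\xi$-piece $\langle\nabla_y^\pi d_H^\pi u,d_H^\pi u\rangle$ is rewritten through the second fundamental form of $R_i$ (using $d_H^\pi u(\partial_x)\in TR_i$), and the Reeb-coefficient piece $\langle\nabla_y(u^*\lambda_H),u^*\lambda_H\rangle$ is handled by computing $\lambda(\nabla_y\partial_x u)$ directly from the torsion identities of the contact triad connection, which expresses it as a combination of $B$ and $d\lambda$ evaluated on first derivatives of $u$. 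This purely algebraic reduction to zeroth-order boundary data is exactly what \cite[Theorem~4.5]{oh:contacton-Legendrian-bdy} does in the unperturbed case; no tangential integration by parts enters. As a consequence your attribution of the origins of $C_7,C_8,C_9$ is off: in the paper $C_7$ carries $\|B\|$ and $\|d\lambda\|$, $C_8$ carries $\|dH\|$, and $C_9$ carries $\|H\|\,\|\nabla^2\gamma\|$. One small correction to your sketch: the hypothesis $\gamma|_{\partial\dot\Sigma}=0$ only kills the tangential component $\gamma(\partial_x)$; the normal component $\gamma(\partial_y)$ need not vanish (on strip-like ends $\gamma=dt$ with $\partial_t$ normal), so your claim ``$d_Hu=du$ and $u^*\lambda_H=u^*\lambda$ on the boundary'' holds only tangentially, and the $X_H\,\gamma(\partial_y)$-contributions must be kept in the boundary computation.
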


The rest of the remaining section will be occupied by the proof of Theorem \ref{thm:coercive-L2}.
We have only to consider the case of a pair of semi-discs $D_1,\, D_2 \subset \dot \Sigma$
with $\overline D_1 \subset D_2$ such that $\del D_2 \subset \del D_1 \subset \del \dot \Sigma$.
(The open disc cases of $D_1 \subset D_2$ are already treated in \cite[Appendix C]{oh-wang:CR-map1}.)

For the pair of given domains $D_1$ and $D_2$, we choose another domain $D$ such that
$\overline D_1 \subset D \subset \overline D \subset D_2$ and a smooth cut-off function $\chi:D_2\to \R$ such that
$\chi\geq 0$ and
$\chi\equiv 1$ on $\overline{D_1}$, $\chi\equiv 0$ on $D_2-D$.
By multiplication of  $\chi^2$ to eqref{eq:second-derivative} followed by integration, we get
\be\label{eq:int-D1}
\int_{D_1}|\nabla(d_Hu)|^2 \leq \int_{D}\chi^2|\nabla(d_Hu)|^2
\leq \int_{D_2}|\nabla(d_Hu)|^2.
\ee
Then we start with the inequality \eqref{eq:second-derivative} for the proof. Utilizing
\eqref{eq:nabladHu} and adjusting coefficients, we obtain
$$
\frac18 |\nabla (d_H u)|^2
\leq  -\frac{1}{2}\Delta e_H(u) + C_1'e_H(u)^2 +  C_2' e_H(u)
+ C_3' |du|^2 + C_4' |du|^4.
$$
Integrating this over $D_2$ and applying the second inequality of the above
inequality, we derive
\beastar
\frac18 \int_{D}\chi^2|\nabla(d_Hu)|^2
&\leq& C_1' \int_{D_2}|d_Hu|^4+ C_2' \int_{D_2}|d_Hu|^2 + C_3' \int_{D^2} |du|^4
+ C_4' \int_{D_2} |du|^2 \\
&{}&  -\frac12 \int_{D}\chi^2\Delta e_H.
\eeastar

We now deal with the last term $\int_{D_2}\chi^2 \Delta e_H$.
Its integrand becomes
\beastar
\chi^2\Delta e_H\, dA&=&*(\chi^2 \Delta e_H)=\chi^2 *\Delta e_H
=-\chi^2 d*de_H\\
&=&-d(\chi^2 *de_H)+2\chi d\chi\wedge (*de_H).
\eeastar
Therefore we have
$$
\int_D \chi^2\Delta e_H\, dA = \int_D -d(\chi^2 *de_H)+ \int_D 2\chi d\chi\wedge (*de_H)
$$
and hence
\bea\label{eq:int-D}
\frac18 \int_{D}\chi^2|\nabla(d_Hu)|^2 &\leq &
 C_1' \int_{D_2}|d_Hu|^4+ C_2' \int_{D_2}|d_Hu|^2 + C_3' \int_{D^2} |du|^4
+ C_4' \int_{D_2} |du|^2 \nonumber\\
&{}& - \int_{D}\chi d\chi \wedge (*de_H) + \frac12 \int_D d(\chi^2 * de_H).
\eea

It remains to estimate the integrals of the last line. For the first one,
by the same argument as in \cite[p.677]{oh-wang:CR-map1} applied to the second integral
and with $dw$ there replaced by $de_H$, we derive the following estimate.
\begin{lem}\label{lem:intD-chidchi}
For any constant $\epsilon > 0$, we have
\be\label{eq:intchidchi}
\left|\int_{D}\chi d\chi\wedge(*de_H)\right| \leq \frac{1}{\epsilon}\int_{D}\chi^2|\nabla(d_Hu)|^2\,dA
+ \epsilon \|d\chi\|_{C^0(D)}^2\int_{D}|d_Hu|^2\,dA.
\ee
\end{lem}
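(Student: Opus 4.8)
The plan is to use that the energy density $e_H(u)$ is exactly the squared pointwise norm of the $u^*TM$-valued one-form $d_Hu$, and then to combine metric-compatibility of the pull-back contact triad connection with Young's inequality, following the scheme of \cite[p.~677]{oh-wang:CR-map1}.

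First I would observe that from the orthogonal decomposition $d_Hu = d_H^\pi u + u^*\lambda_H \otimes R_\lambda$, which gives $|d_Hu|^2 = |d_H^\pi u|^2 + |u^*\lambda_H|^2$, we have $e_H(u) = |d_Hu|^2$. Since the pull-back $u^*\nabla$ of the contact triad connection preserves the triad metric and the Levi--Civita connection of $(\dot\Sigma,h)$ preserves $h$ (see Appendix \ref{sec:connection}), the induced connection on $T^*\dot\Sigma\otimes u^*TM$ is a metric connection. Hence
$$
de_H = 2\langle \nabla(d_Hu), d_Hu\rangle,
$$
and the pointwise Cauchy--Schwarz inequality yields $|de_H| \leq 2\,|\nabla(d_Hu)|\,|d_Hu|$.

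Next, on the Riemann surface $(\dot\Sigma,h)$ the Hodge star is a pointwise isometry on one-forms and satisfies $\alpha\wedge *\beta = \langle \alpha,\beta\rangle\, dA$ for one-forms $\alpha,\beta$; applying this with $\alpha = \chi\, d\chi$ and $\beta = de_H$ gives $\chi\, d\chi\wedge(*de_H) = \chi\langle d\chi, de_H\rangle\, dA$, so that pointwise
$$
|\chi\, d\chi\wedge(*de_H)| \leq |\chi|\,|d\chi|\,|de_H| \leq 2\,|\chi|\,|d\chi|\,|d_Hu|\,|\nabla(d_Hu)|.
$$
Now I apply Young's inequality $2ab \leq \tfrac1\epsilon a^2 + \epsilon b^2$ with $a = |\chi|\,|\nabla(d_Hu)|$ and $b = |d\chi|\,|d_Hu|$, integrate over $D$, and bound $|d\chi|^2 \leq \|d\chi\|_{C^0(D)}^2$; this gives precisely \eqref{eq:intchidchi}.

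I do not expect a genuine obstacle here: the argument is the standard one for such cut-off/energy-density estimates. The only points requiring care are checking that the induced connection on $T^*\dot\Sigma\otimes u^*TM$ is metric --- this is what lets $|de_H|$ be controlled by $|\nabla(d_Hu)|\,|d_Hu|$ rather than by the full $|\nabla du|$ --- and keeping the numerical factor $2$ and the weight $\chi$ in the right places when splitting with Young's inequality, so that the coefficient of $\int_D \chi^2|\nabla(d_Hu)|^2\,dA$ comes out as $\tfrac1\epsilon$ rather than something larger.
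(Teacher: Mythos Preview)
Your proposal is correct and follows essentially the same approach as the paper's own proof: both rewrite $\chi\,d\chi\wedge(*de_H)$ as $\chi\langle d\chi,de_H\rangle\,dA$, use metric-compatibility to get $|de_H|\leq 2|\nabla(d_Hu)||d_Hu|$, and then apply Young's inequality with the same splitting. Your write-up is slightly more explicit about why the induced connection is metric, but the argument is otherwise identical.
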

\begin{proof} We first derive
\beastar
|\int_{D}\chi d\chi\wedge(*de_H)|
& =& |\int_{D}\chi\langle d\chi, de_H\rangle \,dA| \\
& \leq & \int_{D}|\chi||\langle d\chi, de_H \rangle \,dA|\leq\int_{D}|\chi||d\chi||de_H|\,dA.
\eeastar
And recall
$$
|de_H| = |d\langle d_H u, d_H u\rangle|=2|\langle \nabla(d_H u), d_H u\rangle|\leq 2|\nabla (d_H u)||d_H u|.
$$
Hence
\beastar
|\int_{D}\chi d\chi\wedge(*de_H)|&\leq& \int_{D}2|\chi||d\chi||\nabla (d_Hu)||d_H u|\,dA\\
&\leq&  \frac{1}{\epsilon}\int_{D}\chi^2|\nabla(d_H u)|^2\,dA+ \epsilon \int_{D}|d\chi|^2|d_H u|^2\,dA\\
&\leq&  \frac{1}{\epsilon}\int_{D}\chi^2|\nabla(d_H u)|^2\,dA+ \epsilon \|d\chi\|_{C^0(D)}^2\int_{D}|d_H u|^2\,dA
\eeastar
This finishes the proof.
\end{proof}

Next we examine the second integral and prove the following inequality
whose derivation is rather complex and so postponed till the end of this section.

 \begin{lem}[Compare with Lemma 4.9 \cite{oh:contacton-Legendrian-bdy}]
 \label{lem:boundary-integral}
\be\label{eq:CB}
\left| \int_{\del D} -\chi^2 * de_H|_{\del D} \right|  \leq
\int_{\del D_2} ( C_7|du|^3 |d_Hu| + C_8 |du|^2|d_Hu|  + C_9 |d_H u| |du|) =: \CB
\ee
where we have
\beastar
C_7 & =  &2\|B\|_{C^0} + \|d\lambda\|_{C^0} \|\lambda_H\|_{C^0} \\
C_8 & = & \|dH\|_{C^0} \|\lambda_H\|_{C^0} \\
C_9 & = & \|H\|_{C^0} \|\nabla^2 \gamma\|_{C^0} \|\lambda_H\|_{C^0}.
\eeastar
where $B = B_i$ is the second fundamental form of $R_i$.
\end{lem}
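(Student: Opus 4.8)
The plan is to first reduce the boundary term to a pointwise estimate along $\partial\dot\Sigma$, and then to use \emph{both} equations in \eqref{eq:contacton-Legendrian-bdy}, together with the Legendrian condition and Condition \ref{cond:gamma}(1), to eliminate the a priori uncontrolled second-order contributions. By Stokes' theorem $\int_D d(\chi^2 * de_H) = \int_{\partial D}\chi^2 * de_H$, where here $\partial D$ denotes the full boundary of the region $D$, i.e.\ the free arc in the interior of $\dot\Sigma$ together with the flat side lying on $\partial\dot\Sigma$. Since $\chi$ vanishes on $\overline D\setminus D$, the integral over the free arc is zero, so only the integral over $\partial D\cap\partial\dot\Sigma\subset\partial D_2$ survives. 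Fix an isothermal coordinate $(x,y)$ adapted to the boundary, so that locally $\partial\dot\Sigma=\{y=0\}$ and $h=\rho(dx^2+dy^2)$; then the pullback of $*de_H$ to $\partial\dot\Sigma$ equals $-\partial_y e_H\,dx$, i.e.\ the normal derivative of $e_H$ up to the conformal factor. As $0\le\chi\le1$ and $\supp\chi\subset D_2$, it therefore suffices to prove the pointwise bound $|\partial_\nu e_H|\le C_7|du|^3|d_Hu|+C_8|du|^2|d_Hu|+C_9|d_Hu||du|$ on $\partial\dot\Sigma$ and integrate.

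Next I would split $e_H=e_H^\pi+|u^*\lambda_H|^2$ and treat the two summands separately, using that along $\partial\dot\Sigma$ one has $\gamma(\partial_x)=0$ and, writing $c(x):=u(x,0)\in R_i$, $du(\partial_x)=c'(x)\in TR_i\subset\xi$; hence $u^*\lambda_H(\partial_x)=u^*\lambda(\partial_x)\equiv0$ along $\{y=0\}$ and $d_H^\pi u(\partial_x)=d^\pi u(\partial_x)=c'(x)$. For the Reeb component, $|u^*\lambda_H|^2=\rho^{-1}((u^*\lambda_H(\partial_x))^2+(u^*\lambda_H(\partial_y))^2)$: the first square vanishes to second order in $y$, so it does not contribute to $\partial_\nu$; for the second, the equation $d(e^{g_{H,u}}u^*\lambda_H\circ j)=0$ reads, in these coordinates, $\partial_y(e^{g_{H,u}}u^*\lambda_H(\partial_y))=-\partial_x(e^{g_{H,u}}u^*\lambda_H(\partial_x))$, whose right-hand side vanishes on $\{y=0\}$ because $u^*\lambda_H(\partial_x)$ vanishes identically there, hence $\partial_y(u^*\lambda_H(\partial_y))=-(\partial_y g_{H,u})\,u^*\lambda_H(\partial_y)$ on $\partial\dot\Sigma$. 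Combined with the bound on $|\partial_y g_{H,u}|$ from Lemma \ref{lem:dgHw-nabladgHw}, with $|u^*\lambda_H|\le\|\lambda_H\|_{C^0}|du|$, and with the $H\,d\gamma$ term in $d(u^*\lambda_H)$ of Lemma \ref{lem:du*lambda}, this produces precisely the $C_8|du|^2|d_Hu|$ and $C_9|d_Hu||du|$ contributions, with $\|dH\|_{C^0}$ and $\|\nabla^2\gamma\|_{C^0}$ entering as the stated coefficients.

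For the $\pi$-component, use $\delbar_H^\pi u=0$ to replace $d_H^\pi u(\partial_y)$ by $Jd_H^\pi u(\partial_x)$, so that $\partial_\nu e_H^\pi$ reduces to a conformal-factor term plus $2\rho^{-1}\langle\nabla_y(d_H^\pi u(\partial_x)),d_H^\pi u(\partial_x)\rangle$. Here I would invoke the Fundamental Equation of Theorem \ref{thm:fundamental} evaluated on $(\partial_x,\partial_y)$, which expresses $d^{\nabla^\pi}(d_H^\pi u)(\partial_x,\partial_y)=\nabla_x(d_H^\pi u(\partial_y))-\nabla_y(d_H^\pi u(\partial_x))$ in terms of quantities that are zeroth order in the derivatives of $u$ (the $u^*\lambda\wedge\tfrac12(\CL_{R_\lambda}J)J$ term, the $T^\pi$ term, and $d^{\nabla^\pi}(X_H^\pi\otimes\gamma)=\nabla^\pi(X_H^\pi)\wedge\gamma+X_H^\pi\otimes d\gamma$); this trades the normal derivative for the tangential derivative $\nabla_x(d_H^\pi u(\partial_y))$ plus controlled terms. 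Pairing $\nabla_x(d_H^\pi u(\partial_y))$ against $d_H^\pi u(\partial_x)=c'(x)$ and differentiating along the Legendrian curve $c$ brings in $\nabla_{c'}c'$, whose normal part is the second fundamental form $B(c',c')$ of $R_i$: this is the origin of the $2\|B\|_{C^0}$ in $C_7$, while the $\CL_{R_\lambda}J$, torsion, and $\gamma$/$H$-derivative terms, together with the normal $\gamma(\partial_y)$-contribution to $d_H^\pi u(\partial_y)=d^\pi u(\partial_y)-X_H^\pi\gamma(\partial_y)$, yield the $\|d\lambda\|_{C^0}\|\lambda_H\|_{C^0}$ and remaining coefficients. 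Adding the two estimates and using $|d_Hu|=|du|$ on $\partial\dot\Sigma$ gives the claimed inequality; Theorem \ref{thm:coercive-L2} then follows by combining \eqref{eq:int-D}, Lemma \ref{lem:intD-chidchi} and the present lemma.

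The main obstacle is precisely the cancellation this argument is built around: a naive expansion of $\partial_\nu e_H$ contains the full second covariant derivative of $u$, which is not available at this stage (controlling it is the goal of the entire section). The delicate point is therefore to verify that, after substituting $\delbar_H^\pi u=0$, $d(e^{g_{H,u}}u^*\lambda_H\circ j)=0$, the Fundamental Equation, and the boundary relations $\gamma(\partial_x)=0$ and $c'(x)\in TR_i$, every surviving term is either purely tangential to $\partial\dot\Sigma$ (hence expressible through the boundary data and the second fundamental form of $R_i$) or genuinely of order at most one in $u$; the ensuing bookkeeping of the geometric coefficients ($B$, $\|d\lambda\|_{C^0}$, $\|dH\|_{C^0}$, and the $C^0$-norms of $\gamma$ and its covariant derivatives) is routine but lengthy, and runs parallel to the unperturbed computation in \cite[Lemma 4.9]{oh:contacton-Legendrian-bdy}.
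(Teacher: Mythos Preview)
Your strategy is correct and yields the desired bound, but it is genuinely different from the paper's argument in one essential respect: the way you handle the Reeb component $|u^*\lambda_H|^2$.

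You eliminate the second-order contribution $\partial_y(u^*\lambda_H(\partial_y))$ by invoking the \emph{second} contact instanton equation $d(e^{g_{H,u}}u^*\lambda_H\circ j)=0$, which in isothermal coordinates reads $\partial_y(e^{g_{H,u}}g)=-\partial_x(e^{g_{H,u}}f)$ and vanishes on $\partial\dot\Sigma$ because $f\equiv 0$ there; the remaining $\partial_y g_{H,u}$ factor is then bounded via Lemma~\ref{lem:dgHw-nabladgHw}. The paper does \emph{not} use the closedness equation here at all. Instead it computes $\lambda(\nabla_y\partial_x u)$ and $\lambda(\nabla_y\partial_y u)$ directly from the torsion properties of the contact triad connection: using $T(R_\lambda,\cdot)=0$ and $\lambda(T|_\xi)=d\lambda$ (Theorem~\ref{thm:connection}(2) and Corollary~\ref{cor:connection}(2)) one swaps $\nabla_y\partial_x u$ for $\nabla_x\partial_y u$ plus a $d\lambda$-term, and then the tangential derivative is controlled by the second fundamental form of $R_i$. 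For the $\pi$-component, your route through the Fundamental Equation (Theorem~\ref{thm:fundamental}) to trade $\nabla^\pi_y\zeta$ for $J\nabla^\pi_x\zeta$ plus zeroth-order terms is again a valid alternative to the paper's more direct second-fundamental-form identity for $\langle\nabla_y d_H^\pi u, d_H^\pi u\rangle$.

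What each buys: your argument is perhaps closer in spirit to the classical $J$-holomorphic boundary regularity proofs, using the full elliptic system; the paper's argument is a demonstration of the contact triad connection machinery and, since it never appeals to the closedness equation, in principle extends to $H$-perturbed contact Cauchy--Riemann maps that are not instantons. The precise numerical constants $C_7,C_8,C_9$ you would obtain differ slightly from the paper's (e.g.\ your Reeb estimate produces a $\|dg_{\phi_H^t}\circ\phi_H^t\|_{C^0}$ factor rather than a $\|d\lambda\|_{C^0}$ factor), but this is immaterial for the application to Theorem~\ref{thm:coercive-L2}.
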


Assuming this lemma for the moment, we proceed with the proof of Theorem \ref{thm:coercive-L2}.

We insert \eqref{eq:intchidchi} and \eqref{eq:CB} into \eqref{eq:int-D}, we obtain
\bea\label{eq:int-D2}
\frac18 \int_{D}\chi^2|\nabla(d_Hu)|^2 &\leq &
 C_1' \int_{D_2}|d_Hu|^4+ C_2' \int_{D_2}|d_Hu|^2 + C_3' \int_{D^2} |du|^4
+ C_4' \int_{D_2} |du|^2 \nonumber\\
&{}& \int_{D}\frac{2 \chi^2}{\epsilon}|\nabla(d_Hu)|^2\,dA
+ 2\epsilon \|d\chi\|_{C^0(D)}^2\int_{D}|d_Hu|^2\,dA\nonumber \\
&{}& + 2\CB.
\eea
which we rearrange into
\beastar
\frac18 \int_{D}\chi^2|\nabla(d_Hu)|^2
&\leq& \int_D\frac{2\chi^2}{\epsilon}|\nabla(d_Hu)|^2 \\
&{} &  + C_1' \int_{D_2}|d_Hu|^4+(C_2' + 2\epsilon \|d\chi\|_{C^0(D)}^2) \int_{D_2}|d_Hu|^2 \\
&{}& + C_3' \int_{D^2} |du|^4 + C_4' \int_{D_2} |du|^2 + 2\CB.
\eeastar
Then by taking $\epsilon = 32 $ and then setting
$$
C_1 : = 16C_1', \, C_2 = 16(C_2' + 24) \|d\chi\|_{C^0(D)}^2, \, C_3 = 16C_3', \, C_4 = 16C_4',
$$
 we obtain
\beastar
\int_{D}\chi^2|\nabla(d_Hu)|^2
&\leq& C_1\int_{D_2}|d_Hu|^4+ C_2 \int_{D_2}|d_Hu|^2\\
&{}& + C_3 \int_{D^2} |du|^4 + C_4 \int_{D_2} |du|^2 + 2\CB.
\eeastar
By this inequality with $\int_{D_1}|\nabla(d_Hu)|^2\leq\int_{D}\chi^2|\nabla(d_Hu)|^2$,
we have finished the proof of Theorem \ref{thm:coercive-L2}.

\subsection{Proof of Lemma \ref{lem:boundary-integral}}
\label{subsec:boundary-integral}

The proof of Lemma \ref{lem:boundary-integral}
 is rather complex and tedious because
of the complication of the differential of the perturbed energy density function
$$
de_H = d \langle d_Hu, d_H u\rangle = d\langle du - X_H \otimes \gamma,  du - X_H \otimes \gamma \rangle
$$
where the expression $du - X_H \otimes \gamma$ is a $u^*TM$ valued one-form and $\langle \cdot, \cdot \rangle$ is
the inner product thereof.
In this calculation, we exercises the full power of
the defining properties of the contact triad connection exercises.

We first rewrite the integral
$$
\int_D -d(\chi^2 * de_H) = \int_{\del D} - \chi^2 *de_H = \int_{\del D} \chi^2 de_H \circ j
$$
by Stokes' formula and $-* de_H = de_H \circ j$.

We then take  an isothermal coordinate $(x,y)$ adapted to $\del \dot \Sigma$
such that $\del_x$ is tangent to $\del \dot \Sigma$ and $\del_y$ is normal thereto. With this coordinate, we can express
\bea\label{eq:int-delD}
 \int_{\del D} \chi^2 d e_H \circ j
& = & \int_{-\delta}^\delta  \chi^2 \frac{\del}{\del y} \langle d_Hu, d_H u \rangle\, dx
\nonumber\\
& = &  \int_{-\delta}^\delta  2 \chi^2   \langle \nabla_y (d_H u), d_H u\rangle\, dx
\eea
where we set the radius of the semi-disc to be $\delta$.
To unravel the integrand of this integral, we recall the identity
$$
\nabla (d_H u) = \nabla^\pi d_H^\pi u + \nabla(u^*\lambda_H) R_\lambda + u^*\lambda_H \nabla R_\lambda
$$
from \eqref{eq:nabladHu}. Then  we compute
\beastar
 \langle \nabla_y (d_H u), d_H  u\rangle & = &
 \langle  \nabla^\pi d_H^\pi u + \nabla_y (u^*\lambda_H) R_\lambda + u^*\lambda \nabla_y R_\lambda,
 d_H u \rangle \\
 & = & \underbrace{\langle \nabla_y^\pi d_H^\pi u, d_H u \rangle }_{(A)}
 + \underbrace{\langle \nabla_y (u^*\lambda_H), u^*\lambda_H \rangle}_{(B)}
 + \underbrace{\langle u^*\lambda \nabla_y R_\lambda, d_H u\rangle}_{(C)}
 \eeastar
 on $\del \dot \Sigma$.
 We estimate each term of the three separately.

 For the term (A), we note that $du(\del_y)$ is perpendicular to the Legendrian boundary.
 Therefore we have
 \be\label{eq:(A)}
 \langle \nabla_y d_H^\pi u, d_H^\pi u \rangle = - \langle B(d_H^\pi u,d_H^\pi u), du(\del_y)\rangle
 \ee
 on $\del \dot \Sigma$.
 For the term (C), we obtain
 \be\label{eq:(C)}
 |(C)| \leq  \|\nabla R_\lambda\|_{C^0} |du||d_H u| \|\lambda_H\|_{C^0} |du||d_Hu|.
 \ee
For the term (B), we have
\be\label{eq:(B)}
\langle \nabla_y (u^*\lambda_H), u^*\lambda_H \rangle =
\nabla_ y \left(\lambda_H \left(\frac{\del u}{\del x}\right)\right)\, \lambda_H\left(\frac{\del u}{\del x} \right)
+\nabla_y \left(\lambda_H\left(\frac{\del u}{\del y }\right)\right)\,
\lambda_H\left(\frac{\del u}{\del y} \right).
\ee
The estimation of the two terms here is similar. Since
the second summand is easier, we will focus on the study of the first summand and mention
briefly about the second term at the end.

We first write $\gamma = \gamma_x dx + \gamma_y dy$ and then
\beastar
\nabla_y\left(\lambda_H\left(\frac{\del u}{\del x}\right) \right)
& = & \nabla_y\left(\lambda\left(\frac{\del u}{\del x}\right)  + H \gamma_x \right)  \\
& = & \nabla_y\left(\lambda\left(\frac{\del u}{\del x} \right) \right)+ \nabla_y (H\gamma_x).
\eeastar
Obviously we estimate
\be\label{eq:nablaHgammax}
|\nabla_y(H\gamma_x)| \leq \|dH\|_{C^0} \|\gamma_x\|_{C^0} |du|.
\ee
We then rewrite
\be\label{eq:nablaylambda-x}
\nabla_y\left(\lambda\left(\frac{\del u}{\del x}\right) \right)
= (\nabla_y \lambda) \left(\frac{\del u}{\del x} \right)  + \lambda\left(\nabla_y \frac{\del u}{\del x}\right) .
\ee
The proof of the following extensively uses the defining properties of the contact triad
connection. (See the calculations around \cite[Lemma 4.8]{oh:contacton-Legendrian-bdy}
for some relevant calculations for the unperturbed contact instantons.)

\begin{lem} We have
\bea\label{eq:lambda-nabla-nabla}
\lambda\left(\nabla_y\frac{\del u}{\del x}\right)
& = & - \left \langle B\left(\left(\frac{\del u}{\del x} \right)^\pi,
\left(\frac{\del w}{\del y}\right)^\pi \right),R_\lambda \right \rangle
+ d\lambda\left(\left(\frac{\del u}{\del x}\right)^\pi,
\left(\frac{\del w}{\del y}\right)^\pi \right) \nonumber \\
\lambda\left(\nabla_y\frac{\del u}{\del y}\right)
& = & - \left\langle B\left(\left(\frac{\del u}{\del y} \right)^\pi,
\left(\frac{\del w}{\del y}\right)^\pi \right),R_\lambda \right \rangle
\eea
\end{lem}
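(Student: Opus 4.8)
The plan is to work pointwise on $\del\dot\Sigma$ in the adapted isothermal coordinate $(x,y)$, with $\del_x$ tangent to $\del\dot\Sigma$ and $\del_y$ the outward unit normal, and to reduce everything to the intrinsic geometry of the Legendrian $R_i$ against the contact triad connection $\nabla$. Three features of the boundary data are used throughout. First, $\gamma|_{\del\dot\Sigma}=0$ by Condition~\ref{cond:gamma}, so along $\del\dot\Sigma$ we have $d_Hu=du$, $u^*\lambda_H=u^*\lambda$, and the equation collapses to $\delbar^\pi u=0$, i.e.\ $(\del_y u)^\pi=J(\del_x u)^\pi$. Second, $u(\del\dot\Sigma)\subset R_i$ with $R_i$ Legendrian, so $\del_x u\in TR_i\subset\xi$; hence $\lambda(\del_x u)\equiv 0$ and $(\del_x u)^\pi=\del_x u$ on $\del\dot\Sigma$, and, combining with the first point, $(\del_y u)^\pi=J\del_x u\in J(TR_i)$. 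Since $d\lambda|_{TR_i}=0$ gives $J(TR_i)\perp_g TR_i$ and $R_\lambda\perp_g\xi$, we conclude that $\del_y u$ is $g$-orthogonal to $TR_i$ along the boundary. Third, I use the defining properties of $\nabla$ recalled in Appendix~\ref{sec:connection}, in particular that $\nabla^\pi=\Pi\nabla|_\xi$ is the Hermitian connection on $(\xi,d\lambda|_\xi,J)$, that $\nabla_{R_\lambda}R_\lambda=0$, and that $\nabla_{\Pi(\cdot)}R_\lambda\colon\xi\to\xi$ is of $(1,0)$-type.

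For the first identity I would first peel off the exact part: from $d(u^*\lambda)=u^*d\lambda$ and $d\lambda(R_\lambda,\cdot)=0$ one gets, along $\del\dot\Sigma$, the relation $\del_x\bigl(\lambda(\del_y u)\bigr)-\del_y\bigl(\lambda(\del_x u)\bigr)=d\lambda\bigl((\del_x u)^\pi,(\del_y u)^\pi\bigr)$, which is the source of the $d\lambda$-term in the asserted formula. Converting the scalar partial derivatives of the $\lambda$-components into covariant derivatives via the structure of $\nabla$, this expresses $\lambda(\nabla_y\del_x u)$ in terms of $\del_x\langle\del_y u,R_\lambda\rangle$, the $d\lambda$-term, and contributions involving $\nabla R_\lambda$. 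The remaining piece $\del_x\langle\del_y u,R_\lambda\rangle$ is a derivative, taken along the boundary curve $x\mapsto u(x,t_i)\in R_i$, of the normal section $\del_y u$; I extend $\del_x u|_{\del\dot\Sigma}$ to a vector field $V$ on $M$ tangent to $R_i$, so that $\nabla_y\del_x u=\nabla_{\del_y u}V$ on $\del\dot\Sigma$, and then use metric-compatibility on $\xi$ together with $\langle V,R_\lambda\rangle\equiv 0$ on $R_i$ and $\nabla_{R_\lambda}R_\lambda=0$ to isolate the $g$-normal component of $\nabla_{\del_y u}V$ along $R_i$. By the definition of the second fundamental form $B_i$ of $R_i$ with respect to $\nabla$, this normal component — once the $\nabla R_\lambda$ contributions are reassembled into it — is exactly $-\langle B_i\bigl((\del_x u)^\pi,(\del_y u)^\pi\bigr),R_\lambda\rangle$. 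The second identity is proved the same way with $\del_x u$ replaced by $\del_y u$ in both slots; then $u^*d\lambda(\del_y,\del_y)=0$ and no skew $d\lambda$-term survives, leaving only the $B_i$-term, which is why it is the easier of the two.

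The main obstacle is precisely this normal-component bookkeeping: separating the genuine second-fundamental-form contribution of $R_i$ from the torsion of $\nabla$ and from the non-vanishing of $\nabla R_\lambda$, and verifying that the spurious terms either cancel or reassemble, with the correct signs, into the stated $d\lambda$- and $B_i$-terms. This is where the full list of defining properties of the contact triad connection in Appendix~\ref{sec:connection} is indispensable, and I would organize the computation so as to mirror the unperturbed calculation around \cite[Lemma~4.8]{oh:contacton-Legendrian-bdy}, the only genuinely new input being that all terms produced by $\gamma$ drop out on $\del\dot\Sigma$, so that the perturbation $X_H\otimes\gamma$ does not affect the boundary identity at all.
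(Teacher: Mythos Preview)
Your overall strategy is sound and arrives at the same two terms, but it is organized differently from the paper and leaves the hardest step as an assertion. The paper does \emph{not} extract the $d\lambda$-term from the identity $d(u^*\lambda)=u^*d\lambda$; instead it works directly with $\lambda(\nabla_y\del_x u)$ and uses the torsion properties of the contact triad connection in two places. First, it splits the differentiation direction as $\del_y u=(\del_y u)^\pi+\lambda(\del_y u)R_\lambda$ and kills the Reeb-direction contribution in one line using $T(R_\lambda,\cdot)=0$ together with $\nabla_Y R_\lambda\in\xi$. Second, it swaps the slots via the torsion tensor, $\lambda(\nabla_{(\del_y u)^\pi}\del_x u)=\lambda(\nabla_{\del_x u}(\del_y u)^\pi)+\lambda(T((\del_y u)^\pi,\del_x u))$, and then invokes Corollary~\ref{cor:connection}(2), $\lambda(T|_\xi)=d\lambda$, to identify the torsion term directly as $d\lambda((\del_x u)^\pi,(\del_y u)^\pi)$. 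What remains is $\lambda(\nabla_{(\del_x u)^\pi}(\del_y u)^\pi)$, which is the $B$-term by definition.

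The advantage of the paper's route is that the identity $\lambda(T|_\xi)=d\lambda$ packages the $d\lambda$-term in a single stroke, so there is no ``reassembly'' of $\nabla R_\lambda$ contributions to perform: they never appear. Your route through $\del_x(\lambda(\del_y u))-\del_y(\lambda(\del_x u))$ forces you to convert scalar derivatives to covariant ones, which generates exactly the $\langle\del_x u,\nabla_{\del_y u}R_\lambda\rangle$-type terms you flag as the ``main obstacle''; you then have to cancel or absorb these, and you do not actually carry this out. It can be made to work, but it is the long way around. If you want to keep your exterior-derivative starting point, the cleanest fix is to recognize that the torsion identity already encodes it: for the contact triad connection, $\lambda(\nabla_X Y-\nabla_Y X)=\lambda(T(X,Y))=d\lambda(X,Y)$ for $X,Y\in\xi$, which is the covariant form of your scalar identity and bypasses the $\nabla R_\lambda$ bookkeeping entirely.
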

\begin{proof} We recall $\frac{\del u}{\del y} = \left(\frac{\del u}{\del y}\right)^\pi +
\lambda\left(\frac{\del u}{\del y}\right) R_\lambda$. Using the definition of pull-back connection in general,
we re-write
$$
\nabla_y \frac{\del u}{\del x}\Big|_z = \nabla_{\left(\frac{\del u}{\del y}\right)^\pi(z)} Y
 +
\lambda\left(\frac{\del u}{\del y}\right) \nabla_{R_\lambda(u(z)} Y
$$
where $Y$ is a locally defined vector field on $M$ near $u(z)$ such that
$$
Y(u(z)) = \frac{\del u}{\del x}(z).
$$
Using the torsion property and the property $\nabla_{R_\lambda}X \in \xi$ for all $X$
(see Theorem \ref{thm:connection} (2) and (3) respectively), we compute
$$
\lambda\left(\nabla_{R_\lambda}Y\right) = \lambda(\nabla_{Y(z)} R_\lambda)=
\lambda\left(\nabla_{\frac{\del u}{\del x}} R_\lambda\right) = 0.
$$
Therefore using this vanishing,  we derive
$$
\lambda\left(\nabla_y \frac{\del u}{\del x}\right)
= \lambda\left(\nabla_{\left(\frac{\del u}{\del y}\right)^\pi} \left(\frac{\del u}{\del x} \right)\right).
$$
Then
\beastar
\lambda\left(\nabla_{\left(\frac{\del u}{\del y}\right)^\pi} \left(\frac{\del u}{\del x} \right)\right)
& = & \lambda\left(\nabla_{\left(\frac{\del u}{\del x}\right)} \left(\frac{\del u}{\del y} \right)^\pi \right)
+ \lambda \left(T\left (\left(\frac{\del u}{\del y}\right)^\pi, \left(\frac{\del u}{\del x}\right)\right)\right).
\eeastar
Since $du(\del_x) \in TR \subset \xi$ on $\del \dot \Sigma$, we have
$\frac{\del u}{\del x} = (\frac{\del u}{\del x})^\pi$ on $\del \dot \Sigma$.
Since $du(\del_y) = J du(\del_x)$, we also have $du(\del_y) \in \xi$ on $\del \dot \Sigma$.
Therefore we have derived
\beastar
\lambda\left(\nabla_y \frac{\del u}{\del x}\right)
& = &
 \lambda\left(\nabla_{\left(\frac{\del u}{\del x}\right)^\pi } \left(\frac{\del u}{\del y} \right)^\pi \right)
+ \lambda \left(T\left (\left(\frac{\del u}{\del y}\right)^\pi, \left(\frac{\del u}{\del x}\right)^\pi\right)\right)\\
& = & - \left\langle B\left(\left(\frac{\del u}{\del x} \right)^\pi,
\left(\frac{\del u}{\del y}\right)^\pi \right),R_\lambda \right \rangle
+ d\lambda\left(\left(\frac{\del u}{\del x}\right)^\pi,
\left(\frac{\del u}{\del y}\right)^\pi \right).
\eeastar
The other one can be proved more easily. The torsion term appearing for $\nabla_y \frac{\del u}{\del x}$
does not appear by the skew-symmetry for $\nabla_y \frac{\del u}{\del y}$.
This finishes the proof.
\end{proof}

By definition of $\lambda_H$,
$$
\left|\left(\nabla_y \left(\lambda_H\left(\frac{\del u}{\del x}\right) \right) \right)
\left(\lambda_H\left(\frac{\del u}{\del x}\right) \right)\right|
= \left|\nabla_y\left(\lambda\left(\frac{\del u}{\del x}\right) + H \gamma_x \right)  \right|\,
\left|\lambda_H\left(\frac{\del u}{\del x}\right)\right|.
$$
Using the above lemma and \eqref{eq:nablaylambda-x}, we estimate
$$
\left|\nabla_y\left(\lambda\left(\frac{\del u}{\del x}\right)\right) \right| \leq
 (\|B\|_{C^0}  + \|d\lambda\|_{C^0}) + \|\nabla\lambda\|_{C^0}) |du|^2.
$$
Furthermore we have
$$
\left|\lambda_H\left(\frac{\del u}{\del x}\right)\right|
 \leq \|\lambda_H\|_{C^0} |du|.
 $$
 and
 $$
 \left|\lambda_H\left(\frac{\del u}{\del y}\right)\right| \leq
 \|\lambda_H\|_{C^0} |du|.
 $$
Substituting all these into \eqref{eq:(B)} and summing up \eqref{eq:(A)}, \eqref{eq:(B)} and \eqref{eq:(C)},
we have derived the following:
\beastar
|(A)|&  \leq & (\|B\|_{C^0} + \|d\lambda\|_{C^0})|du|^2 + \|dH\|_{C^0} |du| +
\|_{C^0} \|\gamma\|_{C^2}) \|\lambda_H\|_{C^0} |du| |d_H u|\\
|(B)| &\leq &(\|B\|_{C^0} |du|^2 + \|dH\|_{C^0} |du| +
\|H\|_{C^0} \|\gamma\|_{C^2}) \|\lambda_H\|_{C^0} |du |d_Hu| |
\eeastar
By adding these up and integrating them we have obtained the inequality
\be\label{eq:boundary-integral}
\left| \int_{\del D} -\chi^2 * de_H|_{\del D} \right|  \leq
\int_{\del D_2} ( C_7|du|^3 |d_Hu| + C_8 |du|^2|d_Hu|  + C_9 |d_H u| |du|) =: \CB
\ee
if we set
\beastar
C_7 & =  &2\|B\|_{C^0} + \|d\lambda\|_{C^0} \|\lambda_H\|_{C^0} \\
C_8 & = & \|dH\|_{C^0} \|\lambda_H\|_{C^0} \\
C_9 & = & \|H\|_{C^0} \|\nabla^2 \gamma\|_{C^0} \|\lambda_H\|_{C^0}.
\eeastar
where $B = B_i$ is the second fundamental form of $R_i$. This finishes the proof of
Lemma \ref{lem:boundary-integral}.

\section{$C^{k,\alpha}$ coercive estimates and boundary conditions}
\label{sec:Wk+22-estimates}

Once we have established $W^{2,2}$ estimate, we follow the strategy taken in \cite{oh:contacton-Legendrian-bdy}
for the higher derivative estimates by expressing the following fundamental equation in
the isothermal coordinates of $(\dot \Sigma,j)$.  At this point, we will also
consider the case with Legendrian boundary conditions as in \cite{oh:contacton-Legendrian-bdy}.

We start from the equation \eqref{eq:dd_Hu2}
\bea\label{eq:ddu2}
d^{\nabla^\pi}(d_H^\pi u) & = & u^*\lambda\wedge(\frac{1}{2}\left(\CL_{R_\lambda} J)J d_H^\pi u\right) -
2 T^\pi(X_H^\pi(u), \gamma \wedge d_H^\pi u) \nonumber \\
&{}& + u^*\lambda_H \wedge(\frac{1}{2}\left(\CL_{R_\lambda} J)JX_H^\pi(u)\, \gamma\right)\nonumber\\
&{}& - d^{\nabla^\pi} (X_H^\pi \, \gamma).
\eea
\begin{rem} We observe that the three lines of the right hand side have different characteristics.
We recall the decomposition
$$
d_H u = d_H^\pi u + u^*\lambda_H \otimes R_\lambda.
$$
Then the first line is linear over the horizontal component $d_H^\pi u$, the second line if linear over
the vertical component $u^*\lambda_H = u^*\lambda + H \gamma$
and the third line may be regarded as an \emph{inhomogeneous term} thereof.
\end{rem}

Similarly as in \cite{oh-wang:connection}, \cite{oh:contacton-Legendrian-bdy},
in terms of any isothermal coordinates $z = x + i y$, we write
$$
\zeta = d_H^\pi u(\del_x),\quad \eta = d_H^\pi u(\del_y) .
$$
(In \cite{oh-wang:connection}, we had put $\zeta = d^\pi w$ for a \emph{unperturbed} contact instanton $w$.)

Then we have $\eta = J\zeta$ by the $J$-complex linearity of $d_H^\pi$. Therefore we have
$$
d^{\nabla^\pi}(d_H^\pi u)(\del_x,\del_y) = \nabla_x^\pi \eta - \nabla_y \zeta
= \nabla_x^\pi (J\zeta) - \nabla_y \zeta = J \nabla_x^\pi \zeta - \nabla_y \zeta.
$$
On the other hand, we derive
\beastar
u^*\lambda_H \wedge(\frac{1}{2}(\CL_{R_\lambda} J)J d_H^\pi u)(\del_x,\del_y)
& = & \frac12 \lambda(\frac{\del u}{\del x}) \CL_{R_\lambda}J J\eta -  \frac12 \lambda(\frac{\del u}{\del y}) \CL_{R_\lambda}J J\zeta\\
& = & - \frac12 \lambda(\frac{\del u}{\del x}) \CL_{R_\lambda}J \zeta -  \frac12 \lambda(\frac{\del u}{\del y}) \CL_{R_\lambda}J J\zeta
\eeastar
and
\beastar
 2 T^\pi(X_H^\pi(u), \gamma \wedge d_H^\pi u)(\del_x,\del_y)
= 2 T^\pi(X_H^\pi(u), \gamma_x \eta - \gamma_y \zeta).
\eeastar
This shows
\beastar
&{}&
\left(d^{\nabla^\pi}(d_H^\pi u) - u^*\lambda\wedge(\frac{1}{2}(\CL_{R_\lambda} J)J d_H^\pi u)
+ 2 T^\pi(X_H^\pi(u), \gamma \wedge d_H^\pi u)\right)(\del_x,\del_y) \nonumber \\\\
& = & J \left(\nabla_x^\pi \zeta + J \nabla_y^\pi \zeta -
\frac12 \lambda\left(\frac{\del u}{\del x} \right) \CL_{R_\lambda}\zeta
 + \frac12 \lambda\left(\frac{\del u}{\del y} \right) \CL_{R_\lambda}J J\zeta\right)\\
&{}& + 2 T^\pi(X_H^\pi(u), \gamma_x J\zeta - \gamma_y \zeta).
\eeastar
\begin{defn}[Linearization operator for $\delbar_H^\pi$] We call the linear operator
$D_{J,H}^\pi(u)$ defined by the expression
\beastar
D_{J,H}^\pi(u)(\del_x)(\zeta) & = & \nabla_x^\pi \zeta + J \nabla_y^\pi \zeta -
\frac12 \lambda\left(\frac{\del u}{\del y} \right) \CL_{R_\lambda}\zeta
 + \frac12 \lambda\left(\frac{\del u}{\del x} \right) \CL_{R_\lambda}J J\zeta \\
&{}& -2J T^\pi(X_H^\pi(u), \gamma_x J\zeta - \gamma_y \zeta)
\eeastar
for arbitrary section of the vector bundle $u^*\xi \to \dot \Sigma$ the
\emph{linearization} of the map $u \mapsto \del_{J,H}u$.
\end{defn}
We will see later that $D_{J,H}^\pi(u)$ is indeed the coordinate expression of
the linearization of the nonlinear perturbed Cauchy-Riemann map
$$
D\del_{J,H}^\pi(u): u \mapsto \del_{J,H}^\pi u
$$
in a precise sense.

Now in the isothermal coordinates $(x,y)$, \eqref{eq:ddu2} has the form
\be\label{eq:bootstrap}
\nabla_x^\pi \zeta + J \nabla_y^\pi \zeta + B(\lambda(u^*\lambda, \zeta) = -JPu^*\lambda_H
\ee
where $B$ is a operator of $\zeta$ given by
$$
B(u^*\lambda,\zeta) = - \frac12 \lambda\left(\frac{\del u}{\del y} \right) \CL_{R_\lambda}\zeta
+ \frac12 \lambda\left(\frac{\del u}{\del x} \right) (\CL_{R_\lambda}J) J\zeta
-2J T^\pi(X_H^\pi(u), \gamma_x J\zeta - \gamma_y \zeta)
$$
and
\be\label{eq:P(u)}
Pu^*\lambda_H = u^*\lambda_H \wedge \left(\frac12(\CL_{R_\lambda}J)JX_H^\pi(u)\otimes \gamma\right)(\del_x,\del_y).
\ee
We note that by the Sobolev embedding, $ W^{2,2} \hookrightarrow C^{0,\alpha}$ for $0 \leq \alpha < 1/2$.
Therefore we start from $C^{0,\alpha}$ bound with $0 < \alpha <1/2$ and will inductively bootstrap it to
$C^{k, \alpha}$ bounds for $k \geq 1$ by a Schauder-type estimates.

\begin{thm}\label{thm:local-regularity} Assume $k\geq 1$.
Let $u$ be a perturbed contact instanton satisfying \eqref{eq:contacton-Legendrian-bdy}.
Then for any pair of domains $D_1 \subset D_2 \subset \dot \Sigma$ such that $\overline{D_1}\subset D_2$, we have
$$
\|du\|_{C^{k,\alpha}(D_1)} \leq C \|d_H u\|_{W^{1,2}(D_2)} + C'
$$
for some constants $C, \, C'> 0$ depending on $k$ and $J$, $\lambda$ and $D_1, \, D_2$ but independent of $u$.
\end{thm}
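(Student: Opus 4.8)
The plan is to run an \emph{alternating bootstrap} between the $\pi$-component $\zeta=d_H^\pi u(\partial_x)$ and the Reeb data $u^*\lambda_H$, exploiting the structure of the fundamental equation written in isothermal coordinates: the generalized Cauchy--Riemann equation \eqref{eq:bootstrap} for $\zeta$ with the totally real boundary condition $\zeta(z)\in T_{u(z)}R_i$, coupled to the $\delbar$-equation $\delbar\alpha=\tfrac12|\zeta|^2+G(du,H)$ of Proposition \ref{prop:FE-in-isothermal-intro} --- an elliptic equation for the Reeb data $u^*\lambda_H$, whose real and imaginary parts are the real and imaginary parts of $\alpha$ --- with the real boundary condition $\alpha(z)\in\R$. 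The argument is entirely local, so I would fix an isothermal chart on $\dot\Sigma$, either an interior disc or, along $\partial\dot\Sigma$, a half-disc adapted to the boundary, together with a local unitary frame of $u^*\xi$ over it, in which $\nabla_x^\pi\zeta+J\nabla_y^\pi\zeta$ becomes $\partial_x\zeta+J_0\partial_y\zeta$ plus a term of zeroth order in $\zeta$ whose coefficients are smooth functions of $u$ times $du$. As the base step, from $\|d_Hu\|_{W^{1,2}(D_2)}$ I would invoke the interior--boundary $W^{2,2}$-estimate of Theorem \ref{thm:coercive-L2}, bounding its right-hand side by $\|d_Hu\|_{W^{1,2}(D_2)}$ through $W^{1,2}\hookrightarrow L^p$ ($p<\infty$) in the interior and the trace embedding $W^{1,2}(D_2)\hookrightarrow L^p(\partial D_2)$, to obtain a $W^{2,2}$-bound on a slightly smaller domain; then $W^{2,2}\hookrightarrow C^{0,\alpha}$ for $0<\alpha<\tfrac12$ gives a $C^{0,\alpha}$-bound on $d_Hu$, hence --- via $du=d_Hu+X_H(u)\otimes\gamma$ with $X_H,\gamma$ smooth and $u$ continuous with image in the compact set $u(\overline{D_2})$ --- on $du$, so that $u\in C^{1,\alpha}$.

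For the inductive step, suppose $d_Hu\in C^{m,\alpha}$ on a domain $D$, so that $u\in C^{m+1,\alpha}$ and $\zeta,u^*\lambda_H,X_H(u)\in C^{m,\alpha}$. First I would feed this into the $\delbar$-equation for $\alpha$: its inhomogeneous term $\tfrac12|\zeta|^2+G(du,H)$ lies in $C^{m,\alpha}$, since $G$ is built from $du$, from $H\circ u$ and $R_\lambda[H]\circ u$, and from $\gamma$ and $d\gamma$, and Hölder spaces form a Banach algebra, while on the Legendrian portion of $\partial\dot\Sigma$ the function $\alpha$ is real --- this uses $\gamma|_{\partial\dot\Sigma}=0$ from Condition \ref{cond:gamma} together with $\lambda|_{TR_i}=0$. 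The Schauder estimate for $\delbar$ with this real (one-dimensional, totally real) boundary condition then promotes $u^*\lambda_H$ to $C^{m+1,\alpha}$ on a slightly smaller domain. Next I would feed this improved information into \eqref{eq:bootstrap}: the zeroth-order operator $B(u^*\lambda,\zeta)$ has coefficients assembled from $u^*\lambda=u^*\lambda_H-(H\circ u)\gamma$, from $X_H^\pi(u)$ and from $\gamma$, hence of class $C^{m,\alpha}$, the inhomogeneous term $-*JP(u)$ of \eqref{eq:P(u)} is assembled from $u^*\lambda_H\in C^{m+1,\alpha}$, from $X_H^\pi(u)$, $\gamma$ and geometric tensors of the triad along $u$, hence of class $C^{m,\alpha}$, and the connection terms in $\nabla^\pi$ are smooth in $u\in C^{m+1,\alpha}$ times $du\in C^{m,\alpha}$. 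The boundary condition is the totally real one, since $R_i$ being Legendrian forces $TR_i\subset\xi$ with $TR_i$ Lagrangian --- a fortiori totally real --- for $(d\lambda|_\xi,J)$. The boundary Schauder estimate for the generalized Cauchy--Riemann operator $D_{J,H}^\pi(u)$ with totally real boundary conditions, of Agmon--Douglis--Nirenberg type and in the form standard for pseudoholomorphic curves with Lagrangian boundary, then upgrades $\zeta$ to $C^{m+1,\alpha}$ on a further shrunken domain; combined with $u^*\lambda_H\in C^{m+1,\alpha}$ this gives $d_Hu\in C^{m+1,\alpha}$, hence $du\in C^{m+1,\alpha}$, closing the induction. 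Iterating $k$ times along a chain $D_1\subset\cdots\subset D_2$ and composing the Schauder estimates yields the stated estimate; the constants absorb the polynomial dependence entering through the a priori $W^{2,2}$-bound and through the lower-order norms controlled at the previous stage, and the additive constant $C'$ absorbs the $X_H(u)\otimes\gamma$-part of $du$ together with the $H$-only inhomogeneous contributions, which are bounded along the compact image of $u$ but not controlled by $\|d_Hu\|_{W^{1,2}(D_2)}$.

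I expect the main obstacle to be the boundary analysis, and specifically making the two Schauder estimates \emph{interlock} correctly: the coefficient $u^*\lambda$ of the zeroth-order term $B$ in \eqref{eq:bootstrap} is, a priori, only as regular as $du$, so the gain of one derivative on $\zeta$ must be supplied by the prior gain on $u^*\lambda_H$ coming from the $\delbar$-equation, while conversely the quadratic source $\tfrac12|\zeta|^2$ in that equation is supplied by the prior regularity of $\zeta$; keeping this ordering consistent along the nested domains, and verifying that after the unitary trivialization and the use of $\gamma|_{\partial\dot\Sigma}=0$ each individual estimate is genuinely of the standard totally-real type, is where the real work lies. A secondary but necessary point is the boundary compatibility and the corner behavior where the shrunken domains meet $\partial\dot\Sigma$, which should be handled exactly as in \cite{oh:contacton-Legendrian-bdy}; as there, one must also check that the straightening reducing $\zeta\in T_{u(z)}R_i$ to the flat totally real condition is compatible with the covariant form of the operator $D_{J,H}^\pi(u)$.
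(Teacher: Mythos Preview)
Your proposal is correct and follows essentially the same route as the paper: start from the $W^{2,2}$-estimate of Theorem~\ref{thm:coercive-L2} and the Sobolev embedding $W^{2,2}\hookrightarrow C^{0,\alpha}$, then run an alternating bootstrap between the Cauchy--Riemann type equation \eqref{eq:main-eq-isothermal} for $\zeta=d_H^\pi u(\partial_x)$ with the totally real Legendrian boundary condition and the $\delbar$-equation \eqref{eq:equation-for-alpha} for $\alpha$ with the real boundary condition, using Schauder estimates on each side. Your write-up is in fact somewhat cleaner than the paper's about the inductive ordering---feeding $\zeta\in C^{m,\alpha}$ into the $\alpha$-equation first to gain $u^*\lambda_H\in C^{m+1,\alpha}$, so that the coefficient $u^*\lambda$ in $B(u^*\lambda,\zeta)$ is sufficiently regular when you then apply Schauder to the $\zeta$-equation---and you correctly identify the key boundary ingredients $\gamma|_{\partial\dot\Sigma}=0$ and $\lambda|_{TR_i}=0$.
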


WLOG, we assume that $D_2 \subset \dot \Sigma$ is a semi-disc with $\del D \subset \del \dot \Sigma$
and equipped with an isothermal coordinates $(x,y)$ such that
$$
D_2 = \{ (x,y) \mid |x|^2 + |y|^2 < \delta, \, y \geq 0\}
$$
for some $\delta > 0$
and so $\del D_2 \subset \{(x,y) \in D \mid y = 0\}$. Assume $D_1 \subset D_2$
is the semi-disc with radius $\delta /2$.
We  consider the complex-valued function
\be\label{eq:alpha}
\alpha(x,y) = \lambda_H\left(\frac{\del u}{\del y}\right)
+ \sqrt{-1}\left(\lambda_H\left(\frac{\del u}{\del x}\right)\right)
\ee
as in \cite[Subsection 11.5]{oh-wang:CR-map2}. We note that since $u$ satisfies the
Legendrian boundary condition and $\gamma(\del_x) = 0$ along $\del \dot \Sigma$, we have
\be\label{eq:lambda(delw)=0}
\lambda_H\left(\frac{\del u}{\del x}\right) = 0
\ee
on $\del D_2$.

\begin{lem} Let $\zeta = d_H^\pi u(\del x)$ and $\alpha$ be as above.
Then $\alpha$ satisfies the equations
\be\label{eq:atatau-equation}
\begin{cases}
\delbar \alpha
= \frac{1}{2}|\zeta|^2 + G(du,H) \\
\alpha(z) \in \R \quad z \in \del D_2
\end{cases}
\ee
where $\delbar=\frac{1}{2}\left(\frac{\del}{\del x}+\sqrt{-1}\frac{\del}{\del y}\right)$
is the standard Cauchy-Riemann operator for the standard complex structure $J_0=\sqrt{-1}$,
and
$$
G(du,H) = (u^*(R_\lambda[H]\, \lambda)\wedge \gamma + u^*H\, d\gamma)(\del_x,\del_y).
$$
\end{lem}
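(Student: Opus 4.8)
The plan is to prove the two assertions of the lemma separately: first the differential equation $\delbar\alpha = \frac12|\zeta|^2 + G(du,H)$ on the interior of $D_2$, and then the boundary condition $\alpha(z)\in\R$ on $\del D_2$. For the differential equation, I would start from the observation that
$$
2\delbar\alpha = \frac{\del}{\del x}\left(\lambda_H\left(\frac{\del u}{\del y}\right)\right) - \frac{\del}{\del y}\left(\lambda_H\left(\frac{\del u}{\del x}\right)\right) + \sqrt{-1}\left(\frac{\del}{\del x}\left(\lambda_H\left(\frac{\del u}{\del x}\right)\right) + \frac{\del}{\del y}\left(\lambda_H\left(\frac{\del u}{\del y}\right)\right)\right),
$$
and then recognize the real part as $(d(u^*\lambda_H))(\del_x,\del_y)$ up to sign, while the imaginary part is $(\delta(u^*\lambda_H))$ in isothermal coordinates (i.e. the codifferential, which vanishes after the conformal rescaling built into the second equation of \eqref{eq:contacton-Legendrian-bdy}). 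The real part is handled by Lemma \ref{lem:du*lambda}: we have $d(u^*\lambda_H) = \frac12|d_H^\pi u|^2\, dA + u^*(R_\lambda[H]\,\lambda)\wedge\gamma + u^*H\, d\gamma$, and evaluating against $(\del_x,\del_y)$ gives $\frac12|\zeta|^2 + G(du,H)$ after noting that $|d_H^\pi u|^2\, dA$ evaluated on the isothermal frame is $|d_H^\pi u(\del_x)|^2 = |\zeta|^2$ (using $d_H^\pi u(\del_y) = J\zeta$ and $|J\zeta| = |\zeta|$). The imaginary part must be shown to vanish: this is where the equation $d(e^{g_{H,u}}u^*\lambda_H\circ j) = 0$ enters, via the reformulation $d(u^*\lambda_H\circ j) = -dg_{H,u}\wedge(u^*\lambda_H\circ j)$ from \eqref{eq:2du*lambdaH}; however, $\delta(u^*\lambda_H) = -*d(u^*\lambda_H\circ j)$ is generally \emph{not} zero, so I should be careful here — actually what I want is that the imaginary part of $2\delbar\alpha$ equals $(\delta(u^*\lambda_H))\,dA$-type expression, and in fact a direct computation shows $\frac{\del}{\del x}(\lambda_H(\del_x u)) + \frac{\del}{\del y}(\lambda_H(\del_y u))$ is the codifferential contribution which I expect to reorganize so that the $g_{H,u}$-terms get absorbed, or (more likely, following \cite[Subsection 11.5]{oh-wang:CR-map2}) the claim is simply that the combination appearing in $\delbar\alpha$ is exactly the real quantity $d(u^*\lambda_H)(\del_x,\del_y)$ because the imaginary part is a total term that one arranges to be real-valued. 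Let me reconsider: the correct reading is that $\delbar\alpha$ picks up $\frac12\left((d(u^*\lambda_H))(\del_x,\del_y) + \sqrt{-1}\,(\text{something})\right)$ and the lemma asserts the RHS is real, i.e. $\frac12|\zeta|^2 + G(du,H)$, which forces the imaginary part to vanish — and that vanishing is precisely the content of the second PDE in \eqref{eq:contacton-Legendrian-bdy} rewritten via \eqref{eq:2du*lambdaH} and expanded in the isothermal frame.

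For the boundary condition, the argument is short: on $\del D_2 \subset \del\dot\Sigma$ we have the Legendrian boundary condition $u(\del D_2)\subset R_i$ together with Condition \ref{cond:gamma}(1), namely $\gamma|_{\del\dot\Sigma} = 0$. Since $\del_x$ is tangent to $\del\dot\Sigma$, we get $du(\del_x)\in TR_i\subset\xi$, hence $\lambda(du(\del_x)) = 0$, and since $\gamma(\del_x) = 0$ we also have $H\,\gamma(\del_x) = 0$; therefore $\lambda_H(\del_x u) = 0$ on $\del D_2$, which is exactly \eqref{eq:lambda(delw)=0} already recorded in the excerpt. This kills the imaginary part of $\alpha$ on $\del D_2$, so $\alpha(z)\in\R$ there.

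The main obstacle I anticipate is the bookkeeping in the interior equation: correctly tracking how the conformal factor $e^{g_{H,u}}$ and the term $-dg_{H,u}\wedge(u^*\lambda_H\circ j)$ interact with the isothermal-coordinate expansion of $\delta(u^*\lambda_H)$, and verifying that the imaginary part of $2\delbar\alpha$ vanishes on the nose rather than only up to lower-order terms. Concretely, I would compute $*(u^*\lambda_H\circ j)$ and $d(u^*\lambda_H\circ j)$ in the coordinates $(x,y)$, substitute into the identity $\delta(u^*\lambda_H) = *(dg_{H,u}\wedge u^*\lambda_H\circ j)$ from the proof of Proposition \ref{prop:Deltau*lambdaH}, and check that this codifferential is exactly the quantity that cancels the imaginary part arising in $\delbar\alpha$ — a purely algebraic but slightly delicate matching of signs and conformal weights, entirely parallel to \cite[Subsection 11.5]{oh-wang:CR-map2}. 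Once that cancellation is confirmed, assembling the real part via Lemma \ref{lem:du*lambda} is immediate, and the lemma follows. All remaining steps (evaluating $|d_H^\pi u|^2\,dA$ on the frame, expanding $G(du,H)$) are routine.
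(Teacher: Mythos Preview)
Your plan for the real part of $\delbar\alpha$ and for the boundary condition is exactly the paper's approach: the paper's proof simply invokes Lemma~\ref{lem:du*lambda} (the formula $d(u^*\lambda_H)=\frac12|d_H^\pi u|^2\,dA+u^*(R_\lambda[H]\,\lambda)\wedge\gamma+u^*H\,d\gamma$) and evaluates against $(\del_x,\del_y)$, and the boundary condition is precisely \eqref{eq:lambda(delw)=0}, which you rederived correctly from the Legendrian condition and Condition~\ref{cond:gamma}(1).

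Your hesitation about the imaginary part is, however, well placed, and in fact points to a genuine omission. A direct computation gives
\[
2\delbar\alpha=\bigl(d(u^*\lambda_H)\bigr)(\del_x,\del_y)\;-\;\sqrt{-1}\,\bigl(d(u^*\lambda_H\circ j)\bigr)(\del_x,\del_y),
\]
and the second defining equation of a perturbed contact instanton is $d\!\left(e^{g_{H,u}}\,u^*\lambda_H\circ j\right)=0$, not $d(u^*\lambda_H\circ j)=0$. Via \eqref{eq:2du*lambdaH} one has $d(u^*\lambda_H\circ j)=-dg_{H,u}\wedge(u^*\lambda_H\circ j)$, which is in general nonzero. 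Hence the imaginary part of $\delbar\alpha$ does \emph{not} vanish on the nose; rather it equals $\tfrac12\bigl(dg_{H,u}\wedge(u^*\lambda_H\circ j)\bigr)(\del_x,\del_y)$, a term bilinear in $du$ and $(f,g)$. The paper's proof does not address this; it only treats the real part and is silent on the imaginary component.

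So the statement as written (with a purely real right-hand side) is not quite correct in the perturbed setting, and you should not expect to ``confirm the cancellation''---there is none. The fix is to include this extra first-order term on the right-hand side. This is harmless for the intended application: in the alternating bootstrap of Theorem~\ref{thm:local-regularity} one only needs $\delbar\alpha$ to be controlled by quantities built from $\zeta$, $f$, $g$, and $du$, and the additional term $\tfrac12(dg_{H,u}\wedge(u^*\lambda_H\circ j))(\del_x,\del_y)$ is of exactly this type (bounded by $C|du|(|f|+|g|)$ via Lemma~\ref{lem:dgHw-nabladgHw}). Your instinct to flag this as the main obstacle was correct; the resolution is not a cancellation but an amendment of the right-hand side.
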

\begin{proof} Recall \eqref{eq:du*lambdaH}
$$
d(u^*\lambda_H) = \frac12 |d_H u|^2\, dA + u^*(R_\lambda[H]\, \lambda)\wedge \gamma + u^*H\, d\gamma
$$
here. Since $d\gamma = 0$ by hypothesis and $d(u^*\lambda) = d(u^*\lambda) + d(u^*H\, \gamma)$, we have proved the lemma
by evaluating this against the pair $(\del_x,\del_y) = (\del_x,j\del_y)$.
\end{proof}

Summarizing the above discussion, we have proved the following:

\begin{prop}[Fundamental equation in isothermal coordinates]\label{prop:FE-in-isothermal}
Let $u$ be a solution to \eqref{eq:contacton-Legendrian-bdy}, and let $\zeta = d_H^\pi u(\del_x)$
and $\alpha$ as above in an isothermal coordinate $(x,y)$ of $(\dot \Sigma,j)$.
Then  the pair $(\zeta,\alpha)$ satisfies
\be\label{eq:main-eq-isothermal}
\begin{cases}
\nabla_x^\pi \zeta + J \nabla_y^\pi \zeta + B(u^*\lambda, \zeta) = -* JP(u,\alpha) \\
\zeta(z) \in TR_i \quad \text{for } \, z \in \del D_2
\end{cases}
\ee
 and
\be\label{eq:equation-for-alpha}
\begin{cases}
\delbar \alpha
=\frac12 |\zeta|^2 + G(du,H)\\
\alpha(z) \in \R \quad \text{for } \, z \in \del D_2
\end{cases}
\ee
where
$$
G(du,H) = (u^*[R_\lambda[H] \lambda)\wedge \gamma + u^*H d\gamma)(\del_x,\del_y).
$$
\end{prop}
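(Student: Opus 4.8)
The plan is to \emph{assemble} the proposition from material already in place rather than to prove anything new: the coordinate rewriting of the fundamental equation \eqref{eq:dd_Hu2} carried out in the discussion preceding this statement, Lemma \ref{lem:du*lambda}, and the second defining equation $d(e^{g_{H,u}}u^*\lambda_H\circ j)=0$ of a perturbed contact instanton. The only genuine work is to organize the terms into the two advertised equations and to read off the boundary conditions from the Legendrian hypothesis and from Condition \ref{cond:gamma}.

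\emph{The $\zeta$-equation.} Fix an isothermal coordinate $(x,y)$ and set $\zeta = d_H^\pi u(\del_x)$, $\eta = d_H^\pi u(\del_y)$. Since $u$ is an $H$-perturbed CR map, $d_H^\pi u$ is $(j,J)$-complex linear, so $\eta = J\zeta$; since $\nabla^\pi$ is the contact Hermitian connection it is $J$-linear, whence $d^{\nabla^\pi}(d_H^\pi u)(\del_x,\del_y) = \nabla_x^\pi\eta - \nabla_y^\pi\zeta = J\bigl(\nabla_x^\pi\zeta + J\nabla_y^\pi\zeta\bigr)$. I would then evaluate the right-hand side of \eqref{eq:dd_Hu2} on $(\del_x,\del_y)$, using $u^*\lambda = u^*\lambda_H - (H\circ u)\,\gamma$ to split it into the terms linear over the horizontal component $\zeta$ — the $(\CL_{R_\lambda}J)$-term and the $T^\pi(X_H^\pi,\gamma\wedge\cdot)$-term, which together define the operator $B(u^*\lambda,\zeta)$ — and the single term linear over the vertical component $u^*\lambda_H$, which is $P(u)$ of \eqref{eq:P(u)}; multiplying through by $-J$ and inserting the Hodge $*$ produces the first line of \eqref{eq:main-eq-isothermal}. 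For the boundary condition, along $\del\dot\Sigma$ one has $\gamma(\del_x)=0$ by Condition \ref{cond:gamma}, so $\zeta = \Pi\,du(\del_x)$; the Legendrian condition forces $du(\del_x)\in TR_i\subset\xi$, hence $\zeta = du(\del_x)\in TR_i$ on $\del D_2$.

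\emph{The $\alpha$-equation.} Write $\alpha = (u^*\lambda_H)(\del_y) + \sqrt{-1}\,(u^*\lambda_H)(\del_x)$, so that the real part of $2\delbar\alpha$ is $d(u^*\lambda_H)(\del_x,\del_y)$ and its imaginary part is a multiple of $\delta(u^*\lambda_H)$ in the flat coordinate. The real part is evaluated from Lemma \ref{lem:du*lambda}: with $d\gamma=0$ by hypothesis it reduces to $\frac12|\zeta|^2 + (u^*(R_\lambda[H]\,\lambda)\wedge\gamma)(\del_x,\del_y) = \frac12|\zeta|^2 + G(du,H)$. The imaginary part is handled by the second defining equation: \eqref{eq:2du*lambdaH} expresses $\delta(u^*\lambda_H)$ in terms of $dg_{H,u}$, and I would absorb that first-order contribution into $G(du,H)$ — this is the step that needs care, since $u^*\lambda_H$ and $dg_{H,u}$ are each $O(|du|)$. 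Collecting, $\delbar\alpha = \frac12|\zeta|^2 + G(du,H)$. The boundary condition $\alpha(z)\in\R$ on $\del D_2$ is immediate from \eqref{eq:lambda(delw)=0}: there $(u^*\lambda_H)(\del_x) = \lambda(du(\del_x)) + (H\circ u)\,\gamma(\del_x) = 0$, the first term vanishing because $du(\del_x)\in TR_i$ with $R_i$ Legendrian, the second because $\gamma(\del_x)=0$.

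\emph{Regularity of the inhomogeneous terms and the main obstacle.} Both $JP(u)$ and $G(du,H)$ are algebraic in $du$ with coefficients built from the fixed data $(M,\lambda,J)$, $\gamma$ and $H$, so $|JP(u)| + |G(du,H)| \le C(1+|du|^2)$ with covariant derivatives bounded by $C(|du| + |\nabla du| + |du|\,|\nabla du|)$; together with the $C^0$-bound on $du$ from Hypothesis \ref{hypo:basic} and $\nabla du\in L^2$ when $u\in W^{2,2}(\dot\Sigma)$, this yields $JP(u), G(du,H)\in W^{1,2}(\dot\Sigma)$. I expect the only real difficulty to be organizational: verifying that the split of the right-hand side of \eqref{eq:dd_Hu2} into the $\zeta$-linear part $B(u^*\lambda,\zeta)$ and the $u^*\lambda_H$-linear part $P(u)$ is exhaustive, and reconciling the imaginary part of $\delbar\alpha$ with the second PDE and the conformal exponent $g_{H,u}$ so that every leftover term is genuinely of order $\le 1$ in $du$ and hence legitimately lies in $G(du,H)$ and in $W^{1,2}$.
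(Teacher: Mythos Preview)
Your approach is essentially the same as the paper's: the proposition there is labeled ``Summarizing the above discussion, we have proved the following,'' and that discussion is precisely the isothermal rewriting of \eqref{eq:dd_Hu2} for the $\zeta$-equation together with the lemma immediately preceding the proposition (which invokes \eqref{eq:du*lambdaH}) for the $\alpha$-equation, with the boundary conditions read off from the Legendrian hypothesis and $\gamma|_{\del\dot\Sigma}=0$ exactly as you do.

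One point worth flagging: your explicit attention to the \emph{imaginary} part of $\delbar\alpha$---which, via \eqref{eq:2du*lambdaH}, is governed by $dg_{H,u}\wedge(u^*\lambda_H\circ j)$ rather than vanishing outright---is in fact more careful than the paper's own lemma proof, which only evaluates $d(u^*\lambda_H)$ (the real part) and leaves the imaginary contribution implicit. Your instinct to absorb that first-order term into $G(du,H)$ is the natural remedy; it does mean $G$ becomes complex-valued with an additional $O(|du|^2)$ piece, but this does not affect the subsequent bootstrap since the relevant estimate $|G(du,H)|\le C(1+|du|^2)$ and its $W^{1,2}$-membership survive unchanged.
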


Now we are ready to establish the higher regularity estimate.

\begin{proof}[Proof of Theorem \ref{thm:local-regularity}]
Recall the expressions  of $B(u^*\lambda,\zeta)$, $P(u,\alpha)$ and $G(du,H)$ above.
By writing the Reeb component $u^*\lambda$ in the isothermal coordinate $(x,y)$ as
$$
f = \lambda_H\left(\frac{\del u}{\del x}\right) , \quad g = \lambda_H\left(\frac{\del u}{\del y}\right)
$$
we have $\alpha = g + \sqrt{-1}f$ for the $\alpha$ defined above in \eqref{eq:alpha}, and
$$
B(u^*\lambda,\zeta) = - \frac12  g\, \CL_{R_\lambda}J \,\zeta
+ \frac12 f (\CL_{R_\lambda}J) J\zeta
-2J T^\pi(X_H^\pi(u), \gamma_x J\zeta - \gamma_y \zeta)
$$
and hence $\zeta$ satisfies
\be\label{eq:equation-for-zeta}
\nabla_x^\pi \zeta + J \nabla_y^\pi \zeta - \frac12 \lambda g \CL_{R_\lambda}\zeta
+ \frac12 f (\CL_{R_\lambda}J) J\zeta
-2J T^\pi(X_H^\pi(u), \gamma_x J\zeta - \gamma_y \zeta) = -* JP(u,\alpha).
\ee
We first observe that
The two equations \eqref{eq:main-eq-isothermal} and \eqref{eq:equation-for-alpha}
together form a nonlinear elliptic system for $(\zeta,\alpha)$ which are coupled: $\alpha = g + \sqrt{-1} f$
is fed into \eqref{eq:main-eq-isothermal} through its coefficients $f$ and $g$,
and then $\zeta$ provides the input
for the equation \eqref{eq:equation-for-alpha} and then back and forth. Using this structure of
coupling, we obtain the higher derivative estimates by alternating boot strap arguments between $\zeta$ and $\alpha$
the detail of which is now in order.

It is obvious to see that \eqref{eq:main-eq-isothermal} is an \emph{inhomogeneous}
linear elliptic equation for $\zeta$ with inhomogeneous term $-*JP(u)$
in the right hand side: Here we have
\beastar
\frac{1}{2} \lambda(J\zeta)(\CL_{R_\lambda}J)
& = & \frac{1}{2} f \,\CL_{R_\lambda}J\\
\frac{1}{2} \lambda(\zeta)(\CL_{R_\lambda}J)
& = & \frac{1}{2} g\, \CL_{R_\lambda}J J.
\eeastar
Then Theorem  \ref{thm:coercive-L2} is translated into the following
$W^{1,2}$ bound for $\zeta$.

\begin{lem}\label{lem:W12-zeta} We have
$$
\|\zeta\|_{W^{1,2}}^2 \leq C_1 \|\zeta\|_{L^4}^4 + C_2 \|\zeta\|_{L^2}^2 + C_3.
$$
\end{lem}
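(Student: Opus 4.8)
The plan is to deduce the bound directly from Theorem~\ref{thm:coercive-L2} after rewriting everything in terms of $\zeta = d_H^\pi u(\del_x)$. First I would record the elementary pointwise comparisons that make this translation work. Since $\dot\Sigma$ has real dimension two and $u\in W^{2,2}_{loc}$, the embedding $W^{2,2}\hookrightarrow C^{0,\alpha}$ shows $u$ is continuous with $\Image u$ precompact, so all tensorial coefficients evaluated along $u$ (in particular $X_H^\pi(u)$, $\gamma$, $\nabla R_\lambda$, $\CL_{R_\lambda}J$, $T^\pi$) are uniformly $C^0$-bounded on $D_2$; hence $du = d_Hu + X_H(u)\otimes\gamma$ gives $|du|\le |d_Hu| + C$ pointwise and $\|du\|_{L^p(D_2)}\le \|d_Hu\|_{L^p(D_2)} + C$ for $p=2,4$. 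On the other side, in an isothermal coordinate $(x,y)$ the $J$-linearity of $d_H^\pi u$ together with $\nabla^\pi J = 0$ gives $|d_H^\pi u|^2 = 2|\zeta|^2$ and $|\nabla^\pi(d_H^\pi u)|^2 = 2|\nabla^\pi\zeta|^2$, while Proposition~\ref{prop:nabladHu}, $|R_\lambda|=1$, and the fact that $\nabla R_\lambda$ takes values in $\xi$ yield $|\nabla^\pi(d_H^\pi u)|^2\le 2|\nabla(d_Hu)|^2 + C|d_Hu|^2$. Combining these, $\|\zeta\|_{W^{1,2}(D_1)}^2 \le C\bigl(\|\nabla(d_Hu)\|_{L^2(D_1)}^2 + \|d_Hu\|_{L^2(D_1)}^2\bigr)$.

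Next I would plug Theorem~\ref{thm:coercive-L2} into the right-hand side: the $\|du\|_{L^4(D_2)}^4$ and $\|du\|_{L^2(D_2)}^2$ terms collapse, by the first step, into $\|d_Hu\|_{L^4(D_2)}^4$ and $\|d_Hu\|_{L^2(D_2)}^2$ plus constants. To finish I must turn $\|d_Hu\|_{L^p(D_2)}$ into $\|\zeta\|$-norms, and since $|d_Hu|^2 = 2|\zeta|^2 + |\alpha|^2$ with $\alpha$ as in \eqref{eq:alpha}, this requires controlling the Reeb component $\alpha$. Here I would use \eqref{eq:equation-for-alpha}, namely $\delbar\alpha = \tfrac12|\zeta|^2 + G(du,H)$ with the totally real boundary condition $\alpha|_{\del D_2}\in\R$: extending $\alpha$ across $\del D_2$ by Schwarz reflection, $\widetilde\alpha(x,-y) = \overline{\alpha(x,y)}$, turns this into an interior $\delbar$-equation, and the Calder\'on--Zygmund estimate gives $\|\alpha\|_{W^{1,2}(D_1)}\le C\bigl(\||\zeta|^2\|_{L^2(D_2)} + \|G(du,H)\|_{L^2(D_2)} + \|\alpha\|_{L^2(D_2)}\bigr) \le C\bigl(\|\zeta\|_{L^4(D_2)}^2 + \|\zeta\|_{L^2(D_2)} + \|\alpha\|_{L^2(D_2)} + 1\bigr)$, using that $G$ is affine in $du$ with bounded coefficients. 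The residual lower-order term $\|\alpha\|_{L^2(D_2)}\le \|d_Hu\|_{L^2(D_2)}$ is then handled by a standard absorption iteration over a chain of nested half-discs; substituting back and using $\|\zeta\|_{L^4}^2\le \|\zeta\|_{L^4}^4 + C$ produces an estimate of the desired shape $C_1\|\zeta\|_{L^4(D_2)}^4 + C_2\|\zeta\|_{L^2(D_2)}^2 + C_3$.

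The main obstacle is the boundary contribution $\CB = \int_{\del D_2}\bigl(C_7|du|^3|d_Hu| + C_8|du|^2|d_Hu| + C_9|d_Hu||du|\bigr)$ carried along in Theorem~\ref{thm:coercive-L2}, which is an integral over $\del D_2\subset\del\dot\Sigma$ and is a priori not obviously absorbable. I would treat it exactly as in the proof of \cite[Lemma 4.9]{oh:contacton-Legendrian-bdy}: the boundary $L^q$-norms of $du$ and $d_Hu$ are finite because $W^{1,2}(D_2)$ traces into $L^q(\del D_2)$ for every $q<\infty$ in dimension two, and one bounds them by interior norms via the trace theorem together with a Gagliardo--Nirenberg interpolation on the half-disc, producing $\CB\le \epsilon\|\nabla(d_Hu)\|_{L^2(D_2)}^2 + C_\epsilon\bigl(\|d_Hu\|_{L^4(D_2)}^4 + \|d_Hu\|_{L^2(D_2)}^2 + 1\bigr)$; after first passing to a slightly smaller half-disc with the cut-off already used in the proof of Theorem~\ref{thm:coercive-L2} and choosing $\epsilon$ small, the $\epsilon$-term is absorbed into the left-hand side. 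The two delicate points are this absorption against the boundary trace and the reflection argument for $\alpha$; everything else is bookkeeping with the pointwise inequalities recorded above.
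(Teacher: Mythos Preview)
The paper supplies essentially no proof here: the single sentence ``Then Theorem \ref{thm:coercive-L2} is translated into the following $W^{1,2}$ bound for $\zeta$'' is all there is. Your proposal is therefore not so much a different route as an attempt to fill in what the paper leaves implicit, and in fact you try to establish the lemma \emph{literally}, with only $\zeta$-norms on the right-hand side, rather than the weaker inequality with $d_Hu$- and $du$-norms that Theorem \ref{thm:coercive-L2} directly gives. For the paper's use of the lemma (namely, to feed a $W^{1,2}$-bound on $\zeta$ into the alternating bootstrap that follows), it is immaterial whether the right-hand side is written in terms of $\zeta$ or of $d_Hu$; the paper's sloppy statement should almost certainly be read in the latter, weaker sense.

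That said, two places in your argument are genuinely delicate and not quite justified. First, your elimination of the Reeb component: after the Calder\'on--Zygmund estimate on $\alpha$ you are left with $\|\alpha\|_{L^2(D_2)}$ on the right, and your ``standard absorption iteration over a chain of nested half-discs'' does not obviously kill this term---at each step you still carry an $L^2$-norm of $\alpha$ on a slightly larger domain, and nothing forces it to shrink. Without further input you will not get a bound involving only $\zeta$-norms and constants; the constant $C_3$ would have to depend on $\|d_Hu\|_{L^2(D_2)}$ (which is harmless for the application, but not what you wrote). Second, your handling of the boundary term $\CB$ via trace plus interpolation is plausible but the exponents are dangerous: $\CB$ is quartic in $|du|$, and the trace inequality in dimension two produces powers of $\|d_Hu\|_{W^{1,2}}$ on the right that are not obviously linearly absorbable into the single $\|\nabla(d_Hu)\|_{L^2}^2$ on the left. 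You would need a multiplicative trace inequality of Gagliardo--Nirenberg type with the right homogeneity, and you should write it down explicitly rather than appeal to \cite{oh:contacton-Legendrian-bdy}, whose boundary term is cubic rather than quartic.
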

We then consider the equation \eqref{eq:equation-for-zeta} for $\zeta$ and
 obtain the estimate
$$
\|\overline \nabla^\pi \zeta\|_{W^{2,2}}^2 \leq C_1'(\|f\|_{W^{1,4}}^2 + \|g\|_{W^{1,4}}\|\zeta\|_{W^{1,4}}
 + C_2'\|\CL_{R_\lambda}\|_{C^0}\|\gamma\|_{C^0} \|\zeta\|_{W^{1,2}}.
$$
Then by the standard estimate for the Riemann-Hilbert problem
with Dirichlet boundary condition for the Cauchy-Riemann operator
$\nabla_x^\pi + J \nabla_y^\pi J=: \overline \nabla^\pi$, we derive
$$
\|\zeta\|_{C^{0,\delta}}^2 \leq C_1'(\|f\|_{W^{1,4}}^2 + \|g\|_{W^{1,4}})\|\zeta\|_{W^{1,4}}
 + C_2'\|\CL_{R_\lambda}\|_{C^0}\|\gamma\|_{C^0}) \|\zeta\|_{W^{1,2}}
$$
with say $\delta = 2/5 < 1/2$.

Next we consider \eqref{eq:equation-for-alpha} which is another
the Riemann-Hilbert problem with the real (or imaginary) boundary condition for the
Cauchy-Riemann operator $\delbar$.
Then again by the standard estimate for the Riemann-Hilbert problem
with the real (or imaginary) boundary condition, we derive
\be\label{eq:|alpha|22}
\|\alpha\|_{C^{0,\delta}} \leq C_3 \|\zeta\|^2_{C^{0,\delta}} + C_4 \|G(du,H)\|_{W^{1,2}}.
\ee
But we have
\beastar
G(du,H) & = & (u^*(R_\lambda[H]\, \lambda)\wedge \gamma + u^*H\, d\gamma)(\del_x,\del_y)\\
& = & (R_\lambda[H])(u) f \gamma_y - (R_\lambda[H])(u)g \gamma_x + H(u)(d\gamma)(\del_x,\del_y).
\eeastar
By differentiating this identity, it is easy to check
$$
\|G(du,H)\|_{W^{2,1}} \leq C_5 \|du\|_{L^4}(\|f\|_{L^4} + \|g\|_{L^4}) + C_6(\|f\|_{W^{1,2}} + \|g\|_{W^{1,2}}) + C_7\|du\|_{L^2} + C_8
$$
for some constants $C_5$ - $C_8$. We have already shown that the right hand side of this inequality is bounded by
$\|\zeta\|_{C^{(0,\delta}}, \|f\|_{L^4}, \|g\|_{L^4}$. Combining Lemma \ref{lem:W12-zeta}, \eqref{eq:|alpha|22}, we have
a bound for $\|\alpha\|_{W^{2,2}}$ and hence a bound for $\|\alpha\|_{C^{0,\delta}}$
in terms of $\|\zeta\|_{C^{0,\delta}}, \|f\|_{L^4}, \|g\|_{L^4}$.

At this stage, \eqref{eq:main-eq-isothermal} implies that there exist some constants $C, \, C' > 0$
such that
$$
\|\overline \nabla^\pi \zeta\|_{C^{0,\delta}} \leq C \|\zeta \|_{C^{0,\delta}} + C'
$$
for all solutions thereof. By the standard Schauder estimate (see \cite{gilbarg-trudinger}
for example), applied to \eqref{eq:main-eq-isothermal}, any solution $\zeta$ thereof indeed satisfies
$$
\|\zeta\|_{C^{1,\delta}} \leq  C (\|\overline \nabla^\pi \zeta\|_{C^{0,\delta}} + \|\zeta \|_{C^{0,\delta}}).
$$
Combining the two, we have derived
\be\label{eq:zeta-1delta}
\|\zeta\|_{C^{1,\delta}}\leq C \|\zeta \|_{C^{0,\delta}} + C'
\ee
for all solutions of \eqref{eq:equation-for-zeta}.

By substituting $\zeta$ back into \eqref{eq:equation-for-alpha}, we get a similar $C^{1,\delta}$
bound for $\alpha$ from \eqref{eq:zeta-1delta}. Repeating this alternating process between \eqref{eq:equation-for-zeta}
and \eqref{eq:equation-for-alpha}, we have established the $C^k$-estimate for all $k \geq 1$.
This finishes the proof.
\end{proof}

\begin{rem} Recall that we have assumed $u$ is a \emph{classical} solution to \eqref{eq:contacton-Legendrian-bdy}
which in particular means a smooth solution.
The above argument equally applies to any \emph{weak solution} to \eqref{eq:contacton-Legendrian-bdy} $u$
\emph{as long as $u$ is in $W^{1,4}$} (see Lemma \ref{lem:W12-zeta}) so that $\zeta$, $f$ and $g$ are contained in $L^4$. In other words,
to perform the above alternating bootstrap argument, $u$ should have regularity at least of $W^{1,4}$.
We refer to \cite[Section 8.5]{oh:book1} for a full discussion on the boundary regularity theorem
for the weak totally real boundary value problem for  $J$-holomorphic curves in symplectic geometry.
The same kind of proof also applies to \eqref{eq:contacton-Legendrian-bdy} along the same line following the
above alternating boot-strapping process whose details we leave to the readers.
\end{rem}

\section{Vanishing of asymptotic charge and subsequence convergence}
\label{sec:subsequence-convergence}

In this section, we study the asymptotic behavior of contact instantons
on the Riemann surface $(\dot\Sigma, j)$ associated with a metric $h$ with \emph{strip-like ends}.
To be precise, we assume there exists a compact set $K_\Sigma\subset \dot\Sigma$,
such that $\dot\Sigma-\Int(K_\Sigma)$ is a disjoint union of punctured semi-disks
 each of which is isometric to the half strip $[0, \infty)\times [0,1]$ or $(-\infty, 0]\times [0,1]$, where
the choice of positive or negative strips depends on the choice of analytic coordinates
at the punctures.
We denote by $\{p^+_i\}_{i=1, \cdots, l^+}$ the positive punctures, and by $\{p^-_j\}_{j=1, \cdots, l^-}$ the negative punctures.
Here $l=l^++l^-$. Denote by $\phi^{\pm}_i$ such strip-like coordinates.
We first state our assumptions for the study of the behavior of boundary punctures.
(The case of interior punctures is treated in \cite[Section 6]{oh-wang:CR-map1}.)

\begin{defn}Let $\dot\Sigma$ be a boundary-punctured Riemann surface of genus zero with punctures
$\{p^+_i\}_{i=1, \cdots, l^+}\cup \{p^-_j\}_{j=1, \cdots, l^-}$ equipped
with a metric $h$ with \emph{strip-like ends} outside a compact subset $K_\Sigma$.
Let
$u: \dot \Sigma \to M$ be any smooth map with Legendrian boundary condition
\eqref{eq:perturbed-contacton-bdy}.
We define the total $\pi$-harmonic energy $E^\pi(u)$
by
\be\label{eq:endenergy}
E_H^\pi(u) =  \frac{1}{2} \int_{\dot \Sigma} e^{g_{H, u}} |d^\pi u - X_H \otimes \gamma|^2
\ee
where the norm is taken in terms of the given metric $h$ on $\dot \Sigma$ and the triad metric on $M$.
\end{defn}

We put the following hypotheses in our asymptotic study of the finite
energy contact instanton maps $u$ as in \cite{oh-wang:CR-map1}, \emph{except not requiring the charge vanishing
condition $Q = 0$, which itself we will prove here under the hypothesis using the Legendrian boundary condition}:

Throughout this section,
we put the following hypotheses in our asymptotic study of the finite
energy contact instanton maps $u$ similarly as in \cite{oh-wang:CR-map1}, \cite{oh:contacton-Legendrian-bdy}.

\begin{hypo}\label{hypo:basic}
Let $h$ be the metric on $\dot \Sigma$ given above.
Assume $u:\dot\Sigma\to M$ satisfies the contact instanton equation
\eqref{eq:contacton-Legendrian-bdy}
and
\begin{enumerate}
\item $E_H^\pi(u)<\infty$ (finite $\pi$-energy);
\item $\|d u\|_{C^0(\dot\Sigma)}, \quad \|d_H\|_{C^0(\dot \Sigma)} <\infty$.
\item $\Image u \subset K \subset M$ for some compact set $K$.
\end{enumerate}
\end{hypo}
We then consider the asymptotic invariants at each puncture defined as
\bea
T_H & := & \frac{1}{2}\int_{[0,\infty) \times [0,1]} e^{g_{H,u}} |d_H ^\pi u|^2
+ \int_{\{0\}\times [0,1]}e^{g_{H,u}} (u^*\lambda + H\, dt)|_{\{0\}\times [0,1]})\label{eq:TQ-T}\\
Q_H & : = & \int_{\{0\}\times [0,1]}e^{g_{H,u}} ((u|_{\{0\}\times [0,1] })^*(\lambda + H\, dt)\circ j)\label{eq:TQ-Q}
\eea
in the given strip-like coordinates.
(The case of negative punctures is similar and omitted.)

Recall that for any (unperturbed) contact instanton, we have the pointwise identity
$$
d_Hu^*\lambda = \frac12 |d^\pi w|^2\, dA
$$
which enters in the subsequence convergence result for the contact instantons in \cite{oh-wang:CR-map1}
in a crucial way.

The following proposition is the correct generalization of this identity
to the Hamiltonian contact instantons.

\begin{prop}\label{prop:du*lambda}
Let $\gamma = dt$ for the strip-like coordinate on a neighborhood of
a given puncture of $\dot \Sigma$. Then on the strip-like neighborhood, we have
\be\label{eq:du*lambda=}
d(\overline u^*\lambda) = \frac12 e^{g_{H, u}}|(du - X_H \otimes \gamma)^\pi|^2 \, dA
\ee
for any map $u$ satisfying $(du - X_H \otimes \otimes dt)^{\pi(0,1)} = 0$.
\end{prop}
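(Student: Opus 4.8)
The plan is to reduce the claimed identity to the off-shell formula \eqref{eq:du*lambdaH=} already established in Lemma \ref{lem:du*lambda}, together with the gauge-change computation underlying Lemma \ref{lem:action-identity} (or Proposition \ref{prop:Ham=Reeb}). First I would recall that on the strip-like neighborhood $\gamma = dt$, so Condition \ref{cond:gamma} gives $d\gamma = 0$, and hence Lemma \ref{lem:du*lambda} specializes to
$$
d(u^*\lambda_H) = \tfrac12 |d_H^\pi u|^2\, dA + u^*(R_\lambda[H]\,\lambda)\wedge dt .
$$
Thus $d(u^*\lambda_H)$ is \emph{not} by itself proportional to $dA$; the extra term $u^*(R_\lambda[H]\,\lambda)\wedge dt$ must be absorbed by the gauge transformation $\overline u(\tau,t) = (\psi_H^t(\psi_H^1)^{-1})^{-1}(u(\tau,t))$ and the conformal factor $e^{g_{H,u}}$. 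So the real content is the identity $\overline u^*\lambda = e^{g_{H,u}}\, u^*\lambda_H$ as one-forms on the strip (not merely when evaluated on $\partial_t$, which is what Lemma \ref{lem:action-identity} proves); once this is in hand, exterior differentiation and the second equation of the perturbed contact instanton system do the rest.

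The key steps, in order, would be: (1) Prove the one-form identity $\overline u^*\lambda = e^{g_{H,u}}\,u^*\lambda_H$ on the whole strip. For the $\partial_t$-component this is exactly the computation in the proof of Lemma \ref{lem:action-identity}; for the $\partial_\tau$-component one runs the identical computation with $\partial_\tau$ in place of $\partial_t$, noting that $\partial\overline u/\partial\tau = d(\psi_H^t(\psi_H^1)^{-1})^{-1}\big(\partial u/\partial\tau\big)$ with no $X_H$ correction term (since the $\tau$-direction carries no $\gamma$), and using the conformal-exponent relation $\lambda\circ d(\psi_H^t(\psi_H^1)^{-1})^{-1} = e^{g_{H,u}}\lambda$ on $\xi$ together with $\gamma(\partial_\tau)=0$ so that $u^*\lambda_H(\partial_\tau) = u^*\lambda(\partial_\tau)$. (2) Take $d$ of this identity: $d(\overline u^*\lambda) = d\big(e^{g_{H,u}} u^*\lambda_H\big) = e^{g_{H,u}}\big(dg_{H,u}\wedge u^*\lambda_H + d(u^*\lambda_H)\big)$. (3) Substitute Lemma \ref{lem:du*lambda} for $d(u^*\lambda_H)$, so the right-hand side becomes $e^{g_{H,u}}\big(dg_{H,u}\wedge u^*\lambda_H + \tfrac12|d_H^\pi u|^2 dA + u^*(R_\lambda[H]\,\lambda)\wedge dt\big)$. (4) Show the two ``error'' two-forms cancel, i.e.
$$
e^{g_{H,u}}\big(dg_{H,u}\wedge u^*\lambda_H + u^*(R_\lambda[H]\,\lambda)\wedge dt\big) = 0
$$
on the strip. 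Here I would use the explicit formula $dg_{H,u} = -d(g_{\phi_H^t}\circ\phi_H^t)\circ du - R_\lambda[H](t,\phi_H^t(u))\, dt$ from the proof of Proposition \ref{prop:Delta-u*lambda-Hgamma}; the term $R_\lambda[H](t,\phi_H^t(u))\,dt$ in $dg_{H,u}$, wedged with the $\lambda$-part of $u^*\lambda_H$, is designed to kill $u^*(R_\lambda[H]\lambda)\wedge dt$ up to a comparison of the Hamiltonian evaluated at $u$ versus at $\phi_H^t(u)$, and the remaining $d(g_{\phi_H^t}\circ\phi_H^t)\circ du$ piece wedged with $u^*\lambda_H$ should vanish by the same closedness that makes $d(\overline u^*\lambda)$ well-defined. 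Alternatively — and this is cleaner — steps (2)–(4) are unnecessary: I would instead observe that by Proposition \ref{prop:Ham=Reeb} the gauged map $\overline u$ satisfies the \emph{unperturbed} equation $\overline\partial^\pi\overline u = 0$, for which the classical pointwise identity $d(\overline u^*\lambda) = \tfrac12|d^\pi\overline u|^2\, dA$ holds (this is exactly Lemma \ref{lem:du*lambda} with $H = 0$), and then apply Proposition \ref{prop:EpiJH=Epi}, whose integrand-level statement is precisely $e^{g_{H,u}}|d_H^\pi u|^2 = |d^\pi\overline u|^2$, to rewrite the right-hand side. This gives \eqref{eq:du*lambda=} directly.

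I expect the main obstacle to be step (1): upgrading Lemma \ref{lem:action-identity}'s identity from the $\partial_t$-component to the full one-form, which requires checking that the conformal-exponent transformation law $\psi^*\lambda = e^{g_\psi}\lambda$ interacts correctly with the time-dependence of $\psi_H^t(\psi_H^1)^{-1}$ — i.e. that the $\partial/\partial t$-derivative of the family does not introduce an extra term when one feeds in a vector of the form $d(\psi_H^t(\psi_H^1)^{-1})^{-1}(\partial u/\partial\tau)$. This is exactly the place where the precise choice of $g_{H,u} = g_{(\phi_H^t)^{-1}}\circ u$ (as opposed to the alternative $g'_{H,u}$ mentioned in the Remark after \eqref{eq:ubar-intro}) is forced on us. If one adopts the second route via Proposition \ref{prop:Ham=Reeb} and Proposition \ref{prop:EpiJH=Epi}, the obstacle shifts to verifying that those propositions, stated for the model domain $\R\times[0,1]$, apply verbatim on a strip-like neighborhood of a puncture inside a general $\dot\Sigma$ with the \emph{induced} strip-like coordinates and with $\gamma = dt$ there — which is immediate from Condition \ref{cond:gamma} but should be stated. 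Either way the proof is short; its substance is entirely bookkeeping of the conformal factor.
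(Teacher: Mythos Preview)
Your proposal is correct, and the ``cleaner'' alternative you describe is exactly the paper's proof: the paper recalls the one-form identity $e^{g_{H,u}} u^*(\lambda + H\,dt) = \overline u^*\lambda$, applies the unperturbed identity $d(\overline u^*\lambda) = \tfrac12|d^\pi\overline u|^2\,dA$ (valid since $\overline\partial^\pi\overline u = 0$), and then invokes the integrand-level equality behind Proposition~\ref{prop:EpiJH=Epi} to rewrite $|d^\pi\overline u|^2$ as $e^{g_{H,u}}|d_H^\pi u|^2$. Your first route via steps (2)--(4) is indeed unnecessary, as you note; and your careful discussion of step~(1) --- upgrading Lemma~\ref{lem:action-identity} from the $\partial_t$-component to the full one-form --- actually supplies detail the paper omits, since the paper simply ``recalls'' this identity without further justification.
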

\begin{proof} We recall the identity
$$
 e^{g_{H, u}} u^*(\lambda + H\, dt) = \overline u^*\lambda.
$$
Since $\delbar^\pi u = 0$, we have
$$
 d(\overline u^*\lambda) = \frac12 |d^\pi \overline u|^2\, dA
= \frac 12 e^{  g_{H, u}} |d^\pi u - X_H(u)\otimes dt|_J^2 \, dA
$$
which finishes the proof.
\end{proof}

\begin{rem}\label{rem:TQ}
In particular \eqref{eq:du*lambda=} holds for any perturbed contact instanton $u$.  By Stokes' formula, we can express
$$
T_H = \frac{1}{2}\int_{[s,\infty) \times [0,1]} e^{  g_{H, u}}|d_H^\pi u|^2
+ \int_{\{0\}\times [0,1]}e^{g_{H, u}}(u^*\lambda + H\, dt)|_{\{0\}\times [0,1]}), \quad
\text{for any } s\geq 0.
$$
Moreover, since $u$ satisfies $d(u^*\lambda_H\circ j)=0$ and the Legendrian boundary condition, it follows that the integral
$$
Q_H = \int_{\{s \}\times [0,1]}e^{ g_{H, u}}(u^*\lambda + H\, dt)\circ j|_{\{0\}\times [0,1]}, \quad
\text{for any } s \geq 0
$$
does not depend on $s$ whose common value is nothing but $Q_H$.
\end{rem}
We call $T_H$ the \emph{asymptotic (perturbed contact )action}
and $Q$ the \emph{asymptotic (perturbed contact) charge} of the perturbed contact instanton
$u$ at the given puncture.

The proof of the subsequence convergence result largely follows the scheme of
\cite[Theorem 6.4]{oh-wang:CR-map1} with replacement of $d_Hu$ by $d \overline u$ and others,
especially relying on the key identity \eqref{eq:du*lambda=}. Therefore we just indicate
how we can reduce the proof of this theorem to that of \cite[Theorem 6.5]{oh:contacton-Legendrian-bdy}
referring to the proof thereof for complete details.

\begin{thm}[Subsequence Convergence]\label{thm:subsequence}
Let $u:[0, \infty)\times [0,1]\to M$ satisfy the contact instanton equations
\eqref{eq:perturbed-contacton-bdy}
and Hypothesis \eqref{hypo:basic}.

Then for any sequence $s_k\to \infty$, there exists a subsequence, still denoted by $s_k$, and a
massless instanton $u_\infty(\tau,t)$ (i.e., $E^\pi(u_\infty) = 0$)
on the cylinder $\R \times [0,1]$  that satisfies the following:
\begin{enumerate}
\item $\delbar_H^\pi u_\infty = 0$ and
$$
\lim_{k\to \infty}u(s_k + \tau, t) = u_\infty(\tau,t)
$$
in the $C^l(K \times [0,1], M)$ sense for any $l$, where $K\subset [0,\infty)$ is an arbitrary compact set.
\item $u_\infty$ has vanishing asymptotic charge $Q = 0$ and satisfies
$$
u_\infty(\tau,t)= \psi_H^t(\psi_H^1)_*\phi_{R_\lambda^{Tt}}(p)
$$
for some $p \in R_0$.
\end{enumerate}
\end{thm}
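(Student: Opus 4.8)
The plan is to reduce the statement entirely to the corresponding unperturbed result \cite[Theorem 6.5]{oh:contacton-Legendrian-bdy} by means of the gauge transformation $\overline u(\tau,t) = (\psi_H^t(\psi_H^1)^{-1})^{-1}(u(\tau,t))$. First I would observe that, by Proposition \ref{prop:Ham=Reeb}, $\overline u$ satisfies the unperturbed contact instanton equation $\delbar^\pi \overline u = 0$, $d(\overline u^*\lambda \circ j) = 0$ with respect to $J' = \{J_t'\}$, together with the Legendrian boundary condition $\overline u(\tau,0) \in \psi_H^1(R_0)$, $\overline u(\tau,1) \in R_1$. The three clauses of Hypothesis \ref{hypo:basic} for $u$ translate into the corresponding hypotheses for $\overline u$: finiteness of $E^\pi(\overline u)$ follows from Proposition \ref{prop:EpiJH=Epi} ($E^\pi_{J,H}(u) = E^\pi_{J'}(\overline u)$); the $C^0$-bounds on $du$ and $d_Hu$ give $C^0$-bounds on $d\overline u$ since the gauge transformation is smooth with uniformly bounded derivatives on the compact set containing $\Image u$; and $\Image \overline u$ stays in a compact subset of $M$. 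Hence the hypotheses of the unperturbed subsequence convergence theorem are met for $\overline u$.

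Next I would invoke \cite[Theorem 6.5]{oh:contacton-Legendrian-bdy} for $\overline u$: for any sequence $s_k \to \infty$, after passing to a subsequence, $\overline u(s_k + \tau, t) \to \overline u_\infty(\tau,t)$ in $C^l_{\mathrm{loc}}$ for all $l$, where $\overline u_\infty$ is a massless contact instanton on $\R \times [0,1]$ with $\delbar^\pi \overline u_\infty = 0$ and vanishing asymptotic charge, and — because it is a finite-energy massless instanton between the Legendrians $\psi_H^1(R_0)$ and $R_1$ with the analysis of \cite{oh:contacton-Legendrian-bdy} ruling out spiraling near the Reeb core — it has the form $\overline u_\infty(\tau,t) = \phi_{R_\lambda}^{Tt}(q)$ for a point $q \in \psi_H^1(R_0)$ and a constant $T$ (the asymptotic action $T_H$). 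Writing $q = \psi_H^1(p)$ with $p \in R_0$ and undoing the gauge transformation, I set $u_\infty(\tau,t) := \psi_H^t(\psi_H^1)^{-1}(\overline u_\infty(\tau,t)) = \psi_H^t (\psi_H^1)^{-1}\phi_{R_\lambda}^{Tt}(\psi_H^1(p))$, which is exactly the asserted form $u_\infty(\tau,t) = \psi_H^t(\psi_H^1)_*\phi_{R_\lambda}^{Tt}(p)$. The $C^l_{\mathrm{loc}}$ convergence $u(s_k+\tau,t) \to u_\infty(\tau,t)$ then follows by applying the (smooth, uniformly bounded) inverse gauge transformation to the convergence of $\overline u(s_k + \tau, t)$, and $\delbar_H^\pi u_\infty = 0$ follows from $\delbar^\pi \overline u_\infty = 0$ by Proposition \ref{prop:Ham=Reeb} applied to the limit map. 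The vanishing of the asymptotic charge $Q_H = 0$ for $u_\infty$ is precisely the vanishing of the unperturbed charge for $\overline u_\infty$, translated back via the identity $e^{g_{H,u_\infty}}u_\infty^*(\lambda + H\,dt) = \overline u_\infty^*\lambda$ of Proposition \ref{prop:du*lambda}.

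The main obstacle I anticipate is \emph{not} the convergence itself, which is essentially a formal consequence of the gauge equivalence, but rather the verification that the family $J = \{J_t\}$ appearing in Hypothesis \ref{hypo:basic} is genuinely of the form \eqref{eq:Jt} relative to some family $J' = \{J_t'\}$ — i.e. that Proposition \ref{prop:Ham=Reeb} is applicable — and, more substantively, the identification of the limit as $\phi_{R_\lambda}^{Tt}(\psi_H^1(p))$ for an honest point $p \in R_0$ rather than merely a massless instanton. This last point is where one must genuinely import the work of \cite{oh:contacton-Legendrian-bdy} on the structure of massless instantons: the Legendrian barrier argument shows a finite-energy massless instanton with Legendrian boundary on $(\psi_H^1(R_0), R_1)$ and vanishing charge must be a reparametrized Reeb chord, and here the relevant uniform $C^1$-bounds needed to run that bubbling-free argument come from clause (2) of the hypothesis transported through the gauge transformation. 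I would therefore devote the bulk of the write-up to carefully checking that the transported hypotheses are exactly those required by \cite[Theorem 6.5]{oh:contacton-Legendrian-bdy}, and then cite that theorem, rather than reproving the delicate asymptotic analysis from scratch.
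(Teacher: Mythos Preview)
Your proposal is correct and follows essentially the same route as the paper: gauge-transform $u$ to $\overline u = (\phi_H^t)^{-1}\circ u$, verify that $\overline u$ satisfies the unperturbed contact instanton equation with Legendrian boundary condition $(\psi_H^1(R_0),R_1)$ and the hypotheses of \cite[Theorem 6.5]{oh:contacton-Legendrian-bdy}, invoke that theorem to obtain $\overline u_\infty(\tau,t) = \phi_{R_\lambda}^{Tt}(\psi_H^1(p))$, and then transform back. Your discussion of the hypothesis transfer (via Proposition \ref{prop:EpiJH=Epi} and smoothness of the gauge map) is in fact slightly more explicit than the paper's own treatment, which simply asserts the reduction.
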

\begin{proof} Let $u$ be a perturbed contact instanton with Legendrian boundary condition
$(R_0,\cdots, R_k)$. Consider a boundary puncture of $\dot \Sigma$ and let
$(\tau,t) \in [0,\infty) \times [0,1]$ be a strip-like coordinate at the puncture.
We then make the coordinate change
$$
\overline u(\tau,t): = (\phi_H^t)^{-1}(u(\tau,t)), \quad \phi_H^t = \psi_H^t (\psi_H^1)^{-1}.
$$
Then $\overline u$ satisfies
$$
\begin{cases}
\delbar^\pi \overline u = 0 , \quad d(\overline u^*\lambda \circ j) = 0\\
\overline u(\tau,0) \in \psi_H^1(R_0), \quad \overline u(\tau,1) \in R_1.
\end{cases}
$$
By applying \cite[Theorem 6.5]{oh:contacton-Legendrian-bdy}, we derive that
for any sequence $s_k\to \infty$, there exists a subsequence, still denoted by $s_k$, and a
massless instanton $\overline u_\infty(\tau,t)$ (i.e., $E^\pi(\overline u_\infty) = 0$)
on the cylinder $\R \times [0,1]$  that satisfies the following:
\begin{enumerate}[(a)]
\item $\delbar^\pi \overline u_\infty = 0$ and
$$
\lim_{k\to \infty}\overline u(s_k + \tau, t) = \overline u_\infty(\tau,t)
$$
in the $C^l(K \times [0,1], M)$ sense for any $l$, where $K\subset [0,\infty)$ is an arbitrary compact set.
\item $\overline u_\infty$ has vanishing asymptotic charge $Q_H = 0$ and satisfies
$$
\overline u_\infty(\tau,t)= \overline \gamma(t)
$$
 for some \emph{iso-speed Reeb chord} $(\gamma,T)$ satisfying
 $$
\dot{\overline \gamma}(t) = T R_\lambda (\overline \gamma(t))
$$
 and joining $\psi_H^1(R_0)$ and $R_1$ with period $T = \int \gamma^*\lambda$
 at each puncture.
\end{enumerate}

Applying back the gauge transformation to $\overline u$, the map
$$
u(\tau,t) = \psi_H^t(\psi_H^1)^{-1} (\overline u(\tau,t))
$$
is a solution to \eqref{eq:contacton-Legendrian-bdy}.
In particular,  the curve
$$
\gamma(t) : =   \psi_H^t(\psi_H^1)^{-1} (\overline \gamma(t) )
$$
satisfies  the boundary condition
$$
\gamma(0) \in R_0, \quad \gamma(1) \in R_1,
$$
and $\overline \gamma$ is a Reeb chord and so has the form
$$
\overline \gamma(t): = \phi_{R_\lambda}^{Tt}(q)
$$
for some $q \in \psi_H^1(R_0), \, \phi_{R_\lambda}^{T}(q)\in R_1$. Writing $q = \psi_H^1(p)$,
we have $p \in R_0$, we obtain the expression
$$
\gamma(t) =  \psi_H^t(\psi_H^1)^{-1} (\phi_{R_\lambda}^{Tt} (\psi_H^1(p)) = \psi_H^t (\psi_H^1)_*\phi_{R_\lambda}^{Tt}(p)
$$
which finishes the proof.
\end{proof}

\begin{rem} If we take another gauge transformation
$$
\Phi': \Omega(R_0,R_1) \to \Omega(R_0, \psi_H^1(R_1))
$$
defined by
$$
\Phi'(\ell)(t): = \psi_H^t(\ell(t)),
$$
the above asymptotic limit curve $\gamma'$ will have the form
$$
\gamma'(t) = \psi_H^t \phi_{R_\lambda}^{T't}(p'), \, p' \in R_0, \quad \phi_{R_\lambda}^{T'}(p') \in \psi_H^1(R_1).
$$
\end{rem}

\appendix

\section{Review of the contact triad connection}
\label{sec:connection}

Assume $(M, \lambda, J)$ is a contact triad for the contact manifold $(M, \xi)$, and equip with it the contact triad metric
$g=g_\xi+\lambda\otimes\lambda$.
In \cite{oh-wang:connection}, the authors introduced the \emph{contact triad connection} associated to
every contact triad $(M, \lambda, J)$ with the contact triad metric and proved its existence and uniqueness.

\begin{thm}[Contact Triad Connection \cite{oh-wang:connection}]\label{thm:connection}
For every contact triad $(M,\lambda,J)$, there exists a unique affine connection $\nabla$, called the contact triad connection,
 satisfying the following properties:
\begin{enumerate}
\item The connection $\nabla$ is  metric with respect to the contact triad metric, i.e., $\nabla g=0$;
\item The torsion tensor $T$ of $\nabla$ satisfies $T(R_\lambda, \cdot)=0$;
\item The covariant derivatives satisfy $\nabla_{R_\lambda} R_\lambda = 0$, and $\nabla_Y R_\lambda\in \xi$ for any $Y\in \xi$;
\item The projection $\nabla^\pi := \pi \nabla|_\xi$ defines a Hermitian connection of the vector bundle
$\xi \to M$ with Hermitian structure $(d\lambda|_\xi, J)$;
\item The $\xi$-projection of the torsion $T$, denoted by $T^\pi: = \pi T$ satisfies the following property:
\be\label{eq:TJYYxi}
T^\pi(JY,Y) = 0
\ee
for all $Y$ tangent to $\xi$;
\item For $Y\in \xi$, we have the following
\be\label{eq:dnablaYR}
\del^\nabla_Y R_\lambda:= \frac12(\nabla_Y R_\lambda- J\nabla_{JY} R_\lambda)=0.
\ee
\end{enumerate}
\end{thm}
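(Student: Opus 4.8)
The plan is to prove uniqueness and existence separately, in both cases organizing everything around the $g$-orthogonal splitting $TM = \xi \oplus \R R_\lambda$ coming from the triad metric $g = g_\xi + \lambda \otimes \lambda$. The first point is that the data of such a $\nabla$ is far more rigid than it looks. Property (2), $T(R_\lambda,\cdot)=0$, forces $\nabla_{R_\lambda}Y = \nabla_Y R_\lambda + [R_\lambda, Y]$ for all $Y$, and since $[R_\lambda,Y] \in \xi$ whenever $Y \in \xi$, the operator $\nabla_{R_\lambda}(\cdot)|_\xi$ is not independent data. Metric compatibility forces $\lambda(\nabla_X Y) = X(\lambda(Y)) - g(\nabla_X R_\lambda, Y)$, so the $\R R_\lambda$-component of $\nabla_X Y$ is determined by $\nabla R_\lambda$. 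Most importantly, the endomorphism $Y \mapsto \nabla_Y R_\lambda$ of $\xi$ is itself forced: evaluating the condition ``$\nabla^\pi$ preserves $J$'' (part of (4)) in the $R_\lambda$-direction and using (2) gives $\nabla_{JY}R_\lambda = J\nabla_Y R_\lambda - (\CL_{R_\lambda}J)Y$, while (6) gives $\nabla_{JY}R_\lambda = -J\nabla_Y R_\lambda$; adding these,
\[
\nabla_Y R_\lambda = \tfrac12 (\CL_{R_\lambda}J)\,JY, \qquad Y \in \xi .
\]
Hence the only genuine freedom left in $\nabla$ is the choice of the Hermitian connection $\nabla^\pi$ on $\xi$.

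I would then reduce the choice of $\nabla^\pi$ to a Chern-connection problem. For $Y, W \in \Gamma(\xi)$ the $\xi$-part of the torsion, $T^\pi(Y,W) = \nabla^\pi_Y W - \nabla^\pi_W Y - \Pi[Y,W]$, depends only on $\nabla^\pi$, and a short computation shows that property (5), $T^\pi(JY,Y)=0$, is equivalent to the vanishing of the $(1,1)$-part of $T^\pi$. Since $\nabla^\pi$ is also required to be Hermitian for $(d\lambda|_\xi, J)$, this is exactly the characterization of the ``Chern connection'' of the Hermitian bundle $(\xi, d\lambda|_\xi, J)$, where the role usually played by the torsion of a flat coordinate connection is here played by $\Pi[\cdot,\cdot]$. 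The standard argument --- start from any Hermitian connection, compute its torsion, and add the unique $\mathfrak{u}(\xi)$-valued one-form correction cancelling the $(1,1)$-part --- simultaneously proves that such a $\nabla^\pi$ exists and is unique. Combined with the rigidity above, this settles uniqueness of $\nabla$: if $\nabla, \nabla'$ both satisfy (1)--(6), their difference $A$ has $A_Y R_\lambda = 0$ (both give the displayed formula), $\lambda(A_X Y) = -g(A_X R_\lambda, Y) = 0$, and $\Pi A|_\xi = 0$ (Chern uniqueness), hence $A = 0$.

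For existence I would take $\nabla^\pi$ as just constructed, set $\nabla_{R_\lambda}R_\lambda := 0$ and $\nabla_Y R_\lambda := \tfrac12(\CL_{R_\lambda}J)JY$ for $Y \in \xi$, define $\nabla_{R_\lambda}Y := \nabla_Y R_\lambda + [R_\lambda,Y]$, fix the $\R R_\lambda$-components of the remaining covariant derivatives by the metric formula above, and extend bilinearly. Then (2), (3) (the clause $\nabla_Y R_\lambda \in \xi$ being immediate from $g(\nabla_Y R_\lambda, R_\lambda) = \tfrac12 Y(g(R_\lambda,R_\lambda)) = 0$), (4), (5), (6) all hold by construction --- in particular $\Pi\nabla|_\xi = \nabla^\pi$ by the way the $\R R_\lambda$-component was chosen. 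The one substantial check left is $\nabla g = 0$: the truly mixed blocks and the $R_\lambda$--$R_\lambda$ block hold by the defining formula for the $\R R_\lambda$-component, the $\xi$--$\xi$ block in $\xi$-directions reduces to $\nabla^\pi$ being $g_\xi$-metric (automatic, since it is Hermitian and $g_\xi = d\lambda(\cdot, J\cdot)$), and the $\xi$--$\xi$ block in the $R_\lambda$-direction reduces, after using $T(R_\lambda,\cdot)=0$, to the tensorial identity $(\CL_{R_\lambda}g_\xi)(Y,Z) = \tfrac12 g_\xi((\CL_{R_\lambda}J)JY,Z) + \tfrac12 g_\xi(Y,(\CL_{R_\lambda}J)JZ)$ for $Y,Z \in \xi$, which one verifies by showing both sides equal $d\lambda(Y,(\CL_{R_\lambda}J)Z)$, using $\CL_{R_\lambda}d\lambda = d\,\iota_{R_\lambda}d\lambda = 0$, the $J$-invariance of $d\lambda$, and the anti-commutation $(\CL_{R_\lambda}J)J = -J(\CL_{R_\lambda}J)$.

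I expect the main obstacle to be precisely the Chern-connection step together with this closure check: showing that the six conditions are mutually consistent rather than over-determined --- that after $\nabla R_\lambda$ is pinned down from (4) and (6) and the $\R R_\lambda$-component is pinned down from metric compatibility, the residual $\nabla^\pi$-problem still has a (unique) Hermitian solution with vanishing $(1,1)$-torsion, and that feeding this solution back in does not spoil $\Pi\nabla|_\xi = \nabla^\pi$ or any of the torsion identities. The bookkeeping here is exactly what forces the coefficient $\tfrac12$ and the appearance of $\CL_{R_\lambda}J$; once it is verified, the uniqueness argument guarantees that the object produced is the contact triad connection.
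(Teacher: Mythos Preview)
The paper does not actually prove this theorem: it is stated in Appendix~\ref{sec:connection} as a review result cited from \cite{oh-wang:connection}, with no proof given here. So there is no ``paper's own proof'' to compare against.

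That said, your outline is sound and is essentially the approach of the original source. Your key derivation $\nabla_Y R_\lambda = \tfrac12(\CL_{R_\lambda}J)JY$ is exactly what the paper records as Corollary~\ref{cor:connection}(1), and your reduction of the remaining freedom to a Chern-type uniqueness problem for the Hermitian connection $\nabla^\pi$ on $(\xi, d\lambda|_\xi, J)$ with vanishing $(1,1)$-torsion is the correct structural observation. The existence construction you sketch (fix $\nabla R_\lambda$ by the displayed formula, fix the $\R R_\lambda$-components by metric compatibility, and glue in the canonical $\nabla^\pi$) is the natural one, and the consistency check you flag --- that the $\xi$--$\xi$ block of $\nabla g = 0$ in the $R_\lambda$-direction reduces to an identity involving $\CL_{R_\lambda}J$ and $\CL_{R_\lambda}d\lambda = 0$ --- is indeed the one nontrivial verification. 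Your proposal is correct; it simply cannot be compared to a proof that the present paper does not contain.
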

From this theorem, we see that the contact triad connection $\nabla$ canonically induces
a Hermitian connection $\nabla^\pi$ for the Hermitian vector bundle $(\xi, J, g_\xi)$,
and we call it the \emph{contact Hermitian connection}.

Moreover, the following fundamental properties of the contact triad connection was
proved in \cite{oh-wang:connection}, which will be useful to perform tensorial calculations later.

\begin{cor}\label{cor:connection}
Let $\nabla$ be the contact triad connection. Then
\begin{enumerate}
\item For any vector field $Y$ on $M$,
\be\label{eq:nablaYX}
\nabla_Y R_\lambda = \frac{1}{2}(\CL_{R_\lambda}J)JY;
\ee
\item $\lambda(T|_\xi)=d\lambda$.
\end{enumerate}
\end{cor}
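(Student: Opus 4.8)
The statement is from \cite{oh-wang:connection}, where it can be read off the explicit construction of $\nabla$; I will instead outline how to obtain both identities directly from the six defining properties collected in Theorem \ref{thm:connection}.

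For (1), the plan is first to reduce to $Y \in \xi$: both sides of \eqref{eq:nablaYX} are tensorial in $Y$, a vector field splits as $Y = \lambda(Y)R_\lambda + \Pi Y$, and on $Y = R_\lambda$ both sides vanish (since $JR_\lambda = 0$ and $\nabla_{R_\lambda}R_\lambda = 0$ by property (3)). For a local section $Y$ of $\xi$ I would then assemble three elementary facts. First, Cartan's formula with $\iota_{R_\lambda}\lambda = 1$ and $\iota_{R_\lambda}d\lambda = 0$ gives $\CL_{R_\lambda}\lambda = 0$, hence $\CL_{R_\lambda}Y = [R_\lambda, Y] \in \xi$ and likewise $\CL_{R_\lambda}(JY) \in \xi$. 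Second, the torsion property $T(R_\lambda,\cdot) = 0$ (property (2)) rewrites $\nabla_Y R_\lambda$ and $\nabla_{JY}R_\lambda$ as $\nabla_{R_\lambda}Y - \CL_{R_\lambda}Y$ and $\nabla_{R_\lambda}(JY) - \CL_{R_\lambda}(JY)$; the metric property shows $\nabla_{R_\lambda}$ preserves $\xi$, whence the Hermitian property (4) makes it $J$-linear there. Third --- the only step with any content --- I would feed these into property (6), i.e. $\nabla_Y R_\lambda = J\nabla_{JY}R_\lambda$, together with its image under $Y \mapsto JY$, and solve the resulting linear relation \emph{for $\nabla_{R_\lambda}Y$}, getting $\nabla_{R_\lambda}Y = \tfrac12\bigl(\CL_{R_\lambda}Y - J\CL_{R_\lambda}(JY)\bigr)$. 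Substituting back into $\nabla_Y R_\lambda = \nabla_{R_\lambda}Y - \CL_{R_\lambda}Y$ yields $-\tfrac12\bigl(\CL_{R_\lambda}Y + J\CL_{R_\lambda}(JY)\bigr)$, which is exactly $\tfrac12(\CL_{R_\lambda}J)(JY)$ by the definition of $\CL_{R_\lambda}J$ evaluated at $JY$ together with $J^2 = -\mathrm{id}$ on $\xi$.

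For (2), I would begin with $d\lambda(Y,Z) = -\lambda([Y,Z])$ for sections $Y, Z$ of $\xi$ (Cartan again), substitute $[Y,Z] = \nabla_Y Z - \nabla_Z Y - T(Y,Z)$, and use the metric property together with part (1) to write $\lambda(\nabla_Y Z) = -\langle Z,\nabla_Y R_\lambda\rangle = -\tfrac12\langle Z, (\CL_{R_\lambda}J)JY\rangle$, and symmetrically for $\lambda(\nabla_Z Y)$. It then remains to verify that $(\CL_{R_\lambda}J)J$ is self-adjoint on $(\xi, g|_\xi)$, so that the two $\nabla$-terms cancel and only $\lambda(T(Y,Z)) = d\lambda(Y,Z)$ survives. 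This symmetry follows from $\CL_{R_\lambda}d\lambda = 0$: writing $g|_\xi = d\lambda(\cdot, J\cdot)$, the symmetric tensor $\CL_{R_\lambda}(g|_\xi)$ equals $d\lambda(\cdot, (\CL_{R_\lambda}J)\cdot)$, which forces $J(\CL_{R_\lambda}J)$ --- and hence $(\CL_{R_\lambda}J)J = -J(\CL_{R_\lambda}J)$, the sign coming from $\CL_{R_\lambda}(J^2) = 0$ --- to be $g|_\xi$-symmetric.

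Once set up this way the computation is short, so I do not anticipate a real obstacle. The two points that need care are: (a) recognizing that property (6) should be solved for $\nabla_{R_\lambda}Y$ rather than directly for $\nabla_Y R_\lambda$; and (b) invoking the Hermitian property (4) only after checking that the object in question ($\nabla_{R_\lambda}Y$, $\CL_{R_\lambda}Y$) lies in $\xi$, while keeping $J$ extended by $JR_\lambda = 0$ so that the $Y = R_\lambda$ case is consistent.
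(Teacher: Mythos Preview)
Your argument is correct. The paper itself does not prove this corollary: it is stated in Appendix~\ref{sec:connection} as a review result, with the proof deferred to the cited reference \cite{oh-wang:connection}. Your derivation from the six axioms of Theorem~\ref{thm:connection} is therefore more self-contained than what the present paper offers, and each step checks out --- in particular the key manipulation for (1), solving property~(6) together with $T(R_\lambda,\cdot)=0$ and the $J$-linearity of $\nabla_{R_\lambda}$ on $\xi$ for $\nabla_{R_\lambda}Y$, and for (2) the self-adjointness of $(\CL_{R_\lambda}J)J$ via $\CL_{R_\lambda}d\lambda=0$ and the anticommutation $(\CL_{R_\lambda}J)J=-J(\CL_{R_\lambda}J)$.
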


We refer readers to \cite{oh-wang:connection} for more discussion on the contact triad connection and its relation with other related canonical type connections.

\section{Covariant calculus of vector valued forms}
\label{sec:vectorvalued-forms}

In this appendix, we recall the standard covariant calculus of vector-valued
differential forms and the Weitzenb\"ock formulae presented
as in \cite{wells-book}, \cite{freed-uhlenbeck} applied to our current circumstance.
More specifically, the proofs of all lemmata stated in this appendix are included in
\cite[Appendix A \& B]{oh-wang:CR-map1} to which we refer readers for a quick
reference.

Assume $(P, h)$ is a Riemannian manifold of dimension $n$ with metric $h$, and $D$
is the Levi-Civita connection. In our case, $(P,h) = (\dot \Sigma, h)$.
Let $E\to P$ be any vector bundle with inner product $\langle\cdot, \cdot\rangle$,
and assume $\nabla$ is a connection on $E$ which is compatible with $\langle\cdot, \cdot\rangle$.

For any $E$-valued form $s$, calculating the (Hodge) Laplacian of the energy density
of $s$,  we get
\beastar
-\frac{1}{2}\Delta|s|^2=|\nabla s|^2+\langle Tr\nabla^2 s, s\rangle,
\eeastar
where for $|\nabla s|$ we mean the induced norm in the vector bundle $T^*P\otimes E$, i.e.,
$|\nabla s|^2=\sum_i|\nabla_{E_i}s|^2$ with $\{E_i\}$ an orthonormal frame of $TP$.
$Tr\nabla^2$ denotes the connection Laplacian, which is defined as
$Tr\nabla^2=\sum_i\nabla^2_{E_i, E_i}s$,
where $\nabla^2_{X, Y}:=\nabla_X\nabla_Y-\nabla_{\nabla_XY}$.

Denote by $\Omega^k(E)$ the space of $E$-valued $k$-forms on $P$. The connection $\nabla$
induces an exterior derivative by
\beastar
d^\nabla&:& \Omega^k(E)\to \Omega^{k+1}(E)\\
d^\nabla(\alpha\otimes \zeta)&=&d\alpha\otimes \zeta+(-1)^k\alpha\wedge \nabla\zeta.
\eeastar

It is not hard to check that for any $1$-forms, equivalently one can write
$$
d^\nabla\beta (v_1, v_2)=(\nabla_{v_1}\beta)(v_2)-(\nabla_{v_2}\beta)(v_1),
$$
where $v_1, v_2\in TP$.

We extend the Hodge star operator to $E$-valued forms by
\beastar
*&:&\Omega^k(E)\to \Omega^{n-k}(E)\\
*\beta&=&*(\alpha\otimes\zeta)=(*\alpha)\otimes\zeta
\eeastar
for $\beta=\alpha\otimes\zeta\in \Omega^k(E)$.

Define the Hodge Laplacian of the connection $\nabla$ by
$$
\Delta^{\nabla}:=d^{\nabla}\delta^{\nabla}+\delta^{\nabla}d^{\nabla},
$$
where $\delta^{\nabla}$ is defined by
$$
\delta^{\nabla}:=(-1)^{nk+n+1}*d^{\nabla}*.
$$
The following lemma is important for the derivation of the Weitzenb\"ock formula.
\begin{lem}\label{lem:d-delta}Assume $\{e_i\}$ is an orthonormal frame of $P$, and $\{\alpha^i\}$ is the dual frame.
Then we have
\beastar
d^{\nabla}&=&\sum_i\alpha^i\wedge \nabla_{e_i}\\
\delta^{\nabla}&=&-\sum_ie_i\rfloor \nabla_{e_i}.
\eeastar
\end{lem}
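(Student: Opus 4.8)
The plan is to prove the two identities separately; in each case I would reduce the statement to its pointwise, purely scalar-form counterpart together with the Leibniz rule for $\nabla$, since both sides of each identity are local $\R$-linear operators. For the formula $d^{\nabla}=\sum_i\alpha^i\wedge\nabla_{e_i}$, by linearity it suffices to verify it on decomposable forms $\beta=\alpha\otimes\zeta$ with $\alpha\in\Omega^k(P)$ and $\zeta\in\Gamma(E)$; one also notes that $\sum_i\alpha^i\wedge\nabla_{e_i}(\cdot)$ is the canonical contraction of the $T^*P$-slot of $\nabla(\cdot)$ against the form slot, hence independent of the choice of orthonormal frame. Applying the Leibniz rule $\nabla_{e_i}(\alpha\otimes\zeta)=(D_{e_i}\alpha)\otimes\zeta+\alpha\otimes\nabla_{e_i}\zeta$, with $D$ the Levi--Civita connection of $(P,h)$, then wedging with $\alpha^i$ and summing over $i$, the first group of terms gives $\bigl(\sum_i\alpha^i\wedge D_{e_i}\alpha\bigr)\otimes\zeta=d\alpha\otimes\zeta$, where I use the classical fact that, $D$ being \emph{torsion-free}, the exterior derivative of an ordinary form is recovered as $d\alpha=\sum_i\alpha^i\wedge D_{e_i}\alpha$. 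The second group, after commuting the $1$-form $\alpha^i$ past the $k$-form $\alpha$ at the cost of $(-1)^k$, equals $(-1)^k\,\alpha\wedge\bigl(\sum_i\alpha^i\otimes\nabla_{e_i}\zeta\bigr)=(-1)^k\,\alpha\wedge\nabla\zeta$; adding the two reproduces exactly the defining formula $d^{\nabla}(\alpha\otimes\zeta)=d\alpha\otimes\zeta+(-1)^k\alpha\wedge\nabla\zeta$.

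For the formula $\delta^{\nabla}=-\sum_i e_i\rfloor\nabla_{e_i}$, I would start from the definition $\delta^{\nabla}=(-1)^{nk+n+1}*d^{\nabla}*$ on $\Omega^k(E)$ and substitute the frame expression for $d^{\nabla}$ just obtained, applied now to the $E$-valued $(n-k)$-form $*\omega$, obtaining $\delta^{\nabla}\omega=(-1)^{nk+n+1}\sum_i *\bigl(\alpha^i\wedge\nabla_{e_i}(*\omega)\bigr)$. Next I would use that the induced connection on $T^*P\otimes E$ commutes with the Hodge star, which holds because the Levi--Civita connection annihilates the Riemannian volume form and $\nabla$ on $E$ is metric; hence $\nabla_{e_i}(*\omega)=*(\nabla_{e_i}\omega)$. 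Finally I would invoke the pointwise identity $*(\theta\wedge *\eta)=(-1)^{n(p+1)}\,\theta^{\sharp}\rfloor\eta$ valid for a $1$-form $\theta$ and a $p$-form $\eta$ — itself a consequence of $**=(-1)^{p(n-p)}$ on $p$-forms together with the adjointness of $\theta\wedge(\cdot)$ and $\theta^{\sharp}\rfloor(\cdot)$ — applied with $\theta=\alpha^i$, $\eta=\nabla_{e_i}\omega$, $p=k$. The accumulated sign is $(-1)^{nk+n+1}(-1)^{n(k+1)}=(-1)^{2(nk+n)+1}=-1$, independently of $n$ and $k$, which yields $\delta^{\nabla}\omega=-\sum_i e_i\rfloor\nabla_{e_i}\omega$.

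The step I expect to be the main obstacle is the reconciliation of sign conventions in the second identity: one must be certain that the exponent $n(p+1)$ in the star identity, the exponent in $**$, and the exponent $nk+n+1$ in the definition of $\delta^{\nabla}$ are all consistent with the conventions used elsewhere in the paper, and then verify that these factors telescope to $-1$ as above. A convenient consistency check is the specialization to a Riemann surface, $n=2$, where $(-1)^{nk+n+1}=-1$ and $*(\alpha^i\wedge *\eta)=e_i\rfloor\eta$ for $\eta$ of any degree, so $\delta^{\nabla}=-*d^{\nabla}*=-\sum_i e_i\rfloor\nabla_{e_i}$ — precisely the form used in the derivations of Lemmas \ref{lem:2delta} and \ref{lem:laplaceproduct}. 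Alternatively, one may bypass the Hodge-star manipulation by checking directly, via a single integration by parts on a relatively compact piece of $P$ and using that $D$ and $\nabla$ are metric, that $-\sum_i e_i\rfloor\nabla_{e_i}$ is the formal $L^2$-adjoint of $d^{\nabla}$, and then appealing to uniqueness of the formal adjoint and to the standard fact that $\delta^{\nabla}$ is that adjoint.
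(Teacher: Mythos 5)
Your proof is correct, and the sign bookkeeping in the second identity ($(-1)^{nk+n+1}(-1)^{n(k+1)}=-1$) checks out against the paper's convention $\delta^{\nabla}=(-1)^{nk+n+1}*d^{\nabla}*$. The paper itself does not prove this lemma but defers to \cite[Appendix A \& B]{oh-wang:CR-map1}, where the argument is the same standard frame computation you give: Leibniz rule on decomposables plus torsion-freeness of $D$ for $d^\nabla$, and $*$-commutation with the induced connection plus the contraction identity for $\delta^\nabla$.
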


Next  we recall the conventional notation of
the wedge product of $E$-valued forms over the Euclidean vector bundle $E$
which is given by
\beastar
\wedge&:&\Omega^{k_1}(E)\times \Omega^{k_2}(E)\to \Omega^{k_1+k_2}(E)\\
\beta_1\wedge\beta_2&=&\langle \zeta_1, \zeta_2\rangle\,\alpha_1\wedge\alpha_2,
\eeastar
where $\beta_1=\alpha_1\otimes\zeta_1\in \Omega^{k_1}(E)$ and $\beta_2=\alpha_2\otimes\zeta_2\in \Omega^{k_2}(E)$
are $E$-valued forms. Then we also have the following list of lemmata
\begin{lem}\label{lem:inner-star}
For $\beta_1, \beta_2\in \Omega^k(E)$,
$$
\langle \beta_1, \beta_2\rangle=*(\beta_1\wedge *\beta_2).
$$
\end{lem}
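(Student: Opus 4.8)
The plan is to reduce the identity to the classical Hodge-star formula for scalar-valued forms, exploiting the bilinearity of $\wedge$, the $C^\infty(P)$-linearity of $*$, and the multiplicativity of $\langle\cdot,\cdot\rangle$ over the decomposition $\Omega^k(E)=\Omega^k(P)\otimes_{C^\infty(P)}\Gamma(E)$. First I would recall the scalar case: for ordinary $k$-forms $\alpha_1,\alpha_2\in\Omega^k(P)$ on the oriented Riemannian manifold $(P,h)$ one has $\alpha_1\wedge *\alpha_2=h(\alpha_1,\alpha_2)\,\vol_h$, hence $*(\alpha_1\wedge *\alpha_2)=h(\alpha_1,\alpha_2)$, using $*\vol_h=1$; this is standard (see \cite{wells-book}) and I would simply cite it.

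Next I would verify the claim on decomposable forms $\beta_i=\alpha_i\otimes\zeta_i$ with $\alpha_i\in\Omega^k(P)$ and $\zeta_i\in\Gamma(E)$, which suffices since every element of $\Omega^k(E)$ is a finite sum of such terms and all three operations in sight are additive. By the definition of the extended Hodge star, $*\beta_2=(*\alpha_2)\otimes\zeta_2$, and by the definition of the wedge product of $E$-valued forms recalled in this appendix,
$$\beta_1\wedge *\beta_2=\langle\zeta_1,\zeta_2\rangle\,\alpha_1\wedge *\alpha_2,$$
which is an ordinary $n$-form with the function coefficient $\langle\zeta_1,\zeta_2\rangle$. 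Since $*$ on ordinary forms is $C^\infty(P)$-linear, applying it and invoking the scalar identity yields
$$*(\beta_1\wedge *\beta_2)=\langle\zeta_1,\zeta_2\rangle\,*(\alpha_1\wedge *\alpha_2)=h(\alpha_1,\alpha_2)\,\langle\zeta_1,\zeta_2\rangle,$$
and the right-hand side is precisely $\langle\beta_1,\beta_2\rangle$ by the definition of the induced inner product on $\Omega^k(E)$. Extending both sides by bilinearity over the decompositions of general $\beta_1,\beta_2\in\Omega^k(E)$ completes the argument.

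I do not expect a genuine obstacle here: the entire content is the scalar Hodge-star identity plus elementary bookkeeping. The one point deserving a word of care is the (standard, and slightly abusive) convention that $\wedge:\Omega^{k_1}(E)\times\Omega^{k_2}(E)\to\Omega^{k_1+k_2}(P)$ produces an \emph{ordinary} scalar-valued form, the two $E$-factors being paired off by $\langle\cdot,\cdot\rangle$; once this is made explicit, both sides of the asserted equality are functions on $P$ and the computation above applies verbatim.
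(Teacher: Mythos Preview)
Your argument is correct and is the standard derivation: reduce to decomposable tensors, invoke the scalar identity $\alpha_1\wedge *\alpha_2=h(\alpha_1,\alpha_2)\,\vol_h$, and extend by bilinearity. The paper itself does not supply a proof of this lemma---it merely states the result and refers the reader to \cite[Appendix A \& B]{oh-wang:CR-map1}---so your write-up in fact provides more detail than the present paper does, and by exactly the approach one would expect that reference to contain.
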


\begin{lem}
$$
d(\beta_1\wedge\beta_2)=d^\nabla\beta_1\wedge \beta_2+(-1)^{k_1}\beta_1\wedge d^\nabla\beta_2,
$$
where $\beta_1\in \Omega^{k_1}(E)$ and $\beta_2\in \Omega^{k_2}(E)$
are $E$-valued forms.
\end{lem}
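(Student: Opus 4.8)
The plan is to reduce to decomposable $E$-valued forms and then match the two sides term by term, using only the ordinary graded Leibniz rule for $d$, the metric compatibility of $\nabla$, and the definitions of $d^\nabla$ and of the wedge pairing $\Omega^{k_1}(E)\times\Omega^{k_2}(E)\to\Omega^{k_1+k_2}(P)$. First I would observe that both sides are bilinear over $C^\infty(P)$ in $(\beta_1,\beta_2)$ and local, so it suffices to treat $\beta_1=\alpha_1\otimes\zeta_1$, $\beta_2=\alpha_2\otimes\zeta_2$ with $\alpha_i\in\Omega^{k_i}(P)$ and $\zeta_i$ local sections of $E$. For such forms the pairing is the \emph{scalar}-valued form $\beta_1\wedge\beta_2=\langle\zeta_1,\zeta_2\rangle\,\alpha_1\wedge\alpha_2$, so applying the ordinary exterior derivative and the graded Leibniz rule gives
$$
d(\beta_1\wedge\beta_2)=d\langle\zeta_1,\zeta_2\rangle\wedge\alpha_1\wedge\alpha_2+\langle\zeta_1,\zeta_2\rangle\bigl(d\alpha_1\wedge\alpha_2+(-1)^{k_1}\alpha_1\wedge d\alpha_2\bigr),
$$
and metric compatibility $\nabla\langle\cdot,\cdot\rangle=0$, i.e. $d\langle\zeta_1,\zeta_2\rangle=\langle\nabla\zeta_1,\zeta_2\rangle+\langle\zeta_1,\nabla\zeta_2\rangle$ as one-forms, rewrites the first summand as $(\langle\nabla\zeta_1,\zeta_2\rangle+\langle\zeta_1,\nabla\zeta_2\rangle)\wedge\alpha_1\wedge\alpha_2$.

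Next I would expand the right-hand side using $d^\nabla(\alpha\otimes\zeta)=d\alpha\otimes\zeta+(-1)^{\deg\alpha}\alpha\wedge\nabla\zeta$ and unravelling the $E$-valued wedge pairing on each piece (writing $\nabla\zeta_i$ in a local frame of $E$). After commuting the scalar one-forms $\langle\nabla\zeta_i,\zeta_j\rangle$ to the front one finds
$$
d^\nabla\beta_1\wedge\beta_2=\langle\zeta_1,\zeta_2\rangle\,d\alpha_1\wedge\alpha_2+\langle\nabla\zeta_1,\zeta_2\rangle\wedge\alpha_1\wedge\alpha_2,
$$
$$
(-1)^{k_1}\beta_1\wedge d^\nabla\beta_2=(-1)^{k_1}\langle\zeta_1,\zeta_2\rangle\,\alpha_1\wedge d\alpha_2+\langle\zeta_1,\nabla\zeta_2\rangle\wedge\alpha_1\wedge\alpha_2,
$$
where in the second display the factor $(-1)^{k_2}$ from the definition of $d^\nabla\beta_2$ and the factor $(-1)^{k_1+k_2}$ picked up when $\langle\zeta_1,\nabla\zeta_2\rangle$ is moved past $\alpha_1\wedge\alpha_2$ combine with $(-1)^{k_1}$ to give $+1$ on the connection term. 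Adding these two identities reproduces exactly the expression computed above for $d(\beta_1\wedge\beta_2)$, which finishes the verification.

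The only genuinely delicate point is the sign bookkeeping: the $(-1)^{k}$ built into $d^\nabla$ on the $\alpha\wedge\nabla\zeta$ summand must be reconciled with the signs arising when the scalar one-forms $\langle\nabla\zeta_i,\zeta_j\rangle$ (produced by the $E$-inner product) are commuted back past $\alpha_1$ (degree $k_1$) and past $\alpha_1\wedge\alpha_2$ (degree $k_1+k_2$); using $(-1)^{2k_1}=1$ one checks that all connection terms end up with coefficient $+1$ and all ordinary-derivative terms line up with the graded Leibniz rule. No input beyond $\nabla g_E=0$ and the two definitions is needed, and the argument extends verbatim from decomposable to general $E$-valued forms by bilinearity.
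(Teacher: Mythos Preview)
Your proof is correct and is precisely the standard verification of this Leibniz rule: reduce to decomposables, apply $d$ and metric compatibility on the scalar side, unfold $d^\nabla$ on the $E$-valued side, and check signs. The paper itself does not supply a proof of this lemma; it only states it in the appendix and refers the reader to \cite[Appendix A \& B]{oh-wang:CR-map1} for the details, so there is nothing substantive to compare against beyond noting that your argument is the one any such reference would give.
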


\begin{lem}\label{lem:metric-property}
Assume $\beta_0\in \Omega^k(E)$ and $\beta_1\in \Omega^{k+1}(E)$, then we have
$$
\langle d^\nabla \beta_0, \beta_1\rangle-(-1)^{n(k+1)}\langle \beta_0, \delta^\nabla\beta_1\rangle
=*d(\beta_0\wedge *\beta_1).
$$
\end{lem}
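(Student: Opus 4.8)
\textbf{Plan for the proof of Lemma \ref{lem:metric-property}.}
The plan is to reduce the identity to a pointwise computation and then invoke Stokes-type bookkeeping via Lemma \ref{lem:inner-star}. First I would recall the two basic facts I intend to use: the Leibniz rule for $d$ acting on the wedge product of $E$-valued forms (the lemma immediately preceding this one), namely $d(\beta_0 \wedge * \beta_1) = d^\nabla \beta_0 \wedge * \beta_1 + (-1)^k \beta_0 \wedge d^\nabla(*\beta_1)$, since $\beta_0 \in \Omega^k(E)$; and the characterization of $\delta^\nabla$ as $\delta^\nabla = (-1)^{nk+n+1} * d^\nabla *$ when acting on $(k+1)$-forms. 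The strategy is to expand the left-hand side $d(\beta_0 \wedge * \beta_1)$ using the Leibniz rule, and then rewrite each of the two resulting terms using Lemma \ref{lem:inner-star}, which says $\langle \gamma_1, \gamma_2 \rangle = *(\gamma_1 \wedge * \gamma_2)$ for forms of equal degree.

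The first step is to handle the term $d^\nabla \beta_0 \wedge * \beta_1$. Here $d^\nabla \beta_0 \in \Omega^{k+1}(E)$ and $\beta_1 \in \Omega^{k+1}(E)$, so $* \beta_1 \in \Omega^{n-k-1}(E)$ and the wedge lands in $\Omega^n(E)$-valued top forms, i.e. (after pairing the $E$-factors) an ordinary $n$-form. Applying $*$ and using Lemma \ref{lem:inner-star} with $\gamma_1 = d^\nabla \beta_0$, $\gamma_2 = \beta_1$ gives $*(d^\nabla \beta_0 \wedge * \beta_1) = \langle d^\nabla \beta_0, \beta_1 \rangle$, which is exactly the first term on the right-hand side of the claimed identity. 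The second step is to treat $(-1)^k \beta_0 \wedge d^\nabla(* \beta_1)$. The point is to move the $*$ outside: using $** = (-1)^{k'(n-k')}$ on $k'$-forms and the definition $\delta^\nabla = (-1)^{nk+n+1} * d^\nabla *$, one rewrites $d^\nabla(* \beta_1)$ in terms of $*(\delta^\nabla \beta_1)$ up to an explicit sign, so that $\beta_0 \wedge d^\nabla(* \beta_1) = \pm \beta_0 \wedge *(\delta^\nabla \beta_1)$; then Lemma \ref{lem:inner-star} again converts $*(\beta_0 \wedge *(\delta^\nabla \beta_1))$ into $\langle \beta_0, \delta^\nabla \beta_1 \rangle$. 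Collecting the two contributions and applying $*$ to the whole equation $d(\beta_0 \wedge * \beta_1) = \cdots$ then yields $*d(\beta_0 \wedge * \beta_1) = \langle d^\nabla \beta_0, \beta_1 \rangle - (-1)^{n(k+1)} \langle \beta_0, \delta^\nabla \beta_1 \rangle$.

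The final step is the sign bookkeeping: one must verify that the powers of $-1$ arising from the Leibniz rule, from $** = (-1)^{k'(n-k')}$ applied on the appropriate degrees, and from the definition of $\delta^\nabla$ combine to give precisely the factor $(-1)^{n(k+1)}$ in front of $\langle \beta_0, \delta^\nabla \beta_1 \rangle$ and $+1$ in front of $\langle d^\nabla \beta_0, \beta_1 \rangle$. I expect this sign tracking to be the only real obstacle; it is entirely mechanical but error-prone, and it is cleanest to do it by choosing a local orthonormal coframe and checking on decomposable forms $\beta_0 = \alpha_0 \otimes \zeta_0$, $\beta_1 = \alpha_1 \otimes \zeta_1$, reducing everything to the classical (scalar) Hodge identity $*d(\alpha_0 \wedge * \alpha_1) = \langle d\alpha_0, \alpha_1\rangle - (-1)^{n(k+1)} \langle \alpha_0, \delta \alpha_1 \rangle$ together with the compatibility of $\nabla$ with $\langle \cdot, \cdot \rangle$, which is what turns $d$ of the inner product of the $E$-parts into the covariant derivatives $d^\nabla$ and $\delta^\nabla$. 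Since this lemma is recorded in \cite[Appendix B]{oh-wang:CR-map1}, I would also simply cite that reference for the detailed sign verification rather than reproduce it in full.
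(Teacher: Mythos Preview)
Your approach is correct and is precisely the standard argument: expand $d(\beta_0\wedge *\beta_1)$ via the Leibniz rule, convert each piece to an inner product using Lemma~\ref{lem:inner-star}, and rewrite $d^\nabla(*\beta_1)$ in terms of $*\delta^\nabla\beta_1$ using the definition of $\delta^\nabla$ together with $**=(-1)^{k'(n-k')}$. The paper itself does not supply a proof of this lemma; it simply records the statement and refers the reader to \cite[Appendix A \& B]{oh-wang:CR-map1}, so your plan (including the option of citing that reference for the sign verification) matches exactly what the paper does.
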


\section{Conversion to the intersection theoretic version}
\label{sec:conversion}

In this appendix, we give the proof of Proposition \ref{prop:Ham=Reeb} whose
statement we reproduce here.

\begin{prop}\label{prop:Ham=Reeb-appendix} Let $J_0 \in \CJ(\lambda)$ and $J_t$ defined as
in \eqref{eq:Jt}. We equip $(\Sigma,j)$ a K\"ahler metric $h$. Let $g_{H,u}$ be the
function defined in \eqref{eq:gHu}.
Suppose $u$ satisfies
\be\label{eq:perturbed-contacton-bdy-appendix}
\begin{cases}
(du - X_H \otimes dt)^{\pi(0,1)} = 0, \quad d(e^{g_{H, u}}(u^*\lambda + H\, dt)\circ j) = 0\\
u(\tau,0) \in R_0, \quad u(\tau,1) \in R_1
\end{cases}
\ee
with respect to $J_t$. Then $\overline u$ satisfies
\be\label{eq:perturbed-intersection-appendix}
\begin{cases}
\delbar^\pi \overline u = 0, \quad d((\overline u^*\lambda \circ j) = 0 \\
\overline u(\tau,0) \in R_0, \, \overline u(\tau,1) \in \psi_H^1(R_1)
\end{cases}
\ee
for $J_0$.
\end{prop}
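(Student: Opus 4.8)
The plan is to realize the intersection‑theoretic solution as the image of $u$ under the $t$‑dependent gauge transformation of \eqref{eq:PhiH}; that is, to write $u(\tau,t)=\phi_H^t(\overline u(\tau,t))$ with $\phi_H^t=\psi_H^t(\psi_H^1)^{-1}$ as in \eqref{eq:phiHt}, and then to transport each of the three ingredients of the problem --- the Cauchy--Riemann equation, the equation $d(\,\cdot\circ j)=0$, and the Legendrian boundary conditions --- across this transformation. The reverse implication will then follow by the same argument with $\phi_H^t$ replaced by $(\phi_H^t)^{-1}$, giving the asserted equivalence.

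The computational heart is the identity
\[
d_H u \;=\; du-X_H\otimes dt \;=\; d\phi_H^t\circ d\overline u
\]
of $u^*TM$‑valued one‑forms. Differentiating $u=\phi_H^t(\overline u)$ in $\tau$ is immediate; differentiating in $t$ produces, besides $d\phi_H^t(\partial_t\overline u)$, the term $\tfrac{\partial}{\partial s}\big|_{s=t}\phi_H^s(\overline u)$, and the flow property $\tfrac{d}{ds}\psi_H^s=X_H(s,\cdot)\circ\psi_H^s$ gives $\tfrac{\partial}{\partial s}\big|_{s=t}\phi_H^s(y)=X_H(t,\phi_H^t(y))$, so that this extra term is exactly $X_H(t,u)$ and is absorbed into $-X_H\otimes dt$. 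Alongside this I would record the companion scalar identity $e^{g_{H,u}}\,u^*\lambda_H=\overline u^*\lambda$, which drops out of $((\phi_H^t)^{-1})^*\lambda=e^{g_{(\phi_H^t)^{-1}}}\lambda$ together with the definition \eqref{eq:gHu} of $g_{H,u}$; this is just Lemma \ref{lem:action-identity} read at the level of pulled‑back one‑forms.

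With these in hand the transfer is short. Since $\phi_H^t$ is a contact diffeomorphism, $d\phi_H^t$ preserves $\xi$ and, by the defining relation \eqref{eq:Jt} of Choice \ref{choice:LJt}, intertwines $J_t'$ with $J_t$ on $\xi$; decomposing $d\overline u$ into its $\xi$‑ and Reeb‑parts and applying $d\phi_H^t$, one checks that the $(0,1)$‑part of $d_H^\pi u$ with respect to $J_t$ vanishes precisely when the $(0,1)$‑part of $d^\pi\overline u$ with respect to $J'$ does, i.e. $\delbar_H^\pi u=0\iff\delbar_{J'}^\pi\overline u=0$. For the second equation, the companion identity gives $e^{g_{H,u}}\,u^*(\lambda+H\,dt)\circ j=\overline u^*\lambda\circ j$, so $d\big(e^{g_{H,u}}u^*(\lambda+H\,dt)\circ j\big)=0$ is literally $d(\overline u^*\lambda\circ j)=0$. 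Finally, evaluating $u=\phi_H^t(\overline u)$ at the two endpoints, using $\phi_H^1=\mathrm{id}$ and $\phi_H^0=(\psi_H^1)^{-1}$, reads off the boundary behaviour of $\overline u$ at the two ends.

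The one genuinely delicate point I expect is the bookkeeping in the Cauchy--Riemann transfer: $d\phi_H^t$ sends $R_\lambda$ not to a multiple of $R_\lambda$ but to a conformal multiple plus a $\xi$‑valued correction, so the $\pi$‑projection of $d_H u$ is \emph{not} simply $d\phi_H^t$ applied to the $\pi$‑projection of $d\overline u$, and the extra Reeb‑contribution must be seen to cancel against the conformal exponent $g_{\phi_H^t}$. This is exactly why the density $g_{H,u}$ in \eqref{eq:gHu} and the auxiliary family $J_t'$ in Choice \ref{choice:LJt} are chosen the way they are; once the conventions are aligned --- in particular checking $(\phi_H^t)^*(d\lambda)|_\xi=e^{g_{\phi_H^t}}d\lambda|_\xi$, so that $J_t'$ is again $\lambda$‑adapted --- all remaining steps are formal, and the converse direction is obtained verbatim with the roles of $\phi_H^t$ and its inverse exchanged.
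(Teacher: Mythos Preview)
Your approach coincides with the paper's: compute the partial derivatives of $\overline u$ from $u=\phi_H^t(\overline u)$ to obtain $d_Hu=d\phi_H^t\circ d\overline u$ (the paper does this coordinate by coordinate in $(\tau,t)$), invoke that $d\phi_H^t$ preserves $\xi$ and intertwines the two CR-structures via Choice~\ref{choice:LJt} for the first equation, and use the pulled-back form of Lemma~\ref{lem:action-identity}, namely $\overline u^*\lambda = e^{g_{H,u}} u^*\lambda_H$, for the second. The boundary reading is also the same, though note that with $\phi_H^0=(\psi_H^1)^{-1}$ and $\phi_H^1=\mathrm{id}$ you obtain $\overline u(\cdot,0)\in\psi_H^1(R_0)$ and $\overline u(\cdot,1)\in R_1$, matching Proposition~\ref{prop:Ham=Reeb} in the body rather than the appendix statement as written.

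On your flagged delicate point: you are right that $\Pi\circ d\phi_H^t\neq d\phi_H^t\circ\Pi$ in general, so that $(d_Hu)^\pi$ picks up an extra term $(\overline u^*\lambda)\,\Pi(d\phi_H^t(R_\lambda))$ beyond $d\phi_H^t(d^\pi\overline u)$. However, your proposed mechanism --- that this ``cancel[s] against the conformal exponent $g_{\phi_H^t}$'' --- is not the right one: the conformal exponent governs the scalar Reeb coefficient $\lambda(d\phi_H^t(\cdot))$, not a $\xi$-valued correction, and the check $(\phi_H^t)^*(d\lambda)|_\xi=e^{g_{\phi_H^t}}d\lambda|_\xi$ you propose does not touch this term. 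The paper's own proof in the appendix does not isolate or dispatch this contribution either; it passes directly from $d\phi^{-1}(\xi)=\xi$ and $J(\xi)\subset\xi$ to the asserted equivalence. So at the level of detail present in the paper your outline matches it, but the issue you correctly identified remains unaddressed in both arguments rather than being a mere bookkeeping exercise.
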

\begin{proof}  We first note the equality
$$
2\delbar^\pi \overline u(\del_\tau) = \left(\frac{\del \overline u}{\del \tau}\right)^\pi +
J_0\left(\frac{\del \overline u}{\del t}\right)^\pi
$$
for any smooth map $u$. We compute each term of the derivatives
$\frac{\del \overline u}{\del \tau}$ and $\frac{\del \overline u}{\del t}$
separately.

By definition, we have
$$
u(\tau,t) = \psi_H^t(\overline u(\tau,t))).
$$
Then
$$
d u(\del_t)= \frac{\del u}{\del t}
= d\psi_H^t\left(\frac{\del \overline u}{\del t}\right) + X_H(u(\tau,t))
$$
and hence we have
\be\label{eq:dbarudt}
\frac{\del \overline u}{\del t} = d(\psi_H^t)^{-1}
\left(\frac{\del u}{\del t} - X_H(u(\tau,t))\right).
\ee
And more easily, we compute
\be\label{eq:dbarudtau}
\frac{\del \overline u}{\del \tau} = d(\psi_H^t)^{-1}\left(\frac{\del u}{\del \tau}\right).
\ee
By adding up the two, we obtain
$$
2\delbar \overline u(\del_\tau) = 2 d(\psi_H^t)^{-1}\left(
\left(\frac{\del u}{\del \tau} - X_H(u(\tau,t))\right) +\left(\frac{\del u}{\del t}\right)\right)
$$
We recall that $J(\xi) \subset \xi$  for any $\lambda$-adapted CR-almost complex structure $J$,
and
$$
d(\psi_H^t)^{-1}(\xi) = \xi
$$
since $(\psi_H^t)^{-1} $ is a contactomorphism. Therefore we obtain
$$
\delbar^\pi \overline u(\del_\tau) = 0 \Longleftrightarrow (d\pi - X_H \otimes dt)^{\pi(0,1)}(\del_\tau)(\del_\tau) = 0
$$
since we have
$$
2 (d\pi - X_H \otimes dt)^{\pi(0,1)}(\del_\tau) =
\left(
\left(\frac{\del u}{\del \tau} - X_H(u(\tau,t))\right) +\left(\frac{\del u}{\del t}\right)\right)^\pi.
$$

Next we evaluate the form $d((\overline u^*\lambda \circ j))$ against the pair $(\del_\tau,\del_t)$ and obtain
$$
d(\overline u^*\lambda \circ j)(\del_\tau,\del_t) =
- \frac{\del}{\del \tau}\left(\lambda\left(\frac{\del \overline u}{\del \tau}\right)\right)
- \frac{\del}{\del t}\left(\lambda\left(\frac{\del \overline u}{\del t}\right)\right).
$$
Using \eqref{eq:dbarudtau} and \eqref{eq:dbarudt}, we obtain
\beastar
&{}&  - \frac{\del}{\del \tau}\left(\lambda\left(\frac{\del \overline u}{\del \tau}\right)\right)
- \frac{\del}{\del t}\left( \lambda\left(\frac{\del \overline u}{\del t}\right)\right) \\
& = & - \frac{\del}{\del \tau}\left( \lambda\left(d(\psi_H^t)^{-1}
\left(\frac{\del u}{\del \tau}\right)\right)\right)
- \frac{\del}{\del \tau}\left( \lambda\left(d(\psi_H^t)^{-1}
\left(\frac{\del u}{\del t} - X_H(u(\tau,t))\right) \right)\right)\\
& = & - \frac{\del}{\del \tau}\left(\lambda\left(d(\psi_H^t)^{-1}
\frac{\del u}{\del \tau}\right)\right) - \frac{\del}{\del t}\left( \lambda\left(d(\psi_H^t)^{-1}
\frac{\del u}{\del t} + H(u(\tau,t))\right)\right)\\
& = &  - \frac{\del}{\del \tau}\left(\lambda\left(\frac{\del u}{\del \tau}\right)\right) - \frac{\del}{\del t}\left(\lambda\left(\frac{\del u}{\del t} + H(u(\tau,t))\right)\right).
\eeastar
The last term can be rewritten as
$$
2d\left((u^*\lambda + H\, dt) \circ j\right)(\del_\tau,\del_t).
$$
Therefore we have proved
$$
d(\overline u^*\lambda \circ j) = 0 \Longleftrightarrow d(e^{g_{H, u}}(u^*\lambda + H\, dt) \circ j) = 0.
$$
Finally it can be easily check the equivalence of given boundary conditions which we omit.
This finishes the proof.
\end{proof}

\section{Proofs of Lemma \ref{lem:A-B}}
\label{sec:A-B}

In this appendix, we prove  Lemma \ref{lem:A-B} following the scheme
used in the proof of \cite[Lemma 4.5]{oh-wang:connection}. In fact the proof thereof uses the holomorphic
property of $\del^\pi w$ in \cite[Lemma 4.5]{oh-wang:connection} which is replaced by $\del_H^\pi u$ in our
 current situation.

Recall the notation
$$
d^\pi_H u = d^\pi u - X_H^\pi(u) \otimes \gamma, \, \quad u^*\lambda_H = u^*\lambda + u^*H \, \gamma
$$
and $\del^\pi_H u$ (resp. $\delbar^\pi_H u$) is the complex linear (resp. anti-complex linear) part
of $d^\pi_H u$.

For the equality \eqref{eq:A}, using the identities $\delta^{\nabla^\pi} = - *d^{\nabla^\pi}*$
for two-forms and $*\alpha = -\alpha \circ j$ for general one-form $\alpha$, we rewrite
\beastar
- \delta^{\nabla^\pi}[(u^*\lambda \wedge (\CL_{X_\lambda}J)J \del_H^\pi u]=
 *d^{\nabla^\pi}*[(\CL_{X_\lambda}J J)\del_H^\pi u\wedge u^*\lambda],
\eeastar
and then apply the definition of the Hodge $*$
to the expression
$$
*[(\CL_{X_\lambda}J J)\del_H^\pi u\wedge (*u^*\lambda)],
$$
and get
\beastar
&&
- \delta^{\nabla^\pi}[(u^*\lambda \wedge (\CL_{X_\lambda}J)J \del_H^\pi u]\\
&=&*d^{\nabla^\pi}\langle (\CL_{X_\lambda}J J)\del_H^\pi u, u^*\lambda\rangle\\
&=&-*\langle (\nabla^\pi(\CL_{X_\lambda}J J ))\del_H^\pi u, u^*\lambda\rangle
-*\langle (\CL_{X_\lambda}J J)\nabla^\pi\del_H^\pi u, u^*\lambda\rangle\\
&{}& -*\langle (\CL_{X_\lambda}J J)\del_H^\pi u, \nabla u^*\lambda\rangle.
\eeastar
This finishes the proof of \eqref{eq:A}. The proof of \eqref{eq:B} is similar and so omitted.

For the equality \eqref{eq:C}, we compute
\bea\label{eq:delta-nabla-Tpi}
&{}&
\delta^{\nabla^\pi}[T^\pi(X_H^\pi,\gamma \wedge d_H^\pi u)] \nonumber\\
& = &-*d^{\nabla^\pi} * T^\pi(X_H^\pi,\gamma \wedge d_H^\pi u) =
 -*d^{\nabla^\pi} T^\pi(X_H^\pi,*(\gamma \wedge d_H^\pi u)) \nonumber\\
& = & *d^{\nabla^\pi} [T^\pi(X_H^\pi, \langle d_H^\pi u, \gamma \circ j\rangle)] \nonumber\\
& = &  *(\nabla^\pi T^\pi)(X_H^\pi, \langle d_H^\pi u,\gamma \circ j \rangle)
+ T^\pi(*\nabla^\pi X_H^\pi, \langle d_H^\pi u,\gamma \circ j \rangle) \nonumber\\
&{}& + * T^\pi(X_H^\pi, \langle d^{\nabla^\pi}d_H^\pi u,\gamma \circ j\rangle )
 - * T^\pi(X_H^\pi, \langle d_H^\pi u, d(\gamma \circ j) \rangle.
\eea
This finishes the proof.

\bibliographystyle{amsalpha}

\bibliography{biblio}

\end{document}